 \newtheorem{proposition}{Proposition}[section] 
 \newtheorem{lemma}[proposition]{Lemma}
 \newtheorem*{lemma*}{Lemma}
 \newtheorem{theorem}[proposition]{Theorem}
 \newtheorem{corollary}[proposition]{Corollary}
\theoremstyle{definition}
 \newtheorem{definition}[proposition]{Definition}
 \newtheorem*{definition*}{Definition}
 \newtheorem{example}[proposition]{Example}
 \newtheorem*{example*}{Example}
 \newtheorem{remark}[proposition]{Remark}
\numberwithin{equation}{section}
\def\ox{\otimes}
\def\M{\mathbb M}
\def\id{\mathsf{id}}
\def\pibarL{\overline \sqcap^L}
\def\pibarR{\overline \sqcap^R}
\def\piL{\sqcap^L}
\def\piR{\sqcap^R}
\def\op{\mathsf{op}}
\def\wr{\raisebox{1pt}{\scalebox{.7}{$\,\triangleleft\,$}}}
\begin{document}

\title{Weak multiplier bialgebras}
\author{Gabriella B\"ohm}
\address{Wigner Research Centre for Physics, H-1525 Budapest 114,
P.O.B.\ 49, Hungary} 
\email{bohm.gabriella@wigner.mta.hu}
\author{Jos\'e G\'omez-Torrecillas}
\address{Departamento de \'Algebra, Universidad de Granada, 
E-18071 Granada, Spain} 
\email{gomezj@ugr.es}
\author{Esperanza L\'opez-Centella}
\address{Departamento de \'Algebra, Universidad de Granada,
E-18071 Granada, Spain}
\email{esperanza@ugr.es}
\thanks{Research partially supported by the Spanish Ministerio de Ciencia en
Innovaci\'on and the European Union, grant MTM2010-20940-C02-01; by
the Hungarian Scientific Research Fund OTKA, grant K108384; and by the
Nefim Fund of Wigner RCP. The authors thank Alfons Van Daele for valuable
discussions from which this paper benefits a lot.}
\begin{abstract}
A non-unital generalization of weak bialgebra is proposed with a
multi\-plier-valued comultiplication. Certain canonical subalgebras of the
multiplier algebra (named the `base algebras') are shown to carry coseparable
co-Frobenius coalgebra structures. Appropriate modules over a weak multiplier
bialgebra are shown to constitute a monoidal category via the (co)module
tensor product over the base (co)algebra. The relation to Van Daele and Wang's
(regular and arbitrary) weak multiplier Hopf algebra is discussed.
\end{abstract}
\subjclass[2010]{16T05, 16T10, 16D90, 18B40}
\date{Oct 2013}
\maketitle

\section*{Introduction}

The most well-known examples of {\em Hopf algebras} are the linear spans of
(arbitrary) {\em groups}. Dually, also the vector space of linear functionals
on a {\em finite} group carries the structure of a Hopf algebra. In the case
of {\em infinite} groups, however, the vector space of linear functionals ---
with finite support --- possesses no unit. Consequently, it is no longer a
Hopf algebra but, more generally, a {\em multiplier Hopf algebra}
\cite{VDae:MHA}. Replacing groups with {\em finite groupoids}, both their
linear spans and the dual vector spaces of linear functionals carry {\em weak
Hopf algebra} structures \cite{WHAI}. Finally, removing the finiteness
constraint in this situation, both the linear spans of arbitrary groupoids,
and the vector spaces of linear functionals with finite support on them are
examples of {\em weak multiplier Hopf algebras} as introduced in the recent
paper \cite{VDaWa}. 

Van Daele's approach to multiplier Hopf algebras is based on the principle of
using minimal input data. That is, one starts with a non-unital algebra $A$
with an appropriately well-behaving multiplication and a multiplicative map
$\Delta$ from $A$ to the multiplier algebra of $A\ox A$. This allows one to
define maps $T_1$ and $T_2$ from $A\ox A$ to the multiplier algebra of $A\ox
A$ as 
$$
T_1(a\ox b)=\Delta(a)(1\ox b) \qquad \textrm{and}\qquad
T_2(a\ox b)=(a\ox 1)\Delta(b),
$$
where $1$ stands for the unit of the multiplier algebra of $A$.
(If $A$ is a usual, unital bialgebra over a field $k$, then these maps are
the left and right Galois maps for the $A$-extension $k\to A$ provided by the
unit of $A$.) The axioms of multiplier Hopf algebra assert first that $T_1$
and $T_2$ establish isomorphisms from $A\ox A$, to $A\ox A$ regarded as a
two-sided ideal in the multiplier algebra. Second, $T_1$ and $T_2$ are
required to obey $(T_2\ox \id)(\id \ox T_1)=(\id \ox T_1) (T_2\ox \id)$
(replacing the coassociativity of $\Delta$ in the unital case). These axioms
are in turn equivalent to the existence of a counit and an antipode with the
expected properties. In particular, if $A$ has a unit, then it is a multiplier
Hopf algebra if and only if it is a Hopf algebra.

A similar philosophy is applied in \cite{VDaWa} to define weak multiplier
Hopf algebra. Recall that if $A$ is a weak Hopf algebra over a field $k$ with
a unit $1$, then its comultiplication $\Delta$ is not required to preserve $1$
(i.e. $\Delta(1)$ may differ from $1\ox 1$). Consequently, the maps $T_1$ and 
$T_2$ are no longer linear automorphisms of $A\ox A$. Instead, they induce
isomorphisms between some canonical vector subspaces determined by the element
$\Delta(1)$. In the situation when $A$ is allowed to possess no unit, in
\cite{VDaWa} the role of $\Delta(1)$ is played by an idempotent element in
the multiplier algebra of $A\ox A$, which is meant to be part of the
structure. It is used to single out some canonical vector subspaces of $A\ox
A$. The maps $T_1$ and $T_2$ are required to induce isomorphisms between these
vector subspaces and the same (coassociativity) axiom $(T_2\ox \id)(\id \ox
T_1)=(\id \ox T_1) (T_2\ox \id)$ is imposed. In contrast to the case of
non-weak multiplier Hopf algebras, however, these axioms do not seem to imply
the existence and the expected properties of the counit and the
antipode. Therefore in \cite{VDaWa} also the existence of a counit $\epsilon:A
\to k$ is assumed (in the sense that $(\epsilon\ox \id)T_1$ and $(\id\ox
\epsilon)T_2$ are equal to the multiplication on $A$). Adding these counit
axioms, the existence of the antipode and {\em most} of the expected
properties of the counit and the antipode do follow. However --- at least
without requiring that the opposite algebra obeys the same set of axioms,
called the {\em regularity} condition in \cite{VDaWa} --- some crucial
properties seem to be missing (see \cite{VDaWa} for several discussions on
this issue). Most significantly, in a usual, unital weak Hopf algebra, the
counit $\epsilon$ is required to obey two symmetrical conditions
\begin{equation}\label{eq:wmulti}\tag{wm}
(\epsilon\ox\epsilon)((a\ox 1)\Delta(b)(1\ox c))=\epsilon(abc)=
(\epsilon\ox\epsilon)((a\ox 1)\Delta^{\op}(b)(1\ox c)),\qquad \forall a,b,c\in
A. 
\end{equation}
Interestingly enough, the axioms of weak multiplier Hopf algebra in
\cite{VDaWa} imply the second equality but apparently not the first one
(unless regularity is assumed). In this way, even if a weak multiplier Hopf
algebra has a unit, it may not be a usual, unital weak Hopf algebra.

One aim of this paper is to identify an intermediate class between regular and
arbitrary weak multiplier Hopf algebras in \cite{VDaWa}. This class should be
big enough to contain any usual weak Hopf algebra. On the other hand, its
members should have the expected properties like the (separable Frobenius
type) structure of the base algebras.

Achieving the above aim, we depart from a bit further. If considering monoids
instead of groups, their linear spans (and vector spaces of functionals in the
finite case) are only {\em bialgebras}, no longer Hopf algebras. Similarly,
the linear spans of small categories with finitely many objects (and the
vector spaces of functionals in the case when also the number of arrows is
finite) are only {\em weak bialgebras} but not weak Hopf algebras. So with the
ultimate aim to describe the analogous structures associated to categories
without any finiteness assumption, we study {\em weak multiplier
bialgebras}. In this case the existence and the appropriate properties of the
counit have to be assumed. In Section \ref{sec:ax} we propose a set of axioms
defining a weak multiplier bialgebra and we present it in several equivalent
forms. We show that any {\em regular} weak multiplier Hopf algebra
obeys these axioms and so does any weak bialgebra (with a unit). By
generalizing to the multiplier setting several equivalent properties that
distinguish bialgebras among weak bialgebras, we also propose a notion of {\em
multiplier bialgebra} (which is, however, different from both notions in
\cite{JaVe} and \cite{Tim} occurring under the same name). In Section
\ref{sec:base} and Section \ref{sec:sF} we study some distinguished
subalgebras of the multiplier algebra of a weak multiplier bialgebra. They
generalize the `source' and `target' (also called `right' and `left') base
algebras of a unital weak bialgebra. Whenever the comultiplication is `full'
(in the sense of \cite{VDaWa}), they are shown to carry firm Frobenius algebra
structures arising from a coseparable co-Frobenius coalgebra in the sense of
\cite{BoGT:fF}. In Section \ref{sec:modcat} we study an appropriate category
of modules over a regular weak multiplier bialgebra with a full
comultiplication. It is shown to be a monoidal category equipped with a strict
monoidal and faithful (in some sense `forgetful') functor to the category of
firm bimodules over the base algebra. In Section \ref{sec:antipode} we
introduce the notion of antipode on a regular weak multiplier
bialgebra. Whenever the comultiplication is full, the antipode axioms are
shown to be equivalent to the projections of the maps $T_1$ and $T_2$, to maps
between relative tensor products over the base algebras, being
isomorphisms. The resulting structure is equivalent to a weak multiplier Hopf
algebra in the sense of \cite{VDaWa} such that in addition both conditions in
\eqref{eq:wmulti} hold. We claim that this is the desired `intermediate' class
between regular and arbitrary weak multiplier Hopf algebras in which one can
answer the questions left open in \cite{VDaWa} and which is big enough to
contain any unital weak Hopf algebra. Some preliminary information on
multiplier algebras are collected in Section \ref{sec:prelims}. 

\section{Preliminaries on multiplier algebras}\label{sec:prelims}

For a field $k$, we denote by $\ox$ the tensor product of $k$-vector spaces. 
Let $A$ be a vector space over $k$. If there is an associative multiplication
$$
\mu: A\ox A\to A, \qquad a\ox b \mapsto ab
$$
then we say that $A$ is a {\em non-unital} algebra. If in addition $\mu$ is
surjective then $A$ is said to be an {\em idempotent} algebra. Furthermore, if
any of the conditions $(ab=0,\ \forall a\in A)$ and $(ba=0,\ \forall a\in A)$
implies $b=0$, then the multiplication $\mu$ is termed {\em non-degenerate}.
Clearly, also the algebra $A^\op$ on the same vector space $A$ with the
opposite multiplication is idempotent and non-degenerate, whenever A is
so. If $A$ and $B$ are idempotent and non-degenerate algebras, then so
is $A\ox B$ with the factorwise multiplication, see e.g. \cite[Lemma
1.11]{JaVe}. 

Let $A$ be a non-unital algebra with a non-degenerate multiplication. A {\em
multiplier} on $A$ \cite{Dauns:Multiplier} is a pair $(\lambda,\rho)$ of
$k$-linear maps $A\to A$ such that $a\lambda(b)=\rho(a)b$ for all $a,b\in A$. 
Then it follows that $\lambda$ is a morphism of right $A$-modules and
$\rho$ is a map of left $A$-modules. The vector space of multipliers on $A$
--- via the componentwise linear structure --- is denoted by $\M(A)$. It is an 
associative algebra via the multiplication
$(\lambda',\rho')(\lambda,\rho)=(\lambda'\lambda,\rho\rho')$ (where 
juxtaposition means composition) and unit $1=(\id,\id)$. Any element $a$ of
$A$ can be regarded as a multiplier as $(b\mapsto ab,b\mapsto ba)$. This
allows us to regard $A$ as a dense two-sided ideal in $\M(A)$: Indeed, for
$(\lambda,\rho)\in \M(A)$ and $a\in A$, $a(\lambda,\rho)= \rho(a)$ and
$(\lambda,\rho)a=\lambda(a)$; and --- by non-degeneracy of the multiplication
--- $\rho=0$ if and only if $\lambda=0$. Clearly, $\M(A)^\op\cong \M(A^\op)$.
If $B$ denotes a second non-unital algebra with a non-degenerate
multiplication, then we have algebra embeddings $A\ox B \subseteq \M(A) \ox
\M(B) \subseteq \M(A\ox B)$. None of these inclusions will be explicitly
denoted throughout the paper. The multiplication in $\M(A)$ will also be
denoted by $\mu:\M(A)\ox \M(A)\to \M(A)$. 

Along this paper, if $X$ is a set of vectors of some vector space, we will
denote by $\langle X \rangle$ the vector subspace linearly spanned
 by $X$.

\begin{theorem}\cite[Proposition A.3]{VDaWa:Banach}\label{thm:extend}
Let $A$ and $B$ be non-unital algebras with non-degenerate multiplications and
$\gamma : A \to \M (B)$ be a multiplicative linear map. Assume that there is
an idempotent element $e \in \M (B)$ such that 
$$
\langle \gamma(a)b\ |\ a\in A\ b\in B \rangle = \{eb\ |\ b\in B \}
\quad \textrm{and}\quad 
\langle b\gamma(a)\ |\ a\in A\ b\in B \rangle = \{ be\ |\ b\in B \}. 
$$
Then there is a unique multiplicative linear map $\overline
\gamma : \M (A) \to \M (B)$ such that $\overline\gamma(1) =
e$ and $\overline \gamma(a)=\gamma (a)$, for all $a\in A$. 
\end{theorem}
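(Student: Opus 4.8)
The plan is to construct $\overline\gamma$ explicitly by declaring, for each $m\in\M(A)$, a multiplier $\overline\gamma(m)=(\Lambda_m,P_m)$ on $B$ whose left and right actions are forced by the requirement that $\overline\gamma$ be multiplicative and restrict to $\gamma$. Indeed, if such an extension exists, then for $a\in A$ one must have $\overline\gamma(m)\gamma(a)=\overline\gamma(ma)=\gamma(ma)$ and $\gamma(a)\overline\gamma(m)=\gamma(am)$, since $ma,am\in A$. As $eB=\langle\gamma(a)b\rangle$ and $Be=\langle b\gamma(a)\rangle$ by hypothesis, these relations determine the actions on the dense ideals $eB$ and $Be$, hence on all of $B$ once one uses $\overline\gamma(m)=e\overline\gamma(m)=\overline\gamma(m)e$. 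Concretely I would define, for $b\in B$ with chosen expansions $eb=\sum_i\gamma(a_i)c_i$ and $be=\sum_j d_j\gamma(a_j')$,
$$
\Lambda_m(b):=\sum_i\gamma(ma_i)c_i,\qquad P_m(b):=\sum_j d_j\gamma(a_j'm),
$$
noting that both right-hand sides depend linearly on $m$.

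The heart of the argument --- and the step I expect to be the main obstacle --- is well-definedness of these formulas, i.e.\ their independence of the chosen expansions. It suffices to show that $\sum_i\gamma(a_i)c_i=0$ forces $x:=\sum_i\gamma(ma_i)c_i=0$ (together with the mirror statement for $P_m$). For this I would first compute, for an arbitrary $a\in A$,
$$
\gamma(a)x=\sum_i\gamma(a\,ma_i)c_i=\sum_i\gamma((am)a_i)c_i=\gamma(am)\sum_i\gamma(a_i)c_i=0,
$$
using associativity in $\M(A)$ and multiplicativity of $\gamma$. Since $x\in eB$ we have $x=ex$, so for every $b\in B$, writing $be=\sum_k b_k\gamma(a_k)\in Be$, we obtain $bx=(be)x=\sum_k b_k\gamma(a_k)x=0$; non-degeneracy of the multiplication of $B$ then yields $x=0$. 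The argument for $P_m$ is symmetric, invoking the left density condition in the final step.

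With well-definedness in hand the remaining verifications are routine. I would first record that $\gamma(a')=e\gamma(a')=\gamma(a')e$ for $a'\in A$, which follows from $\gamma(a')b\in eB$ and non-degeneracy. Then $(\Lambda_m,P_m)$ is genuinely a multiplier: expanding $b_1\Lambda_m(b_2)$ and $P_m(b_1)b_2$ through these identities, both reduce to $\sum_{i,j}d_j\gamma(a_j'ma_i)c_i$. Multiplicativity $\overline\gamma(mm')=\overline\gamma(m)\overline\gamma(m')$ follows by comparing left actions, since $\Lambda_m\Lambda_{m'}(b)=\sum_i\gamma(mm'a_i)c_i=\Lambda_{mm'}(b)$, the sum $\sum_i\gamma(m'a_i)c_i$ being an admissible expansion of its own value in $eB$. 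Setting $m=a\in A$ gives $\Lambda_a(b)=\gamma(a)b$, whence $\overline\gamma(a)=\gamma(a)$, and setting $m=1$ gives $\Lambda_1(b)=eb$, whence $\overline\gamma(1)=e$. Finally, uniqueness is immediate: any multiplicative $\phi$ with $\phi(1)=e$ and $\phi|_A=\gamma$ satisfies $\phi(m)\gamma(a)=\gamma(ma)$, which pins down the left action of $\phi(m)$ on $eB$, and hence (via $\phi(m)=\phi(m)e$ and non-degeneracy) the multiplier $\phi(m)$ itself, forcing $\phi=\overline\gamma$.
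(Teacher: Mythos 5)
Your argument is correct, including the key well-definedness step (showing that $\sum_i\gamma(a_i)c_i=0$ forces $\sum_i\gamma(ma_i)c_i=0$ via $\gamma(a)x=0$ for all $a\in A$, $x=ex$, and non-degeneracy of the multiplication of $B$). Note that the paper itself gives no proof of this statement --- it is imported verbatim from \cite[Proposition A.3]{VDaWa:Banach} --- and your construction of $\overline\gamma(m)$ by its forced action on the dense ideals $eB$ and $Be$ is essentially the standard argument found there.
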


If for some map $\gamma$ there exists an idempotent element $e$ as in Theorem
\ref{thm:extend}, then it is clearly unique (cf. \cite[Proposition
1.6]{VDaWa}). 

Let $A$ be a non-unital idempotent algebra with a non-degenerate
multiplication and let $\Delta:A\to \M(A\ox A)$ be a multiplicative linear
map. Assume that there is an idempotent element $E \in \M (A\ox A)$ such that 
\begin{eqnarray*}
&& \langle \Delta(a)(b\ox b')\ |\ a,b,b'\in A \rangle =
\langle E(b\ox b')\ |\ b,b'\in A \rangle 
\quad \textrm{and}\\
&& \langle (b\ox b')\Delta(a)\ |\ a,b,b'\in A \rangle =
\langle (b\ox b')E\ |\ b,b'\in A \rangle 
\end{eqnarray*}
as vector spaces. Then by Theorem \ref{thm:extend}, there exist the 
extended multiplicative maps $\overline \Delta:\M(A)\to \M(A\ox A)$,
$\overline {\Delta \ox \id}: \M(A\ox A)\to \M(A\ox A\ox A)$ and $\overline
{\id \ox \Delta}:\M(A\ox A)\to \M(A\ox A\ox A)$. If the ranges of the
maps
$$
T_1(a\ox b)=\Delta(a)(1\ox b) \quad \textrm{and}\qquad
T_2(a\ox b)=(a\ox 1)\Delta(b),\quad \textrm{for}\ a\ox b \in A\ox A,
$$
belong to $A\ox A$ and they satisfy $(T_2\ox \id)(\id \ox
T_1)=(\id \ox T_1) (T_2\ox \id)$, then it follows by
\cite[Proposition A.6]{VDaWa:Banach} that
$(\overline {\id \ox \Delta})(E)=
(\overline {\Delta \ox \id})(E)$. This allows us to define the
idempotent element
\begin{equation}\label{eq:E_3}
E^{(3)}:=(\overline {\id \ox \Delta})(E)=
(\overline {\Delta \ox \id})(E)
\end{equation} 
in $\M(A\ox A\ox A)$.

For any $k$-vector space $A$, denote by $\mathsf{tw}$ the flip map $A\ox A \to
A\ox A$, $a\ox b\mapsto b\ox a$.
For a non-unital algebra $A$ with a non-degenerate multiplication, and for a
multiplicative linear map $\Delta:A\to \M(A \ox A)$, define a multiplicative
linear map $\Delta^{\op}:A\to
\M(A\ox A)$ via 
$$
\Delta^{\op}(a)(b\ox c):=\mathsf{tw}(\Delta(a)(c \ox b))
\quad \textrm{and}\quad
(b\ox c)\Delta^{\op}(a):=\mathsf{tw}((c \ox b)\Delta(a))
$$ 
and define $\Delta_{13}:A\to \M(A\ox A\ox A)$ by 
\begin{eqnarray*}
&&\Delta_{13}(a)(b\ox c\ox d):=(\id \ox \mathsf{tw})(\Delta(a)(b\ox d)\ox c)
\quad \textrm{and}\\
&&(b\ox c\ox d)\Delta_{13}(a):=(\id \ox \mathsf{tw})((b\ox d)\Delta(a)\ox c).
\end{eqnarray*}

\section{The weak multiplier bialgebra axioms}\label{sec:ax}

The central notion of the paper, weak multiplier bialgebra, is introduced in
this section. Several equivalent forms of the axioms are presented and their
first consequences are drawn. Some illustrative examples are collected.

\begin{definition}\label{def:mwba}
A {\em weak multiplier bialgebra} $A$ over a field $k$ is given by
\begin{itemize}
\item an idempotent $k$-algebra with a non-degenerate multiplication $\mu:A\ox
A \to A$,
\item an idempotent element $E$ in $\M(A\ox A)$,
\item a multiplicative linear map $\Delta:A \to \M(A\ox A)$ (called the {\em
comultiplication}), 
\item and a linear map $\epsilon:A\to k$ (called the {\em counit}),
\end{itemize}
which are subject to the axioms below. 
\begin{itemize}
\item[{(i)}] For any elements $a,b\in A$, the elements 
$$
T_1(a\ox b):=\Delta(a)(1\ox b), \qquad \textrm{and} \qquad
T_2(a\ox b):=(a\ox 1)\Delta(b)
$$
of $\M(A\ox A)$ belong to the two-sided ideal $A\ox A$.
\item[{(ii)}] The comultiplication is coassociative in the sense that 
$$
(T_2\ox \id)(\id \ox T_1)=
(\id \ox T_1) (T_2\ox \id).
$$
\item[{(iii)}] The counit obeys 
$$
(\epsilon\ox \id)T_1=\mu=(\id\ox \epsilon)T_2.
$$
\item[{(iv)}] In terms of the idempotent element $E$, 
\begin{eqnarray*}
&& \langle \Delta(a)(b\ox b')\ |\ a,b,b'\in A \rangle =
\langle E(b\ox b')\ |\ b,b'\in A \rangle 
\quad \textrm{and}\\
&&\langle (b\ox b')\Delta(a)\ |\ a,b,b'\in A \rangle =
\langle (b\ox b')E\ |\ b,b'\in A \rangle. 
\end{eqnarray*}
\item[{(v)}] The idempotent element $E$ satisfies the equality
$$
(E\ox 1)(1\ox E)=E^{(3)}=(1\ox E)(E\ox 1) 
$$
in $\M(A\ox A\ox A)$, cf. \eqref{eq:E_3}.
\item[{(vi)}] For any $a,b,c\in A$,
\begin{eqnarray*}
&&(\epsilon\ox \id)((1\ox a)E(b\ox c))=(\epsilon\ox \id)(\Delta(a)(b\ox c))
\quad \textrm{and}\\
&&(\epsilon\ox \id)((a\ox b)E(1\ox c))=(\epsilon\ox \id)((a\ox b)\Delta(c)).
\end{eqnarray*}
\end{itemize}
\end{definition}

It follows immediately from axiom (iv) and the idempotency of $E$ that
$E\Delta(a)=\Delta(a)=\Delta(a) E$, for all $a\in A$. 

\begin{remark}\label{rem:E_unique}
In a weak multiplier bialgebra, the idempotent element $E$ and the counit
$\epsilon$ are uniquely determined in fact by the multiplication $\mu$ and the
comultiplication $\Delta$. The uniqueness of $E$ follows by the uniqueness of
the idempotent element in Theorem \ref{thm:extend}. We will come back to the
uniqueness of $\epsilon$ later in this section (cf. Theorem \ref{thm:unique}). 
\end{remark}

\begin{definition}\label{def:regular}
A weak multiplier bialgebra $A$ is said to be {\em regular} if also the
elements
$$
T_3(a\ox b):=(1\ox b)\Delta(a) \quad \textrm{and}\quad 
T_4(a\ox b):=\Delta(b)(a\ox 1)
$$
of $\M(A\ox A)$ belong to the two-sided ideal $A\ox A$, for all $a,b \in A$.
\end{definition}

For a regular weak multiplier bialgebra $A$, some of the axioms can be
re-written in the following equivalent forms.
\begin{itemize}
\item[{(ii)}] $\Leftrightarrow (T_4\ox \id)(\id \ox T_3)= (\id \ox T_3)
(T_4\ox \id)$, 
\item[{(iii)}] $\Leftrightarrow (\epsilon\ox \id)T_3=\mu^\op=(\id\ox
\epsilon)T_4$. 
\end{itemize}
So a weak multiplier bialgebra $A$ over a field is regular if and only if the
opposite algebra $A^\op$ is a weak multiplier bialgebra too, via the same
comultiplication $\Delta$, counit $\epsilon$ and idempotent element $E$. 

Below we shall provide some equivalent forms of axiom (vi) in Definition
\ref{def:mwba}. In particular, this will allow us to prove the uniqueness of
the counit.

\begin{proposition}\label{prop:pibarL}
For any weak multiplier bialgebra $A$ over a field, and for any $a\in A$, the 
linear maps $A\to A$,
\begin{equation}\label{eq:pibarL}
b\mapsto (\epsilon \ox \id)T_2(a\ox b)\quad \textrm{and}\quad
b\mapsto (\epsilon \ox \id)((a\ox b)E)
\end{equation}
define a multiplier $\pibarL(a)$ on $A$, giving rise to a linear map
$\pibarL:A \to \M(A)$. 
\end{proposition}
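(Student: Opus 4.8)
The plan is to verify that the pair consisting of $\lambda:b\mapsto(\epsilon\ox\id)T_2(a\ox b)$ and $\rho:b\mapsto(\epsilon\ox\id)((a\ox b)E)$ satisfies the defining relation $x\lambda(y)=\rho(x)y$ of a multiplier, for all $x,y\in A$; both maps are evidently $k$-linear, so this relation is all that must be checked (the right/left module morphism properties then follow automatically, as recalled in Section~\ref{sec:prelims}). First I would record that both maps actually land in $A$: the element $T_2(a\ox y)=(a\ox 1)\Delta(y)$ lies in $A\ox A$ by axiom~(i), so $\lambda(y)=(\epsilon\ox\id)T_2(a\ox y)\in A$; and $(a\ox x)E\in A\ox A$ because $a\ox x\in A\ox A$ is a two-sided ideal of $\M(A\ox A)$, whence $\rho(x)\in A$ as well. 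In particular $\epsilon\ox\id$ is legitimately applied to these elements.

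The core of the argument is to push both sides of the desired identity to a common expression, using axiom~(vi) as the bridge. Writing $T_2(a\ox y)=\sum_i p_i\ox q_i$ in $A\ox A$, I would compute
$$
x\lambda(y)=\sum_i\epsilon(p_i)\,xq_i=(\epsilon\ox\id)\big((1\ox x)T_2(a\ox y)\big)=(\epsilon\ox\id)\big((a\ox x)\Delta(y)\big),
$$
where the last step uses $(1\ox x)(a\ox 1)\Delta(y)=(a\ox x)\Delta(y)$. Symmetrically, writing $(a\ox x)E=\sum_j r_j\ox s_j$ in $A\ox A$, I would compute
$$
\rho(x)\,y=\sum_j\epsilon(r_j)\,s_jy=(\epsilon\ox\id)\big((a\ox x)E(1\ox y)\big).
$$
These are purely formal ``sliding'' manipulations: because the relevant elements genuinely lie in $A\ox A$, multiplying by $1\ox x$ (resp.\ $1\ox y$) acts only on the second tensor leg, and applying $\epsilon\ox\id$ then factors $x$ (resp.\ $y$) out.

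It now remains only to match the two right-hand sides, which is exactly the content of the second equation of axiom~(vi), read with its $a,b,c$ taken to be $a,x,y$:
$$
(\epsilon\ox\id)\big((a\ox x)E(1\ox y)\big)=(\epsilon\ox\id)\big((a\ox x)\Delta(y)\big).
$$
Combining the three displays yields $x\lambda(y)=\rho(x)y$, so $(\lambda,\rho)$ is a multiplier, which we name $\pibarL(a)$; and $a\mapsto\pibarL(a)$ is linear since $\lambda$ and $\rho$ each depend linearly on $a$ through $T_2$ and $E$, respectively. I do not expect a genuine obstacle here: the whole conceptual weight rests on choosing the correct form of axiom~(vi), and the one step to watch is precisely the justification of the sliding manipulations, which is secured by axiom~(i) and the ideal property guaranteeing that $T_2(a\ox y)$ and $(a\ox x)E$ belong to $A\ox A$ rather than merely to $\M(A\ox A)$.
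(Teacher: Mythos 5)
Your proof is correct and follows essentially the same route as the paper: both reduce $x\lambda(y)$ to $(\epsilon\ox\id)((a\ox x)\Delta(y))$ and $\rho(x)y$ to $(\epsilon\ox\id)((a\ox x)E(1\ox y))$, then invoke the second identity of axiom (vi). Your additional remarks on why the relevant elements lie in $A\ox A$ (axiom (i) and the ideal property) are a welcome explicit justification of steps the paper leaves implicit.
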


\begin{proof}
For any $a,b,c\in A$,
\begin{eqnarray*}
c((\epsilon \ox \id)T_2(a\ox b))&=&
(\epsilon \ox \id)((a\ox c)\Delta(b))\\
&\stackrel{(vi)}=&
(\epsilon \ox \id)((a\ox c)E(1\ox b))=
((\epsilon \ox \id)((a\ox c)E))b.
\end{eqnarray*}
\end{proof}

\begin{proposition}\label{prop:(a@1)E}
Let $A$ be a weak multiplier bialgebra over a field. For any $a,b\in A$,
the following assertions hold.
\begin{itemize}
\item[{(1)}] $(\id \ox \pibarL)T_2(a\ox b)=(ab\ox 1)E$
as elements of $\M(A\ox A)$. 
\item[{(2)}] $(a\ox 1)E$ belongs to the (non-unital) subalgebra $A\ox \M(A)$
of $\M(A\ox A)$.
\item[{(3)}] $(a\ox 1)E(1\ox b)$ belongs to the (non-unital) subalgebra $A\ox
A$ of $\M(A\ox A)$.
\end{itemize}
\end{proposition}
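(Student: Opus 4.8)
The plan is to prove the three parts in order, since each feeds into the next. The key definitions to keep in mind are that $\pibarL(a)$ is the multiplier whose right action sends $b$ to $(\epsilon\ox\id)T_2(a\ox b) = (\epsilon\ox\id)((a\ox 1)\Delta(b))$ and whose left action (applied to $c$) uses $(\epsilon\ox\id)((a\ox c)E)$, as established in Proposition~\ref{prop:pibarL}. The standing identity $E\Delta(a)=\Delta(a)=\Delta(a)E$ noted after Definition~\ref{def:mwba}, together with axiom~(vi), will be the main tools.

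For part~(1), I would verify the claimed equality of multipliers in $\M(A\ox A)$ by testing it against an arbitrary element $c\ox d\in A\ox A$ from the right (and symmetrically from the left). Writing out $\big((\id\ox\pibarL)T_2(a\ox b)\big)(c\ox d)$, the first leg is straightforward multiplication in $A$ while the second leg applies the multiplier $\pibarL(ab)$ — here one must be careful that $T_2(a\ox b)\in A\ox A$ by axiom~(i), so $(\id\ox\pibarL)$ is applied to a genuine element of $A\ox A$ and the result is a well-defined element of $\M(A\ox A)$. Expanding $\pibarL$ via its defining formula \eqref{eq:pibarL} and then invoking axiom~(vi) to trade $\Delta$ for $E$ should collapse the expression to $(ab\ox 1)E(c\ox d)$. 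The first equality of axiom~(vi), namely $(\epsilon\ox\id)(\Delta(a)(b\ox c))=(\epsilon\ox\id)((1\ox a)E(b\ox c))$, is exactly the bridge that rewrites the coproduct-based formula as an $E$-based one; I expect this rewriting to be the technical heart of part~(1).

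Parts~(2) and~(3) should then follow as consequences. For part~(2), the point is that $(a\ox 1)E$ must be shown to lie in $A\ox\M(A)$, i.e.\ its first tensor leg is a genuine element of $A$. The natural strategy is to use idempotency of $A$ to write $a=\sum a'a''$, and then exploit part~(1): since $(ab\ox 1)E=(\id\ox\pibarL)T_2(a\ox b)$ with $T_2(a\ox b)\in A\ox A$, the expression $(ab\ox 1)E$ visibly sits in $A\ox\M(A)$ because applying $\id\ox\pibarL$ to an element of $A\ox A$ lands in $A\ox\M(A)$. Writing an arbitrary $a$ as a sum of products $ab$ (using surjectivity of $\mu$) upgrades this from $(ab\ox 1)E$ to $(a\ox 1)E$ for all $a$. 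For part~(3), I would multiply the result of part~(2) on the right by $1\ox b$: an element of $A\ox\M(A)$ multiplied by $1\ox b$ lands in $A\ox(\M(A)b)\subseteq A\ox A$, using that $A$ is a two-sided ideal in $\M(A)$.

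The main obstacle I anticipate is bookkeeping in part~(1): since $E$ and $\Delta(a)$ are only multipliers (not elements of $A\ox A$), every manipulation must be justified by pairing against elements of $A\ox A$ on both sides and appealing to non-degeneracy of the multiplication on $A\ox A$ to conclude equality of multipliers. One has to apply axiom~(vi) in the correct slot and confirm that the counit $\epsilon$ is being contracted against the intended tensor factor throughout; a careful choice of which leg carries the $\epsilon$ and which carries the surviving $A$-valued entry is what makes the two sides match. Once part~(1) is secured, parts~(2) and~(3) are essentially ideal-theoretic observations and should present no serious difficulty.
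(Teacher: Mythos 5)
Your handling of parts (2) and (3) coincides with the paper's: once (1) is known, idempotency of $A$ puts $(a\ox 1)E=(ab'\ox 1)E=(\id\ox\pibarL)T_2(a'\ox b')$ into $A\ox\M(A)$, and multiplying by $1\ox b$ and using that $A$ is an ideal of $\M(A)$ gives (3). The gap is in part (1), where you name the wrong tool and omit the one that actually carries the argument. Write $T_2(a\ox b)=\sum_i a_i\ox b_i$. Testing from the right against $c\ox d$ produces $\sum_i a_ic\ox\pibarL(b_i)d=\sum_i a_ic\ox(\epsilon\ox\id)\bigl((b_i\ox 1)\Delta(d)\bigr)$, and neither half of axiom (vi) applies to this configuration: the first equality of (vi) concerns $(\epsilon\ox\id)(\Delta(d)(x\ox y))$ with $\Delta$ acting from the \emph{left} on an element of $A\ox A$, and the second concerns $(\epsilon\ox\id)((x\ox y)\Delta(d))$ with a genuine element $y\in A$ (not $1$) in the second slot. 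So ``trading $\Delta$ for $E$ via axiom (vi)'' does not collapse anything here. The productive move --- the one the paper makes --- is to test from the \emph{left} by $p\ox q$, because the multiplier property of $\pibarL$ then yields the $E$-based leg $q\,\pibarL(b_i)=(\epsilon\ox\id)((b_i\ox q)E)$, i.e. the whole expression becomes $(\id\ox\epsilon\ox\id)[(T_2(pa\ox b)\ox q)(1\ox E)]$. (Axiom (vi) enters only indirectly, as the reason $\pibarL(b_i)$ is a well-defined multiplier.)

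Even from that starting point the proof is not finished by counit manipulations: the essential ingredient is axiom (v), which your proposal never mentions. One inserts $E\ox 1$ next to $T_2(pa\ox b)$ (harmless, since $\Delta(b)E=\Delta(b)$ by (iv)), rewrites $(E\ox 1)(1\ox E)=E^{(3)}=(\overline{\Delta\ox\id})(E)$, and uses the multiplicativity of the extended map $\overline{\Delta\ox\id}$ to pull $E$ inside the coproduct,
$$
\bigl((T_2\ox\id)(pa\ox b\ox q)\bigr)E^{(3)}=(T_2\ox\id)\bigl((pa\ox b\ox q)(1\ox E)\bigr),
$$
after which axiom (iii) in the form $(\id\ox\epsilon)T_2=\mu$ lands on $(p\ox q)(ab\ox 1)E$. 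Your list of ``main tools'' ($E\Delta(a)=\Delta(a)=\Delta(a)E$ and axiom (vi)) cannot produce this step: axiom (vi) only controls how $\epsilon$ interacts with $E$ and $\Delta$, whereas the identity in (1) requires knowing how $E$ itself interacts with the comultiplication, which is precisely the content of axiom (v) together with the extension $\overline{\Delta\ox\id}$. Without that, the proposed proof of part (1) --- and hence of (2) and (3), which depend on it --- does not go through.
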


\begin{proof} (1). For any $a,b,p,q\in A$, 
\begin{eqnarray*}
(p\ox q)((\id \ox \pibarL)T_2(a\ox b)) &=&
(\id \ox \epsilon \ox \id)[(T_2(pa\ox b)\ox q)(1\ox E)]\\
&\stackrel{(iv)}=&
(\id \ox \epsilon \ox \id)[(T_2(pa\ox b)\ox q)(E\ox 1)(1\ox E)]\\
&\stackrel{(v)}=&
(\id \ox \epsilon \ox \id)[(T_2 \ox \id)(pa\ox b\ox q)E^{(3)}]\\
&=&(\id \ox \epsilon \ox \id)(T_2 \ox \id)[(pa\ox b\ox q)(1\ox E)]\\
&\stackrel{(iii)}=&
(p\ox q)(ab\ox 1)E.
\end{eqnarray*}
In the first equality we used the definition of $\pibarL$ in Proposition
\ref{prop:pibarL} and the left $A$-module map property of $T_2$. 
The fourth equality follows by
\begin{eqnarray*}
((T_2 \ox \id)(a\ox b\ox c))E^{(3)}&\stackrel{\eqref{eq:E_3}}=&
(a\ox 1\ox 1)(\Delta \ox \id)(b\ox c)(\overline{\Delta \ox \id})(E)\\
&=&(a\ox 1\ox 1)((\Delta \ox \id)((b\ox c)E))\\
&=&(T_2 \ox \id)((a\ox b\ox c)(1\ox E)),
\end{eqnarray*}
for any $a,b,c\in A$. 

(2). In the equality in (1), the left hand side belongs to $A\ox \M(A)$ hence
so does the right hand side. Since $A$ is an idempotent algebra by assumption,
this proves (2).

(3) follows immediately from (2), since $A$ is an ideal of $\M(A)$. 
\end{proof}

\begin{proposition}\label{prop:vi_versions}
Let $A$ be an idempotent algebra over a field $k$ with a non-degenerate
multiplication, $\Delta:A\to \M(A\ox A)$ be a multiplicative linear map,
$\epsilon:A \to k$ be a linear map and $E$ be an idempotent element in
$\M(A\ox A)$. Assume that the axioms (i)-(v) --- but not necessarily (vi) ---
in Definition \ref{def:mwba} hold. The following assertions are
equivalent \footnote{The proof of $(4)\Rightarrow (1)$ was kindly communicated
to us by A. Van Daele.}. 
\begin{itemize}
\item[{(1)}] $(\epsilon \ox \id)((a\ox b)E(1\ox c))=(\epsilon \ox \id)((a\ox
b) \Delta(c))$ for all $a,b,c\in A$.
\item[{(2)}] $(\id \ox \epsilon)((a\ox 1)E(b\ox c))=(\id \ox \epsilon)
(\Delta(a)(b\ox c))$ for all $a,b,c\in A$. 
\item[{(3)}] $(a\ox 1)E(1\ox c)\in A\ox A$ and $(\epsilon \ox \epsilon)((a\ox
1)E(1\ox c))=\epsilon(ac)$ for all $a,c\in A$.
\item[{(4)}] $(\epsilon \ox \epsilon)((a\ox 1)\Delta(b)(1\ox
c))=\epsilon(abc)$ for all $a,b,c\in A$. 
\end{itemize}
\end{proposition}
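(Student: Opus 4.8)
The plan is to establish the equivalence through the implications $(1)\Rightarrow(3)$, $(2)\Rightarrow(3)$, $(3)\Rightarrow(4)$, $(4)\Rightarrow(1)$ and $(4)\Rightarrow(2)$; the first three close a cycle on $\{1,3,4\}$, while $(4)\Rightarrow(2)$ together with $(2)\Rightarrow(3)\Rightarrow(4)$ ties assertion $(2)$ into the same class. Before anything else I would record two preparatory facts. First, every displayed expression is meaningful: writing $(a\ox b)E=\sum_i(p_i\ox q_i)\Delta(r_i)$ via (iv) gives $(a\ox b)E(1\ox c)=\sum_i(p_i\ox q_i)T_1(r_i\ox c)\in A\ox A$ by (i); symmetrically $E(b\ox c)=\sum_i\Delta(a_i)(b_i\ox c_i)$ shows $(a\ox 1)E(b\ox c)\in A\ox A$, and $(a\ox b)\Delta(c)\in A\ox A$ since $A\ox A$ is a two-sided ideal of $\M(A\ox A)$. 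Second, because $(a\ox b)E(1\ox c)=(1\ox b)\big((a\ox 1)E(1\ox c)\big)$ and $(a\ox b)\Delta(c)=(1\ox b)T_2(a\ox c)$, non-degeneracy lets me cancel the free $b$: $(1)$ is equivalent to the reduced identity $(\epsilon\ox\id)\big((a\ox 1)E(1\ox c)\big)=(\epsilon\ox\id)T_2(a\ox c)$ in $A$, and $(2)$ is equivalent to $(\id\ox\epsilon)\big((a\ox 1)E(1\ox c)\big)=(\id\ox\epsilon)T_1(a\ox c)$.

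For the two easy implications I would apply $\epsilon$ to the surviving tensor factor of these reduced identities and invoke (iii). From the reduced form of $(1)$ this yields $(\epsilon\ox\epsilon)\big((a\ox1)E(1\ox c)\big)=\epsilon\big((\id\ox\epsilon)T_2(a\ox c)\big)=\epsilon(ac)$, which is $(3)$; the reduced form of $(2)$ gives $(3)$ in the same way, now using the other half $(\epsilon\ox\id)T_1=\mu$ of the counit axiom.

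The implication $(3)\Rightarrow(4)$ is the computational core of the easy part. I would introduce $W:=(a\ox1\ox1)(E\ox1)(1\ox\Delta(b))(1\ox1\ox c)$, which lies in $A\ox A\ox A$ by the membership facts above (its first two legs are of the form $(a\ox1)E(1\ox u_j)$), and evaluate $(\epsilon\ox\epsilon\ox\epsilon)(W)$ in two ways. Collapsing the third leg through $\Delta(b)(1\ox c)=T_1(b\ox c)=\sum_j u_j\ox v_j$ and applying $(3)$ to each $(a\ox1)E(1\ox u_j)$ gives $(\epsilon\ox\epsilon\ox\epsilon)(W)=(\epsilon\ox\epsilon)\big((a\ox1)\Delta(b)(1\ox c)\big)$, the left-hand side of $(4)$. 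On the other hand, $1\ox\Delta(b)=(1\ox E)(1\ox\Delta(b))$ together with axiom (v) turns $(E\ox1)(1\ox\Delta(b))$ into $E^{(3)}(1\ox\Delta(b))$; using the description \eqref{eq:E_3} of $E^{(3)}$ as $(\overline{\id\ox\Delta})(E)$ and multiplicativity of $\Delta$, the second and third legs combine into a single coproduct $\Delta(\,\cdot\, b)$, after which (iii) collapses those two legs and $(3)$ is applied a second time, producing $\epsilon(abc)$. Comparing the two evaluations yields $(4)$.

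The genuine obstacle is $(4)\Rightarrow(1)$ and its mirror $(4)\Rightarrow(2)$. Here the target is the \emph{element-valued} reduced identity $(\epsilon\ox\id)\big((a\ox1)E(1\ox c)\big)=(\epsilon\ox\id)T_2(a\ox c)$, whereas $(4)$ is only scalar; naive testing is hopeless, since multiplying both sides by an arbitrary $b$ merely reproduces $(1)$, and pairing the second leg with $\epsilon(-\,e)$ leads to the scalar identity $(\epsilon\ox\epsilon)\big((a\ox b)E(1\ox ce)\big)=(\epsilon\ox\epsilon)\big((a\ox b)\Delta(c)(1\ox e)\big)$, which carries a free $b$ to the left of the second leg and is therefore strictly stronger than $(4)$. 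My plan is to use that $\Delta$ is injective (if $\Delta(x)=0$ then $xb=(\epsilon\ox\id)T_1(x\ox b)=0$ for all $b$, hence $x=0$) and to verify the identity after applying $\Delta$, employing coassociativity (ii) to commute the coproduct past $(\epsilon\ox\id)$ and axioms (iv), (v) to trade the multiplier $E$ for honest coproducts, until the residual scalar content is exactly what $(4)$ supplies. This is precisely the point where information beyond (i)--(v) is indispensable, which is why it is the delicate step; running the same reduction on the first tensor factor gives $(4)\Rightarrow(2)$. Assembling the cycle then establishes the equivalence of all four assertions.
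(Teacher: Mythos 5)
Your implications $(1)\Rightarrow(3)$, $(2)\Rightarrow(3)$ and $(3)\Rightarrow(4)$ are essentially sound (indeed your three-leg computation for $(3)\Rightarrow(4)$ is the same manipulation the paper uses to prove the stronger, element-valued implication $(3)\Rightarrow(1)$: you could keep the third tensor leg instead of applying $\epsilon$ to it and get $(1)$ directly). Two caveats there: the membership claim in $(3)$, namely $(a\ox 1)E(1\ox c)\in A\ox A$ with no auxiliary $b$ in the second leg, does not follow from your preparatory observation that $(a\ox b)E(1\ox c)\in A\ox A$; the paper derives it from $(1)$ via the argument of Proposition \ref{prop:(a@1)E}, i.e.\ from the identity $(\id\ox\pibarL)T_2(a\ox b)=(ab\ox 1)E$, and you would need to supply that step (and its mirror for $(2)\Rightarrow(3)$).

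The genuine gap is $(4)\Rightarrow(1)$. You correctly identify it as the crux and correctly explain why naive testing fails, but what you then offer is a declaration of strategy, not a proof: ``apply $\Delta$, use its injectivity, and use (ii), (iv), (v) until the residual scalar content is what $(4)$ supplies'' contains no chain of equalities and no identification of where $(4)$ actually enters. Worse, the route is in danger of circularity: to compute $\overline\Delta$ of $(\epsilon\ox\id)T_2(a\ox c)=\pibarL(a)c$ or of $(\epsilon\ox\id)((a\ox 1)E(1\ox c))$ one would naturally invoke Lemma \ref{lem:coproduct}-type formulas such as $\overline\Delta\pibarL(a)=(\pibarL(a)\ox 1)E$, whose proofs rest on Proposition \ref{prop:(a@1)E} and hence on axiom (vi) --- that is, on statement $(1)$ itself. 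The paper's actual argument is quite different and does not pass through $\Delta$-injectivity at all: using axiom (iv) it rewrites $(1)$ as the right $A$-linearity statement $(\epsilon\ox\id)T_2(a\ox c)d=(\epsilon\ox\id)T_2(a\ox cd)$, establishes the auxiliary identity $T_1(bc\ox d)=T_1(b\ox d')(c'\ox 1)$ with $c'\ox d':=T_1(c\ox d)$, and then evaluates $((\epsilon\ox\id)T_2(a\ox b))cd$ through a chain using (iii), that identity, coassociativity (ii), and finally $(4)$ applied to the middle leg. Without this (or an equally concrete substitute) your cycle does not close, so the equivalence of $(4)$ with the other three statements is not established; the same criticism applies to your claimed mirror implication $(4)\Rightarrow(2)$.
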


\begin{proof}
(1)$\Rightarrow$(3). Note that (1) is in fact the second one of the axioms in
Definition \ref{def:mwba}~(vi). Hence the same reasoning used to prove
Proposition \ref{prop:(a@1)E}~(3) shows that for all $a,b\in A$, $(a\ox
1)E(1\ox b)\in A\ox A$; so that (1) is equivalent to $(\epsilon \ox \id)((a\ox
1)E(1\ox c))=(\epsilon \ox \id)T_2(a\ox c)$ for all $a,c\in A$. Applying
$\epsilon$ to both sides of this equality and using the counitality axiom
(iii), we obtain the equality in (3). 

(2)$\Rightarrow$(3). Symmetrically, if (2) holds then for any $a\in A$ the
maps
\begin{equation}\label{eq:pibarR}
b\mapsto (\id \ox \epsilon)(E(b\ox a))\quad \textrm{and}\quad
b\mapsto (\id \ox \epsilon)T_1(b\ox a)
\end{equation}
define a multiplier $\pibarR(a)$ on $A$, for which 
\begin{equation}\label{eq:E(1@a)}
(\pibarR\ox \id)T_1(a\ox b)=E(1\ox ab), \qquad \forall a,b\in A. 
\end{equation}
This proves $E(1\ox b)\in \M(A)\ox A$ hence
$(a\ox 1)E(1\ox b)\in A\ox A$. Then (2) is equivalent to 
\begin{equation}\label{equiv(2)}
(\id \ox
\epsilon)((a\ox 1)E(1\ox c))=(\id \ox \epsilon) T_1(a \ox c)
\end{equation} 
for all $a,c\in A$. Applying $\epsilon$ to both sides of this equality and
using the counitality axiom (iii), we obtain the equality in (3). 

(3)$\Rightarrow$(1). For any $a,b,c\in A$, 
\begin{eqnarray*}
(\epsilon \ox \id)((a\ox 1)\!\!\!\!\!&\!\!\!\!\!E\!\!\!\!\!&\!\!\!\!\!
(1\ox b))c \stackrel{(iii)}=
(\epsilon\ox \epsilon\ox \id)(\id \ox T_1)((a\ox 1\ox 1)(E\ox 1)
(1\ox b\ox c))\\ 
&\stackrel{(v)}=&
(\epsilon\ox \epsilon\ox \id)((a\ox 1\ox 1)(E\ox 1)(1\ox E)
(1\ox T_1(b\ox c)))\\
&\stackrel{(iv)}=&
(\epsilon\ox \epsilon\ox \id)((a\ox 1\ox 1)(E\ox 1)(1\ox T_1(b\ox c)))\\
&\stackrel{(3)}=&
(\epsilon\ox \id)((a\ox 1)T_1(b\ox c))=
((\epsilon\ox \id)T_2(a\ox b))c,
\end{eqnarray*}
so we conclude by the non-degeneracy of the multiplication in $A$.

(3)$\Rightarrow$(2) is proven symmetrically.

(1) (and (3)) $\Rightarrow$(4). For any $a,b,c\in A$, 
$$
(\epsilon\ox \epsilon)((a\ox 1)\Delta(b)(1\ox c))\stackrel{(1)}=
(\epsilon\ox \epsilon)((a\ox 1)E(1\ox bc))\stackrel{(3)}=
\epsilon(abc).
$$

(4)$\Rightarrow$(1). For the idea of the reasoning below, we are grateful to
Fons Van Daele. 

\noindent
In view of axiom (iv) in Definition \ref{def:mwba}, (1) is equivalent to 
$$
(\epsilon \ox \id)((a\ox b)\Delta(c)(1\ox d))=(\epsilon \ox \id)((a\ox
b) \Delta(cd))\ \forall a,b,c,d\in A,
$$
hence by the non-degeneracy of the multiplication, also to
$$
(\epsilon \ox \id)T_2(a\ox c)d=(\epsilon \ox \id)T_2(a\ox cd)\ 
\forall a,c,d\in A.
$$
So we will prove it in this last form. For any $c,d\in A$, denote $c'\ox
d':=T_1(c\ox d)$ (allowing for implicit summation). Then for any $b\in A$,
\begin{equation}\label{eq:T_1_1st_arg}
T_1(bc\ox d)=\Delta(b)(c'\ox d')=T_1(b\ox d')(c'\ox 1).
\end{equation}
With this information in mind, for any $a,b,c,d\in A$, 
\begin{eqnarray*}
((\epsilon \ox \id)T_2(a\ox b))cd&=&
(\epsilon \ox \id)[T_2(a\ox b)(1\ox cd)]\\
&\stackrel{(iii)}=&
(\epsilon \ox \epsilon \ox \id)(\id \ox T_1)[(T_2(a\ox b)(1\ox c))\ox d]\\
&\stackrel{\eqref{eq:T_1_1st_arg}}=&
(\epsilon \ox \epsilon \ox \id)[((\id \ox T_1)(T_2\ox \id)(a\ox b \ox d'))
(1\ox c'\ox 1)]\\
&\stackrel{(ii)}=&
(\epsilon \ox \epsilon \ox \id)[((T_2\ox \id)(\id \ox T_1)(a\ox b \ox d'))
(1\ox c'\ox 1)]\\
&\stackrel{(4)}=&
(\epsilon \ox \id)[(a\ox 1)T_1(b\ox d')(c'\ox 1)]\\
&\stackrel{\eqref{eq:T_1_1st_arg}}=&
(\epsilon \ox \id)[(a\ox 1)T_1(bc\ox d)]=
((\epsilon \ox \id)T_2(a\ox bc))d,
\end{eqnarray*}
so we conclude by the non-degeneracy of the multiplication. 
\end{proof}

The following symmetric version is immediate.

\begin{proposition}\label{prop:vi_regular}
Let $A$ be an idempotent algebra over a field $k$ with a non-degenerate
multiplication, $\Delta:A\to \M(A\ox A)$ be a multiplicative linear map,
$\epsilon:A \to k$ be a linear map and $E$ be an idempotent element in
$\M(A\ox A)$. Assume that also the ranges of the maps $T_3$ and $T_4$ in
Definition \ref{def:regular} are in the ideal $A\ox A$, and that the axioms
(i)-(v) --- but not necessarily (vi) --- in Definition \ref{def:mwba}
hold. The following assertions are equivalent.
\begin{itemize}
\item[{(1)}] $(\epsilon \ox \id)((1\ox a)E(b\ox c))=(\epsilon \ox
\id)(\Delta(a)(b \ox c)$, for any $a,b,c\in A$. 
\item[{(2)}] $(\id \ox \epsilon)((a\ox b)E(c\ox 1))=(\id \ox \epsilon)
((a\ox b)\Delta(c))$, for any $a,b,c\in A$. 
\item[{(3)}] $(1\ox a)E(c\ox 1)\in A\ox A$ and $(\epsilon \ox \epsilon)((1\ox
a)E(c\ox 1))=\epsilon(ac)$, for any $a,c\in A$. 
\item[{(4)}] $(\epsilon \ox \epsilon)((1\ox a)\Delta(b)(c\ox
1))=\epsilon(abc)$, for any $a,b,c\in A$. 
\end{itemize}
\end{proposition}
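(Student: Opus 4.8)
The plan is to obtain Proposition~\ref{prop:vi_regular} by transporting Proposition~\ref{prop:vi_versions} through the opposite-algebra construction, exploiting the symmetry already built into the regularity setup. Recall from the discussion after Definition~\ref{def:regular} that a weak multiplier bialgebra is regular precisely when $A^\op$ is again a weak multiplier bialgebra, with the \emph{same} comultiplication $\Delta$, counit $\epsilon$, and idempotent $E$; the hypotheses here (axioms (i)--(v) together with $T_3,T_4$ landing in $A\ox A$) are exactly the regular analogue of the standing hypotheses of Proposition~\ref{prop:vi_versions}. So the strategy is: feed $A^\op$ into Proposition~\ref{prop:vi_versions} and read off the four equivalent conditions in the opposite algebra, then rewrite each one in terms of the original multiplication $\mu$ of $A$.

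First I would pin down how the data of Proposition~\ref{prop:vi_versions} transform under passing to $A^\op$. Since $\mu^\op(a\ox b)=ba$, the maps $T_1,T_2$ for $A^\op$ become (up to the flip $\mathsf{tw}$ on the tensor factors) the maps $T_3,T_4$ for $A$: concretely, the Galois map $\Delta(a)(1\ox b)$ computed with the opposite multiplication reorders to $(1\ox b)\Delta(a)=T_3(a\ox b)$, and similarly $(a\ox 1)\Delta(b)$ reorders to $\Delta(b)(a\ox 1)=T_4(a\ox b)$. This is why the regularity hypothesis ``$T_3,T_4$ have range in $A\ox A$'' is the correct substitute for axiom (i) when working with $A^\op$. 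The counit $\epsilon$ is unchanged, and the idempotent $E$ is unchanged as an element of $\M(A\ox A)$, though expressions of the form $(a\ox b)E(c\ox d)$ get their left and right factors swapped when interpreted via $\mu^\op$.

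Next I would match up the four statements. Writing out Proposition~\ref{prop:vi_versions}(1)--(4) for $A^\op$ and translating each product $xy$ back to $yx$ in $A$, statement~(1) for $A^\op$, namely $(\epsilon\ox\id)((a\ox b)E(1\ox c))=(\epsilon\ox\id)((a\ox b)\Delta^\op(c))$ read in $A^\op$, becomes condition~(1) of Proposition~\ref{prop:vi_regular}, $(\epsilon\ox\id)((1\ox a)E(b\ox c))=(\epsilon\ox\id)(\Delta(a)(b\ox c))$, after applying the flip that relates left and right multiplier actions on the first versus second tensor leg. In the same way (2), (3), and (4) of the present statement are the $A^\op$-images of (2), (3), and (4) of Proposition~\ref{prop:vi_versions}. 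The equivalences $(1)\Leftrightarrow(2)\Leftrightarrow(3)\Leftrightarrow(4)$ then follow immediately, because they hold verbatim for $A^\op$ by the already-proven proposition. This is the reason the authors can call the result ``immediate.''

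The main obstacle is purely bookkeeping rather than conceptual: one must verify carefully that the flip $\mathsf{tw}$ and the reversal of multiplication interact correctly so that the \emph{positions} of the distinguished slots ($1\ox a$ versus $a\ox 1$, and $c\ox 1$ versus $1\ox c$) land exactly as written in (1)--(4) above. In particular, the placement of $\epsilon$ on the first versus the second tensor factor must be tracked, since applying $\mathsf{tw}$ exchanges $(\epsilon\ox\id)$ with $(\id\ox\epsilon)$; one should check that this exchange is consistent across all four statements. A safe alternative, should the opposite-algebra transport feel too slippery, is to bypass $A^\op$ entirely and simply rerun the proof of Proposition~\ref{prop:vi_versions} line by line with $T_1,T_2$ replaced by $T_3,T_4$, invoking the regular forms of axioms (ii) and (iii) listed after Definition~\ref{def:regular}; every step there used only the structural axioms, which are symmetric under $\op$, so the argument goes through unchanged.
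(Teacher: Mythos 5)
Your proposal is correct and is exactly what the paper intends: Proposition~\ref{prop:vi_regular} is presented as the ``immediate'' symmetric version of Proposition~\ref{prop:vi_versions}, obtained by applying that proposition to $A^\op$, for which $T_1$ and $T_2$ become $T_3$ and $T_4$ and axiom (i) becomes the hypothesis that these maps land in $A\ox A$. One small simplification: passing to $A^\op$ only reverses the order of multiplication in $\M(A\ox A)$ and does not permute the tensor legs, so no flip $\mathsf{tw}$ actually enters and the placement of $\epsilon$ on the first versus second factor is preserved verbatim in all four statements.
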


\begin{theorem}\label{thm:unique}
The counit of a weak multiplier bialgebra $A$ over a field $k$ is uniquely
determined by the multiplication and the comultiplication. 
\end{theorem}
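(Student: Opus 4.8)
The plan is to reduce the statement to a comparison of the two counits on products and then exploit the already-established uniqueness of the idempotent $E$ together with axiom (vi). Suppose $(\mu,\Delta,\epsilon,E)$ and $(\mu,\Delta,\epsilon',E')$ are two weak multiplier bialgebra structures sharing the same multiplication and comultiplication. By Remark \ref{rem:E_unique} the idempotent is forced, so $E=E'$ and both structures involve the \emph{same} $E\in\M(A\ox A)$. Since $A$ is idempotent, it suffices to prove $\epsilon(ac)=\epsilon'(ac)$ for all $a,c\in A$.

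First I would observe that a naive ``convolution'' argument, mimicking the uniqueness of the counit of an ordinary coalgebra, does not work directly here: applying the counitality axiom (iii) to $T_1(a\ox c)$ and $T_2(a\ox c)$ merely reproduces $ac$, and feeding in the other counit yields the tautologies $(\epsilon'\ox\epsilon)T_1(a\ox c)=\epsilon(ac)$ and $(\epsilon'\ox\epsilon)T_2(a\ox c)=\epsilon'(ac)$ with no evident bridge between them. The genuine content must therefore flow through axiom (vi), the only axiom tying $\Delta$ to the now common element $E$.

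Accordingly, the key step is to evaluate the single element $W:=(a\ox 1)E(1\ox c)$ --- which lies in $A\ox A$ by Proposition \ref{prop:(a@1)E}~(3) and is the same for both structures --- against the \emph{mixed} functional $\epsilon'\ox\epsilon$, and to compute the result in two different ways. Both $\epsilon$ and $\epsilon'$ satisfy assertion (1) of Proposition \ref{prop:vi_versions}, hence also all of its equivalent reformulations. On one hand, using the reformulation $(\id\ox\epsilon)((a\ox 1)E(1\ox c))=(\id\ox\epsilon)T_1(a\ox c)$ for $\epsilon$, then applying $\epsilon'$ and the counitality $(\epsilon'\ox\id)T_1=\mu$ of $\epsilon'$, I expect $(\epsilon'\ox\epsilon)(W)=\epsilon(ac)$. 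On the other hand, using the reformulation $(\epsilon'\ox\id)((a\ox 1)E(1\ox c))=(\epsilon'\ox\id)T_2(a\ox c)$ for $\epsilon'$, then applying $\epsilon$ and the counitality $(\id\ox\epsilon)T_2=\mu$ of $\epsilon$, I expect $(\epsilon'\ox\epsilon)(W)=\epsilon'(ac)$. Comparing the two evaluations yields $\epsilon(ac)=\epsilon'(ac)$, and idempotency of $A$ finishes the proof.

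The main obstacle, which this routing through $W$ is designed to overcome, is precisely the asymmetry noted above: one must place the two counits in \emph{different} tensor legs so that axiom (vi) can be applied separately to each structure --- once converting the $E$-leg into a $T_1$-expression controlled by $\epsilon$, once into a $T_2$-expression controlled by $\epsilon'$. Lining up the correct reformulation of (vi) with the correct counit, and checking that $W$ genuinely lands in $A\ox A$ so that $\epsilon'\ox\epsilon$ may legitimately be applied, are the only delicate points; the remaining manipulations are bookkeeping with axiom (iii).
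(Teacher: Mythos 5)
Your argument is correct, and it closes: all the ingredients you invoke are available at this point in the paper. Since both structures satisfy axioms (i)--(v) with the same $\mu$, $\Delta$ and (by uniqueness) the same $E$, and each counit satisfies assertion (1) of Proposition \ref{prop:vi_versions}, each also satisfies the equivalent forms you use --- $(\mathsf{id}\ox \epsilon)((a\ox 1)E(1\ox c))=(\mathsf{id}\ox\epsilon)T_1(a\ox c)$ from \eqref{equiv(2)} and $(\epsilon'\ox\mathsf{id})((a\ox 1)E(1\ox c))=(\epsilon'\ox\mathsf{id})T_2(a\ox c)$ from the alternative form of (1) --- and $W=(a\ox 1)E(1\ox c)$ does lie in $A\ox A$ by Proposition \ref{prop:(a@1)E}~(3). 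The two evaluations of $(\epsilon'\ox\epsilon)(W)$ then give $\epsilon(ac)$ and $\epsilon'(ac)$ exactly as you say, and idempotency of $A$ finishes it. Your route is genuinely leaner than the paper's, though the underlying mechanism is the same: the paper also evaluates a mixed functional two ways, but on the element $(a\ox 1)\Delta(b)(1\ox c)$, and therefore needs two multi-line computations (using axioms (iv), (v), the element $E^{(3)}$ and Proposition \ref{prop:(a@1)E}~(2)) to rewrite that element as $(a\ox 1)E(1\ox bc)$ and as $(ab\ox 1)E(1\ox c)$ before running the same endgame you run directly on $W$. Your observation that the shared idempotent $E$ lets one skip the reduction from $\Delta$ to $E$ altogether eliminates those computations; what the paper's longer version buys in exchange is the explicit intermediate identity $(\epsilon\ox\epsilon')((a\ox 1)\Delta(b)(1\ox c))=\epsilon(abc)=\epsilon'(abc)$, which is not needed for the theorem itself.
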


\begin{proof}
We have seen in Remark \ref{rem:E_unique} that the idempotent element $E$ is
uniquely fixed. Let $\epsilon,\epsilon':A\to k$ be counits for $A$. Then for
all $a,b,c\in A$,
\begin{eqnarray*}
(\epsilon\ox \epsilon')[(a\ox 1)\!\!\!&\!\!\!\Delta(b)\!\!\!&\!\!\!(1\ox c)]=
(\epsilon\ox \epsilon')[(a\ox 1)T_1(b\ox c)]\\
&=&
(\epsilon\ox \epsilon\ox \epsilon')[(a\ox 1\ox 1)(E\ox 1)(1\ox T_1(b\ox c))]\\
&\stackrel{(iv)}=&
(\epsilon\ox \epsilon\ox \epsilon')[(a\ox 1\ox 1)(E\ox 1)(1\ox E)
(1\ox T_1(b\ox c))]\\
&\stackrel{(v)}=&
(\epsilon\ox \epsilon\ox \epsilon')[(a\ox 1\ox 1)(\overline{\id\ox
\Delta})(E(1\ox b))(1\ox 1\ox c)]\\
&=&(\epsilon\ox \epsilon\ox \epsilon')[(a\ox 1\ox 1)
(\id \ox T_1)(E(1\ox b) \ox c)]\\
&\stackrel{(iii)}=&
(\epsilon\ox \epsilon')[(a\ox 1)E(1\ox bc)]
=(\epsilon\ox \epsilon') T_1(a\ox bc) \stackrel{(iii)}
=\epsilon'(abc).
\end{eqnarray*}
In the second and the penultimate equalities we used Proposition
\ref{prop:vi_versions}~(3) and (2) (in its alternative form \eqref{equiv(2)})
for $\epsilon$ and for $\epsilon'$, respectively. In the fifth equality we
used from the proof of Proposition \ref{prop:vi_versions}~(2)$\Rightarrow$(3)
the fact that $E(1\ox b)\in \M(A)\ox A$. Symmetrically, using Proposition
\ref{prop:vi_versions}~(3) for $\epsilon'$ in the second equality, Proposition
\ref{prop:(a@1)E}~(2) in the fifth equality and Proposition
\ref{prop:vi_versions}~(1) for $\epsilon$ in the penultimate equality, 
\begin{eqnarray*}
(\epsilon\ox \epsilon')[(a\ox 1)\!\!\!&\!\!\!\Delta(b)\!\!\!&\!\!\!(1\ox c)]=
(\epsilon\ox \epsilon')[T_2(a\ox b)(1\ox c)]\\
&=&
(\epsilon\ox \epsilon' \ox \epsilon')[(T_2(a\ox b)\ox 1)(1\ox E)
(1\ox 1\ox c)]\\ 
&\stackrel{(iv)}=&
(\epsilon\ox \epsilon' \ox \epsilon')[(T_2(a\ox b)\ox 1)(E\ox 1)(1\ox E)(1\ox
1\ox c)] \\
&\stackrel{(v)}=& (\epsilon\ox \epsilon' \ox \epsilon')[(a\ox 1\ox 1)
(\overline{\Delta\ox \id})((b\ox 1)E)(1\ox 1\ox c)]\\
&=&(\epsilon\ox \epsilon' \ox \epsilon')[(T_2\ox \id)(a\ox (b\ox 1)E)
(1\ox 1\ox c)]\\
&\stackrel{(iii)' }=&
(\epsilon\ox \epsilon')[(ab\ox 1)E(1\ox c)]=
(\epsilon\ox \epsilon')T_2 (ab\ox c) \stackrel{(iii)'}=
\epsilon(abc),
\end{eqnarray*}
where the label (iii)' refers to the application of axiom (iii) to $\epsilon'$. 
So we conclude by the idempotency of $A$ that $\epsilon=\epsilon'$.
\end{proof}

Two main sources of examples of weak multiplier bialgebras are
regular weak multiplier Hopf algebras in \cite{VDaWa} and weak bialgebras
\cite{WHAI,Nill} (possessing units), as we shall see in the next two theorems.

\begin{theorem}\label{thm:reg_mwha>mwba}
If an idempotent algebra $A$ over a field with a non-degenerate multiplication
possesses a regular weak multiplier Hopf algebra structure in the sense of
\cite{VDaWa}, then $A$ is also a (regular) weak multiplier bialgebra via the
same structure maps.
\end{theorem}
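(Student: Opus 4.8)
The plan is to check that the structure maps of a regular weak multiplier Hopf algebra satisfy, one by one, the axioms (i)--(vi) of Definition \ref{def:mwba} together with the condition of Definition \ref{def:regular}; most of this amounts to matching the two axiom systems. The standing requirements that $\mu$ be idempotent and non-degenerate are common to both notions, so nothing is needed there. Axiom (i), that $T_1$ and $T_2$ take values in the ideal $A\ox A$, is exactly the range requirement on the generalized Galois maps in \cite{VDaWa}; axiom (ii) is literally the coassociativity axiom imposed there; and axiom (iii) is the pair of counit conditions $(\epsilon\ox\id)T_1=\mu=(\id\ox\epsilon)T_2$ assumed in \cite{VDaWa}. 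I would simply cite these.

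Next I would treat the idempotent $E$. In \cite{VDaWa} it arises as the value $\overline\Delta(1)=E$ of the extension of $\Delta$ provided by Theorem \ref{thm:extend}, and the two density equalities characterising that extension are precisely axiom (iv); so (iv) holds by construction. Axiom (v) is the coassociativity of $E$: the equality $E^{(3)}=\overline{\id\ox\Delta}(E)=\overline{\Delta\ox\id}(E)$ is already granted by \eqref{eq:E_3} once (i), (ii) and (iv) are in place, while the factorisations $(E\ox 1)(1\ox E)=E^{(3)}=(1\ox E)(E\ox 1)$ are the standard relations satisfied by $\Delta(1)$ in a weak multiplier Hopf algebra, which I would take from \cite{VDaWa}.

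The one point of substance is axiom (vi), and this is where I expect the main difficulty to sit. Granted (i)--(v), Propositions \ref{prop:vi_versions} and \ref{prop:vi_regular} reduce its two halves to the identities
$$(\epsilon\ox\epsilon)((a\ox 1)\Delta(b)(1\ox c))=\epsilon(abc)\quad\textrm{and}\quad(\epsilon\ox\epsilon)((1\ox a)\Delta(b)(c\ox 1))=\epsilon(abc),\qquad \forall a,b,c\in A,$$
namely to condition (4) in each proposition; to invoke Proposition \ref{prop:vi_regular} one already needs $T_3,T_4$ to land in $A\ox A$, which is the regularity hypothesis. The second of these is, up to the flip, the ``second equality'' of \eqref{eq:wmulti} expressed through $\Delta$, and this one is a counit property holding in every weak multiplier Hopf algebra. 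The first identity is the ``first equality'' of \eqref{eq:wmulti}; as recalled in the Introduction, it is \emph{not} available from the bare \cite{VDaWa} axioms and is supplied exactly by regularity, since the opposite structure is again a weak multiplier Hopf algebra and the always-valid equality applied there yields the missing one. The care required here is to keep straight which of the two \eqref{eq:wmulti}-type equalities is automatic and which needs the opposite structure, and to confirm that the hypotheses of Propositions \ref{prop:vi_versions} and \ref{prop:vi_regular} (that is, axioms (i)--(v) and the $T_3,T_4$ range condition) are genuinely in force.

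Finally, regularity in the sense of Definition \ref{def:regular}, that $T_3$ and $T_4$ also map into $A\ox A$, is precisely the regularity condition of \cite{VDaWa}, so it carries over at once and exhibits $A$, via the same $\mu$, $\Delta$, $\epsilon$ and $E$, as a regular weak multiplier bialgebra.
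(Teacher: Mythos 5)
Your proposal is correct and its overall decomposition coincides with the paper's: axioms (i), (ii), (iii), (v) and the regularity of Definition \ref{def:regular} are read off directly from the axioms of \cite{VDaWa}, axiom (iv) follows from the range conditions $E(A\ox A)=T_1(A\ox A)$ and $(A\ox A)E=T_2(A\ox A)$ (do note that passing from $\Delta(a)(1\ox b)$ to $\Delta(a)(b\ox b')$ uses the idempotency of $A$, a point the paper makes explicitly and you elide), and the whole substance lies in axiom (vi), with the correct bookkeeping of which half is automatic and which half needs regularity. Where you genuinely diverge is the mechanism for (vi). You reduce both halves, via the equivalences of Propositions \ref{prop:vi_versions} and \ref{prop:vi_regular}, to the two counit identities of \eqref{eq:wmulti} and then cite \cite{VDaWa} for the automatic one. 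The paper instead argues directly at the level of part (1) of each proposition: it invokes \cite[Proposition 2.3]{VDaWa} to produce a map $R_1$ with $T_1R_1(a\ox b)=E(a\ox b)$, derives $\mu R_1=(\epsilon\ox\id)E(-\ox-)$ from counitality, and computes $T_1[(a\ox 1)R_1(b\ox c)]=\Delta(a)(b\ox c)$ to obtain the first half of (vi); regularity then gives the second half by the symmetric argument in the opposite algebra. The paper's route buys self-containedness: it only uses the existence of the generalized inverse $R_1$, which is an explicitly quotable result of \cite{VDaWa}, whereas your route rests on the assertion (made in this paper's Introduction, but not pinpointed to a statement of \cite{VDaWa}) that the second equality of \eqref{eq:wmulti} follows from the bare weak multiplier Hopf algebra axioms; if that identity is not directly citable, your proof would need to supply the very $R_1$-computation the paper performs. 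Your route buys economy of means, since Propositions \ref{prop:vi_versions} and \ref{prop:vi_regular} are already proved and reduce everything to two one-line identities. Both are valid; just be aware that your appeal to ``the always-valid equality applied to the opposite structure'' is exactly the paper's ``symmetric verification'' in disguise, so the two arguments for the regularity-dependent half are the same up to packaging.
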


\begin{proof} 
Axioms (i), (ii), (iii) and (v) in Definition \ref{def:mwba} are parts of the
definition of regular weak multiplier Hopf algebra in \cite{VDaWa}. Since $A$
is an idempotent algebra by assumption, the axioms $E(A\ox A)=T_1(A\ox A)$ and
$(A\ox A)E=T_2(A\ox A)$ in \cite{VDaWa} imply our axiom (iv). It
remains to prove that axiom (vi) holds true. By \cite[Proposition 2.3]{VDaWa},
for any weak multiplier Hopf algebra $A$ over a field, there exists a linear
map $R_1:A\ox A \to A\ox A$ such that $T_1R_1(a\ox b)=E(a\ox b)$ for all
$a,b\in A$. Then applying $\epsilon \ox \id$ to both sides and using the
counitality axiom (iii) in Definition \ref{def:mwba}, it follows that 
\begin{equation}\label{eq:muR1}
\mu R_1(a\ox b)=(\epsilon \ox \id)[E(a\ox b)]\qquad \forall a,b\in A.
\end{equation}
For any $a,b,c\in A$,
$$
T_1[(a\ox 1)R_1(b\ox c)]=
\Delta(a)(T_1R_1(b\ox c))
=\Delta(a)E(b\ox c)\stackrel{(iv)}=
\Delta(a)(b\ox c).
$$
Applying $\epsilon \ox \id$ to both sides and using the counitality axiom (iii)
and \eqref{eq:muR1}, 
$$
(\epsilon \ox \id)((1\ox a)E(b\ox c))=
(\epsilon \ox \id)(\Delta(a)(b\ox c)).
$$
So the first axiom in (vi) holds true. The assumption about regularity ---
which has not yet been used so far --- allows for a symmetric verification of
the second axiom in (vi). 
\end{proof}

For {\em arbitrary} weak multiplier Hopf algebras in \cite{VDaWa}, however, the 
second axiom in Definition \ref{def:mwba}~(vi) does not seem to
hold. Consequences of this will be discussed further in Section
\ref{sec:antipode}.

\begin{theorem}\label{thm:wba<>mwba}
For a unital algebra $A$ over a field, there is a bijective correspondence
between
\begin{itemize}
\item weak bialgebra structures on $A$,
\item and weak multiplier bialgebra structures on $A$.
\end{itemize}
\end{theorem}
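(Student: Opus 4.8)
The plan is to exploit that a unital algebra $A$ is automatically idempotent with a non-degenerate multiplication, and that for any unital algebra $B$ one has $\M(B)=B$; in particular $\M(A\ox A)=A\ox A$. Consequently a weak multiplier bialgebra structure on $A$ amounts to \emph{ordinary} linear maps $\Delta:A\to A\ox A$ and $\epsilon:A\to k$ together with an idempotent $E\in A\ox A$, all the auxiliary maps ($T_1$, $T_2$, the extended comultiplications, $E^{(3)}$) being honest maps between tensor powers of $A$. Since $\M(A\ox A)=A\ox A$, the maps $T_3$ and $T_4$ also land in $A\ox A$, so such a structure is automatically regular and both Proposition \ref{prop:vi_versions} and Proposition \ref{prop:vi_regular} are available. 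Multiplicativity of $\Delta$ is part of the data in both notions, so it need not be translated.

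I would set up the correspondence to be the identity on the pair $(\Delta,\epsilon)$: a weak bialgebra $(A,\mu,1,\Delta,\epsilon)$ is sent to $(\mu,E,\Delta,\epsilon)$ with $E:=\Delta(1)$, and a weak multiplier bialgebra $(\mu,E,\Delta,\epsilon)$ is sent to $(A,\mu,1,\Delta,\epsilon)$. The point is that $E$ is never independent data. In a weak multiplier bialgebra with $A$ unital, Theorem \ref{thm:extend} together with $\M(A)=A$ forces $E=\overline\Delta(1)=\Delta(1)$ (cf. Remark \ref{rem:E_unique}), while $\Delta(1)=\Delta(1\cdot 1)=\Delta(1)^2$ is automatically idempotent. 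Hence, once the two axiom systems are shown to match, the two assignments are patently mutually inverse and the correspondence is a bijection.

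The core of the argument is the dictionary between the axioms, carried out in Sweedler notation $\Delta(a)=a_{(1)}\ox a_{(2)}$ with $E=\Delta(1)$. Writing $T_1(a\ox b)=a_{(1)}\ox a_{(2)}b$ and $T_2(a\ox b)=ab_{(1)}\ox b_{(2)}$, axiom (i) is vacuous; evaluating both sides of axiom (ii) and specialising the outer tensor factors to $1$ identifies it with coassociativity of $\Delta$, and axiom (iii) is read off as the left and right counit laws, so $(A,\Delta,\epsilon)$ becomes a coalgebra. From $\Delta(a)=\Delta(1)\Delta(a)\in\Delta(1)(A\ox A)=E(A\ox A)$ one gets $\langle\Delta(a)(b\ox b')\rangle=\langle E(b\ox b')\rangle$ and symmetrically, so axiom (iv) holds and the formation of $E^{(3)}$ is legitimate. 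Recognising $E^{(3)}=(\Delta\ox\id)\Delta(1)=(\id\ox\Delta)\Delta(1)$, axiom (v) becomes exactly the weak comultiplicativity of the unit $(\Delta(1)\ox 1)(1\ox\Delta(1))=(\Delta\ox\id)\Delta(1)=(1\ox\Delta(1))(\Delta(1)\ox 1)$.

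Finally, axiom (vi) is handled through the two propositions rather than by hand. As (i)--(v) are already in force, Proposition \ref{prop:vi_versions} shows that the second equality in (vi) is equivalent to its assertion (4), which for $E=\Delta(1)$ reads $\epsilon(ab_{(1)})\epsilon(b_{(2)}c)=\epsilon(abc)$; by the regularity observed above, Proposition \ref{prop:vi_regular} likewise identifies the first equality in (vi) with $\epsilon(ab_{(2)})\epsilon(b_{(1)}c)=\epsilon(abc)$. These are precisely the two conditions \eqref{eq:wmulti} defining a weak bialgebra. Assembling the translations, the data $(\mu,\Delta,\epsilon,E=\Delta(1))$ satisfy (i)--(vi) if and only if $(A,\mu,1,\Delta,\epsilon)$ is a weak bialgebra, which yields the bijection. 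I expect the only genuinely delicate points to be the bookkeeping that pins down $E=\Delta(1)$ (so that $E$ is not extra data) and the careful use of Propositions \ref{prop:vi_versions} and \ref{prop:vi_regular} to match each half of (vi) with the correct equality of \eqref{eq:wmulti}; everything else is a routine unwinding of the definitions in the setting $\M(A\ox A)=A\ox A$.
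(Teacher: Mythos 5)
Your proposal is correct and follows essentially the same route as the paper's proof: using $\M(A\ox A)=A\ox A$ to trivialize axiom (i) and force regularity, identifying $E=\Delta(1)$ via the uniqueness of the idempotent in axiom (iv), reading (ii)--(iii) as the coalgebra axioms and (v) as weak comultiplicativity of the unit, and invoking parts (4) of Propositions \ref{prop:vi_versions} and \ref{prop:vi_regular} to match the two halves of axiom (vi) with the two counit conditions \eqref{eq:wmulti}. The only difference is that you spell out the Sweedler-notation bookkeeping that the paper leaves implicit.
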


\begin{proof} 
A unital algebra $A$ is clearly idempotent with a non-degenerate
multiplication, and its multiplier algebra $\M(A)$ coincides with $A$. So in
this case the axioms in Definition \ref{def:mwba} (i) become trivial
identities and any weak multiplier bialgebra structure on $A$ is regular. By
axioms (ii) and (iii), a weak multiplier bialgebra structure on $A$ is given
by a coassociative counital comultiplication $A\to A\ox A$ which is a
multiplicative map, and a compatible idempotent element of $A\ox A$. By the
uniqueness of the idempotent element $E$ obeying axiom (iv), it follows that
$E=\Delta(1)$. Then axiom (v) is the usual weak bialgebra axiom expressing the
weak comultiplicativity of the unit. By Proposition \ref{prop:vi_versions} and
Proposition \ref{prop:vi_regular}, axiom (vi) is equivalent to the usual weak
bialgebra axiom expressing the weak multiplicativity of the counit (cf. parts
(4) of the quoted propositions).
\end{proof}

Among weak bialgebras $A$ over a field, bialgebras are distinguished by the
equivalent properties that $\Delta(1)=1\ox 1$, or
$\epsilon(ab)=\epsilon(a)\epsilon(b)$ for all $a,b\in A$, or
$\pibarL(a)=\epsilon(a)1$ for all $a\in A$, or
$\pibarR(a)=\epsilon(a)1$ for all $a\in A$.
As shown in the next theorem, these properties (in appropriate forms) remain
equivalent also for a weak multiplier bialgebra $A$.

\begin{theorem}\label{thm:mba}
Let $A$ be a weak multiplier bialgebra over a field. The following
assertions are equivalent.
\begin{itemize}
\item[{(1)}] $E=1$ as elements of $\M(A\ox A)$.
\item[{(2)}] $\epsilon(ab)=\epsilon(a)\epsilon(b)$ for all $a,b\in A$.
\item[{(3)}] $\pibarL(a)=\epsilon(a)1$ as elements of $\M(A)$, for all $a\in A$.
\item[{(4)}] $\pibarR(a)=\epsilon(a)1$ as elements of $\M(A)$, for all $a\in A$.
\end{itemize}
\end{theorem}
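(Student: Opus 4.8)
The plan is to prove the equivalences in a cycle, exploiting the uniqueness results and the multiplier formulas already established. Let me think about which implications are easy and where the real work lies.

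First, recall the relevant definitions. We have $\pibarL(a)$ defined by the maps $b\mapsto(\epsilon\ox\id)T_2(a\ox b)$ and $b\mapsto(\epsilon\ox\id)((a\ox b)E)$, from Proposition \ref{prop:pibarL}. Symmetrically, from the proof of Proposition \ref{prop:vi_versions}, $\pibarR(a)$ is given by $b\mapsto(\id\ox\epsilon)T_1(b\ox a)$ and $b\mapsto(\id\ox\epsilon)(E(b\ox a))$. And by Proposition \ref{prop:(a@1)E}(1) and \eqref{eq:E(1@a)} we have the key identities
$$
(\id\ox\pibarL)T_2(a\ox b)=(ab\ox 1)E
\quad\text{and}\quad
(\pibarR\ox\id)T_1(a\ox b)=E(1\ox ab).
$$
These will be the workhorses.

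Let me sketch the implications. For (1)$\Rightarrow$(3): if $E=1$, then $(\id\ox\pibarL)T_2(a\ox b)=(ab\ox 1)$; applying this and comparing with the defining property of $\pibarL$ via counitality should collapse $\pibarL(a)$ to $\epsilon(a)1$. Concretely, I would compute $(\epsilon\ox\id)T_2(a\ox b)=(\id\ox\epsilon)((ab\ox 1)E)\cdot$-type expressions, using axiom (iii) that $(\id\ox\epsilon)T_2=\mu$; with $E=1$ the formula $(\id\ox\pibarL)T_2(a\ox b)=(ab\ox 1)$ means, after applying $\id\ox\epsilon$ (and using that $T_2$ has range in $A\ox A$), that $\epsilon(\pibarL(\cdot))$ matches $\epsilon(ab)$, from which $\pibarL(a)=\epsilon(a)1$ follows by non-degeneracy and idempotency. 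Symmetrically (1)$\Rightarrow$(4) using the $\pibarR$ identity. For (3)$\Rightarrow$(2): substitute $\pibarL(a)=\epsilon(a)1$ into Proposition \ref{prop:(a@1)E}(1) to get $\epsilon(b)(a\ox 1)=(ab\ox 1)E$ after suitable evaluation—wait, more carefully, $(\id\ox\pibarL)T_2(a\ox b)$ with $\pibarL(b')=\epsilon(b')1$ should yield something whose comparison with $(ab\ox 1)E$ lets me apply $\id\ox\epsilon$ and invoke counitality to extract $\epsilon(ab)=\epsilon(a)\epsilon(b)$. The analogous (4)$\Rightarrow$(2) uses \eqref{eq:E(1@a)}.

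The main obstacle will be (2)$\Rightarrow$(1), i.e.\ recovering $E=1$ from multiplicativity of the counit. The natural route is to show that under (2) the element $E$ satisfies the same defining property that characterizes $1$ uniquely via Theorem \ref{thm:extend}, or equivalently to show $E(b\ox c)=b\ox c$ for all $b,c\in A$ (and symmetrically on the right), which by non-degeneracy forces $E=1$. I would start from Proposition \ref{prop:vi_versions}(4), namely $(\epsilon\ox\epsilon)((a\ox 1)\Delta(b)(1\ox c))=\epsilon(abc)$, together with (2) to rewrite the right side as $\epsilon(a)\epsilon(bc)$ or $\epsilon(ab)\epsilon(c)$, and then use the $\pibarL$, $\pibarR$ formulas to feed this back into an expression for $E$. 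The cleanest approach is probably to first establish (2)$\Rightarrow$(3) and (2)$\Rightarrow$(4) (showing multiplicativity of $\epsilon$ forces both projections to be trivial, via the characterizations in Propositions \ref{prop:(a@1)E} and the equivalences of Proposition \ref{prop:vi_versions}), and then show (3)\&(4)$\Rightarrow$(1): feeding $\pibarL(a)=\epsilon(a)1$ into $(ab\ox 1)E=(\id\ox\pibarL)T_2(a\ox b)$ gives a formula computing $(ab\ox 1)E$ purely in terms of $T_2$ and $\epsilon$, which by counitality should equal $ab\ox\text{(something)}$; combined with the symmetric $\pibarR$ statement controlling $E(1\ox bc)$, non-degeneracy of the multiplication and idempotency of $A$ should pin down $E(b\ox c)=b\ox c$ on a spanning set, whence $E=1$. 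The delicate point throughout is keeping track of which slots the projections act on and ensuring every intermediate expression genuinely lands in $A\ox A$ (not merely $\M(A\ox A)$) so that $\epsilon$ may be applied—this is exactly where Proposition \ref{prop:(a@1)E}(2),(3) and the membership facts from the proof of Proposition \ref{prop:vi_versions} are indispensable.
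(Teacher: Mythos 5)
Your overall architecture is workable and uses the right ingredients (Proposition \ref{prop:(a@1)E}~(1), \eqref{eq:E(1@a)}, the two descriptions \eqref{eq:pibarL} and \eqref{eq:pibarR} of the multipliers, and Proposition \ref{prop:vi_versions}~(3)), but two of your steps have genuine problems. First, the inference pattern you use for the ``easy'' directions is invalid: for (1)$\Rightarrow$(3) you propose to apply $\id\ox\epsilon$ to $(\id\ox\pibarL)T_2(a\ox b)=(ab\ox 1)E$ and to conclude from ``$\epsilon(\pibarL(\cdot))$ matches $\epsilon(ab)$'' that $\pibarL(a)=\epsilon(a)1$. Besides the fact that $\epsilon$ is not defined on $\M(A)$, the identity $\epsilon(\pibarL(a)b)=\epsilon(ab)$ holds in \emph{every} weak multiplier bialgebra (Lemma \ref{lem:counit_exp}) and so cannot imply (3); $\epsilon$ is not injective, and non-degeneracy of $\mu$ cannot rescue an equality that has already been collapsed by $\epsilon$. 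The correct one-line argument uses the \emph{second} formula in \eqref{eq:pibarL}: $b\pibarL(a)=(\epsilon\ox\id)((a\ox b)E)=\epsilon(a)b$ when $E=1$, and then density of $A$ in $\M(A)$. The same confusion reappears in your (3)$\Rightarrow$(2), where you want to ``apply $\id\ox\epsilon$'' to expressions of the form $ab\ox 1$.

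Second, the crux (2)$\Rightarrow$(1) --- which you rightly single out as the main obstacle --- is not actually carried out: saying that multiplicativity of $\epsilon$ ``forces both projections to be trivial via the characterizations'' names the destination but not the road. The missing computation is the following (it is the paper's): by Proposition \ref{prop:(a@1)E}~(1),
$$
(ab\ox 1)E(1\ox cd)=((\id\ox\pibarL)T_2(a\ox b))(1\ox cd)
=(\id\ox\epsilon\ox\id)[(T_2(a\ox b)\ox 1)(1\ox T_1(c\ox d))],
$$
and it is exactly here that (2) lets you split the middle $\epsilon$ over the product of the second leg of $T_2(a\ox b)$ with the first leg of $T_1(c\ox d)$, yielding $(\id\ox\epsilon)T_2(a\ox b)\ox(\epsilon\ox\id)T_1(c\ox d)=ab\ox cd$ by axiom (iii), whence $E=1$ by idempotency of $A$ and density of $A\ox A$ in $\M(A\ox A)$. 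Your alternative factoring (2)$\Rightarrow$(3)\&(4)$\Rightarrow$(1) can be made to work --- e.g.\ $\pibarL(a)bc=(\epsilon\ox\id)((a\ox 1)T_1(b\ox c))=\epsilon(ab'')c''=\epsilon(a)\epsilon(b'')c''=\epsilon(a)bc$ with $T_1(b\ox c)=b''\ox c''$, followed by the (3)$\Rightarrow$(1) step you already have --- but this is the same splitting trick in disguise and still has to be written down.
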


\begin{proof}
(1)$\Rightarrow$(2). For any $a,b\in A$,
$\epsilon(a)\epsilon(b)\stackrel{(1)}=
(\epsilon\ox \epsilon)[(a\ox 1)E(1\ox b)]=
\epsilon(ab)$,
where the last equality follows by Proposition \ref{prop:vi_versions}~(3).

(2)$\Rightarrow$(1). Using Proposition \ref{prop:(a@1)E}~(1) in the first
equality, it follows for any $a,b,c,d\in A$ that
\begin{eqnarray*}
(ab \ox 1)E(1\ox cd)&=&
((\id \ox \pibarL)T_2(a\ox b))(1\ox cd)\\
&\stackrel{\eqref{eq:pibarL}}=&
(\id \ox \epsilon \ox \id)
[((\id \ox T_2)(T_2\ox \id)(a\ox b \ox c))(1\ox 1\ox d)]\\
&=&(\id \ox \epsilon \ox \id)
[(T_2(a\ox b)\ox 1)(1\ox T_1(c\ox d))]\\
&\stackrel{(2)}=&
(\id \ox \epsilon)T_2(a\ox b)\ox(\epsilon \ox \id)T_1(c\ox d)
\stackrel{(iii)}=
ab \ox cd,
\end{eqnarray*}
from which we conclude by the density of $A\ox A$ in $\M(A\ox A)$.

(1)$\Rightarrow$(3). For any $a,b\in A$,
$
b\pibarL(a)\stackrel{\eqref{eq:pibarL}}=
(\epsilon \ox \id)[(a\ox b)E]\stackrel{(1)}=
b\epsilon(a),
$
from which we conclude by the density of $A$ in $\M(A)$.

(1)$\Rightarrow$(4) follows symmetrically making use of \eqref{eq:pibarR}. 

(3)$\Rightarrow$(1). Using Proposition \ref{prop:(a@1)E}~(1) in the first
equality, it follows for any $a,b\in A$ that
$$
(ab \ox 1)E=
(\id \ox \pibarL)T_2(a\ox b)\stackrel{(3)}=
(\id \ox \epsilon)T_2(a\ox b)\ox 1\stackrel{(iii)}=
ab \ox 1,
$$
from which we conclude by the density of $A\ox A$ in $\M(A\ox A)$.

(4)$\Rightarrow$(1) follows symmetrically applying \eqref{eq:E(1@a)}.
\end{proof}

Whenever the equivalent conditions in Theorem \ref{thm:mba} hold for a 
weak multiplier bialgebra over a field, it would be most natural to term
it a {\em multiplier bialgebra}. Note, however, that this notion is different
from both notions in \cite{JaVe} and \cite{Tim} which were given the same
name. 

The next three examples do not belong to any of the previously
discussed classes. 

\begin{example}\label{ex:span}
Take a small category possibly with infinitely many objects and arrows. For a
fixed field, let $A$ be the vector space spanned by all of the arrows. It can
be equipped with the following non-unital algebra structure. For any arrows
$a$ and $b$, let their product be the composite arrow $ab$ if they are
composable and zero otherwise. Since the identity arrows of the category give
rise to local units, this is an idempotent algebra with a non-degenerate
multiplication. It can be equipped with the structure of a regular weak
multiplier bialgebra. The comultiplication takes an arrow $a$ to $a\ox a$
regarded as an element of the multiplier algebra $\M(A\ox A)$. The counit
takes each arrow to $1$. The idempotent element $E$ in $\M(A\ox A)$ is given
by $E(a\ox b)=a\ox b$ if the arrows $a$ and $b$ have equal targets and $E(a\ox
b)=0$ otherwise; and $(a\ox b)E=a\ox b$ if the arrows $a$ and $b$ have equal
sources and $(a\ox b)E=0$ otherwise. All these maps are then linearly extended. 
\end{example}

\begin{example}\label{ex:functional}
Take again a small category possibly with infinitely many objects and
arrows. For a fixed field, let $A$ be the vector space of linear functionals
of finite support on the vector space spanned by the arrow set $\mathcal
S$. It is a non-unital algebra via the pointwise multiplication
(i.e. $(fg)(a):=f(a)g(a)$ for every $f, g \in A$ and $a \in \mathcal{S}$). The
characteristic functions of the finite subsets of $\mathcal S$ serve as local
units for $A$ hence it is an idempotent algebra with a non-degenerate
multiplication. 

For any arrows $a$ and $b$ of common source, assume that there are only
finitely many arrows $c$ such that $ca=b$. Symmetrically, for any arrows $a$
and $b$ of common target, assume that there are only finitely many arrows $c$
such that $ac=b$. (These assumptions evidently hold for a groupoid.) Then $A$
carries the structure of a regular weak multiplier bialgebra. In terms
of the characteristic functions $\delta_p$ of the one element subsets $\{p\}$
of $\mathcal S$, the comultiplication $\Delta$ takes $f\in A$ to the
multiplier $\Delta (f)$ described by 
$$
\Delta(f)(g\ox h)=\sum_{p,q\in \mathcal S} g(p)h(q)f(pq)\delta_p\ox \delta_q=
(g\ox h)\Delta(f)
$$
for any $g,h\in A$. Note that in this sum there are only finitely many
non-zero terms since $g,h$ and $f$ have finite supports. The maps $T_j$
(for $j\in \{1,2,3,4 \}$) land in $A\ox A$ by the assumption that we made
about the set of arrows. The counit takes $f$ to the sum of the values $f(i)$
for the {\em identity} arrows $i$ (which contains finitely many non-zero terms
by assumption). The idempotent element $E$ in $\M(A\ox A)$ is given by
$$
E(g\ox h)=\sum_{\{p,q\in \mathcal S \ \mathrm{composable}\}} 
g(p)h(q)\delta_p\ox \delta_q=
(g\ox h)E\quad \forall g,h\in A.
$$
\end{example}

It was shown in \cite{VDaWa} that whenever the categories in the above
examples are group\-oids, then both constructions yield regular weak multiplier
Hopf algebras in the sense of \cite{VDaWa}. 

\begin{example}\label{ex:direct_sum}
In this example we show that any direct sum of weak multiplier bialgebras over
a field --- so in particular any infinite direct sum of weak bialgebras over a
field --- is a weak multiplier bialgebra.

For any index set $I$, consider a family of idempotent algebras $\{A_j\}_{j\in
I}$ over a field $k$ with non-degenerate multiplications $\mu_j$. Let $A : =
\bigoplus_{j \in I} A_j$ denote the direct sum vector space with the
inclusions $i_j:A_j\to A$ and the projections $p_j:A\to A_j$. The elements of
$A$ are the $I$-tuples $\underline a=\{a_j\in A_j\}_{j\in I}$
such that $a_j:= p_j(\underline a)$ is non-zero only for finitely many
indices $j\in I $. Clearly, $A$ can be equipped with the structure of an
idempotent algebra with a non-degenerate multiplication $\mu:\underline a \ox
\underline b \mapsto \underline a\,\underline b$, uniquely characterized
by $p_j(\underline a\, \underline b)=a_j b_j$, for any $\underline
a,\underline b\in A$ and $j\in I$ (so that $i_j$ becomes multiplicative as
well). 

The multiplier algebra of $A$ is isomorphic to the direct (in fact, Cartesian)
product $\prod_{j\in I} \M(A_j)$, regarded as a unital algebra via the factorwise
multiplication. (Its elements are $I$-tuples $\{\omega_j\in
\M(A_j)\}_{j\in I}$ without any restriction on the number of the non-zero
elements.) Indeed, $i_j(A_j)$ is an ideal in $A$ for any $j\in I$. Hence for
any $\omega \in \M(A)$, any $j\in I$, and any $a,b\in A_j$,
$$
\omega i_j(ab)=
\omega (i_j(a)i_j(b))=
(\omega i_j(a))i_j(b)
$$
is an element of $i_j(A_j)$. So by the idempotency of $A_j$, $\omega i_j(a)\in
i_j(A_j)$, and symmetrically, $i_j(a) \omega \in i_j(A_j)$, for any $j\in I$
and $a\in A_j$. This proves the existence of multipliers $\omega_j\in \M(A_j)$
such that 
$$
i_j(\omega_j a):=\omega i_j(a)\qquad \textrm{and}\qquad
i_j(a \omega_j):=i_j(a) \omega, \qquad \forall a\in A_j.
$$
Hence there is a map
\begin{equation}\label{eq:phi}
\varphi:\M(A)\to \prod_{j\in I} \M(A_j),\qquad
\omega \mapsto \{\omega_j\}_{j\in I}.
\end{equation}
It has the inverse $\{\omega_j\}_{j\in I}\mapsto \omega$ such that $p_j(\omega
\underline a)=\omega_j a_j$ and $p_j(\underline a \omega)=a_j \omega_j$ for
all $\underline a\in A$ and $j\in I$.

Let us take now two families $\{A_j\}_{j\in I}$ and $\{B_j\}_{j\in I}$ of
idempotent algebras with non-degenerate multiplications, together with a
family of multiplicative linear maps $\{\gamma_j:A_j\to \M(B_j)\}_{j\in I}$
and idempotent elements $\{e_j\in \M(B_j)\}_{j\in I}$ such that for all $j\in
I$, $\gamma_j(A_j)B_j=e_jB_j$ and $B_j\gamma_j(A_j)=B_je_j$. Then it follows
by Theorem \ref{thm:extend} that there exist unique multiplicative linear maps
$\{\overline \gamma_j:\M(A_j)\to \M(B_j)\}_{j \in I}$ extending $\gamma_j$
such that $\overline\gamma_j(1_j)=e_j$. Put $A:=\oplus_{j\in I} A_j$ and 
$B:=\oplus_{j\in I} B_j$ as before and in terms of the map \eqref{eq:phi}
define 
$$ 
e:=\varphi^{-1}(\{e_j\}_{j\in I})\in\M(B)
\quad \textrm{and}\quad
\gamma:A \to \M(B),\qquad
\underline a \mapsto \varphi^{-1}(\{\gamma_j(a_j)\}_{j\in I}).
$$
Then for all $j\in I$, $p_j(\gamma(A)B)=\gamma_j(A_j)B_j=e_jB_j=p_j(eB)$,
so that $\gamma(A)B=eB$. Symmetrically, $B\gamma(A)=Be$. Thus by Theorem
\ref{thm:extend}, $\gamma$ extends to a unique multiplicative linear map
$\overline \gamma:\M(A)\to \M(B)$ such that $\overline
\gamma(1)=e$. Explicitly, 
\begin{equation}\label{eq:gamma_bar}
p_j(\overline \gamma(\omega)\underline b)=\overline \gamma_j(\omega_j)b_j
\qquad\textrm{and}\qquad
p_j(\underline b \overline \gamma(\omega))=b_j\overline \gamma_j(\omega_j),
\end{equation}
for all $j\in I$, $\underline b\in B$ and $\omega \in \M(A)$.

Assume next that for all $j\in I$, $A_j$ carries a weak multiplier bialgebra
structure with comultiplication $\Delta_j:A_j\to \M(A_j\ox A_j)$, counit
$\epsilon_j:A_j\to k$ and idempotent element $E_j\in \M(A_j\ox A_j)$. Since
$A\ox A\cong \oplus_{j,l\in I} A_j\ox A_l$, its multiplier algebra $\M(A\ox
A)$ is isomorphic to $\prod_{j,l\in I}\M(A_j\ox A_l)$. Hence $\M(A\ox
A)$ has a non-unital subalgebra $\prod_{j\in I}\M(A_j\ox A_j)$. In terms of
the map \eqref{eq:phi}, we define 
$$
\begin{array}{rll}
\Delta:&A\to \prod_{j\in I} \M(A_j\ox A_j)\subset \M(A\ox A),\qquad
&\underline a \mapsto \varphi^{-1}(\{\Delta_j(a_j)\}_{j\in I})\\
\epsilon:&A \to k,\qquad
&\underline a \mapsto \sum_{j\in I}\epsilon_j(a_j)\\
E\in& \prod_{j\in I}\M(A_j\ox A_j)\subset \M(A\ox A),\qquad 
&E:=\varphi^{-1}(\{E_j\}_{j\in I}).
\end{array}
$$
Note that the counit $\epsilon$ is well-defined because the sum has only
finitely many non-zero terms. This equips $A$ with the structure of a weak
multiplier bialgebra. Moreover, if $A_j$ is a regular weak multiplier
bialgebra for all $j\in I$, then so is the direct sum $A=\oplus_{j\in I} A_j$.
\end{example}

\section{The base algebras}\label{sec:base}

Let $A$ be a weak multiplier bialgebra over a field. The aim of this section
is to study the properties of the maps 
$$
\pibarL:A\to \M(A),\qquad a\mapsto (\epsilon \ox \id)((a\ox 1)E)
$$ 
in \eqref{eq:pibarL} and 
$$
\pibarR:A\to \M(A),\qquad a\mapsto (\id \ox \epsilon)(E(1\ox a))
$$ 
in \eqref{eq:pibarR} in a remarkable analogy with the unital case. Their
images in $\M(A)$ are termed as the {\em base algebras} (they are indeed
subalgebras of $\M(A)$ by Lemma \ref{lem:mod_map} below), and they will be
investigated further in the next Section \ref{sec:sF}. 

\begin{lemma}\label{lem:counit_exp}
For any weak multiplier bialgebra $A$ over a field, and any $a,b\in A$, 
$$
\epsilon(\pibarL(a)b)=\epsilon(ab)\qquad \textrm{and}\qquad 
\epsilon(a\pibarR(b))=\epsilon(ab).
$$
\end{lemma}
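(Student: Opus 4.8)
The plan is to reduce both identities to axiom (iii) together with the elementary observation that, for any element $\sum_i x_i \ox y_i$ of the honest tensor product $A\ox A$, applying $\epsilon$ to the output of $(\epsilon\ox\id)$ and to the output of $(\id\ox\epsilon)$ yields the same scalar, namely $\sum_i \epsilon(x_i)\epsilon(y_i)=(\epsilon\ox\epsilon)(\sum_i x_i\ox y_i)$. Thus the whole content is that the two ``legs'' of $\epsilon$ on $A\ox A$ agree, after which the counitality axiom collapses everything to $\epsilon(ab)$. The first step is to pin down the multiplier conventions: from Proposition \ref{prop:pibarL} and the convention $(\lambda,\rho)b=\lambda(b)$, $b(\lambda,\rho)=\rho(b)$ recorded in Section \ref{sec:prelims}, the first of the two maps in \eqref{eq:pibarL} computes the left action, so that $\pibarL(a)b=(\epsilon\ox\id)T_2(a\ox b)$ as an element of $A$ (it lies in $A$ by axiom (i)); symmetrically, reading off $\pibarR(b)$ from \eqref{eq:pibarR} gives $a\pibarR(b)=(\id\ox\epsilon)T_1(a\ox b)\in A$.

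For the first equality I would then apply $\epsilon$ to $\pibarL(a)b=(\epsilon\ox\id)T_2(a\ox b)$ and use the leg-interchange remark to rewrite
$$
\epsilon(\pibarL(a)b)=\epsilon\bigl((\epsilon\ox\id)T_2(a\ox b)\bigr)=(\epsilon\ox\epsilon)T_2(a\ox b)=\epsilon\bigl((\id\ox\epsilon)T_2(a\ox b)\bigr).
$$
Now axiom (iii) asserts $(\id\ox\epsilon)T_2=\mu$, so the last expression equals $\epsilon(\mu(a\ox b))=\epsilon(ab)$, which is the claim. The second equality is proved by the mirror-image computation: apply $\epsilon$ to $a\pibarR(b)=(\id\ox\epsilon)T_1(a\ox b)$, interchange the legs to obtain $(\epsilon\ox\epsilon)T_1(a\ox b)=\epsilon\bigl((\epsilon\ox\id)T_1(a\ox b)\bigr)$, and invoke the other half of axiom (iii), $(\epsilon\ox\id)T_1=\mu$, which again gives $\epsilon(ab)$.

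I do not anticipate a serious obstacle: once the ranges of $T_1$ and $T_2$ are known to sit in the genuine tensor product $A\ox A$ by axiom (i), every map in sight is defined on actual elements and the argument is purely formal. The only point demanding care is the bookkeeping of the multiplier conventions — in particular verifying that it is the $T_2$-leg (respectively the $T_1$-leg) of $\pibarL$ (respectively $\pibarR$) that realises the correct one-sided action $\pibarL(a)b$ (respectively $a\pibarR(b)$), rather than the $E$-leg. This is precisely what the defining computations in Proposition \ref{prop:pibarL} and in the proof of Proposition \ref{prop:vi_versions} already establish, so no new input beyond axioms (i) and (iii) is required.
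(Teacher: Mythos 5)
Your proof is correct and follows essentially the same route as the paper: identify $\pibarL(a)b$ with $(\epsilon\ox\id)T_2(a\ox b)$ via \eqref{eq:pibarL}, observe that composing with $\epsilon$ gives $(\epsilon\ox\epsilon)T_2(a\ox b)$, and collapse this with the counit axiom (iii); the second identity is the mirror image using \eqref{eq:pibarR} and $(\epsilon\ox\id)T_1=\mu$. Your extra care with the multiplier conventions (which leg of the multiplier realises $\pibarL(a)b$, resp.\ $a\pibarR(b)$) is exactly the bookkeeping the paper leaves implicit, and it checks out.
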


\begin{proof} For any $a,b\in A$,
$\epsilon(\pibarL(a)b)\stackrel{\eqref{eq:pibarL}}=
(\epsilon \ox \epsilon)T_2(a\ox b)\stackrel{(iii)}=
\epsilon(ab)$.
The other equality is proven symmetrically.
\end{proof}

\begin{lemma}\label{lem:cond_exp}
For any weak multiplier bialgebra $A$ over a field, and any $a,b\in A$, 
$$
\pibarL(\pibarL(a)b)=\pibarL(ab)\qquad \textrm{and}\qquad 
\pibarR(a\pibarR(b))=\pibarR(ab).
$$
\end{lemma}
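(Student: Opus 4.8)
The plan is to prove the first identity $\pibarL(\pibarL(a)b)=\pibarL(ab)$ by testing both sides against an arbitrary element $c\in A$ from the left, using the multiplier description of $\pibarL$ from \eqref{eq:pibarL}, and then to obtain the second identity by the evident left--right symmetry. The basic tools available are the defining formulas $c\,\pibarL(x)=(\epsilon\ox\id)((x\ox c)E)$ and $\pibarL(x)=(\epsilon\ox\id)T_2(x\ox-)$, the counit property in Lemma \ref{lem:counit_exp}, and the structural identity in Proposition \ref{prop:(a@1)E}(1), namely $(\id\ox\pibarL)T_2(a\ox b)=(ab\ox 1)E$.

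First I would rewrite the left-hand side. For any $c\in A$, using the second description in \eqref{eq:pibarL},
\begin{equation*}
c\,\pibarL(\pibarL(a)b)=(\epsilon\ox\id)\bigl((\pibarL(a)b\ox c)E\bigr)
=(\epsilon\ox\id)\bigl((\pibarL(a)\ox 1)(b\ox c)E\bigr).
\end{equation*}
The key obstacle, and the heart of the argument, is to move the multiplier $\pibarL(a)$ in the first leg through $\epsilon$ and recombine it with $a$. Here I expect to expand $\pibarL(a)$ back via $\eqref{eq:pibarL}$ as $\epsilon$ applied to $(a\ox-)E$, producing a three-fold tensor expression $(\epsilon\ox\epsilon\ox\id)\bigl((a\ox\,?\ox 1)(E\ox 1)((b\ox c)E\text{ suitably placed})\bigr)$, and then to invoke axioms (iv) and (v) (the factorization $(E\ox 1)(1\ox E)=E^{(3)}=(\overline{\id\ox\Delta})(E)$) to reassemble the two copies of $E$ into a single comultiplied $E$. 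This is exactly the manipulation carried out in the proofs of Proposition \ref{prop:(a@1)E}(1) and Theorem \ref{thm:unique}, so I would model the computation on those.

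The cleanest route is probably to avoid re-expanding $\pibarL(a)$ and instead apply Proposition \ref{prop:(a@1)E}(1) directly. I would compute, for any $p,c\in A$,
\begin{equation*}
c\,\pibarL\bigl(\pibarL(a)b\bigr)
=(\epsilon\ox\id)T_2\bigl(\pibarL(a)b\ox c\bigr)
=(\epsilon\ox\id)\bigl((\pibarL(a)\ox 1)T_2(b\ox c)\bigr),
\end{equation*}
using that $T_2$ is a left $A$-module map in its first argument. Applying $\id\ox\pibarL$-type identities and Lemma \ref{lem:counit_exp} to collapse the inner $\pibarL(a)$ against $\epsilon$ — precisely $\epsilon(\pibarL(a)x)=\epsilon(ax)$ — should turn the right-hand factor into $(\epsilon\ox\id)T_2(ab\ox c)=c\,\pibarL(ab)$. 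In detail, once the first leg carries $\epsilon$ applied to $\pibarL(a)$ times the relevant element, Lemma \ref{lem:counit_exp} replaces $\pibarL(a)$ by $a$, and what remains is the definition of $\pibarL(ab)$ tested against $c$. Non-degeneracy of the multiplication in $A$ then cancels the arbitrary $c$ and yields $\pibarL(\pibarL(a)b)=\pibarL(ab)$ as multipliers.

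The main obstacle is organizing the bookkeeping of the two $E$'s and the two $\epsilon$'s so that axiom (v) can be applied, and then recognizing $\epsilon(\pibarL(a)\,\cdot\,)=\epsilon(a\,\cdot\,)$ from Lemma \ref{lem:counit_exp} at the right moment; everything else is routine module-map juggling. The second identity $\pibarR(a\pibarR(b))=\pibarR(ab)$ follows by the symmetric computation, testing against $c$ from the right, using \eqref{eq:pibarR}, the identity \eqref{eq:E(1@a)}, and the companion relation $\epsilon(a\pibarR(b))=\epsilon(ab)$ of Lemma \ref{lem:counit_exp}.
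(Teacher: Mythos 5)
Your final route is essentially the paper's proof: both reduce the claim to Lemma \ref{lem:counit_exp}, which lets $\epsilon$ absorb $\pibarL(a)$ into $a$ in the first tensor factor, the paper simply applying this directly to $(\pibarL(a)b\ox 1)E\in A\ox\M(A)$ without testing against $c$, so the detour through axioms (iv)--(v) that you initially sketch is not needed. One small slip worth fixing: by \eqref{eq:pibarL} the expression $(\epsilon\ox\id)T_2(\pibarL(a)b\ox c)$ computes $\pibarL(\pibarL(a)b)\,c$ rather than $c\,\pibarL(\pibarL(a)b)$ (the latter is the $E$-form $(\epsilon\ox\id)((\pibarL(a)b\ox c)E)$), but either side goes through and the conclusion via non-degeneracy is unaffected.
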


\begin{proof}
Using Lemma \ref{lem:counit_exp} in the second equality,
$$
\pibarL(\pibarL(a)b)=
(\epsilon \ox \id)[(\pibarL(a)b\ox 1)E]=
(\epsilon \ox \id)[(ab\ox 1)E]=
\pibarL(ab),
$$
for any $a,b\in A$. The other equality is proven symmetrically. 
\end{proof}

\begin{lemma}\label{lem:coproduct}
For any weak multiplier bialgebra $A$ over a field, and any $a\in A$, 
\begin{eqnarray*}
\overline\Delta\pibarL(a)=&(\pibarL(a)\ox 1)E&=E(\pibarL(a)\ox 1)
\qquad \textrm{and}\\
\overline\Delta\pibarR(a)=&(1\ox \pibarR(a))E&=E(1\ox \pibarR(a)).
\end{eqnarray*}
\end{lemma}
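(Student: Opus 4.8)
The plan is to prove all four identities by the same scheme: first turn each equation in $\M(A\ox A)$ into an equation about $\Delta$ applied to an honest element of $A$, and then, where coassociativity is unavoidable, into an identity living in $A\ox A$ where the axioms apply concretely.

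For the two statements about $\pibarL$ I would argue as follows. Since $\overline\Delta$ is multiplicative and $\pibarL(a)b,\ b\pibarL(a)\in A$ ($A$ being an ideal in $\M(A)$), multiplying $\overline\Delta\pibarL(a)=(\pibarL(a)\ox 1)E$ on the right by $\Delta(b)$ reduces it to
\[
\Delta(\pibarL(a)b)=(\pibarL(a)\ox 1)\Delta(b),\quad\forall b\in A\qquad(\dagger)
\]
and multiplying $\overline\Delta\pibarL(a)=E(\pibarL(a)\ox 1)$ on the left by $\Delta(p)$ reduces it to
\[
\Delta(p\pibarL(a))=\Delta(p)(\pibarL(a)\ox 1),\quad\forall p\in A\qquad(\dagger').
\]
Conversely, since $\langle\Delta(b)(u\ox v)\rangle=\langle E(u\ox v)\rangle$ by axiom (iv) and $\overline\Delta(\omega)E=\overline\Delta(\omega)=E\overline\Delta(\omega)$, a density/non-degeneracy argument shows that $(\dagger)$ implies the first identity and $(\dagger')$ the second. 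So it suffices to prove $(\dagger)$ and $(\dagger')$.

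Identity $(\dagger')$ is the easy one, resting only on axiom (v) and \eqref{eq:E_3}. Using Proposition~\ref{prop:(a@1)E}~(2) one checks $(a\ox p)E=(1\ox p)(a\ox 1)E\in A\ox A$, so that $p\pibarL(a)=(\epsilon\ox\id)((a\ox p)E)$ may be sliced honestly, giving $\Delta(p\pibarL(a))=(\epsilon\ox\id\ox\id)(\id\ox\Delta)((a\ox p)E)$. Writing $(a\ox p)E=(1\ox p)(a\ox 1)E$ and invoking multiplicativity of $\overline{\id\ox\Delta}$, the relation $(\overline{\id\ox\Delta})(E)=E^{(3)}$ of \eqref{eq:E_3}, the factorization $E^{(3)}=(1\ox E)(E\ox 1)$ from axiom (v), and $\Delta(p)E=\Delta(p)$, one obtains $(\id\ox\Delta)((a\ox p)E)=(a\ox 1\ox 1)(1\ox\Delta(p))(E\ox 1)$. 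Slicing the right-hand side of $(\dagger')$ in the same way yields exactly $(\epsilon\ox\id\ox\id)[(a\ox 1\ox 1)(1\ox\Delta(p))(E\ox 1)]$, so the two sides coincide.

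Identity $(\dagger)$ is the main obstacle, because $\pibarL(a)b=(\epsilon\ox\id)T_2(a\ox b)$ forces the term $(\id\ox\Delta)(T_2(a\ox b))$, whose $\Delta$-leg is $(\overline{\id\ox\Delta})(\Delta(b))$ — an expression which, unlike $(\overline{\id\ox\Delta})(E)$, does not reduce directly to $E^{(3)}$; this is precisely where coassociativity must be used. I would reduce $(\dagger)$ by non-degeneracy (multiplying by $v\ox 1$ on the left and by $1\ox c$ on the right) to the honest identity in $A\ox A$
\[
(v\ox 1)\,T_1(\pibarL(a)b\ox c)=(v\pibarL(a)\ox 1)\,T_1(b\ox c),\quad\forall v,b,c\in A\qquad(\ddagger).
\]
Slicing its left side and recognizing $(\id\ox\Delta)(T_2(a\ox b))(1\ox 1\ox c)=(\id\ox T_1)(T_2\ox\id)(a\ox b\ox c)$, axiom (ii) rewrites this as $(T_2\ox\id)(\id\ox T_1)(a\ox b\ox c)$. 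Writing $T_1(b\ox c)=\sum b'\ox c'$, slicing the first leg with $\epsilon$ (so that $(\epsilon\ox\id)T_2(a\ox b')=\pibarL(a)b'$ by \eqref{eq:pibarL}), and using the multiplier relation $v(\pibarL(a)b')=(v\pibarL(a))b'$, the whole expression collapses to $(v\pibarL(a)\ox 1)\sum b'\ox c'=(v\pibarL(a)\ox 1)T_1(b\ox c)$, which is $(\ddagger)$. Reversing the non-degeneracy reductions then gives $(\dagger)$, and hence $\overline\Delta\pibarL(a)=(\pibarL(a)\ox 1)E$.

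Finally, the two statements for $\pibarR$ follow by the mirror-image arguments: one runs the same reductions with the roles of $T_1$ and $T_2$ interchanged and the counit slice taken on the other leg, replacing the defining formula \eqref{eq:pibarL} of $\pibarL$ and Proposition~\ref{prop:(a@1)E}~(1) by \eqref{eq:pibarR} and \eqref{eq:E(1@a)}, and using the opposite factorization $E^{(3)}=(E\ox 1)(1\ox E)$ of axiom (v). Exactly as above, one of the two $\pibarR$-identities uses only axiom (v) and \eqref{eq:E_3}, while the other rests on coassociativity (ii).
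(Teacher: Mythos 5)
Your argument is correct, but it is organized differently from the paper's and, for half of the identities, invokes machinery the paper avoids. The paper's proof is a uniform three-line computation: since $(a\ox 1)E\in A\ox \M(A)$ by Proposition \ref{prop:(a@1)E}~(2), one writes $\pibarL(a)=(\epsilon\ox\id)[(a\ox 1)E]$, applies $\overline\Delta$ under the $\epsilon$-slice, and uses $(\overline{\id\ox\Delta})(E)=E^{(3)}$ together with the two factorizations $E^{(3)}=(E\ox 1)(1\ox E)=(1\ox E)(E\ox 1)$ from axiom (v) to land on $(\pibarL(a)\ox 1)E$ and $E(\pibarL(a)\ox 1)$ respectively; the $\pibarR$ identities are mirror images via $E(1\ox a)\in\M(A)\ox A$. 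Coassociativity (ii) is never invoked. Your proof of $(\dagger')$ is essentially this computation with an extra element $p$ carried along; but for $(\dagger)$ you switch to the representation $\pibarL(a)b=(\epsilon\ox\id)T_2(a\ox b)$, which forces the detour through $(\id\ox T_1)(T_2\ox\id)=(T_2\ox\id)(\id\ox T_1)$. Your claim that coassociativity is unavoidable there is therefore not accurate: working directly with the multiplier $(a\ox 1)E$ (rather than first reducing to honest elements of $A$ and choosing the $T_2$-presentation) lets axiom (v) do all the work for both equalities. What your route buys is an explicit verification of the useful intermediate identities $\Delta(\pibarL(a)b)=(\pibarL(a)\ox 1)\Delta(b)$ and $\Delta(p\pibarL(a))=\Delta(p)(\pibarL(a)\ox 1)$; what it costs is the extra density/non-degeneracy reduction layer and the dependence on axiom (ii). One cosmetic point: when computing $(\id\ox\Delta)((a\ox p)E)$, factor it as $(a\ox p)\cdot E$ with $a\ox p\in A\ox A$ rather than as $(1\ox p)(a\ox 1)E$, since multiplicativity of $\overline{\id\ox\Delta}$ is immediately applicable only when each factor is either in $A\ox A$ or is $E$ itself.
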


\begin{proof}
For any $a\in A$, $(a\ox 1)E\in A\ox \M(A)$ by Proposition
\ref{prop:(a@1)E}~(2). Hence
\begin{eqnarray*}
\overline\Delta\pibarL(a)&=&
\overline\Delta(\epsilon \ox \id)[(a\ox 1)E]=
(\epsilon \ox \id)(\id \ox \overline\Delta)[(a\ox 1)E]\\
&\stackrel{(v)}=&
(\epsilon \ox \id)[(a\ox 1\ox 1)(E\ox 1)(1\ox E)]\\
&=& ((\epsilon \ox \id)[(a\ox 1)E]\ox 1)E=
(\pibarL(a)\ox 1)E.
\end{eqnarray*}
All of the remaining equalities are verified analogously. 
\end{proof}

\begin{lemma}\label{lem:mod_map}
For any weak multiplier bialgebra $A$ over a field, and any $a,b\in A$, 
$$
\pibarL(a\pibarL(b))=
\pibarL(a)\pibarL(b)\qquad \textrm{and}\qquad 
\pibarR(\pibarR(a)b)=
\pibarR(a)\pibarR(b).
$$
\end{lemma}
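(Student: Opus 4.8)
The plan is to establish both identities by comparing multipliers through their actions on $A$ and invoking non-degeneracy: if $m,m'\in\M(A)$ satisfy $mc=m'c$ for all $c\in A$, then $\lambda_{m-m'}=0$, whence $\rho_{m-m'}=0$ and $m=m'$ (and symmetrically, equality of the right actions forces $m=m'$). Note first that $a\pibarL(b)\in A$ since $A$ is an ideal in $\M(A)$, so the left-hand sides are defined. For the first identity I would compute the left action of $\pibarL(a\pibarL(b))$ on an arbitrary $c\in A$: using the defining formula $\pibarL(x)c=(\epsilon\ox\id)((x\ox1)\Delta(c))$ from Proposition \ref{prop:pibarL} and the factorization $a\pibarL(b)\ox1=(a\ox1)(\pibarL(b)\ox1)$, this reads $\pibarL(a\pibarL(b))c=(\epsilon\ox\id)((a\ox1)(\pibarL(b)\ox1)\Delta(c))$.

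The only non-formal point — the step I expect to be the crux — is the intertwining identity $(\pibarL(b)\ox1)\Delta(c)=\Delta(\pibarL(b)c)$. I would prove it as follows. Since $E\Delta(c)=\Delta(c)$ (the remark following Definition \ref{def:mwba}), one may insert $E$ and apply Lemma \ref{lem:coproduct} in the form $(\pibarL(b)\ox1)E=\overline\Delta\pibarL(b)$, obtaining $(\pibarL(b)\ox1)\Delta(c)=\overline\Delta(\pibarL(b))\Delta(c)$. As $\overline\Delta$ is multiplicative, extends $\Delta$, and $\pibarL(b)c\in A$, the right-hand side equals $\overline\Delta(\pibarL(b))\overline\Delta(c)=\overline\Delta(\pibarL(b)c)=\Delta(\pibarL(b)c)$, as wanted.

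Granting this, the computation closes at once: $\pibarL(a\pibarL(b))c=(\epsilon\ox\id)((a\ox1)\Delta(\pibarL(b)c))=\pibarL(a)(\pibarL(b)c)=(\pibarL(a)\pibarL(b))c$, and non-degeneracy yields $\pibarL(a\pibarL(b))=\pibarL(a)\pibarL(b)$. The second identity follows by the mirror-image argument: I would test the right actions, using $c\pibarR(x)=(\id\ox\epsilon)(\Delta(c)(1\ox x))$ from \eqref{eq:pibarR}, the factorization $1\ox\pibarR(a)b=(1\ox\pibarR(a))(1\ox b)$, the relation $\Delta(c)E=\Delta(c)$, and the symmetric form $E(1\ox\pibarR(a))=\overline\Delta\pibarR(a)$ of Lemma \ref{lem:coproduct}, to derive $\Delta(c)(1\ox\pibarR(a))=\Delta(c\pibarR(a))$ and hence $c\,\pibarR(\pibarR(a)b)=c\,(\pibarR(a)\pibarR(b))$ for all $c$. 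Everything beyond the intertwining identity is purely formal manipulation of the defining formulas and associativity of the $\M(A)$-action.
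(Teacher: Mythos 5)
Your proof is correct and follows essentially the same route as the paper's: both hinge on the identity $(\pibarL(b)\ox 1)\Delta(c)=\Delta(\pibarL(b)c)$ obtained by inserting $E$ (via $E\Delta(c)=\Delta(c)$), applying Lemma \ref{lem:coproduct}, and using the multiplicativity of $\overline\Delta$, then concluding by non-degeneracy. The only cosmetic difference is that the paper runs the computation for an arbitrary $\psi\in\M(A)$ with $\overline\Delta(\psi)=(\psi\ox 1)E$ before specializing to $\psi=\pibarL(b)$, whereas you specialize from the start.
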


\begin{proof}
For any $\psi\in \M(A)$ such that $\overline \Delta(\psi)=(\psi \ox 1)E$, and
for any $a,b\in A$,
\begin{eqnarray*}
\pibarL(a\psi)b&\stackrel{\eqref{eq:pibarL}}=&
(\epsilon \ox \id)T_2(a\psi\ox b)\stackrel{(iv)}=
(\epsilon \ox \id)[(a\psi\ox 1)E\Delta(b)]\\
&=&(\epsilon \ox \id)[(a\ox 1)\Delta(\psi b)]=
(\epsilon \ox \id)T_2(a\ox \psi b)\stackrel{\eqref{eq:pibarL}}=
\pibarL(a)\psi b,
\end{eqnarray*}
where in the third equality we used the assumption about $\psi$ and the
multiplicativity of $\overline\Delta$. Since $A$ is dense in
$\M(A)$, this proves $\pibarL(a\psi)=\pibarL(a)\psi$. We conclude by Lemma
\ref{lem:coproduct} that $\pibarL(a\pibarL(b))=\pibarL(a)\pibarL(b)$, for any
$a,b\in A$. The other equality is proven symmetrically.
\end{proof}

\begin{lemma}\label{lem:commute}
For any weak multiplier bialgebra $A$ over a field, and any $a,b\in A$, 
$$
\pibarR(a)\pibarL(b)=\pibarL(b)\pibarR(a).
$$
\end{lemma}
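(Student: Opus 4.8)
The plan is to compute the image under $\overline\Delta$ of both products $\pibarR(a)\pibarL(b)$ and $\pibarL(b)\pibarR(a)$, to observe that these images coincide, and then to recover the equality in $\M(A)$ from the equality of the $\overline\Delta$-images by means of the counit. Throughout I will write $L:=\pibarL(b)$ and $R:=\pibarR(a)$ for short, so that the claim reads $RL=LR$.

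First I would exploit that $\overline\Delta:\M(A)\to\M(A\ox A)$ is multiplicative, together with the two factorizations provided by Lemma~\ref{lem:coproduct}, namely $\overline\Delta(L)=(L\ox 1)E=E(L\ox 1)$ and $\overline\Delta(R)=(1\ox R)E=E(1\ox R)$. Choosing these forms judiciously, and using that $L\ox 1$ and $1\ox R$ commute in $\M(A\ox A)$ (they act in different tensor factors), one finds
\[
\overline\Delta(RL)=\overline\Delta(R)\overline\Delta(L)=E(1\ox R)(L\ox 1)E=E(L\ox R)E
\]
and likewise
\[
\overline\Delta(LR)=\overline\Delta(L)\overline\Delta(R)=E(L\ox 1)(1\ox R)E=E(L\ox R)E,
\]
so that $\overline\Delta(RL)=\overline\Delta(LR)$. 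I expect this to be the routine part; the genuine content already sits inside Lemma~\ref{lem:coproduct}, which is where axiom (v) enters.

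The main obstacle is then to deduce $RL=LR$ from the equality of their images under $\overline\Delta$, i.e.\ to argue that $\overline\Delta$ is sufficiently injective on the elements at hand. For this I would first establish the recovery identity
\[
\overline\Delta(\psi)\,T_1(x\ox y)=T_1(\psi x\ox y),\qquad \psi\in\M(A),\ x,y\in A,
\]
which follows from the multiplicativity of $\overline\Delta$ and $\overline\Delta|_A=\Delta$: writing $\overline\Delta(\psi)\Delta(x)(1\ox y)=\overline\Delta(\psi x)(1\ox y)=\Delta(\psi x)(1\ox y)$ and noting $\psi x\in A$. The product makes sense in $A\ox A$ because $T_1(x\ox y)$ lies in the two-sided ideal $A\ox A\subseteq\M(A\ox A)$ by axiom (i).

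Applying this identity once with $\psi=RL$ and once with $\psi=LR$, the already established equality $\overline\Delta(RL)=\overline\Delta(LR)$ yields $T_1((RL)x\ox y)=T_1((LR)x\ox y)$ for all $x,y\in A$. Applying $(\epsilon\ox\id)$ and invoking the counit axiom (iii) in the form $(\epsilon\ox\id)T_1=\mu$ gives $((RL)x)y=((LR)x)y$ for all $x,y\in A$. By non-degeneracy of the multiplication this forces $(RL)x=(LR)x$ for every $x\in A$; and since a multiplier is determined by its left action (again by non-degeneracy), I conclude $RL=LR$, which is the assertion.
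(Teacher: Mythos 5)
Your argument is correct, but it takes a genuinely different route from the paper's. The paper proves the identity by a direct computation: it writes $\pibarR(a)\pibarL(b)=(\epsilon\ox\id\ox\epsilon)[(b\ox 1\ox 1)(1\ox E)(E\ox 1)(1\ox 1\ox a)]$ and invokes axiom (v) to replace $(1\ox E)(E\ox 1)$ by $(E\ox 1)(1\ox E)$, which reassembles at once into $\pibarL(b)\pibarR(a)$ --- five lines, no detour through $\overline\Delta$. You instead route everything through Lemma \ref{lem:coproduct} (which is where axiom (v) enters for you), observe that both products have the same image $E(\pibarL(b)\ox\pibarR(a))E$ under $\overline\Delta$, and then recover the equality in $\M(A)$ via the identity $\overline\Delta(\psi)T_1(x\ox y)=T_1(\psi x\ox y)$, counitality (iii) and non-degeneracy. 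All the individual steps check out: $L\ox 1$ and $1\ox R$ do commute in the subalgebra $\M(A)\ox\M(A)$ of $\M(A\ox A)$; $T_1(x\ox y)$ lies in the ideal $A\ox A$ so the products are defined; $\psi x\in A$ so $\overline\Delta(\psi x)=\Delta(\psi x)$; and a multiplier is determined by its left action by non-degeneracy. Both arguments work for arbitrary (not necessarily regular) weak multiplier bialgebras. The paper's computation is shorter and more self-contained; what your version buys is a conceptual explanation --- the two maps commute because their $\overline\Delta$-images sit in different tensor legs --- together with a reusable ``$\overline\Delta$ is injective enough'' recovery principle, at the cost of depending on Lemma \ref{lem:coproduct} and on the multiplicativity of the extension $\overline\Delta$.
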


\begin{proof}
For any $a,b\in A$, 
\begin{eqnarray*}
\pibarR(a)\pibarL(b)&=&
(\id \ox \epsilon)[E(1\ox a)](\epsilon \ox \id)[(b\ox 1)E]\\
&=&(\epsilon \ox \id \ox \epsilon)[(1\ox E)(b\ox 1\ox a)(E\ox 1)]\\
&=&
(\epsilon \ox \id \ox \epsilon)[(b\ox 1\ox 1)(1\ox E)(E\ox 1)(1\ox 1\ox a)]\\
&\stackrel{(v)}=&
(\epsilon \ox \id \ox \epsilon)[(b\ox 1\ox 1)(E\ox 1)(1\ox E)(1\ox 1\ox a)]\\
&=&(\epsilon \ox \id)[(b\ox 1)E](\id \ox \epsilon)[E(1\ox a)]=
\pibarL(b)\pibarR(a).
\end{eqnarray*}
\end{proof}

\begin{lemma}\label{lem:pi_(a)}
For any weak multiplier bialgebra $A$ over a field, and for any
$a,b,c,d\in A$,
$$
(ab\ox 1)((\pibarR\ox \id)T_1(c\ox d))=
((\id \ox \pibarL)T_2(a\ox b))(1\ox cd).
$$
\end{lemma}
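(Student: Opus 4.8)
The plan is to observe that both sides of the claimed identity are merely two different presentations of the single multiplier $(ab\ox 1)E(1\ox cd)$ in $\M(A\ox A)$; the lemma is then an immediate consequence of the two ``Galois-type'' formulas for $(\id\ox\pibarL)T_2$ and $(\pibarR\ox\id)T_1$ that are already available.

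First I would rewrite the right-hand side. By Proposition~\ref{prop:(a@1)E}~(1) we have $(\id\ox\pibarL)T_2(a\ox b)=(ab\ox 1)E$, so multiplying on the right by $1\ox cd$ gives
\[
((\id\ox\pibarL)T_2(a\ox b))(1\ox cd)=(ab\ox 1)E(1\ox cd).
\]
Next I would rewrite the left-hand side. Formula~\eqref{eq:E(1@a)} asserts $(\pibarR\ox\id)T_1(c\ox d)=E(1\ox cd)$, so multiplying on the left by $ab\ox 1$ yields
\[
(ab\ox 1)((\pibarR\ox\id)T_1(c\ox d))=(ab\ox 1)E(1\ox cd).
\]

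Comparing the two displays, both sides coincide with $(ab\ox 1)E(1\ox cd)$, which is a genuine element of the ideal $A\ox A$ by Proposition~\ref{prop:(a@1)E}~(3); this completes the argument. I do not expect any genuine obstacle here: all the real content has already been extracted in the preceding results, so the lemma is pure bookkeeping. The only point worth checking is the legitimacy of invoking~\eqref{eq:E(1@a)}, which was derived inside the proof of Proposition~\ref{prop:vi_versions}~(2)$\Rightarrow$(3) under hypothesis~(2). Since axiom~(vi) of Definition~\ref{def:mwba} supplies condition~(1) of that proposition, the equivalence of conditions (1)--(4) makes~(2)---and hence both the multiplier $\pibarR$ and identity~\eqref{eq:E(1@a)}---available for every weak multiplier bialgebra, so the substitution above is justified.
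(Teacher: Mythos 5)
Your proposal is correct and follows exactly the paper's own (one-line) argument: both sides are identified with $(ab\ox 1)E(1\ox cd)$ via Proposition \ref{prop:(a@1)E}~(1) and the identity \eqref{eq:E(1@a)} from the proof of Proposition \ref{prop:vi_versions}~(2)$\Rightarrow$(3). Your closing remark justifying the availability of \eqref{eq:E(1@a)} through the equivalence (1)$\Leftrightarrow$(2) in that proposition is accurate and appropriately careful.
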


\begin{proof}
Both expressions in the claim are equal to $(ab\ox 1)E(1\ox cd)$, see
Proposition \ref{prop:(a@1)E}~(1) and the proof of Proposition
\ref{prop:vi_versions}~(2)$\Rightarrow$(3).
\end{proof}

Whenever $A$ is a {\em regular} weak multiplier bialgebra over a field, the
above considerations can be repeated in the opposite algebra. That is, we can
define the maps 
\begin{eqnarray}
\label{eq:piL}
&\piL:A\to \M(A),\qquad 
&a\mapsto (\epsilon \ox \id)(E(a\ox 1))\\
\label{eq:piR}
&\piR:A\to \M(A),\qquad 
&a\mapsto (\id \ox \epsilon)((1\ox a)E).
\end{eqnarray}
They obey the following properties, for all $a,b,c,d\in A$.
\begin{eqnarray}
\label{eq:piR(a)b}
&&\piR(a)b=(\id \ox \epsilon)T_3(b\ox a)\ \textrm{and}\ 
b\piL(a)=(\epsilon\ox \id)T_4(a\ox b).\\
\label{eq:E(a@1)}
&&(1\ox ab)E=(\piR\ox \id)T_3(b\ox a)\ \textrm{and}\ 
E(ab\ox 1)=(\id \ox \piL)T_4(b\ox a),\\
\label{eq:counit_exp_reg}
&&\epsilon(a\piL(b))=\epsilon(ab)\ \textrm{and}\ 
\epsilon(\piR(a)b)=\epsilon(ab),\\
\label{eq:cond_exp_reg}
&&\piL(a\piL(b))=\piL(ab)\ \textrm{and}\ 
\piR(\piR(a)b)=\piR(ab),\\
\label{eq:coproduct_reg}
&&\overline\Delta\piL(a)=(\piL(a)\ox 1)E=E(\piL(a)\ox 1)
\ \textrm{and}\\
&&\overline\Delta\piR(a)=(1\ox \piR(a))E=E(1\ox \piR(a)),\nonumber\\
\label{eq:mod_map_reg}
&&\piL(\piL(a)b)=\piL(a)\piL(b)\ \textrm{and}\
\piR(a\piR(b))=\piR(a)\piR(b),\\
\label{eq:commute_reg}
&&\piL(a)\piR(b)=\piR(b)\piL(a),\\
\label{eq:pi_identity_reg}
&&((\piR\ox \id)T_3(a\ox b))(cd\ox 1)=
(1\ox ba)((\id \ox \piL)T_4(d\ox c)).
\end{eqnarray}
\smallskip

In a weak bialgebra possessing a unit, the above maps behave as generalized
counits: $\mu(\piL\ox \id)\Delta=\id=\mu(\id \ox \piR)\Delta$ and
$\mu^{\op}(\pibarL\ox \id)\Delta=\id = \mu^{\op}(\id \ox \pibarR)\Delta$. In the
following lemma these identities are generalized to weak multiplier
bialgebras. 

\begin{lemma}\label{lem:counital_maps}
For a regular weak multiplier bialgebra $A$ over a field, the following
equalities hold. 
\begin{itemize}
\item[{(1)}] $\mu^{\op}(\pibarL\ox \id)T_3=\mu^\op$.
\item[{(2)}] $\mu^{\op}(\id \ox \pibarR)T_4=\mu^\op$.
\item[{(3)}] $\mu(\piL\ox \id)T_1=\mu$.
\item[{(4)}] $\mu(\id \ox \piR)T_2=\mu$.
\end{itemize}
\end{lemma}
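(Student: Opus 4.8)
The plan is to verify all four identities by one and the same short computation, exploiting the fact that each of the maps $\pibarL,\pibarR,\piL,\piR$ admits a \emph{one-sided} action formula in which both tensor legs are absorbed into the idempotent $E$. First I would record, for each map, the relevant action. For $\pibarL$, Proposition \ref{prop:pibarL} supplies the right action $b\pibarL(a)=(\epsilon\ox\id)((a\ox b)E)$; for $\pibarR$, the left action $\pibarR(a)b=(\id\ox\epsilon)(E(b\ox a))$ is exactly the multiplier \eqref{eq:pibarR} built in the proof of Proposition \ref{prop:vi_versions}. The two remaining formulas, $\piL(a)b=(\epsilon\ox\id)(E(a\ox b))$ and $a\piR(b)=(\id\ox\epsilon)((a\ox b)E)$, I would read off the defining expressions \eqref{eq:piL} and \eqref{eq:piR}: since $E(a\ox 1)\in A\ox\M(A)$ and $(1\ox b)E\in\M(A)\ox A$ (the regular analogues of Proposition \ref{prop:(a@1)E}(2)), writing these as finite sums and pulling the surviving element of $A$ into the open leg yields the claimed identities.

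With these four action formulas in hand, each assertion collapses to a single line. For (3) I would apply $\mu(\piL\ox\id)$ to $T_1(a\ox b)\in A\ox A$; by linearity and the $\piL$-formula this equals $(\epsilon\ox\id)(E\,T_1(a\ox b))$, and since $E\,T_1(a\ox b)=E\Delta(a)(1\ox b)=\Delta(a)(1\ox b)=T_1(a\ox b)$ by the identity $E\Delta(a)=\Delta(a)$ noted after Definition \ref{def:mwba}, this is $(\epsilon\ox\id)T_1(a\ox b)=\mu(a\ox b)$ by axiom (iii). Identity (4) is identical, using $a\piR(b)=(\id\ox\epsilon)((a\ox b)E)$, $\Delta(b)E=\Delta(b)$, and the second half of axiom (iii). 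For (1) and (2) I would run the same argument on $T_3$ and $T_4$: one obtains $\mu^{\op}(\pibarL\ox\id)T_3(a\ox b)=(\epsilon\ox\id)((1\ox b)\Delta(a)E)=(\epsilon\ox\id)T_3(a\ox b)$ and $\mu^{\op}(\id\ox\pibarR)T_4(a\ox b)=(\id\ox\epsilon)(E\Delta(b)(a\ox 1))=(\id\ox\epsilon)T_4(a\ox b)$, which equal $\mu^{\op}(a\ox b)$ by the regular form $(\epsilon\ox\id)T_3=\mu^{\op}=(\id\ox\epsilon)T_4$ of axiom (iii).

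The only genuine care required — and the step where an order-bookkeeping slip is easy — is pairing each map with the correct one-sided action. Because $\mu^{\op}$ reverses the order of the factors, in (1) and (2) the $\pi$-map multiplies the surviving leg from the opposite side than in (3) and (4): one must use the \emph{right} action of $\pibarL$ and the \emph{left} action of $\pibarR$ there, against the \emph{left} action of $\piL$ and the \emph{right} action of $\piR$ in (3) and (4). Once these are matched correctly against $\mu$ versus $\mu^{\op}$, the computations are forced, and $E\Delta=\Delta=\Delta E$ together with counitality closes each of the four cases.
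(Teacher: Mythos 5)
Your argument is correct, and it follows a genuinely different (though closely parallel) route from the paper's. The paper proves (1) by testing against an extra element $c\in A$: writing $T_3(a\ox b)=\sum_i x_i\ox y_i$, it evaluates $\bigl(\sum_i y_i\pibarL(x_i)\bigr)c$ via the $T_2$-realization $\pibarL(x)c=(\epsilon\ox\id)\bigl((x\ox 1)\Delta(c)\bigr)$ of the multiplier, so the computation runs through $(\epsilon\ox\id)\bigl[(1\ox b)\Delta(a)\Delta(c)\bigr]=(\epsilon\ox\id)T_3(ac\ox b)=bac$ and closes with non-degeneracy of the multiplication; parts (2)--(4) are declared symmetric. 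You instead use the \emph{other} component of each multiplier --- the $E$-realization of its action on the surviving tensor leg --- so that the whole of $T_3(a\ox b)$ is absorbed at once into $(\epsilon\ox\id)\bigl((1\ox b)\Delta(a)E\bigr)$, and the identity $\Delta(a)E=\Delta(a)=E\Delta(a)$ noted after Definition \ref{def:mwba} finishes the computation with no test element and no appeal to non-degeneracy; you trade the paper's use of multiplicativity of $\Delta$ for the absorption $\Delta E=\Delta$. Both proofs rest on Proposition \ref{prop:pibarL} and its symmetric variants together with counitality in its regular form $(\epsilon\ox\id)T_3=\mu^\op=(\id\ox\epsilon)T_4$. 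Your bookkeeping of which one-sided action to pair with $\mu$ versus $\mu^\op$ is exactly right, and the well-definedness inputs you invoke ($E(a\ox 1)\in A\ox\M(A)$ and $(1\ox b)E\in\M(A)\ox A$ for regular $A$, so that $(\epsilon\ox\id)(E(x\ox y))$ and $(\id\ox\epsilon)((x\ox y)E)$ make sense termwise) are indeed available: they are recorded in the paper as \eqref{eq:E(a@1)} and arise as the opposite-algebra analogues of Proposition \ref{prop:(a@1)E}.
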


\begin{proof}
We spell out the proof only for (1), all other assertions are proven
symmetrically. For any $a,b,c\in A$, 
\begin{eqnarray*}
(\mu^{\op}(\pibarL\ox \id)T_3(a\ox b))c&=&
\mu^\op[((\pibarL\ox \id)T_3(a\ox b))(c\ox 1)]\\
&\stackrel{\eqref{eq:pibarL}}=&
(\epsilon \ox \id)[(1\ox b)\Delta(a)\Delta(c)]=
(\epsilon \ox \id)T_3(ac\ox b)\stackrel{(iii)}=
bac,
\end{eqnarray*}
so we conclude by non-degeneracy of the multiplication.
\end{proof}

\begin{lemma}\label{lem:counit_inv}
For a regular weak multiplier bialgebra $A$ over a field, and any $a,b\in
A$,
$$
\epsilon(\piL(a)b)=\epsilon(\pibarR(b)a)\quad \textrm{and}\quad
\epsilon(a\piR(b))=\epsilon(b\pibarL(a)).
$$
\end{lemma}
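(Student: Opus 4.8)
The plan is to show that each of the two claimed identities collapses, on both of its sides, to a single symmetric scalar: the first to $(\epsilon \ox \epsilon)(E(a\ox b))$, and the second to $(\epsilon \ox \epsilon)((a\ox b)E)$. Once both sides are seen to equal the same expression, the equalities are immediate.

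First I would make the four maps $\piL,\piR,\pibarL,\pibarR$ fully explicit as multipliers, reading off \emph{both} their left and right module actions rather than only the one recorded in \eqref{eq:pibarL}, \eqref{eq:pibarR}, \eqref{eq:piL}, \eqref{eq:piR}. Since $\pibarR(b)=(\id \ox \epsilon)(E(1\ox b))$ and $\piL(a)=(\epsilon \ox \id)(E(a\ox 1))$, left-multiplying into the free tensor leg gives
$$
\piL(a)b=(\epsilon \ox \id)(E(a\ox b)) \qquad\text{and}\qquad \pibarR(b)a=(\id \ox \epsilon)(E(a\ox b)),
$$
using $(1\ox b)(a\ox 1)=(a\ox b)=(a\ox 1)(1\ox b)$. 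Symmetrically, $a\piR(b)=(\id \ox \epsilon)((a\ox b)E)$ and $b\pibarL(a)=(\epsilon \ox \id)((a\ox b)E)$. The next step is then to apply $\epsilon$ and contract the remaining slot, whence $\epsilon(\piL(a)b)=(\epsilon \ox \epsilon)(E(a\ox b))=\epsilon(\pibarR(b)a)$, and identically $\epsilon(a\piR(b))=(\epsilon \ox \epsilon)((a\ox b)E)=\epsilon(b\pibarL(a))$.

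The only genuine obstacle is well-definedness: one must verify that $E(a\ox b)$ and $(a\ox b)E$ actually lie in the ideal $A\ox A$, so that $\epsilon \ox \epsilon$ can be evaluated on them factorwise and the manipulations above are legitimate. For $(a\ox b)E$ this holds even without regularity, exactly as in Proposition \ref{prop:(a@1)E}~(3): by axiom (iv) one has $(a\ox b)E\in\langle (p\ox q)\Delta(c)\rangle$, and $(p\ox q)\Delta(c)=(1\ox q)T_2(p\ox c)\in A\ox A$. For $E(a\ox b)$ I would use regularity symmetrically, writing $E(a\ox b)\in\langle\Delta(c)(p\ox q)\rangle$ via axiom (iv) and noting $\Delta(c)(p\ox q)=T_4(p\ox c)(1\ox q)\in A\ox A$.

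Granting these memberships, the whole argument is bookkeeping: the delicate point is correctly identifying which of the two defining maps of each of $\piL$, $\pibarR$, $\piR$, $\pibarL$ is its left action and which its right action, so that $\piL(a)b$ and $\pibarR(b)a$ are literally presented as the two counit-contractions of the \emph{same} element $E(a\ox b)$ of $A\ox A$ (and dually for the second identity). After that the lemma follows with no further computation.
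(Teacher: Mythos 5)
Your proposal is correct and coincides with the paper's own (one-line) proof: both sides of each identity are identified with $(\epsilon\ox\epsilon)[E(a\ox b)]$, respectively $(\epsilon\ox\epsilon)[(a\ox b)E]$, via \eqref{eq:piL} and \eqref{eq:pibarR} and their counterparts. Your extra care about which component of each multiplier is the left versus right action, and about $E(a\ox b),(a\ox b)E$ lying in $A\ox A$, is accurate and merely makes explicit what the paper leaves implicit (cf.\ Proposition \ref{prop:(a@1)E} and the proof of Proposition \ref{prop:vi_versions}).
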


\begin{proof}
In light of \eqref{eq:piL} and \eqref{eq:pibarR}, respectively, both sides of
the first equality are equal to $(\epsilon\ox \epsilon)[E(a\ox b)]$ hence also
to each other. The second equality follows symmetrically.
\end{proof}

\begin{lemma}\label{lem:E_balanced}
For a regular weak multiplier bialgebra $A$ over a field, and any $a\in
A$, the following equalities hold. 
$$
(\piR(a)\ox 1)E=(1\ox \pibarL(a))E\qquad \textrm{and}\qquad 
 E(\pibarR(a)\ox 1)=E(1\ox \piL(a)).
$$
\end{lemma}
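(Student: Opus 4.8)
The plan is to prove the first identity directly and to deduce the second one from it by passing to the opposite algebra.

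Since both sides lie in $\M(A\ox A)$ and $A\ox A$ is a non-degenerate two-sided ideal, it suffices to check that $(\piR(a)\ox 1)E$ and $(1\ox \pibarL(a))E$ agree after right multiplication by an arbitrary $b\ox c\in A\ox A$. By associativity this amounts to comparing $(\piR(a)\ox 1)(E(b\ox c))$ with $(1\ox \pibarL(a))(E(b\ox c))$, and by axiom (iv) together with linearity one may replace $E(b\ox c)$ by elements $\Delta(p)(q\ox c')$. Everything therefore reduces to the multiplier identity
$$(\piR(a)\ox 1)\Delta(p)=(1\ox \pibarL(a))\Delta(p),\qquad(\ast)$$
which yields the claim after right multiplication by $q\ox c'$.

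To prove $(\ast)$ I would exhibit a common value. Writing $\Delta^{(2)}(p):=(\overline{\Delta\ox \id})\Delta(p)=(\overline{\id\ox \Delta})\Delta(p)$, the two agreeing by the coassociativity axiom (ii) exactly as for $E^{(3)}$, set $\Phi:=(\id\ox \epsilon\ox \id)[(1\ox a\ox 1)\Delta^{(2)}(p)]$. The key point is that \emph{both} sides of $(\ast)$ equal $\Phi$. For the left-hand side I test against $1\ox s$: using $\Delta(p)(1\ox s)=T_1(p\ox s)\in A\ox A$, the formula $\piR(a)x=(\id\ox \epsilon)((1\ox a)\Delta(x))$ from \eqref{eq:piR(a)b}, and $(\Delta\ox \id)T_1(p\ox s)=\Delta^{(2)}(p)(1\ox 1\ox s)$, one obtains $(\piR(a)\ox 1)\Delta(p)(1\ox s)=\Phi(1\ox s)$. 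For the right-hand side I test against $t\ox 1$: using $(t\ox 1)\Delta(p)=T_2(t\ox p)\in A\ox A$, the defining formula $\pibarL(a)y=(\epsilon\ox \id)((a\ox 1)\Delta(y))$ of Proposition \ref{prop:pibarL}, and $(\id\ox \Delta)T_2(t\ox p)=(t\ox 1\ox 1)\Delta^{(2)}(p)$, one obtains $(t\ox 1)(1\ox \pibarL(a))\Delta(p)=(t\ox 1)\Phi$. Non-degeneracy of the multiplication then forces both sides of $(\ast)$ to equal $\Phi$. Here regularity is needed only to guarantee that the arguments of $\id\ox \epsilon\ox \id$ genuinely lie in $A\ox A\ox A$, so that contracting the middle leg by $\epsilon$ is legitimate: after inserting $a$ the relevant factor on the left side has the shape $T_3(\,\cdot\,\ox a)\ox(\,\cdot\,)$, which lands in $A\ox A\ox A$ by Definition \ref{def:regular} (on the right side $T_2(a\ox\,\cdot\,)$ already lies in $A\ox A$ by axiom (i)).

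For the second identity $E(\pibarR(a)\ox 1)=E(1\ox \piL(a))$ I would simply read the first identity in the opposite algebra $A^{\op}$, which is again a regular weak multiplier bialgebra with the same $\Delta$, $\epsilon$ and $E$. Under $A\rightsquigarrow A^{\op}$ the four maps are interchanged as $\piR\leftrightarrow \pibarR$ and $\pibarL\leftrightarrow \piL$, while the order of every product is reversed; hence $(\piR(a)\ox 1)E=(1\ox \pibarL(a))E$ for $A^{\op}$ translates precisely into $E(\pibarR(a)\ox 1)=E(1\ox \piL(a))$ for $A$.

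The main obstacle is step $(\ast)$: one must verify that unfolding the first tensor leg through $\piR$ and unfolding the second tensor leg through $\pibarL$ produce the \emph{same} element once the newly created middle leg is contracted against $\epsilon(a\,\cdot\,)$. This symmetry is exactly the content of coassociativity, encoded in the well-definedness of $\Delta^{(2)}(p)$; the rest is bookkeeping of which leg gets expanded and of the membership of all intermediate multipliers in $A\ox A\ox A$. (I note that applying $\epsilon\ox \epsilon$ to $(\ast)$ recovers the scalar shadow $\epsilon(a\piR(b))=\epsilon(b\pibarL(a))$ of Lemma \ref{lem:counit_inv}, which is a useful sanity check.)
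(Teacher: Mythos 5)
Your proof is correct and follows essentially the same route as the paper: after reducing via axiom (iv) to elements in the image of $\Delta$, both arguments express $\piR(a)(-)$ and $(-)\pibarL(a)$ as $\epsilon$-contractions of a middle tensor leg (via $T_3$ and $T_2$ respectively) and let the coassociativity axiom (ii) do the work, concluding by non-degeneracy and obtaining the second identity by passing to $A^{\op}$. Your auxiliary element $\Phi$ built from $\Delta^{(2)}(p)$ is just a repackaging of the paper's direct computation of $(d\piR(a)\ox 1)T_1(b\ox c)$ using $(T_2\ox \id)(\id\ox T_1)=(\id\ox T_1)(T_2\ox \id)$, since the compatibility $(t\ox 1)(\Phi(1\ox s))=((t\ox 1)\Phi)(1\ox s)$ you need is exactly axiom (ii) contracted against $\epsilon(a\,\cdot\,)$.
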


\begin{proof}
By axiom (iv) in Definition \ref{def:mwba}, the first equality in the claim is
equivalent to 
\begin{equation}\label{eq:E_balanced}
(\piR(a)\ox 1)T_1(b\ox c)=(1\ox \pibarL(a))T_1(b\ox c), 
\quad \forall b,c\in A. 
\end{equation}
Using the identities $c\piR(a)b=(\id \ox \epsilon)[(c\ox a)\Delta(b)]=(\id \ox
\epsilon)[(1\ox a)T_2(c\ox b)]$ and
$$
(\epsilon \ox \id)[(a\ox 1)T_1(b\ox c)]
=(\epsilon \ox \id)[T_2(a\ox b)(1\ox c)]\stackrel{\eqref{eq:pibarL}}=
\pibarL(a)bc
$$
(for any $a,b,c\in A$) in the first and the penultimate equalities,
respectively, one computes
\begin{eqnarray*}
(d\piR(a)\ox 1)T_1(b\ox c)&=&
(\id \ox \epsilon \ox \id)
[(1\ox a\ox 1)((T_2\ox \id)(\id \ox T_1)(d\ox b\ox c))]\\
&\stackrel{(ii)}=&
(\id \ox \epsilon \ox \id)
[(1\ox a\ox 1)((\id \ox T_1)(T_2\ox \id)(d\ox b\ox c))]\\
&=&(1\ox \pibarL(a))(d\ox 1)\Delta(b)(1\ox c)\\
&=&(d\ox \pibarL(a))T_1(b\ox c),
\end{eqnarray*}
for all $a,b,c,d\in A$. So we conclude by the non-degeneracy of the
multiplication that \eqref{eq:E_balanced}, and hence $(\piR(a)\ox 1)E=(1\ox
\pibarL(a))E$, holds for all $a\in A$. The other equality is proven
symmetrically. 
\end{proof}

\begin{lemma}\label{lem:pi_(ef)}
For any regular weak multiplier bialgebra $A$ over a field, and for any
$a,b,c,d\in A$,
\begin{eqnarray*}
&&(1\ox ab)((\id \ox \pibarR)T_4(c\ox d))=
((\piR\ox \id)T_2^{\op}(a\ox b))(dc\ox 1)\ \quad \textrm{and}\\
&&((\pibarL\ox \id)T_3(a\ox b))(cd\ox 1)=
(1\ox ba)((\id \ox \piL)T_1^{\op}(c\ox d)),
\end{eqnarray*}
where $T_1^{\op}=\mathsf{tw}\, T_1$ and $T_2^{\op}=\mathsf{tw}\, T_2$.
\end{lemma}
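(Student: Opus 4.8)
The two identities are mirror images of one another---interchanging the two tensor legs together with the families $\{\pibarL,\pibarR\}$ and $\{\piL,\piR\}$ and the maps $T_3\leftrightarrow T_4$, $T_1^{\op}\leftrightarrow T_2^{\op}$ turns the first into the second---so the second will follow by the symmetric computation and I would concentrate on
\[
(1\ox ab)\big((\id \ox \pibarR)T_4(c\ox d)\big)=\big((\piR\ox \id)T_2^{\op}(a\ox b)\big)(dc\ox 1).
\]
Both sides are honest elements of $A\ox A$, so it is enough to compare them as such.

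My plan is to expand each side through the counit-contraction descriptions of the base maps. On the left I would write $T_4(c\ox d)=\Delta(d)(c\ox 1)$ and use the action $e\,\pibarR(x)=(\id\ox \epsilon)T_1(e\ox x)$ from \eqref{eq:pibarR}; on the right I would use $T_2^{\op}=\mathsf{tw}\,T_2$ together with $\piR(x)\,e=(\id\ox \epsilon)T_3(e\ox x)$ from \eqref{eq:piR(a)b}. Invoking the multiplicativity of $\Delta$ (so that $\Delta(ab)=\Delta(a)\Delta(b)$ and $\Delta(dc)=\Delta(d)\Delta(c)$), each side is thereby rewritten as $(\id\ox \epsilon\ox \id)$ applied to a threefold tensor in $A\ox A\ox A$ carrying two comultiplications, the middle leg being the one that the counit eventually removes.

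The crux is to identify these two threefold expressions. Because of the flip $\mathsf{tw}$ hidden in $T_2^{\op}$, the counit contracts \emph{opposite} Sweedler legs on the two sides; the purpose of coassociativity---axiom (ii) in the form $(T_2\ox \id)(\id \ox T_1)=(\id \ox T_1)(T_2\ox \id)$, or equivalently its $E^{(3)}$-form (v)---is exactly to transport the contraction from one leg to the other, as in the proof of Proposition \ref{prop:vi_versions}~(4)$\Rightarrow$(1). What makes this possible is that $\piR$ and $\pibarL$ (respectively $\pibarR$ and $\piL$) are both counital from the same side and are intertwined by the balancing relations of Lemma \ref{lem:E_balanced} and Lemma \ref{lem:counit_inv}, together with the module-map and conditional-expectation identities of Lemmas \ref{lem:counit_exp} and \ref{lem:mod_map} and equations \eqref{eq:cond_exp_reg}, \eqref{eq:mod_map_reg}; contracting a single tensor slot already reveals the shape of what is needed, namely absorptions of the form $\piR(\pibarL(a)b)=\piR(ab)$ and $\pibarR(d\piL(c))=\pibarR(dc)$. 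After the two tensors are matched, non-degeneracy of the multiplication finishes the argument. I expect the only genuinely delicate point to be the bookkeeping of which leg carries the counit after the flip, and the correct placement of the two auxiliary factors $a$ and $c$ that are not comultiplied; the rest is routine given the machinery already in place.
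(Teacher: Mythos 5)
Your opening move is the right one and matches the paper's: both sides are elements of $A\ox A$ obtained by applying $\id\ox\epsilon\ox\id$ to elements of $A\ox A\ox A$, via $ab\,\pibarR(v)=(\id\ox\epsilon)[(a\ox 1)T_1(b\ox v)]$ from \eqref{eq:pibarR} and $\piR(q)dc=(\id\ox\epsilon)[T_3(d\ox q)(c\ox 1)]$ from \eqref{eq:piR(a)b}, and the reduction of the second identity to the first by left--right symmetry is also how the paper proceeds. The gap is at what you yourself call the crux: you assert that the two threefold expressions are identified by coassociativity (axiom (ii)) assisted by Lemmas \ref{lem:counit_exp}, \ref{lem:counit_inv}, \ref{lem:E_balanced}, \ref{lem:mod_map} and various absorption identities, but you do not carry this out, and coassociativity is in fact the wrong tool here. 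Axiom (ii) governs the two ways of iterating $\Delta$ on a \emph{single} element; in the present computation the two comultiplications occurring are $\Delta^{\op}(b)$ (hidden in $T_1^{\op}$, resp.\ $T_2^{\op}$) and $\Delta(d)$ (hidden in $T_4$, resp.\ $T_3$), applied to \emph{different} elements on overlapping tensor legs. Both threefold expressions are literally the same product
$$
(1\ox 1\ox a)(1\ox \Delta^{\op}(b))(\Delta(d)\ox 1)(c\ox 1\ox 1)
$$
in $\M(A\ox A\ox A)$, read with two different bracketings --- once as $(1\ox 1\ox a)(\id\ox T_1^{\op}\mathsf{tw})(T_4\ox\id)(c\ox d\ox b)$ and once as $((T_3\ox\id)(\id\ox T_2^{\op})(d\ox a\ox b))(c\ox 1\ox 1)$ --- so they coincide by mere associativity of the multiplication in the multiplier algebra; regularity is what guarantees that each bracketing passes through $A\ox A\ox A$, so that applying $\id\ox\epsilon\ox\id$ makes sense. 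In particular no contraction has to be ``transported from one leg to the other'': the counit sits on the middle leg on both sides from the outset, and none of the balancing or absorption lemmas you list enters. As written, the proposal leaves the essential identification unproved and points at machinery that would not accomplish it; the fix, however, is only to replace the appeal to axiom (ii) by the explicit rebracketing above.
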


\begin{proof}
For any $a,b,c,d\in A$,
\begin{eqnarray*}
(1\ox 1\ox a)((\id \ox T_1^\op\,\mathsf{tw})
\!\!\!\!\!\!\!&&\!\!\!\!\!\!\!(T_4\ox \id)(c\ox d\ox b))\\
&=&(1\ox 1\ox a)(1\ox \Delta^\op(b))(\Delta(d)\ox 1)(c\ox 1\ox 1)\\
&=&((T_3\ox \id)(\id \ox T_2^\op)(d\ox a\ox b))(c\ox 1\ox 1).
\end{eqnarray*}
Applying $\id \ox \epsilon \ox \id$ to both sides, and using the identities
\eqref{eq:pibarR} and \eqref{eq:piR(a)b}, we obtain the first equality in the
claim. The second equality is proven symmetrically.
\end{proof}

\begin{lemma}\label{lem:bar_mod_map}
Let $A$ be a regular weak multiplier bialgebra over a field. For any $a,b\in
A$,
\begin{eqnarray*}
&&\piR(a\pibarR(b))=\piR(a)\pibarR(b)=\pibarR(\piR(a)b)\quad \textrm{and}\\
&&\piL(\pibarL(a)b)=\pibarL(a)\piL(b)=\pibarL(a\piL(b)).
\end{eqnarray*}
\end{lemma}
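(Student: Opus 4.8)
The plan is to reduce both displayed lines to a small family of ``one-sided module-map principles'' for the maps $\pibarL,\pibarR,\piL,\piR$, following verbatim the pattern of the proof of Lemma \ref{lem:mod_map}. Concretely, I would first isolate the following statements, valid for any $\psi\in\M(A)$ and any $a,b\in A$: if $\overline\Delta(\psi)=(1\ox\psi)E$ then $\piR(a\psi)=\piR(a)\psi$; if $\overline\Delta(\psi)=E(1\ox\psi)$ then $\pibarR(\psi b)=\psi\pibarR(b)$; if $\overline\Delta(\psi)=E(\psi\ox 1)$ then $\piL(\psi b)=\psi\piL(b)$; and, finally, the principle already contained in Lemma \ref{lem:mod_map}, that $\overline\Delta(\psi)=(\psi\ox 1)E$ implies $\pibarL(a\psi)=\pibarL(a)\psi$.

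Each of these I would prove by the same short calculation, testing against an arbitrary $d\in A$ and using the action formulas \eqref{eq:pibarL}, \eqref{eq:pibarR}, \eqref{eq:piR(a)b}, the identities $\Delta(d)E=\Delta(d)=E\Delta(d)$ (from the remark after Definition \ref{def:mwba}), and multiplicativity of $\overline\Delta$. For instance, for the second principle I would compute $d\cdot\pibarR(\psi b)=(\id\ox\epsilon)[\Delta(d)(1\ox\psi b)]$, replace $\Delta(d)(1\ox\psi)=\Delta(d)E(1\ox\psi)=\overline\Delta(d)\overline\Delta(\psi)=\overline\Delta(d\psi)=\Delta(d\psi)$, and recognise the outcome as $d\cdot(\psi\pibarR(b))$; since $d$ is arbitrary and the multiplication is non-degenerate, this yields $\pibarR(\psi b)=\psi\pibarR(b)$. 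The first and third principles are the verbatim analogues (the third is the left--right mirror of the second, and the first follows by the opposite-algebra symmetry available in the regular case, under which $\piR$ and $\pibarR$ are interchanged).

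With the principles in hand, the lemma is immediate. For the first line I would apply the $\piR$-principle with $\psi=\pibarR(b)$, whose hypothesis $\overline\Delta\pibarR(b)=(1\ox\pibarR(b))E$ is supplied by Lemma \ref{lem:coproduct}, obtaining $\piR(a\pibarR(b))=\piR(a)\pibarR(b)$; and the $\pibarR$-principle with $\psi=\piR(a)$, whose hypothesis $\overline\Delta\piR(a)=E(1\ox\piR(a))$ is supplied by \eqref{eq:coproduct_reg}, obtaining $\pibarR(\piR(a)b)=\piR(a)\pibarR(b)$. The second line is proven symmetrically (it is the left--right mirror of the first), applying the $\piL$-principle with $\psi=\pibarL(a)$ and the $\pibarL$-principle with $\psi=\piL(b)$, whose coproduct hypotheses $\overline\Delta\pibarL(a)=E(\pibarL(a)\ox 1)$ and $\overline\Delta\piL(b)=(\piL(b)\ox 1)E$ are again read off from Lemma \ref{lem:coproduct} and \eqref{eq:coproduct_reg}.

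I expect the only delicate point to be the bookkeeping: one must match to each factorization the correct one-sided form of the coproduct relation (for example $(1\ox\psi)E$ versus $E(1\ox\psi)$, which differ for a general multiplier) and keep track of whether $\psi$ enters as a left or a right multiplier, since the two action components of each of $\pibarL,\pibarR,\piL,\piR$ are given by genuinely different formulas. The structural fact that makes every case collapse is precisely that each of $\piL(x),\piR(x),\pibarL(x),\pibarR(x)$ satisfies a \emph{two-sided} coproduct identity $\overline\Delta(\psi)=(\text{single leg }\psi)E=E(\text{single leg }\psi)$, so that whichever one-sided form a given principle requires is automatically at our disposal.
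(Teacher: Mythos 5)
Your proposal is correct and follows essentially the same route as the paper: the paper's proof likewise combines the multiplicativity of $\overline\Delta$, the coproduct identities of Lemma \ref{lem:coproduct} and \eqref{eq:coproduct_reg}, and the absorption $E\Delta(b)=\Delta(b)=\Delta(b)E$ to move a multiplier of the form $\pibarR(a)$ (resp.\ $\piR(a)$, etc.) from one leg of $T_3$ (resp.\ $T_1$, $T_2$, $T_4$) to the other, then applies the counit formulas and non-degeneracy --- exactly your ``one-sided module-map principles'' in the pattern of Lemma \ref{lem:mod_map}. The only difference is presentational: the paper writes out one equality and declares the rest analogous, whereas you systematize the bookkeeping explicitly.
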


\begin{proof}
Applying the multiplicativity of $\overline \Delta:\M(A)\to \M(A\ox A)$,
Lemma \ref{lem:coproduct} and axiom (iv) in the second equality, 
$$
T_3(\pibarR(a)b\ox c)=
(1\ox c)\Delta(\pibarR(a)b)=
(1\ox c\pibarR(a))\Delta(b)=
T_3(b\ox c\pibarR(a)),
$$
for any $a,b,c\in A$. Hence using \eqref{eq:piR(a)b},
$$
\piR(a)\pibarR(b)c=
(\id \ox \epsilon)T_3(\pibarR(b)c\ox a)=
(\id \ox \epsilon)T_3(c\ox a\pibarR(b)) =
\piR(a\pibarR(b))c.
$$
This proves the first equality in the claim and all other equalities are
checked analogously. 
\end{proof}

\begin{lemma}\label{lem:piRonL}
Let $A$ be a regular weak multiplier bialgebra over a field. For any $a,b\in
A$, the following hold.
\begin{eqnarray*}
&\piR(a\pibarL(b))=\piR(b)\piR(a)\qquad
&\piL(\pibarR(a)b)=\piL(b)\piL(a)\\
&\pibarR(\piL(b)a)=\pibarR(a)\pibarR(b)\qquad
&\pibarL(b\piR(a))=\pibarL(a)\pibarL(b).
\end{eqnarray*}
\end{lemma}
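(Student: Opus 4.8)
The four equalities are related by the left--right and opposite-algebra symmetries, so the plan is to prove the first one directly and deduce the rest. The engine of the computation is Lemma~\ref{lem:E_balanced}, which lets one trade a $\pibarL$ for a $\piR$ (and a $\pibarR$ for a $\piL$) inside an expression, together with the identity $E\Delta(a)=\Delta(a)=\Delta(a)E$ recorded after Definition~\ref{def:mwba}, which lets one insert $E$ next to any $\Delta(c)$.

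First I would establish $\piR(a\pibarL(b))=\piR(b)\piR(a)$ by testing both sides against an arbitrary $c\in A$ on the right and invoking non-degeneracy. Using the first identity in \eqref{eq:piR(a)b} in the form $\piR(x)c=(\id\ox\epsilon)[(1\ox x)\Delta(c)]$, one writes $\piR(a\pibarL(b))c=(\id\ox\epsilon)[(1\ox a)(1\ox\pibarL(b))\Delta(c)]$. The key step is to replace $\Delta(c)$ by $E\Delta(c)$ and then apply Lemma~\ref{lem:E_balanced}, namely $(1\ox\pibarL(b))E=(\piR(b)\ox 1)E$, to obtain $(1\ox\pibarL(b))\Delta(c)=(\piR(b)\ox 1)\Delta(c)$. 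Regrouping $(1\ox a)(\piR(b)\ox 1)=(\piR(b)\ox 1)(1\ox a)$ and pulling the scalar-free factor $\piR(b)\ox 1$ out of $(\id\ox\epsilon)$ leaves $\piR(b)\,(\id\ox\epsilon)[(1\ox a)\Delta(c)]=\piR(b)\piR(a)c$, as desired. (The bracket $(1\ox a)\Delta(c)=T_3(c\ox a)$ lies in $A\ox A$ by regularity, so the counit may legitimately be applied and the factor extracted.)

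The second equality $\piL(\pibarR(a)b)=\piL(b)\piL(a)$ is proved by the mirror computation: test against $c$ on the \emph{left}, use the second identity in \eqref{eq:piR(a)b} in the form $c\piL(x)=(\epsilon\ox\id)[\Delta(c)(x\ox 1)]$, insert $E$ via $\Delta(c)=\Delta(c)E$, and apply the other half of Lemma~\ref{lem:E_balanced}, namely $E(\pibarR(a)\ox 1)=E(1\ox\piL(a))$, before extracting the factor $1\ox\piL(a)$ and concluding by non-degeneracy. Finally, the remaining two equalities follow from these two by passing to the opposite algebra $A^{\op}$: since $A$ is regular, $A^{\op}$ is again a weak multiplier bialgebra with the same $\Delta$, $\epsilon$ and $E$, and under this passage the maps are interchanged as $\pibarL\leftrightarrow\piL$, $\pibarR\leftrightarrow\piR$ while all products reverse. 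Applying the already-proven first (resp.\ second) equality in $A^{\op}$ and translating back yields $\pibarR(\piL(b)a)=\pibarR(a)\pibarR(b)$ (resp.\ $\pibarL(b\piR(a))=\pibarL(a)\pibarL(b)$).

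I expect the only delicate point to be the insertion of $E$ at the right place: the expressions $\pibarL(b)$, $\piR(b)$ etc.\ are genuine multipliers, so one must keep the bracketing honest and rely on $E\Delta(c)=\Delta(c)$ to create the factor $E$ against which Lemma~\ref{lem:E_balanced} can be applied, since that lemma is stated with $E$ rather than with a general $\Delta(c)$. Once this rewriting is set up, the rest is a routine factorization of a leg out of $\id\ox\epsilon$ (or $\epsilon\ox\id$) followed by non-degeneracy.
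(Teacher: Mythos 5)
Your proposal is correct and follows essentially the same route as the paper's proof: the first identity is established by evaluating against $c$ via \eqref{eq:piR(a)b}, inserting $E$ through axiom (iv) so that Lemma \ref{lem:E_balanced} converts $(1\ox\pibarL(b))E$ into $(\piR(b)\ox 1)E$, extracting the first-leg factor from $\id\ox\epsilon$, and concluding by density; the remaining identities follow by the same left--right and opposite-algebra symmetries the paper invokes.
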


\begin{proof}
We only prove the first assertion, all of the remaining ones are proven
symmetrically. For any $a,b,c\in A$, 
$$
\piR(a\pibarL(b))c\stackrel{\eqref{eq:piR(a)b}}=
(\id \ox \epsilon)T_3(c\ox a\pibarL(b))=
\piR(b)(\id \ox \epsilon)T_3(c\ox a)\stackrel{\eqref{eq:piR(a)b}}=
\piR(b)\piR(a)c,
$$
where the second equality follows by axiom (iv) in Definition \ref{def:mwba}
and Lemma \ref{lem:E_balanced}. So we conclude by the density of $A$ in
$\M(A)$. 
\end{proof}

In the following theorem, for any vector spaces $V$ and $W$,
$\mathsf{Lin}(V,W)$ denotes the vector space of linear maps $V\to W$.

\begin{theorem}\label{thm:full}
For a regular weak multiplier bialgebra $A$ over a field $k$, the following
assertions are equivalent to each other.
\begin{itemize}
\item[{(1)}] The comultiplication is {\em right full} in the sense that
$$
\langle (\id \ox \omega)T_1(a\ox b)\ |\ a,b\in A, \omega\in
\mathsf{Lin}(A,k)\rangle =A. 
$$
\item[{(2)}] The comultiplication is {\em right full} in the sense that
$$
\langle (\id \ox \omega)T_3(a\ox b)\ |\ 
a,b\in A, \omega\in \mathsf{Lin}(A,k)\rangle =A.
$$
\item[{(3)}] $\langle (\id \ox \epsilon)T_1(a\ox b)\ |\ a,b\in A\rangle =A$. 
\item[{(4)}] $\langle (\id \ox \epsilon)T_3(a\ox b)\ |\ a,b\in A\rangle =A$. 
\item[{(5)}] $\{ \piR(a)\ |\ a\in A\}=\{ \pibarR(a)\ |\ a\in A\}$. 
\end{itemize}
The following assertions are equivalent to each other, too.
\begin{itemize}
\item[{(1)'}] The comultiplication is {\em left full} in the sense that
$$
\langle (\omega \ox \id)T_2(a\ox b)\ |\ 
a,b\in A, \omega\in \mathsf{Lin}(A,k)\rangle =A.
$$
\item[{(2)'}] The comultiplication is {\em left full} in the sense that
$$
\langle (\omega \ox \id)T_4(a\ox b)\ |\ 
a,b\in A, \omega\in \mathsf{Lin}(A,k)\rangle =A.
$$
\item[{(3)'}] $\langle (\epsilon \ox \id)T_2(a\ox b)\ |\ a,b\in A\rangle =A$. 
\item[{(4)'}] $\langle (\epsilon \ox \id)T_4(a\ox b)\ |\ a,b\in A\rangle =A$. 
\item[{(5)'}] $\{ \piL(a)\ |\ a\in A\}=\{ \pibarL(a)\ |\ a\in A\}$. 
\end{itemize}
\end{theorem}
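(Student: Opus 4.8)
The plan is to prove the first five equivalences (right fullness); the primed group then follows by the mirror argument, replacing $T_1,T_3,\piR,\pibarR$ throughout by $T_2,T_4,\piL,\pibarL$. Throughout I would exploit the op-symmetry supplied by regularity: passing from $A$ to $A^{\op}$ interchanges $T_1\leftrightarrow T_3$ and $\piR\leftrightarrow\pibarR$, hence interchanges (1)$\leftrightarrow$(2) and (3)$\leftrightarrow$(4) and fixes (5). Consequently it suffices to prove each implication below once, its op-companion being automatic.

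First I would translate the two counital conditions through the base maps. By \eqref{eq:pibarR}, $(\id\ox\epsilon)T_1(a\ox b)=a\pibarR(b)$, so (3) says $A\pibarR(A)=A$; by \eqref{eq:piR(a)b}, $(\id\ox\epsilon)T_3(a\ox b)=\piR(b)a$, so (4) says $\piR(A)A=A$. Next I record two identities that hold \emph{unconditionally}: expanding $\mu(\id\ox\piR)T_2=\mu$ from Lemma~\ref{lem:counital_maps} gives $A\piR(A)=A$, and expanding $\mu^{\op}(\id\ox\pibarR)T_4=\mu^{\op}$ gives $\pibarR(A)A=A$ (this one uses regularity, so that $T_4$ lands in $A\ox A$). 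Granting these, (5) gives (3) and (4) at once upon substituting $\pibarR(A)=\piR(A)$. For the partial converses I would use Lemma~\ref{lem:bar_mod_map} in the form $\piR(a)\pibarR(b)=\piR(a\pibarR(b))=\pibarR(\piR(a)b)$: feeding $A=A\pibarR(A)$ into it shows (3)$\Leftrightarrow\piR(A)\subseteq\pibarR(A)$, and feeding $A=\piR(A)A$ into it shows (4)$\Leftrightarrow\pibarR(A)\subseteq\piR(A)$. Thus (5) is exactly the conjunction of (3) and (4).

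The functional reductions (1)$\Leftrightarrow$(3) and (2)$\Leftrightarrow$(4) come next; by op-symmetry only the first need be done. The implication (3)$\Rightarrow$(1) is the trivial choice $\omega=\epsilon$. For (1)$\Rightarrow$(3) I would collapse an arbitrary functional to the counit using coassociativity: applying $\id\ox\id\ox\epsilon$ to the identity $(T_2\ox\id)(\id\ox T_1)=(\id\ox T_1)(T_2\ox\id)$ of axiom (ii), and using the genuine counitality $(\epsilon\ox\id)\Delta=\id=(\id\ox\epsilon)\Delta$ encoded in axiom (iii), rewrites $(\id\ox\omega)T_1(a\ox b)$ as a combination of elements $(\id\ox\epsilon)T_1(\cdot\ox\cdot)$, so that the $\omega$-span and the $\epsilon$-span of the left legs coincide.

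The crux, and the step I expect to resist, is the equivalence (3)$\Leftrightarrow$(4) — equivalently, that either inclusion $\piR(A)\subseteq\pibarR(A)$ or $\pibarR(A)\subseteq\piR(A)$ forces the other. This cannot come from the module-map lemmas alone: the counital reconstructions of Lemma~\ref{lem:counital_maps} always land in $A\piR(A)$ and never in the opposite-handed product $\piR(A)A$, so they only reproduce the inclusion one already has. I would therefore argue directly at the level of left-leg spans, aiming at $\langle(\id\ox\epsilon)T_1(a\ox b)\rangle=\langle(\id\ox\epsilon)T_3(a\ox b)\rangle$; here coassociativity and regularity (which is what makes the $T_3$-span meaningful) both enter essentially. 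The natural levers are the identities \eqref{eq:E(1@a)}, $E(1\ox ab)=(\pibarR\ox\id)T_1(a\ox b)$, and \eqref{eq:E(a@1)}, $(1\ox ab)E=(\piR\ox\id)T_3(b\ox a)$, which tie the left legs of $T_1$ and of $T_3$ to the two sides of the idempotent $E$ and so let one trade $\pibarR$ for $\piR$. Once (3)$\Leftrightarrow$(4) is secured, (3) yields both inclusions — its own, and the reverse through (4) — hence (5); assembling this with the previous steps closes the cycle (1)$\Leftrightarrow$(2)$\Leftrightarrow$(3)$\Leftrightarrow$(4)$\Leftrightarrow$(5), and the primed statements follow by the mirrored computation.
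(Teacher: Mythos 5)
Your reduction of (5) to the conjunction of (3) and (4) is correct and complete: the unconditional identities $A\piR(A)=A$ and $\pibarR(A)A=A$ from Lemma~\ref{lem:counital_maps}, combined with Lemma~\ref{lem:bar_mod_map}, do show that (3) is equivalent to $\piR(A)\subseteq\pibarR(A)$ and (4) to $\pibarR(A)\subseteq\piR(A)$. The op-symmetry reduction is also sound. But the two links that would close the cycle are both missing, and you have flagged only one of them.

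First, your argument for (1)$\Rightarrow$(3) does not work as described. Applying $\id\ox\id\ox\epsilon$ (or $\id\ox\epsilon\ox\id$) to axiom (ii) and invoking (iii) only produces module-type identities such as $T_2(a\ox b\pibarR(c))=T_2(a\ox b)(1\ox\pibarR(c))$ or $(\id\ox\omega)T_1(\pibarL(a)b\ox c)=\pibarL(a)\,(\id\ox\omega)T_1(b\ox c)$; none of these replaces an arbitrary functional $\omega$ in the second leg of $T_1$ by $\epsilon$. The inclusion $\langle(\id\ox\epsilon)T_1(a\ox b)\rangle\subseteq\langle(\id\ox\omega)T_1(a\ox b)\rangle$ is the trivial one; the reverse inclusion is genuinely the hard direction, and your sketch supplies no mechanism for it. Second, you explicitly leave (3)$\Leftrightarrow$(4) as ``the step I expect to resist'' and only name the identities \eqref{eq:E(1@a)} and \eqref{eq:E(a@1)} as levers. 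Without either of these two links, the conditions (1),(3) on the $T_1$-side and (2),(4) on the $T_3$-side remain disconnected, and none of them individually implies (5).

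The paper closes the cycle differently, and in a way that avoids both problems: it imports (1)$\Leftrightarrow$(2) from \cite[Lemma 1.11]{VDaWa:Banach}, and proves (1)$\wedge$(2)$\Rightarrow$(5) from the identity $(\piR\ox\id)T_1(a\ox b)=(1\ox a)E(1\ox b)=(\pibarR\ox\id)T_3(b\ox a)$ --- precisely the levers you name --- by applying $\id\ox\omega$ for \emph{all} $\omega$, so that (1) identifies $\piR(A)$ and (2) identifies $\pibarR(A)$ with the same span $\langle(\id\ox\omega)[(1\ox a)E(1\ox b)]\rangle$. Note that this step needs the full functional spans of (1) and (2), not merely the $\epsilon$-spans of (3) and (4); this is exactly why the cycle is arranged as (3)$\Rightarrow$(1)$\Leftrightarrow$(2), (1)$\wedge$(2)$\Rightarrow$(5)$\Rightarrow$(3),(4), rather than attempting your (3)$\Leftrightarrow$(4) head-on. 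To repair your proof you would either have to reprove the content of the cited lemma or restructure along the paper's lines.
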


\begin{proof}
We only prove the equivalence of the first five assertions. The equivalence of
the second quintuple is proven analogously.

(1)$\Leftrightarrow$(2) is proven in \cite[Lemma 1.11]{VDaWa:Banach}.

(3)$\Rightarrow$(1) and (4)$\Rightarrow$(2) are trivial. 

(1) and (2)$\Rightarrow$(5). For any $a,b\in A$, it follows by
\eqref{eq:E(a@1)} and \eqref{eq:E(1@a)} that 
$$
(\piR\ox \id)T_1(a\ox b)=(1\ox a)E(1\ox b)=(\pibarR\ox \id)T_3(b\ox a).
$$
Then (1) implies 
$$
\{ \piR(a)\ |\ a\in A\}=\langle (\id\ox \omega)[(1\ox a)E(1\ox b)]\ |\ 
a,b\in A, \omega\in \mathsf{Lin}(A,k)\rangle 
$$
and (2) implies 
$$
\{ \pibarR(a)\ |\ a\in A\}=\langle (\id\ox \omega)[(1\ox a)E(1\ox b)]\ |\ 
a,b\in A, \omega\in \mathsf{Lin}(A,k)\rangle ,
$$
proving the claim.

(5)$\Rightarrow$(3). By Lemma \ref{lem:counital_maps}~(4) and the idempotency
of the algebra $A$, $\langle a\piR(b)\ |\ a,b\in A\rangle =A$. Hence by
(5), 
$$
A=
\langle a\pibarR(b)\ |\ a,b\in A\rangle \stackrel{\eqref{eq:pibarR}}=
\langle (\id \ox \epsilon)T_1(a\ox b)\ |\ a,b\in A\rangle .
$$

(5)$\Rightarrow$(4) is proven symmetrically.
\end{proof}

\section{Firm separable Frobenius structure of the base algebras}\label{sec:sF}

In a weak bialgebra with a unit, the coinciding range of the maps $\pibarL$
and $\piL$, and the coinciding range of $\pibarR$ and $\piR$ in the previous
section, carry the structures of anti-isomorphic {\em separable Frobenius
algebras} (with units). The aim of this section is to see that in a regular
weak multiplier bialgebra with a left and right full comultiplication, the
base algebras still carry the structures of anti-isomorphic coseparable and
co-Frobenius coalgebras. Consequently, they are firm Frobenius algebras in the
sense of \cite{BoGT:fF}.  

It follows immediately from Lemma \ref{lem:mod_map} that for any weak
multiplier bialgebra $A$, the ranges of $\pibarL$ and of $\pibarR$ are
non-unital subalgebras of the multiplier algebra $\M(A)$. We turn to proving
that --- whenever $A$ is regular with a left and right full comultiplication
--- they carry coalgebra structures as well. First we look for the candidate
counit.  

\begin{proposition}\label{prop:base_counit}
Let $A$ be a regular weak multiplier bialgebra over a field $k$ with a right
full comultiplication. Then the counit $\epsilon$ determines a linear map 
$$
\varepsilon:\{\piR(a)\ |\ a\in A\}\to k,\qquad\qquad 
\piR(a)\mapsto \epsilon(a).
$$
\end{proposition}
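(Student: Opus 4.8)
The statement to be proven is the well-definedness of $\varepsilon$: the prescription $\piR(a)\mapsto\epsilon(a)$ yields a genuine map on the subspace $\{\piR(a)\ |\ a\in A\}$ of $\M(A)$ only if $\piR(a)=\piR(a')$ forces $\epsilon(a)=\epsilon(a')$. Since both $\piR$ and $\epsilon$ are linear, this is equivalent to the single inclusion $\ker\piR\subseteq\ker\epsilon$, so the whole proof reduces to the implication $\piR(a)=0\Rightarrow\epsilon(a)=0$. This is the only thing I would actually verify.

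The first step is to extract numerical information from $\piR(a)=0$. The natural tool is the counit identity for the base map recorded in \eqref{eq:counit_exp_reg}, namely $\epsilon(\piR(a)b)=\epsilon(ab)$ for all $a,b\in A$. Evaluating it at our element $a$ with $\piR(a)=0$ gives $\epsilon(ab)=\epsilon(\piR(a)b)=0$ for every $b\in A$. Thus $\epsilon$ annihilates the whole subspace $\langle ab\ |\ b\in A\rangle$, and the task becomes to promote the family of identities $\epsilon(ab)=0$ to the single identity $\epsilon(a)=0$.

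This last passage is where I expect the real work, and it is the only point at which the right fullness hypothesis is genuinely needed. In the unital case it is trivial — one simply puts $b=1$ — and the content of the proposition is precisely that right fullness manufactures an adequate replacement for the absent unit. Concretely, I would invoke Theorem \ref{thm:full}: right fullness rewrites $A$ as the span of the first legs of the comultiplication, e.g. in form (3) as $\langle (\id\ox\epsilon)T_1(c\ox d)\ |\ c,d\in A\rangle=A$, and it identifies the range of $\piR$ with that of $\pibarR$ (form (5)). Using such a spanning presentation of $a$, together with the idempotency and non-degeneracy of $\mu$ and the compatibility identity \eqref{eq:counit_exp_reg}, one feeds the vanishing $\epsilon(ab)=0$ back in to force $\epsilon(a)=0$. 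The delicate point — and the step I would spend the most care on — is to arrange the computation so that it genuinely closes rather than merely re-expressing $\epsilon(a)$ as another value of $\epsilon$ on an element carrying the same defect; it is exactly fullness (as opposed to mere idempotency, which supplies only the former) that supplies the extra leverage needed to break this circularity.
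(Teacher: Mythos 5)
Your reduction to the implication $\piR(p)=0\Rightarrow\epsilon(p)=0$ and your first deduction $\epsilon(pb)=0$ for all $b\in A$ (via \eqref{eq:counit_exp_reg}) are both correct, but the proposal stops exactly where the proof has to begin: the passage from the family of identities $\epsilon(pb)=0$ to the single identity $\epsilon(p)=0$ is only asserted (``one feeds the vanishing back in''), not performed, and you yourself flag it as the delicate point. This is a genuine gap, and I do not see how to close it from the data you have retained. Writing $p=\sum_i a^ib^i$ and $a'\ox b':=\sum_i T_2(a^i\ox b^i)$, Lemma \ref{lem:counital_maps}~(4) gives $\epsilon(p)=\epsilon(a'\piR(b'))$; your hypothesis controls only $\epsilon(a'\piR(b')c)$ for $c\in A$, and stripping off the $c$ would require something like a local unit for $\piR(b')$ inside $\piR(A)$ --- but the existence of local units in $\piR(A)$ (Theorem \ref{thm:base_sF}~(2)) is established downstream of this very proposition, so invoking it here would be circular.

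The paper's proof does not pass through the weaker statement ``$\epsilon(pb)=0$ for all $b$''. It keeps the full strength of $\piR(p)=0$ and uses the first identity of Lemma \ref{lem:pi_(ef)} (together with Lemma \ref{lem:counit_exp}) to convert it into the vanishing of $\epsilon(a'\pibarR(c))\piR(b')dh$ for all $c,d,h\in A$; by non-degeneracy of the multiplication, density of $A$ in $\M(A)$, and the equality $\pibarR(A)=\piR(A)$ from Theorem \ref{thm:full} --- this is the precise point where right fullness enters --- the operator $x\mapsto\epsilon(a'x)\piR(b')$ on $\piR(A)$ is therefore zero. The injectivity of $\mathsf{Lin}(\piR(A),k)\ox\piR(A)\to\mathsf{Lin}(\piR(A),\piR(A))$ then forces the \emph{tensor} $\epsilon(a'-)\ox\piR(b')$ itself to vanish, and only at that stage does one evaluate $\Phi\ox x\mapsto\Phi(x)$ to obtain $\epsilon(a'\piR(b'))=\epsilon(p)=0$. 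The idea missing from your sketch is exactly this separation of the two tensor legs of $T_2(a^i\ox b^i)$ before $\epsilon$ is applied: collapsing everything to $\epsilon(pb)=0$ multiplies the legs back together too early and discards the structure that breaks the circularity you correctly worry about.
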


\begin{proof}
In order to see that $\varepsilon$ is a well-defined linear map, we need to
show that $\piR(p)=0$ implies $\epsilon(p)=0$, for any $p\in A$. Since $A$ is
an idempotent algebra by assumption, we can write any element $p$ of $A$ as a
linear combination $\sum_i a^ib^i$ in terms of finitely many elements 
$a^i,b^i\in A$. So omitting throughout the summation symbol for brevity, it is
enough to prove that $\piR(a^ib^i)=0$ implies $\epsilon(a^ib^i)=0$, for any
finite set of elements $a^i,b^i\in A$. If $\piR(a^ib^i)=0$, then 
\begin{eqnarray*} 
0 \stackrel{\eqref{eq:counit_exp_reg}}= 
(\id \ox \epsilon)[(1\ox a^ib^i)((\id \ox \pibarR)
\!\!\!\!&\!\!\!\!T_4\!\!\!\!&\!\!\!\!(h\ox d))(1\ox c)]\\
&=&(\epsilon \ox \id)[((\id \ox \piR)T_2(a^i\ox b^i))(c\ox dh)]\\
&=&(\epsilon \ox \id)[((\id \ox \piR)T_2(a^i\ox b^i))
(\pibarR(c)\ox 1)]dh,
\end{eqnarray*}
for all $c,d,h\in A$. In the
second equality above, we used the first statement in Lemma \ref{lem:pi_(ef)}
and the third equality follows by Lemma \ref{lem:counit_exp}. By Theorem
\ref{thm:full}, the vector subspaces $\piR(A):=\{\piR(a)\ |\ a\in A\}$ and
$\pibarR(A):=\{\pibarR(a)\ |\ a\in A\}$ of $\M(A)$ coincide. So by the
density of $A$ in $\M(A)$, the map 
$$
\piR(A)\to \piR(A),\qquad \pibarR(c) \mapsto 
(\epsilon \ox \id)[((\id \ox \piR)T_2(a^i\ox b^i))
(\pibarR(c) \ox 1)]
$$
is the zero map. Introduce the notation $T_2(a^i\ox b^i)=:a'\ox b'$
(allowing for implicit summation on both sides of the equality). Using that
the evident map $\mathsf{Lin}(\piR(A),k)\ox \piR(A)\to
\mathsf{Lin}(\piR(A),\piR(A))$, $\Phi\ox x\mapsto \Phi(-)x$ is injective, we 
conclude that 
$$
\epsilon(a'-)\ox \piR(b')\in \mathsf{Lin}(\piR(A),k)\ox \piR(A)
$$
is equal to zero. Applying to it the evaluation map $\mathsf{Lin}(\piR(A),k)\ox
\piR(A)\to k$, $\Phi \ox x\mapsto \Phi(x)$, and using Lemma
\ref{lem:counital_maps}~(4), we proved that
$$
\epsilon(a'\piR(b'))=\epsilon\mu(\id \ox \piR)T_2(a^i\ox b^i)= \epsilon(a^ib^i)
$$
is equal to zero as needed.
\end{proof}

For the construction of the comultiplication on $\piR(A)$, the following
technical lemma is needed. 

\begin{lemma}\label{lem:F_well-defd}
Let $A$ be a regular weak multiplier bialgebra over a field. For any $a,b\in
A$, the following assertions hold.
\begin{itemize}
\item[{(1)}] The element $(\id \ox \pibarR)T_4(a\ox b)$ of $A\ox \M(A)$
 depends on $a$ and $b$ only through the product $ba$.
\item[{(2)}] The element $(\id \ox \piR)T_2(a\ox b)$ of $A\ox \M(A)$
 depends on $a$ and $b$ only through the product $ab$.
\end{itemize}
\end{lemma}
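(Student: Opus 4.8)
My plan is to observe that the two statements are interchanged by passage to the opposite algebra, reduce everything to (2), and then prove (2) by a computation modelled on the proof of Proposition~\ref{prop:(a@1)E}(1).

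For the reduction, recall that since $A$ is regular, $A^\op$ is again a regular weak multiplier bialgebra with the same $\Delta$, $\epsilon$ and $E$. Under this passage $T_2^\op=T_4$ and $\piR^\op=\pibarR$ (indeed $(u\ox v)\cdot_\op w=w(u\ox v)$ turns left multiplication by $E$, resp.\ by $\Delta(b)$, into right multiplication, so that $(1\ox x)\cdot_\op E=E(1\ox x)$ and $(a\ox 1)\cdot_\op\Delta(b)=\Delta(b)(a\ox 1)$), while the product is reversed, $a\cdot_\op b=ba$. Hence statement (1) for $A$ is literally statement (2) read inside $A^\op$, and it suffices to prove (2) for an arbitrary regular weak multiplier bialgebra.

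To prove (2), I would use that $(\id\ox\piR)T_2(a\ox b)$ lies in $A\ox\M(A)$, so by non-degeneracy of the multiplication it is determined by its products $(\id\ox\piR)T_2(a\ox b)(1\ox c)$ with $c\in A$; thus it is enough that each of these depend on $a,b$ only through $ab$. Expressing $\piR$ through $E$ as in \eqref{eq:piR} and spreading the two legs of $T_2(a\ox b)=(a\ox 1)\Delta(b)$ into the first and third tensorands by the map $\Delta_{13}$ of Section~\ref{sec:prelims}, I would rewrite
$$
(\id\ox\piR)T_2(a\ox b)=(\id\ox\id\ox\epsilon)\bigl[(a\ox 1\ox 1)\,\Delta_{13}(b)\,(1\ox E)\bigr].
$$
Since $\Delta(b)=\Delta(b)E$ by axiom (iv), the factor $\Delta_{13}(b)$ carries a hidden copy of $E$ in its first and third legs, sitting beside the explicit $1\ox E$ in the second and third legs.

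The heart of the argument is to collapse this expression exactly in the manner of Proposition~\ref{prop:(a@1)E}(1): insert the hidden $E$ using axiom (iv), fuse the two copies of $E$ by the weak comultiplicativity axiom (v) into $E^{(3)}=(\overline{\Delta\ox\id})(E)=(\overline{\id\ox\Delta})(E)$, transport the resulting coproduct through by coassociativity, and finally contract the third leg by the counit axiom (iii). The result is an expression in which $a$ and $b$ enter only as $ab$, as required; statement (1) then follows from the opposite-algebra reduction. I expect the decisive difficulty to be bookkeeping but genuinely delicate: because $\piR$ contracts the \emph{inner} leg, the coproduct of $b$ lands in the non-adjacent first and third factors (this is exactly why $\Delta_{13}$ was introduced), so the two copies of $E$ meet in a crossed position rather than in the adjacent position $(E\ox 1)(1\ox E)$ of axiom (v); one must therefore apply (v) after a suitable flip and then verify that the counit contraction in the third leg reassembles $a$ and $b$ into $ab$ instead of leaving them entangled.
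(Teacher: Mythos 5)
Your reduction of (1) to (2) via the opposite algebra is correct (the paper goes the other way, proving (1) and declaring (2) analogous), and your overall strategy --- multiply by a generic element and show the result depends only on the product --- is also the paper's. The gap is in the core computation, and you have put your finger on exactly where it breaks: the crossed position of the two copies of $E$ is not a bookkeeping nuisance but a genuine obstruction to the method you propose. After rewriting $(\id\ox\piR)T_2(a\ox b)$ as $(\id\ox\id\ox\epsilon)[(a\ox 1\ox 1)\Delta_{13}(b)(1\ox E)]$, the hidden copy of $E$ coming from $\Delta(b)=\Delta(b)E$ sits in legs $(1,3)$ while the explicit one sits in legs $(2,3)$. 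Axiom (v) only governs $(E\ox 1)(1\ox E)$ and $(1\ox E)(E\ox 1)$; conjugating it by $\mathsf{tw}\ox\id$ produces a statement about $\mathsf{tw}(E)$ in legs $(1,2)$ against $E$ in legs $(1,3)$, which is still not the configuration you need, and $E$ is not symmetric in general. Concretely, if you push your computation through (replacing $E(c\ox d)$ by $\Delta(x)(c\ox d)$ via axiom (iv)) you arrive at an expression of the shape $ab_{(1)}\ox x_{(1)}c\,\epsilon(b_{(2)}x_{(2)}d)$ in which the first legs of $\Delta(b)$ and $\Delta(x)$ land in \emph{different} tensor factors; coassociativity and the counit axiom cannot reassemble $a$ and $b$ into $ab$ from there. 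So the proof of Proposition \ref{prop:(a@1)E}~(1) is not a usable template here.

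What actually closes the gap in the paper is Lemma \ref{lem:pi_(ef)}, whose proof rests not on axiom (v) but on the multiplicativity of $\Delta$: the element $(1\ox \Delta^\op(b))(\Delta(d)\ox 1)$ of $\M(A\ox A\ox A)$ can be grouped in two ways, and applying $\id\ox\epsilon\ox\id$ to the two groupings yields $(1\ox ab)((\id\ox\pibarR)T_4(c\ox d))=((\piR\ox\id)T_2^\op(a\ox b))(dc\ox 1)$. The paper applies this identity twice, once in each direction, to obtain $((\id\ox\pibarR)T_4(a\ox b))(f\ox g)=((\id\ox\pibarR)T_4(f\ox ba))(1\ox g)$, whose right-hand side manifestly depends on $a,b$ only through $ba$; that proves (1), and (2) follows by your opposite-algebra symmetry. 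Without Lemma \ref{lem:pi_(ef)} or an equivalent crossed-leg identity, your outline cannot be completed.
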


\begin{proof}
We only prove part (1), part (2) follows analogously.
Applying twice the first identity in Lemma \ref{lem:pi_(ef)}, for all
$a,b,c,d,f,g\in A$
\begin{eqnarray*}
(1\ox cd)((\id \ox \pibarR)T_4(a\ox b))(f\ox g)&=&
((\piR\ox \id)T_2^\op(c\ox d))(baf\ox g)\\
&=&(1\ox cd)((\id \ox \pibarR)T_4(f\ox ba)(1\ox g).
\end{eqnarray*}
So if $ba=b'a'$ for some $a,b,a',b'\in A$, then for all $f,g\in A$, 
\begin{eqnarray*}
((\id \ox \pibarR)T_4(a\ox b))(f\ox g)&=&
((\id \ox \pibarR)T_4(f\ox ba))(1\ox g)\\
&=&((\id \ox \pibarR)T_4(f\ox b'a'))(1\ox g)\\
&=&((\id \ox \pibarR)T_4(a'\ox b'))(f\ox g),
\end{eqnarray*}
proving $(\id \ox \pibarR)T_4(a\ox b)=(\id \ox \pibarR)T_4(a'\ox b')$.
\end{proof}

\begin{proposition}\label{prop:F}
For a regular weak multiplier bialgebra $A$ over a field, the following
assertions hold. 
\begin{itemize}
\item[{(1)}] The maps $A\ox A \to A \ox A$, 
\begin{eqnarray*}
&&a\ox bc\mapsto ((\pibarR\ox \id) T_4^\op(c\ox b))(a\ox 1)
\quad \textrm{and}\\
&&ab\ox c \mapsto (1\ox c)((\id \ox \piR)T_2(a\ox b))
\end{eqnarray*}
(where $T_4^\op=\mathsf{tw}T_4$) determine an element of $\M(A\ox A)$, to be
denoted by $F$. 
\item[{(2)}] For any element $a\in A$, and $F\in \M(A\ox A)$ as in (1), 
$(\piR(a)\ox 1)F$ and $F(1\ox \piR(a))$ are equal elements of $\piR(A)\ox
\piR(A)$, to be denoted by $\delta\sqcap^R(a)$.
\item[{(3)}] The map 
$$
\delta:\piR(A)\to \piR(A)\ox \piR(A),\qquad 
\piR(a)\mapsto (\piR(a)\ox 1)F= F(1\ox \piR(a))
$$
provides a $\piR(A)$-bimodule section of the multiplication in $\piR(A)$. 
\item[{(4)}] The map $\delta:\piR(A)\to \piR(A)\ox \piR(A)$ in part (3) is a
coassociative comultiplication.
\end{itemize}
\end{proposition}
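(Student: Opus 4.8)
The plan is to establish the four parts of Proposition \ref{prop:F} in sequence, since each part builds on the previous one. For part (1), I must show that the two prescribed assignments are compatible and genuinely define a multiplier on $A\ox A$, i.e.\ that the maps are well-defined (independent of the chosen factorizations) and satisfy the multiplier balancing condition. The well-definedness of the second assignment $ab\ox c\mapsto (1\ox c)((\id\ox\piR)T_2(a\ox b))$ is exactly Lemma \ref{lem:F_well-defd}(2), which guarantees dependence only on the product $ab$; similarly the first assignment is controlled by Lemma \ref{lem:F_well-defd}(1) (after unwinding $T_4^\op=\mathsf{tw}\,T_4$). The crucial computational step is then to check the multiplier compatibility $(p\ox q)[\textrm{first map applied to }a\ox bc]=[\textrm{second map applied to }ab\ox c'](p'\ox q')$ for the appropriate elements; here I expect to lean on Lemma \ref{lem:pi_(ef)}, which relates expressions of the form $((\piR\ox\id)T_2^\op(\cdots))(\cdots\ox 1)$ to $(1\ox\cdots)((\id\ox\pibarR)T_4(\cdots))$, and on the defining module-map properties of the $T_j$. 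This is the step I anticipate being the main obstacle, since one must carefully track which factor each map acts on and invoke the correct (left/right) module-map property to move elements across the tensor factors.

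For part (2), I would compute $(\piR(a)\ox 1)F$ by pairing against arbitrary elements $p\ox q\in A\ox A$ from the left and using the first defining assignment of $F$ together with \eqref{eq:piR(a)b}; symmetrically $F(1\ox\piR(a))$ is computed using the second assignment. The claim that both land in $\piR(A)\ox\piR(A)$ and coincide should follow by first showing each equals a common expression built from $E$ (much as Proposition \ref{prop:(a@1)E}(1) identified $(\id\ox\pibarL)T_2$ with $(ab\ox 1)E$); the balancing identity in Lemma \ref{lem:E_balanced} together with Lemma \ref{lem:commute} should be what forces the two sides to agree. I would use that $\piR(A)=\pibarR(A)$ is unnecessary here since regularity already gives both $\piR$ and $\pibarR$, but the coincidence of the two formulas for $\delta\sqcap^R$ is the genuine content.

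For part (3), I must verify that $\delta$ is both a right and a left $\piR(A)$-module map and that it splits the multiplication, i.e.\ $\mu\delta=\id$ on $\piR(A)$. The module-map property should follow by inserting $\piR(a)\piR(b)$ and using Lemma \ref{lem:mod_map_reg}, \eqref{eq:coproduct_reg}, and the bimodule structure encoded in part (2); the splitting property $\mu\,\delta(\piR(a))=\piR(a)$ reduces, via the formula $(\piR(a)\ox 1)F$ and the second assignment of $F$, to Lemma \ref{lem:counital_maps}(4) (that $\mu(\id\ox\piR)T_2=\mu$) combined with the idempotency of $A$. Finally, for part (4), coassociativity $(\delta\ox\id)\delta=(\id\ox\delta)\delta$ should be proven by expressing both sides in terms of $F$ and the threefold idempotent $E^{(3)}$ from \eqref{eq:E_3}, then invoking axiom (v). I would reduce each side to an expression of the form $(\piR(a)\ox 1\ox 1)(F\ox 1)(1\ox F)$ versus $(\piR(a)\ox 1\ox 1)(1\ox F)(F\ox 1)$ and use the $E^{(3)}$-type associativity together with part (2) to identify them; the main technical care is ensuring all intermediate expressions remain inside $\piR(A)^{\ox 3}$, which the section property from part (3) and the coincidence of formulas from part (2) should guarantee.
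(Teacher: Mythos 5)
Your plan for parts (1) and (3) is essentially the paper's: well-definedness of the two assignments is Lemma \ref{lem:F_well-defd}, the multiplier compatibility is precisely the first identity of Lemma \ref{lem:pi_(ef)}, and the splitting $\mu\delta=\id$ comes down to Lemma \ref{lem:counital_maps}~(4). For part (2), though, the lemmas you name are not the ones that carry the argument. The centrality $(\piR(a)\ox 1)F=F(1\ox \piR(a))$ is obtained by pairing against $bc\ox d$ and using $\overline\Delta\piR(a)=E(1\ox\piR(a))$ from \eqref{eq:coproduct_reg}, the absorption $T_2(b\ox c)E=T_2(b\ox c)$ from axiom (iv), and $\piR(x\piR(a))=\piR(x)\piR(a)$ from \eqref{eq:mod_map_reg}; membership in $\piR(A)\ox\piR(A)$ then follows from Lemma \ref{lem:bar_mod_map} and (again) Lemma \ref{lem:pi_(ef)}, and this computation is what yields the formula $\delta\piR(ab)=(\piR\ox\piR)T_2(a\ox b)$ that you also need for part (3). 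Lemma \ref{lem:E_balanced} and Lemma \ref{lem:commute} do not enter; ``a common expression built from $E$'' needs to be made explicit along these lines.

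The genuine problem is part (4). You propose to reduce coassociativity to comparing $(F\ox 1)(1\ox F)$ with $(1\ox F)(F\ox 1)$ and to ``invoke axiom (v)''. Axiom (v) concerns $E$, not $F$, and no such commutation relation for $F$ is available at this stage: the relation between $E$ and $F$ (Proposition \ref{prop:E<>F}) is proved only later and requires a left and right full comultiplication, which Proposition \ref{prop:F} does not assume. So, as stated, this step would stall. The detour is also unnecessary. Once part (2) shows $\delta$ is a $\piR(A)$-bimodule map and part (3) shows $\piR(A)$ is idempotent, coassociativity is the purely formal computation
$$
(\delta\ox\id)\delta(sr)=\delta(sr_1)\ox r_2=s_1\ox s_2r_1\ox r_2=s_1\ox\delta(s_2r)=(\id\ox\delta)\delta(sr),
$$
which is the paper's one-line argument. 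Equivalently, if you insist on computing with $F$, alternating the two formulas $\delta(r)=(r\ox 1)F=F(1\ox r)$ shows that \emph{both} iterated coproducts equal the single expression $(\piR(a)\ox 1\ox 1)(F\ox 1)(1\ox F)$, so only the centrality from part (2) is used and no $E^{(3)}$-type identity is needed.
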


\begin{proof}
(1). Both maps in the claim are well-defined by Lemma \ref{lem:F_well-defd}
and they define a multiplier by the first statement in Lemma
\ref{lem:pi_(ef)}. 

(2). Centrality of $F$ in the $\piR(A)$-bimodule $\M(A\ox A)$ follows by the
following computation, for all $a,b,c,d\in A$.
\begin{eqnarray*}
(bc\ox d)(\piR(a)\ox 1)F&=&
(1\ox d)((\id \ox \piR)T_2(b\ox c\piR(a)))\\
&=&(1\ox d)((\id \ox \piR)(T_2(b\ox c)\overline \Delta\piR(a)))\\
&=&(1\ox d)((\id \ox \piR)(T_2(b\ox c)(1\ox \piR(a))))\\
&=&(1\ox d)((\id \ox \piR)T_2(b\ox c))(1\ox \piR(a))\\
&=&(bc\ox d)F(1\ox \piR(a)).
\end{eqnarray*}
The second equality follows by the explicit form of $T_2$ and the
multiplicativity of $\overline \Delta$. In the third equality we used that by
\eqref{eq:coproduct_reg}, $\overline \Delta \piR(a)=E(1\ox \piR(a))$, and by
axiom (iv) in Definition \ref{def:mwba}, $T_2(b\ox c)E=T_2(b\ox c)$. The
fourth equality follows by \eqref{eq:mod_map_reg}. The stated elements belong
to $\piR(A)\ox \piR(A)$ by the following reasoning. For any $a,b,d,f\in A$,
denote $a'\ox b':=T_2(a\ox b)$ and $f'\ox d':=T_4(f\ox d)$ (allowing for
implicit summation). Then for all $a,b,c,d,f\in A$, 
\begin{eqnarray*}
(\piR(ab)\ox 1)F(c\ox df)&=&
\piR(ab)\pibarR(d')c\ox f'=
\piR(ab\pibarR(d'))c\ox f'\\
&=&\piR(a')c\ox \piR(b')df=
(\piR\ox \piR)T_2(a\ox b)(c\ox df).
\end{eqnarray*}
The second equality follows by Lemma \ref{lem:bar_mod_map} and the third one
follows by the first assertion in Lemma \ref{lem:pi_(ef)}. This proves 
\begin{equation}\label{eq:delta}
\delta\piR(ab)=(\piR\ox \piR)T_2(a\ox b), \quad \forall a,b\in A,
\end{equation}
so by the idempotency of $A$, $\delta\piR(a)\in \piR(A)\ox \piR(A)$, for all
$a\in A$.

(3). In view of \eqref{eq:delta}, for any $a,b\in A$,
$$\mu\delta\piR(ab)=
\mu(\piR\ox \piR)T_2(a\ox b)=
\piR\mu(\id \ox \piR)T_2(a\ox b)=
\piR(ab).
$$ 
The second identity follows by \eqref{eq:mod_map_reg} and the last one does by
Lemma \ref{lem:counital_maps}~(4).

(4). Let us use a Sweedler type index notation $\delta(r)=:r_1\ox r_2$ for any
$r\in \piR(A)$, where implicit summation is understood. It follows by part (3)
that $\piR(A)$ is an idempotent algebra. So the coassociativity of $\delta$
follows by
$$
(\delta \ox \id)\delta(sr)=
\delta(sr_1)\ox r_2=
s_1\ox s_2r_1\ox r_2=
s_1\ox \delta(s_2r)=
(\id \ox \delta)\delta(sr),
$$ 
for all $s,r\in \piR(A)$. In the first and the penultimate equalities we used
that $\delta$ is a morphism of left $\piR(A)$-modules and in the second and
the last equalities we used that it is a morphism of right $\piR(A)$-modules.
\end{proof}

\begin{theorem}\label{thm:base_coalg}
Let $A$ be a regular weak multiplier bialgebra over a field $k$ with a right
full comultiplication. Then $\piR(A)$ is a coalgebra via the counit
$\varepsilon:\piR(A)\to k$ in Proposition \ref{prop:base_counit} and the
comultiplication $\delta:\piR(A)\to \piR(A)\ox \piR(A)$ in Proposition
\ref{prop:F}~(3). 
\end{theorem}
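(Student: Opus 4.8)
The plan is to assemble the coalgebra axioms for $\piR(A)$ from the pieces already established for the comultiplication $\delta$ and the candidate counit $\varepsilon$. Proposition \ref{prop:F} already gives that $\delta:\piR(A)\to\piR(A)\ox\piR(A)$ is a well-defined coassociative comultiplication, and Proposition \ref{prop:base_counit} gives that $\varepsilon:\piR(A)\to k$, $\piR(a)\mapsto\epsilon(a)$, is a well-defined linear map (both relying on right fullness to identify $\piR(A)$ with $\pibarR(A)$ via Theorem \ref{thm:full}). So the only thing left to verify is the counit law, namely that $(\varepsilon\ox\id)\delta=\id=(\id\ox\varepsilon)\delta$ as maps $\piR(A)\to\piR(A)$.

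First I would reduce the counit law to a computation on elements of the form $\piR(ab)$, which suffices by the idempotency of the algebra $A$. The natural handle is the formula $\delta\piR(ab)=(\piR\ox\piR)T_2(a\ox b)$ from \eqref{eq:delta} in the proof of Proposition \ref{prop:F}. Applying $\id\ox\varepsilon$ and using the definition $\varepsilon(\piR(c))=\epsilon(c)$, together with Lemma \ref{lem:counital_maps}~(4) which states $\mu(\id\ox\piR)T_2=\mu$, I expect to compute
$$
(\id\ox\varepsilon)\delta\piR(ab)=
\piR\mu(\id\ox\piR)T_2(a\ox b)=
\piR(ab),
$$
where in the first step I pull $\piR$ through the first leg and apply $\varepsilon$ in the second, and I use that $\piR\mu(\id\ox\piR)T_2(a\ox b)=\piR(ab)$ exactly as in the proof of Proposition \ref{prop:F}~(3). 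The other counit law $(\varepsilon\ox\id)\delta=\id$ is verified symmetrically, applying $\varepsilon\ox\id$ to \eqref{eq:delta} and invoking the appropriate counitality identity together with $\piR\mu(\piR\ox\id)$; here one should be slightly careful, since the first leg now carries $\piR$ rather than the identity, so I would instead appeal to the fact that $\varepsilon\piR=\epsilon$ on $A$ together with an analogue of Lemma \ref{lem:counital_maps}, or else re-derive the needed identity from the bimodule section property of $\delta$ in Proposition \ref{prop:F}~(3).

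The main obstacle I anticipate is precisely the consistency between the two legs: the formula \eqref{eq:delta} is not manifestly symmetric in its two tensor factors, so applying $\varepsilon$ on the left leg versus the right leg requires two genuinely different counitality identities from Lemma \ref{lem:counital_maps}. The cleanest route is probably to lean on Proposition \ref{prop:F}~(3), which already tells us $\mu\delta=\id$ on $\piR(A)$; combining this with the fact that $\delta$ is a bimodule section means that $\varepsilon$ behaves like the restriction of $\epsilon$ to the base, and the two counit equations should both collapse to the single identity $\varepsilon\piR=\epsilon$ fed through Lemma \ref{lem:counital_maps}. In short, the statement is essentially bookkeeping assembling Proposition \ref{prop:base_counit}, Proposition \ref{prop:F}, and Lemma \ref{lem:counital_maps}~(4) into the definition of a coalgebra, with the only subtlety being the asymmetry of \eqref{eq:delta}.
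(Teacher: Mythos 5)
Your reduction of the problem to the counit law is right, and your treatment of the right-hand counit equation is essentially the paper's (up to a small slip in your display: the correct chain is $(\id\ox\varepsilon)(\piR\ox\piR)T_2(a\ox b)=\piR\big((\id\ox\epsilon)T_2(a\ox b)\big)=\piR(ab)$ by axiom (iii), rather than via $\piR\mu(\id\ox\piR)T_2$, which is the computation for $\mu\delta=\id$ and not for $(\id\ox\varepsilon)\delta$). But there is a genuine gap in your handling of the other counit equation $(\varepsilon\ox\id)\delta=\id$, and it is precisely where the real content of the proof lies. If you apply $\varepsilon\ox\id$ directly to \eqref{eq:delta}, you get $\piR\big((\epsilon\ox\id)T_2(a\ox b)\big)$, and by \eqref{eq:pibarL} this is $\piR(\pibarL(a)b)$, \emph{not} $\piR(ab)$: the identity $(\epsilon\ox\id)T_2=\mu$ is not among the counit axioms (axiom (iii) gives $(\epsilon\ox\id)T_1=\mu$ and $(\id\ox\epsilon)T_2=\mu$), and no lemma in the paper asserts $\piR(\pibarL(a)b)=\piR(ab)$. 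Your proposed escape routes do not close this: the fact that $\mu\delta=\id$ together with the bimodule section property of $\delta$ does not determine $(\varepsilon\ox\id)\delta$ — a coassociative comultiplication admitting a one-sided counit need not admit it on the other side, and nothing formal forces the particular candidate $\varepsilon$ to work on the left leg.

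The paper's resolution is to derive a second, genuinely different expression for $\delta$, namely $\delta\piR(ab)=(\piR\ox\piR)T_3(b\ox a)$ (equation \eqref{eq:delta_other}). This is obtained through the intermediate identities \eqref{eq:b_piR(a)c} and \eqref{eq:T_3>2}, whose proofs use Lemma \ref{lem:counit_exp}, Lemma \ref{lem:E_balanced} and \eqref{eq:cond_exp_reg}; in particular Lemma \ref{lem:E_balanced} (the exchange $(\piR(a)\ox 1)E=(1\ox\pibarL(a))E$) is the key input that lets one move between the two legs. Once \eqref{eq:delta_other} is available, applying $\varepsilon\ox\id$ gives $\piR\big((\epsilon\ox\id)T_3(b\ox a)\big)=\piR(ab)$ by the regular form $(\epsilon\ox\id)T_3=\mu^\op$ of axiom (iii). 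So the "asymmetry of \eqref{eq:delta}" that you flag as a subtlety is not bookkeeping: it requires this additional half-page computation, which your proposal does not supply.
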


\begin{proof}
The map $\delta$ is a coassociative comultiplication by Proposition
\ref{prop:F}~(4). It remains to prove its counitality. For any $a,b\in A$, 
$$
(\id \ox \varepsilon)\delta \piR(ab)\stackrel{\eqref{eq:delta}}=
(\id \ox \varepsilon)(\piR\ox \piR)T_2(a\ox b)=
\piR(\id \ox \epsilon)T_2(a\ox b)\stackrel{(iii)}=
\piR(ab).
$$
In order to prove counitality on the other side, we need an alternative
expression of $\delta$. To this end, note that for any $a,b,c\in A$,
\begin{equation}\label{eq:b_piR(a)c}
(\id \ox \epsilon)[(1\ox a)T_2(b\ox c)]=
(\id \ox \epsilon)[(b\ox 1)T_3(c\ox a)]
\stackrel{\eqref{eq:piR(a)b}}=
b\piR(a)c.
\end{equation}
On the other hand, for any $a,b,c\in A$,
\begin{eqnarray*}
(\id\ox \epsilon)[T_3(b\ox a)(1\ox c)]&=&
(\id\ox \epsilon)[(1\ox a)T_1(b\ox c)]\\
&=&(\id\ox \epsilon)[(1\ox \pibarL(a))T_1(b\ox c)]\\
&=&(\id\ox \epsilon)[(\piR(a)\ox 1)T_1(b\ox c)].
\end{eqnarray*}
The second equality follows by Lemma \ref{lem:counit_exp} and the third one
follows by Lemma \ref{lem:E_balanced}. Therefore,
\begin{eqnarray}\label{eq:T_3>2}
(\piR\ox \epsilon)\!\!\!\!\!\!\!&&\!\!\!\!\!\!\![T_3(b\ox a)(1\ox c)]=
(\piR\ox \epsilon)[(\piR(a)\ox 1)T_1(b\ox c)]\\
&=&(\piR\ox \epsilon)[(a\ox 1)T_1(b\ox c)]=
(\piR\ox \epsilon)[T_2(a\ox b)(1\ox c)],\nonumber
\end{eqnarray}
where the second equality follows by \eqref{eq:cond_exp_reg}. With these
identities at hand, for any $a,b,c,d,f,g\in A$,
\begin{eqnarray}\label{eq:3>2}
(f\ox g)\!\!\!\!\!\!\!\!\!\!&&\!\!\!\!\!\!\!\!\!\!
((\piR\ox \piR)T_3(b\ox a))(c\ox d)\\
&\stackrel{\eqref{eq:b_piR(a)c}}=&
(f\ox 1)(\piR\ox \epsilon \ox \id)[(T_3(b\ox a)\ox 1)(1\ox T_2^\op(g\ox d))]
(c\ox 1)\nonumber\\
&\stackrel{\eqref{eq:T_3>2}}=&
(f\ox 1)(\piR\ox \epsilon \ox \id)[(T_2(a\ox b)\ox 1)(1\ox T_2^\op(g\ox d))]
(c\ox 1)\nonumber\\
&\stackrel{\eqref{eq:b_piR(a)c}}=&
(f\ox g)((\piR\ox \piR)T_2(a\ox b))(c\ox d), \nonumber
\end{eqnarray}
so that by \eqref{eq:delta}, 
\begin{equation}\label{eq:delta_other}
\delta\piR(ab)=(\piR\ox \piR)T_3(b\ox a), \qquad \forall a,b\in A.
\end{equation}
Using this expression of $\delta$,
$$
(\varepsilon\ox \id)\delta \piR(ab)\stackrel{\eqref{eq:delta_other}}=
(\varepsilon\ox \id)(\piR\ox \piR)T_3(b\ox a)=
\piR(\epsilon \ox \id)T_3(b\ox a)\stackrel{(iii)}=
\piR(ab).
$$
\end{proof}

\begin{lemma}\label{lem:base_coalg_bar}
Let $A$ be a regular weak multiplier bialgebra over a field with a right full
comultiplication. Then the comultiplication $\delta$ and the counit
$\varepsilon$ in Theorem \ref{thm:base_coalg} satisfy the following
identities, for all $a,b\in A$.
\begin{itemize}
\item[{(1)}] $\delta\pibarR(ab)=(\pibarR\ox \pibarR)T_4^\op(b\ox a)$.
\item[{(2)}] $\delta\pibarR(ab)=(\pibarR\ox \pibarR)T_1^\op(a\ox b)$.
\item[{(3)}] $\varepsilon \pibarR(a)=\epsilon(a)$.
\end{itemize}
\end{lemma}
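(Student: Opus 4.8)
The plan is to prove the three identities in the order (3), (1), (2), exploiting throughout that right fullness gives the identification $\piR(A)=\pibarR(A)$ (Theorem \ref{thm:full}), so that both the counit $\varepsilon$ of Proposition \ref{prop:base_counit} and the comultiplication $\delta$ of Proposition \ref{prop:F} are genuinely defined on the elements $\pibarR(x)$. With this in mind, (1) and (2) are the $\pibarR$-flavoured counterparts of the two presentations \eqref{eq:delta} and \eqref{eq:delta_other} of $\delta$, while (3) is the $\pibarR$-counterpart of the defining property $\varepsilon\piR(a)=\epsilon(a)$.

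For (3) I would first record that $\{\piR(x)y\mid x,y\in A\}$ spans $A$: by \eqref{eq:piR(a)b} one has $\piR(x)y=(\id\ox\epsilon)T_3(y\ox x)$, and right fullness gives $\langle(\id\ox\epsilon)T_3(a\ox b)\mid a,b\in A\rangle=A$ by Theorem \ref{thm:full}(4). By linearity it then suffices to check $\varepsilon\pibarR(w)=\epsilon(w)$ for $w=\piR(x)y$. For such $w$, Lemma \ref{lem:bar_mod_map} gives $\pibarR(\piR(x)y)=\piR(x)\pibarR(y)=\piR(x\pibarR(y))$, so by the definition of $\varepsilon$ and by Lemma \ref{lem:counit_exp}
$$
\varepsilon\pibarR(\piR(x)y)=\varepsilon\piR(x\pibarR(y))=\epsilon(x\pibarR(y))=\epsilon(xy).
$$
On the other hand $\epsilon(\piR(x)y)=\epsilon(xy)$ by \eqref{eq:counit_exp_reg}, and comparing the two values yields (3).

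For (1) I would mirror the computation of Proposition \ref{prop:F}(2) that produced \eqref{eq:delta}, but starting from $\delta\pibarR(ab)=(\pibarR(ab)\ox 1)F$. Evaluating $(\pibarR(ab)\ox 1)F$ on $c\ox df$ through the first (that is, the $\pibarR$-) leg of $F$ from Proposition \ref{prop:F}(1) gives $\pibarR(ab)\pibarR(d'')c\ox f''$ with $f''\ox d'':=T_4(f\ox d)$, which by Lemma \ref{lem:mod_map} equals $\pibarR(\pibarR(ab)d'')c\ox f''$. The first tensor factor is thus already of $\pibarR$-type, and the remaining task is to convert the plain second factor into $\pibarR$-form and to recognise $T_4^\op(b\ox a)$. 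I would carry this out with the regularity identity Lemma \ref{lem:pi_(ef)} together with $\piR(A)=\pibarR(A)$, exactly as the $\piR$-case used Lemma \ref{lem:pi_(ef)} to pass from $\piR(ab\pibarR(d'))c\ox f'$ to $\piR(a')c\ox\piR(b')df$; here the same device should deliver $\pibarR(a')c\ox\pibarR(b')df$ with $b'\ox a':=T_4(b\ox a)$. Non-degeneracy of the multiplication then gives $\delta\pibarR(ab)=(\pibarR\ox\pibarR)T_4^\op(b\ox a)$.

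Finally, (2) is the $\pibarR$-analogue of the passage from \eqref{eq:delta} to \eqref{eq:delta_other} carried out in the proof of Theorem \ref{thm:base_coalg}. Starting from (1), I would rerun the transition \eqref{eq:3>2} in the opposite algebra, replacing the auxiliary identities \eqref{eq:b_piR(a)c} and \eqref{eq:T_3>2} by their $\pibarR$-counterparts (which in turn rest on coassociativity (ii), Lemma \ref{lem:E_balanced} and Lemma \ref{lem:counit_exp}), so as to trade $T_4^\op(b\ox a)$ for $T_1^\op(a\ox b)$ without altering the value $\delta\pibarR(ab)$. The main obstacle throughout is precisely this leg-bookkeeping in (1) and (2): one must track carefully which tensor leg carries $\piR$ and which carries $\pibarR$, and apply the conversion lemmas in the correct order; once the fullness identification $\piR(A)=\pibarR(A)$ is used consistently, every remaining step is routine.
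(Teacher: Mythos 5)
Your treatment of (3) is correct and in fact more direct than the paper's: you verify $\varepsilon\pibarR=\epsilon$ on the spanning set $\{\piR(x)y\ |\ x,y\in A\}$ using Lemma \ref{lem:bar_mod_map}, Lemma \ref{lem:counit_exp} and \eqref{eq:counit_exp_reg}, whereas the paper deduces (3) \emph{from} (1) and (2), by invoking the opposite-algebra version of Proposition \ref{prop:base_counit} and the uniqueness of counits; your argument is independent of (1) and (2) and is a legitimate simplification. Your plan for (2) coincides with the paper's (apply \eqref{eq:3>2} in $A^\op$ and quote (1)). The problem is (1).

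There you start from $\delta\pibarR(ab)=(\pibarR(ab)\ox 1)F$ and correctly reach $\pibarR(\pibarR(ab)d'')c\ox f''$ with $f''\ox d'':=T_4(f\ox d)$; but the assertion that ``the same device'' converts this into $\pibarR(a')c\ox\pibarR(b')df$ with $b'\ox a':=T_4(b\ox a)$ does not hold as stated. The first identity of Lemma \ref{lem:pi_(ef)} trades an expression of the form $(1\ox xy)((\id\ox\pibarR)T_4(u\ox v))$, with $xy$ an \emph{element of $A$}, for $((\piR\ox \id)T_2^{\op}(x\ox y))(vu\ox 1)$, i.e.\ its output carries $\piR$ on the legs of $T_2$. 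That is exactly what the computation in Proposition \ref{prop:F}~(2) needs, since its target is $(\piR\ox\piR)T_2(a\ox b)$. In your expression the multiplying factor is the \emph{multiplier} $\pibarR(ab)$ rather than an element of $A$, and your target is $(\pibarR\ox\pibarR)T_4^\op(b\ox a)$, i.e.\ $\pibarR$ on the legs of $T_4$; neither feature is produced by the cited lemma, so this is a genuine gap rather than bookkeeping. The paper avoids it by using the other presentation $\delta\pibarR(ab)=F(1\ox\pibarR(ab))$ (available by the centrality of $F$ established in Proposition \ref{prop:F}~(2)) and evaluating from the left on $cd\ox f$: the second map defining $F$ then yields $c'\ox f\piR(d')\pibarR(ab)$ with $c'\ox d':=T_2(c\ox d)$, Lemma \ref{lem:bar_mod_map} rewrites $\piR(d')\pibarR(ab)$ as $\pibarR(\piR(d')ab)$, and Lemma \ref{lem:pi_(ef)} then applies in precisely the form in which it is stated. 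You should either switch to that presentation, or separately establish the $T_4$-to-$T_4$ conversion identity that your route actually requires.
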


\begin{proof}
(1) Symmetrically to the derivation of \eqref{eq:delta}, for any $a,b,c,d,f\in
A$ denote $T_4(b\ox a)=:b'\ox a'$ and $T_2(c\ox d)=:c'\ox d'$, allowing for
implicit summation. Then
\begin{eqnarray*}
(cd\ox f)\delta\pibarR(ab)&=&
(cd\ox f)F(1\ox \pibarR(ab))=
c'\ox f\piR(d')\pibarR(ab)\\
&=& c'\ox f\pibarR(\piR(d')ab)=
cd\pibarR(a')\ox f\pibarR(b')\\
&=&(cd\ox f)((\pibarR \ox \pibarR)T_4^\op(b\ox a)).
\end{eqnarray*}
The third equality follows by Lemma \ref{lem:bar_mod_map} and the fourth
equality follows by the first assertion in Lemma \ref{lem:pi_(ef)}.

(2). Applying the equality \eqref{eq:3>2} in the opposite of the algebra $A$,
we obtain 
$$
(\pibarR \ox \pibarR)T_4(b\ox a)=(\pibarR \ox \pibarR)T_1(a\ox b).
$$
Hence the claim follows by part (1).

(3). Applying Proposition \ref{prop:base_counit} to the opposite of the
algebra $A$, there is a linear map $\pibarR(A)\to k$, $\pibarR(a)\mapsto
\epsilon(a)$. Using parts (1) and (2), it can be seen to be the counit for
$\delta$ proving that it is equal to $\varepsilon$.
\end{proof}

The following theorem describes the rich algebraic structure carried by the
base algebras. Such a result was obtained for {\em regular} weak multiplier
Hopf algebras in \cite{VDae:Sep}.

\begin{theorem}\label{thm:base_sF}
Let $A$ be a regular weak multiplier bialgebra over a field with a right
full comultiplication. Then the following assertions hold.
\begin{itemize}
\item[{(1)}] Via the coalgebra structure in Theorem \ref{thm:base_coalg} and
the restriction of the multiplication in $\M(A)$, $\piR(A)$ is a coseparable
coalgebra, hence a firm Frobenius algebra.
\item[{(2)}] The multiplication in the firm Frobenius algebra in part (1) is
non-degenerate. Moreover, it has (idempotent) local units. 
\item[{(3)}] The coalgebra $\piR(A)$ in part (1) is a co-Frobenius coalgebra.
Hence there exists a unique isomorphism of non-unital algebras 
$\vartheta: \piR(A)\to \piR(A)$ --- known as the {\em Nakayama automorphism}
--- such that $\varepsilon(sr)=\varepsilon (\vartheta(r)s)$, for all
$s,r\in \piR(A)$. 
\end{itemize}
\end{theorem}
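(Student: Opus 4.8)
The plan is to read off all the structure from the algebra and coalgebra already built on $\piR(A)$ in Theorem \ref{thm:base_coalg}, and to feed it into the general theory of \cite{BoGT:fF}. The central observation for part (1) is that coseparability of the coalgebra $\piR(A)$ is nothing but a reformulation of Proposition \ref{prop:F}~(3). Indeed, coseparability asks for the comultiplication $\delta$ to admit a retraction in the category of $\piR(A)$-bicomodules, and the natural candidate is the (restricted) multiplication $\mu$. On one hand $\mu\delta=\id$ is precisely the content of Proposition \ref{prop:F}~(3); on the other hand, the assertion that $\mu$ is a $\piR(A)$-bicomodule morphism is equivalent to $\delta$ being a $\piR(A)$-bimodule morphism (both encode $\delta(rs)=r\delta(s)=\delta(r)s$), which is again Proposition \ref{prop:F}~(3). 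Hence $\piR(A)$ is a coseparable coalgebra, and I would then invoke the general result of \cite{BoGT:fF} that a coseparable coalgebra on a firm module carries the structure of a firm Frobenius algebra, the required firmness being supplied by part (2).

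For part (2) I would argue as follows. Idempotency of the multiplication is already contained in the proof of Proposition \ref{prop:F}~(4). The heart of the matter is the existence of idempotent local units; once these are available, non-degeneracy is immediate, since if $r\in\piR(A)$ satisfies $r\,\piR(A)=0$ and $u$ is a local unit with $ru=r$, then $r=ru=0$ (and symmetrically on the other side). To produce the local units I would exploit the coseparable structure directly: for $r=\piR(ab)$ the element $\delta(r)=(\piR\ox\piR)T_2(a\ox b)$ of $\piR(A)\ox \piR(A)$ furnishes, via $\mu\delta=\id$ together with the bimodule property of $\delta$, a ``local separability idempotent''; combining the legs of the comultiplications of finitely many elements and collapsing one leg with the counit $\varepsilon$ should yield an idempotent acting as a two-sided unit on the chosen finite set. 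This is exactly the mechanism by which firm Frobenius algebras in \cite{BoGT:fF} acquire local units, so the cleanest route is to verify the hypotheses there and quote the conclusion.

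For part (3) the task is to recognise $\piR(A)$ as co-Frobenius, i.e.\ to show that the counit $\varepsilon$ is a non-degenerate Frobenius functional for the pairing $(r,s)\mapsto\varepsilon(rs)$. Here I would combine the identities already available: $\varepsilon\piR(a)=\epsilon(a)$ from Proposition \ref{prop:base_counit}, $\varepsilon\pibarR(a)=\epsilon(a)$ from Lemma \ref{lem:base_coalg_bar}~(3), and the coincidence $\{\piR(a)\,|\,a\in A\}=\{\pibarR(a)\,|\,a\in A\}$ from Theorem \ref{thm:full}, together with the module-map relations \eqref{eq:mod_map_reg} and the counit formulae \eqref{eq:counit_exp_reg} and Lemma \ref{lem:counit_exp}, in order to rewrite $\varepsilon(rs)$ in terms of $\epsilon$ on $A$ and to deduce that both legs of the pairing are non-degenerate. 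Granting this non-degeneracy, co-Frobeniusness follows from \cite{BoGT:fF}, and the Nakayama automorphism $\vartheta$ is then characterised uniquely by $\varepsilon(sr)=\varepsilon(\vartheta(r)s)$: for each $r$ the functional $s\mapsto\varepsilon(sr)$ is represented, by non-degeneracy, as $s\mapsto\varepsilon(\vartheta(r)s)$ for a unique $\vartheta(r)\in\piR(A)$, and $\vartheta$ is an algebra automorphism by the standard Frobenius argument.

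I expect the main obstacle to be part (3), or more precisely extracting genuine one-sided non-degeneracy of the Frobenius pairing (and, in part (2), the local units) from the merely idempotent and non-degenerate algebra $A$: the coseparable identities give surjectivity of $\mu$ and the bimodule compatibilities at no cost, but turning these into honest non-degeneracy of $\varepsilon(rs)$ requires carefully transporting the non-degeneracy of the multiplication of $A$ through the maps $\piR$ and $\pibarR$, using right fullness in an essential way. Once the pairing is shown to be non-degenerate, the remaining assertions are formal consequences of the firm Frobenius and co-Frobenius dictionary of \cite{BoGT:fF}.
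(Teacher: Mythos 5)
Your outline for part (1) matches the paper exactly: coseparability is Proposition \ref{prop:F}~(3), and the firm Frobenius structure is quoted from \cite[Section 6.4]{BoGT:fF}. (One small slip: firmness is not ``supplied by part (2)''; it comes for free from coseparability, since $\delta$ being a bimodule section of $\mu$ already forces the induced map $\piR(A)\ox_{\piR(A)}\piR(A)\to\piR(A)$ to be invertible.) The problem lies in parts (2) and (3), where your logical order is reversed and, as a consequence, the one genuinely non-formal step of the theorem is never carried out. You propose to obtain idempotent local units first, ``by verifying the hypotheses of \cite{BoGT:fF} and quoting the conclusion,'' and then to deduce non-degeneracy from the local units. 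But the relevant result (\cite[Proposition 7]{BoGT:fF}, as the paper uses it) takes \emph{non-degeneracy of the multiplication as its hypothesis} and returns local units and the co-Frobenius property as its conclusion; a coseparable coalgebra structure on a merely idempotent algebra does not by itself produce local units. So your route is circular: the input you need for the quoted machinery is exactly the statement you plan to derive from its output.

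The missing argument — which you correctly flag as ``the main obstacle'' but only gesture at — is the direct proof that the multiplication on $\piR(A)$ is non-degenerate. The paper's mechanism is: if $\piR(a)\pibarR(b)=\piR(a\pibarR(b))=0$ for all $b\in A$ (using Lemma \ref{lem:bar_mod_map} and $\piR(A)=\pibarR(A)$ from right fullness), then applying $\varepsilon$ and Lemma \ref{lem:counit_exp} gives $\epsilon(ab)=0$ for all $b\in A$; since $T_2(b\ox c)\in A\ox A$, this kills $(\id\ox\epsilon)[(1\ox a)T_2(b\ox c)]=(\id\ox\epsilon)[(b\ox1)T_3(c\ox a)]=b\piR(a)c$ for all $b,c$, whence $\piR(a)=0$ by density of $A$ in $\M(A)$. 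The decisive ingredient here is the identity \eqref{eq:piR(a)b} expressing $b\piR(a)c$ through $\epsilon$ and $T_2$ (or $T_3$), which does not appear in your list of tools; the lemmas you do cite ($\varepsilon\pibarR=\epsilon$, \eqref{eq:mod_map_reg}, \eqref{eq:counit_exp_reg}) get you as far as $\epsilon(ab)=0$ for all $b$ but not back to $\piR(a)=0$. Once this non-degeneracy is in hand, everything else in parts (2) and (3) — local units, the co-Frobenius property, and the existence and uniqueness of the Nakayama automorphism (via \cite[Section 6]{CaDaNa}) — is indeed the formal bookkeeping you describe.
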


\begin{proof}
(1). By Theorem \ref{thm:base_coalg}, $\piR(A)$ is a coalgebra. By Proposition
\ref{prop:F}~(3), the multiplication in $\piR(A)$ is a bicomodule retraction
(i.e. left inverse) of the comultiplication. This precisely means a
coseparable coalgebra structure. Then $\piR(A)$ is a firm Frobenius algebra
by the considerations in \cite[Section 6.4]{BoGT:fF}.

(2). For some $a\in A$, assume that $\piR(a)\pibarR(b)= \piR(a\pibarR(b))=0$,
for all $b\in A$ (where the first equality follows by Lemma
\ref{lem:bar_mod_map}). Then also 
$$
0=\varepsilon\piR(a\pibarR(b))=\epsilon(a\pibarR(b))=\epsilon(ab),
\quad \forall b\in A,
$$
where the last equality follows by Lemma \ref{lem:counit_exp}. This implies
that 
$$
0=(\id \ox \epsilon)[(1\ox a)T_2(b\ox c)]=
(\id \ox \epsilon)[(b\ox 1)T_3(c\ox a)]\stackrel{\eqref{eq:piR(a)b}}=
b\piR(a)c,\quad \forall b,c\in A,
$$
proving $\piR(a)=0$. Since $\piR(A)=\pibarR(A)$ by Theorem \ref{thm:full},
this proves the non-degeneracy of the multiplication on the
right. Non-degeneracy on the left is proven symmetrically. The existence of
local units follows by \cite[Proposition 7]{BoGT:fF}.

(3). In light of part (2), it follows by \cite[Proposition 7]{BoGT:fF} that
the coalgebra $\piR(A)$ in Theorem \ref{thm:base_coalg} is left and right
co-Frobenius. So the existence of the Nakayama automorphism follows by
\cite[Section 6]{CaDaNa}. 
\end{proof}

The following symmetric version is immediate.

\begin{theorem}\label{thm:base_sF_left}
Let $A$ be a regular weak multiplier bialgebra over a field $k$ with a left
full comultiplication. Then the following assertions hold.
\begin{itemize}
\item[{(1)}] The subalgebra $\piL(A)$ of $\M(A)$ is a coseparable
coalgebra, hence a firm Frobenius algebra.
\item[{(2)}] The multiplication in the firm Frobenius algebra in part (1) is
non-degenerate. Moreover, it has (idempotent) local units.
\item[{(3)}] The coalgebra $\piL(A)$ in part (1) is a co-Frobenius coalgebra
 (hence its counit has a Nakayama automorphism). 
\end{itemize}
\end{theorem}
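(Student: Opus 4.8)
The statement is the exact left--right mirror of Theorem \ref{thm:base_sF}, so my plan is to deduce it from that theorem by passing to the \emph{co-opposite} structure rather than redoing the argument by hand. I keep the algebra $A$, and hence the multiplier algebra $\M(A)$ together with all of its products, unchanged, but replace the comultiplication by $\Delta^\op$ and the idempotent $E$ by $\mathsf{tw}(E)$, retaining the same counit $\epsilon$; call the resulting data $A^{\mathrm{cop}}$. Since $\mathsf{tw}$ is an algebra automorphism of $\M(A\ox A)$ with $(\id\ox\epsilon)\mathsf{tw}=(\epsilon\ox\id)$, a direct computation from the definitions of $\Delta^\op$, $T_1,\dots,T_4$ gives $T_1^{\mathrm{cop}}(a\ox b)=\mathsf{tw}\,T_4(b\ox a)$, $T_2^{\mathrm{cop}}(a\ox b)=\mathsf{tw}\,T_3(b\ox a)$, $T_3^{\mathrm{cop}}(a\ox b)=\mathsf{tw}\,T_2(b\ox a)$ and $T_4^{\mathrm{cop}}(a\ox b)=\mathsf{tw}\,T_1(b\ox a)$.

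The first task is to verify that $A^{\mathrm{cop}}$ is again a \emph{regular} weak multiplier bialgebra. By the four identities above all four maps $T_j^{\mathrm{cop}}$ land in the ideal $A\ox A$ --- the first two by the regularity of $A$ and the last two because $A$ is a weak multiplier bialgebra --- so axiom (i) of Definition \ref{def:mwba} and Definition \ref{def:regular} hold. Axioms (ii)--(vi) for $A^{\mathrm{cop}}$ are then obtained by applying $\mathsf{tw}$, and its three-fold analogue on $\M(A\ox A\ox A)$, to the corresponding (regular) axioms for $A$; here one uses the multiplicativity of $\mathsf{tw}$, the fact that it interchanges the two counit slots, and that reversing the three tensor factors carries $E^{(3)}$ to the element $(E^{\mathrm{cop}})^{(3)}$ built from $\mathsf{tw}(E)$ (which makes the flip of axiom (v) for $A$ read exactly as axiom (v) for $A^{\mathrm{cop}}$). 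This verification is really the only content of the proof, and is the step I expect to be the most laborious, although it is entirely routine bookkeeping.

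With $A^{\mathrm{cop}}$ in hand I would translate the conclusion of Theorem \ref{thm:base_sF} through the obvious dictionary. From $(1\ox a)\mathsf{tw}(E)=\mathsf{tw}((a\ox 1)E)$ one reads off that the map $\piR$ attached to $A^{\mathrm{cop}}$ coincides with the map $\pibarL$ attached to $A$, whence $\piR(A^{\mathrm{cop}})=\pibarL(A)$ as subsets of $\M(A)$; under the left fullness hypothesis this equals $\piL(A)$ by Theorem \ref{thm:full}~(5)'. Moreover $(\id\ox\omega)T_1^{\mathrm{cop}}(a\ox b)=(\omega\ox\id)T_4(b\ox a)$, so right fullness of $A^{\mathrm{cop}}$ (Theorem \ref{thm:full}~(1)) is precisely left fullness of $A$ (Theorem \ref{thm:full}~(2)', equivalent to (1)'). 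Thus Theorem \ref{thm:base_sF} applies to $A^{\mathrm{cop}}$ and endows $\piR(A^{\mathrm{cop}})=\piL(A)$ with a coseparable, co-Frobenius, firm Frobenius structure whose multiplication is the restriction of the product of $\M(A^{\mathrm{cop}})=\M(A)$. Since this multiplier algebra and its product are literally those of $A$, each clause of the present theorem --- the coalgebra and firm Frobenius structure in (1), the non-degeneracy and local units in (2), and the co-Frobenius property with its Nakayama automorphism in (3) --- transports verbatim.

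As an alternative I could avoid $A^{\mathrm{cop}}$ entirely and simply mirror the chain Proposition \ref{prop:base_counit}, Lemma \ref{lem:F_well-defd}, Proposition \ref{prop:F}, Theorem \ref{thm:base_coalg}, Lemma \ref{lem:base_coalg_bar} and Theorem \ref{thm:base_sF} under the substitutions $T_1\leftrightarrow T_2$, $T_3\leftrightarrow T_4$, $\piR\leftrightarrow\piL$, $\pibarR\leftrightarrow\pibarL$ and $\mu\leftrightarrow\mu^\op$; each of those results was established together with exactly the symmetric counterpart now required. I nonetheless prefer the co-opposite route, as it concentrates all of the mirror symmetry into the single routine check that $A^{\mathrm{cop}}$ is a regular weak multiplier bialgebra, after which nothing further needs to be proved.
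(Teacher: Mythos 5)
Your proposal is correct and matches the paper, whose entire proof of this statement is the remark that the symmetric version of Theorem \ref{thm:base_sF} is immediate; you simply make that left--right symmetry explicit by packaging it as the co-opposite structure $(A,\Delta^{\op},\epsilon,\mathsf{tw}(E))$ and checking $\piR(A^{\mathrm{cop}})=\pibarL(A)=\piL(A)$ and that right fullness of $A^{\mathrm{cop}}$ is left fullness of $A$. The identifications of the $T_j^{\mathrm{cop}}$ and the use of regularity of $A$ to get axiom (i) for $A^{\mathrm{cop}}$ are all right, so nothing further is needed.
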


Our next aim is to find a more explicit expression of the Nakayama
automorphisms in Theorem \ref{thm:base_sF}~(3) and Theorem
\ref{thm:base_sF_left}~(3). 

\begin{lemma}\label{lem:sigma}
For a regular weak multiplier bialgebra $A$ over a field, the following
assertions hold.
\begin{itemize} 
\item[{(1)}] If the comultiplication is left full, then there is a linear
anti-multiplicative map   
$$
\sigma:\piL(A)=\pibarL(A)\to \piR(A),\qquad 
\pibarL(a)\mapsto \piR(a).
$$ 
\item[{(2)}] If the comultiplication is left full, then there is a linear
anti-multiplicative map
$$
\overline\sigma:\piL(A)=\pibarL(A)\to \pibarR(A),\qquad 
\piL(a)\mapsto \pibarR(a).
$$ 
\item[{(3)}] If the comultiplication is right full, then there is a linear
anti-multiplicative map  
$$
\tau:\piR(A)=\pibarR(A)\to \pibarL(A),\qquad 
\piR(a)\mapsto \pibarL(a).
$$ 
\item[{(4)}] If the comultiplication is right full, then there is a linear
anti-multiplicative map 
$$
\overline\tau:\piR(A)=\pibarR(A)\to \piL(A),\qquad 
\pibarR(a)\mapsto \piL(a).
$$
\end{itemize}
If the comultiplication is both left and right full, then $\tau=\sigma^{-1}$
and $\overline\tau=\overline\sigma^{\,-1}$.
\end{lemma}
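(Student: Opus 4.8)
The plan is to prove assertion (3) in detail and then obtain (1), (2) and (4) by transporting it to the opposite and the co-opposite algebra, the final statement about inverses being immediate from the explicit formulae. For (3) two things must be checked: that the assignment $\piR(a)\mapsto\pibarL(a)$ is well defined (linearity is then automatic) and that it is anti-multiplicative. Since $\sum_i\piR(a_i)=\piR(\sum_i a_i)$ and likewise for $\pibarL$, well-definedness reduces to the single implication $\piR(p)=0\Rightarrow\pibarL(p)=0$ for $p\in A$.

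For well-definedness I would start from the first identity of Lemma \ref{lem:E_balanced}, namely $(\piR(a)\ox1)E=(1\ox\pibarL(a))E$. If $\piR(p)=0$ then the left-hand side vanishes, so $(1\ox\pibarL(p))E=0$. Right-multiplying by $b\ox1$ and applying $\epsilon\ox\id$, and using \eqref{eq:piL} in the form $\piL(b)=(\epsilon\ox\id)(E(b\ox1))$, I obtain $\pibarL(p)\piL(b)=0$, hence $\pibarL(p)\piL(b)c=0$ for all $b,c\in A$. By Lemma \ref{lem:counital_maps}~(3) one has $\mu(\piL\ox\id)T_1=\mu$, so every product $bc$ is a sum of elements $\piL(x)y$; as $A$ is idempotent this gives $\langle\piL(b)c\mid b,c\in A\rangle=A$, and non-degeneracy of the multiplication yields $\pibarL(p)=0$, as needed.

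For anti-multiplicativity I would invoke \eqref{eq:mod_map_reg}, which gives $\piR(a)\piR(b)=\piR(a\piR(b))$, so that $\tau(\piR(a)\piR(b))=\pibarL(a\piR(b))$. The last identity of Lemma \ref{lem:piRonL} reads $\pibarL(b\piR(a))=\pibarL(a)\pibarL(b)$; relabelling gives $\pibarL(a\piR(b))=\pibarL(b)\pibarL(a)=\tau(\piR(b))\tau(\piR(a))$, which is exactly anti-multiplicativity of $\tau$.

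To deduce the remaining parts I would record how the four base maps behave under the two standard symmetries. For the co-opposite algebra $A^{\mathsf{cop}}$ (with comultiplication $\Delta^\op$ and idempotent $\mathsf{tw}(E)$) the defining formulae give $\piR^{A^{\mathsf{cop}}}=\pibarL$ and $\pibarL^{A^{\mathsf{cop}}}=\piR$, while right fullness of $A^{\mathsf{cop}}$ is precisely left fullness of $A$ (by Theorem \ref{thm:full}); applying (3) to $A^{\mathsf{cop}}$ therefore yields (1). For the opposite algebra $A^\op$ one finds $\pibarL^{A^\op}=\piL$, $\pibarR^{A^\op}=\piR$, with both fullness conditions unchanged; applying (3) and (1) to $A^\op$ then delivers (4) and (2) respectively. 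Finally, when the comultiplication is both left and right full all four maps are defined and $\tau\circ\sigma$ sends $\pibarL(a)\mapsto\piR(a)\mapsto\pibarL(a)$ while $\sigma\circ\tau$ sends $\piR(a)\mapsto\pibarL(a)\mapsto\piR(a)$, so $\tau=\sigma^{-1}$; the identity $\overline\tau=\overline\sigma^{\,-1}$ follows in the same way. The main obstacle is the well-definedness step: its only nontrivial input is the balancing identity of Lemma \ref{lem:E_balanced}, which trades $\piR$ for $\pibarL$ against $E$ and lets the counit collapse the surviving tensor factor onto the dense subspace $\langle\piL(b)c\rangle=A$; the symmetry dictionary for $A^\op$ and $A^{\mathsf{cop}}$ is routine but must be tracked carefully so that the left/right fullness hypotheses match up.
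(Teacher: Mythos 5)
Your proposal is correct, and its overall architecture (prove one part directly, transport the rest through the opposite and co-opposite structures, read off the inverse relations from the explicit formulae) matches the paper's, which proves part (1) and declares the rest symmetric. The genuine difference lies in the well-definedness step. The paper shows $\pibarL(a)=0\Rightarrow\piR(a)=0$ by passing through the counit $\varepsilon$ of the base coalgebra $\pibarL(A)$ — i.e.\ it invokes Proposition \ref{prop:base_counit} (in its left-handed form, which is where left fullness enters), Lemma \ref{lem:mod_map} and Lemma \ref{lem:counit_inv}, ending with $bc\,\piR(a)=0$. You instead prove the converse-direction implication $\piR(p)=0\Rightarrow\pibarL(p)=0$ needed for $\tau$, using only Lemma \ref{lem:E_balanced} to trade $(\piR(p)\ox 1)E$ for $(1\ox\pibarL(p))E$, the formula \eqref{eq:piL} to collapse the first leg, and Lemma \ref{lem:counital_maps}~(3) plus idempotency to see that $\langle\piL(b)c\rangle=A$. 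This is more elementary — it bypasses the coalgebra machinery of Section \ref{sec:sF} entirely and, as stated, establishes the implication $\piR(p)=0\Rightarrow\pibarL(p)=0$ for any regular weak multiplier bialgebra, with fullness needed only to identify the domain $\piR(A)=\pibarR(A)$; the price is that you must justify applying $\epsilon\ox\id$ to $(1\ox\pibarL(p))E(b\ox 1)$, which is fine since $E(b\ox 1)\in A\ox\M(A)$ by \eqref{eq:E(a@1)} and regularity. The anti-multiplicativity argument is essentially identical to the paper's (both rest on Lemma \ref{lem:piRonL} together with the conditional-expectation identities), and your op/cop dictionary and the fullness bookkeeping check out.
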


\begin{proof}
We prove part (1), all other parts follow symmetrically. Denote the counit of
the coalgebra $\piL(A)=\pibarL(A)$ by $\varepsilon$. If $\pibarL(a)=0$, then
for all $b,c\in A$,
\begin{eqnarray*}
0&=&
(\id \ox \varepsilon(\pibarL(a)\pibarL(-)))T_2(b\ox c)=
(\id \ox \varepsilon\pibarL(a\pibarL(-)))T_2(b\ox c)\\
&=&(\id \ox \epsilon(a\pibarL(-)))T_2(b\ox c)=
(\id \ox \epsilon(-\piR(a)))T_2(b\ox c)\\
&=&(\id \ox \epsilon)[(b\ox 1)\Delta(c)(1\ox \piR(a))]=
(\id \ox \epsilon)T_2(b\ox c\piR(a))\stackrel{(iii)}=
bc\piR(a),
\end{eqnarray*}
proving that $\piR(a)=0$. The second equality follows by Lemma
\ref{lem:mod_map} and the fourth one follows by Lemma \ref{lem:counit_inv}. In
the penultimate equality we applied axiom (iv) in Definition \ref{def:mwba},
\eqref{eq:coproduct_reg} and the multiplicativity of $\overline \Delta$. 
This proves the existence of the stated linear map $\sigma$. Using Lemma
\ref{lem:mod_map} in the first equality and Lemma \ref{lem:piRonL} in the
penultimate equality,
$$
\sigma(\pibarL(a)\pibarL(b))=
\sigma\pibarL(a\pibarL(b))=
\piR(a\pibarL(b))=
\piR(b)\piR(a)=
(\sigma\pibarL(b))(\sigma\pibarL(a)),
$$
for any $a,b\in A$; that is, $\sigma$ is anti-multiplicative.
\end{proof}

\begin{proposition}\label{prop:sigma}
Let $A$ be a regular weak multiplier bialgebra over a field with a left and
right full comultiplication. Then the maps $\sigma$ and $\overline\sigma$ in
Lemma \ref{lem:sigma} are anti-coalgebra isomorphisms. Moreover, the Nakayama
automorphism of $\piR(A)$ is equal to $\sigma \overline \sigma^{-1}$ and the
Nakayama automorphism of $\piL(A)$ is equal to $\overline \sigma^{-1} \sigma$.
\end{proposition}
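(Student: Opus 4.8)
The plan is to treat the two asserted facts in turn: first that $\sigma$ and $\overline\sigma$ are anti-coalgebra isomorphisms, and then that the two prescribed composites are the Nakayama automorphisms. Throughout I abbreviate $L:=\piL(A)=\pibarL(A)$ and $R:=\piR(A)=\pibarR(A)$, the coincidences being guaranteed by left and right fullness through Theorem \ref{thm:full}, and I write $\varepsilon$ and $\delta$ for the counit and comultiplication of whichever base coalgebra is meant (Proposition \ref{prop:base_counit}, Theorem \ref{thm:base_coalg} and the left analogues underlying Theorem \ref{thm:base_sF_left}). By Lemma \ref{lem:sigma} the maps $\sigma,\overline\sigma:L\to R$ are already linear bijections (with inverses $\tau,\overline\tau$) and anti-multiplicative, so the word \emph{isomorphism} needs no extra work; it remains only to check that each map carries the counit to the counit and turns the comultiplication into its opposite, i.e. $\delta\,\sigma=(\sigma\ox\sigma)\,\mathsf{tw}\,\delta$ and similarly for $\overline\sigma$.

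Counit preservation is immediate: for $a\in A$ one has $\varepsilon\sigma(\pibarL(a))=\varepsilon\piR(a)=\epsilon(a)=\varepsilon\pibarL(a)$, where the outer equalities are the defining property of $\varepsilon$ on $R$ and on $L$ (Proposition \ref{prop:base_counit} and Lemma \ref{lem:base_coalg_bar}(3), together with their left counterparts), and symmetrically $\varepsilon\overline\sigma=\varepsilon$. In particular $\varepsilon\,\sigma\overline\sigma^{-1}=\varepsilon$ on $R$.

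The heart of the matter is anti-comultiplicativity. Here I would evaluate on a product, using idempotency of $A$: on $\sigma(\pibarL(ab))=\piR(ab)$ the left-hand side is computed by \eqref{eq:delta} or \eqref{eq:delta_other}, while on $\pibarL(ab)$ the right-hand side is computed by the left-handed analogues of \eqref{eq:delta}, \eqref{eq:delta_other} and Lemma \ref{lem:base_coalg_bar} supplied by the symmetric construction behind Theorem \ref{thm:base_sF_left}; substituting $\sigma\pibarL=\piR$ (and $\overline\sigma\piL=\pibarR$) should make the two sides coincide, the flip $\mathsf{tw}$ entering precisely because $\sigma$ reverses order. Equivalently one may argue through the Casimir multiplier $F$ of Proposition \ref{prop:F}, writing $\delta(r)=F(1\ox r)$ on $R$ and $\delta(\ell)=F_L(1\ox\ell)$ on $L$, in which case the claim collapses to the single identity $(\sigma\ox\sigma)\,\mathsf{tw}\,F_L=F$. \emph{This comparison is the main obstacle}: the delicate point is to pair the correct member of the quadruple $T_1,\dots,T_4$ (with the right occurrence of $\mathsf{tw}$) on the two sides, since the right base coalgebra is built from $T_2,T_3$ while the left one is built from $T_1,T_4$, and an off-by-a-flip error is easy to make. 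Once this is settled for both $\sigma$ and $\overline\sigma$, the composite $\sigma\overline\sigma^{-1}$ is an honest coalgebra automorphism of $R$, and as a composite of two anti-multiplicative bijections it is also an algebra automorphism; symmetrically $\overline\sigma^{-1}\sigma$ is an algebra and coalgebra automorphism of $L$.

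It then remains to identify these automorphisms. By Theorem \ref{thm:base_sF}(2)--(3) the algebra $R$ has non-degenerate multiplication and its co-Frobenius structure determines the Nakayama automorphism $\vartheta$ as the \emph{unique} algebra automorphism with $\varepsilon(sr)=\varepsilon(\vartheta(r)s)$ for all $s,r\in R$; by uniqueness it suffices to verify this relation for $\vartheta:=\sigma\overline\sigma^{-1}$. I would test it on $s=\piR(a)$, $r=\pibarR(b)$. On one side, $\varepsilon(\piR(a)\pibarR(b))=\varepsilon\pibarR(\piR(a)b)=\epsilon(\piR(a)b)=\epsilon(ab)$ by Lemma \ref{lem:bar_mod_map}, Lemma \ref{lem:base_coalg_bar}(3) and \eqref{eq:counit_exp_reg}. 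On the other side $\sigma\overline\sigma^{-1}(\pibarR(b))=\sigma(\piL(b))$; writing $\piL(b)=\pibarL(c)$ for some $c\in A$ (legitimate since $L=\piL(A)=\pibarL(A)$) this equals $\piR(c)$, whence $\varepsilon(\piR(c)\piR(a))=\varepsilon\piR(c\piR(a))=\epsilon(c\piR(a))$ by \eqref{eq:mod_map_reg}, and Lemma \ref{lem:counit_inv} turns this into $\epsilon(a\pibarL(c))=\epsilon(a\piL(b))=\epsilon(ab)$ via \eqref{eq:counit_exp_reg}. The two sides agree, so $\sigma\overline\sigma^{-1}$ is the Nakayama automorphism of $R$; notably the auxiliary element $c$ never has to be made explicit. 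The statement for $\overline\sigma^{-1}\sigma$ on $L$ follows by the same computation read in the left base coalgebra.
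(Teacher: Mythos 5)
Your overall route coincides with the paper's: counit preservation via Proposition \ref{prop:base_counit} and Lemma \ref{lem:base_coalg_bar}~(3), anti-comultiplicativity by comparing the explicit $T_i$-expressions of the two base comultiplications, and identification of the Nakayama automorphism by testing the defining relation $\varepsilon(sr)=\varepsilon(\vartheta(r)s)$ on generators and invoking the uniqueness from Theorem \ref{thm:base_sF}~(3). Your Nakayama computation is complete and correct; it is a harmless reparametrization of the paper's argument (you test on $s=\piR(a)$, $r=\pibarR(b)$ and insert an auxiliary $c$ with $\pibarL(c)=\piL(b)$, whereas the paper rewrites the condition as $\varepsilon(\piR(a)\,\overline\sigma\sigma^{-1}\piR(b))=\varepsilon(\piR(b)\piR(a))$ and never needs such a $c$; both rest on Lemma \ref{lem:counit_inv}, \eqref{eq:counit_exp_reg} and \eqref{eq:mod_map_reg}).

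The genuine gap is the step you yourself flag as ``the main obstacle'': the anti-comultiplicativity of $\sigma$ (and of $\overline\sigma$) is asserted but never verified, and without it the first claim of the proposition is unproved and the composites $\sigma\overline\sigma^{-1}$ and $\overline\sigma^{-1}\sigma$ are not yet known to be coalgebra maps. The matching of the $T_i$ that worries you is resolved as follows. The left--right mirror of Lemma \ref{lem:base_coalg_bar}~(1) gives $\delta\,\pibarL(ab)=(\pibarL\ox\pibarL)T_3^{\op}(b\ox a)$ for the comultiplication $\delta$ of $\piL(A)$; hence $\delta^{\op}\pibarL(ab)=(\pibarL\ox\pibarL)T_3(b\ox a)$, the flip hidden in $T_3^{\op}$ being absorbed exactly by the flip in $\delta^{\op}$. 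Applying $\sigma\ox\sigma$ and using $\sigma\pibarL=\piR$ yields $(\piR\ox\piR)T_3(b\ox a)$, which equals $\delta\,\piR(ab)$ by \eqref{eq:delta_other}. Thus $\delta\,\sigma=(\sigma\ox\sigma)\,\delta^{\op}$ on all elements $\pibarL(ab)$, hence everywhere by the idempotency of $A$; the point is that both the left coalgebra (in its $\pibarL$-presentation) and the right coalgebra (via \eqref{eq:delta_other}) are expressed through the \emph{same} map $T_3$, so no off-by-a-flip ambiguity remains. The verification for $\overline\sigma$ is symmetric, and only after these two checks do your concluding sentences about $\sigma\overline\sigma^{-1}$ and $\overline\sigma^{-1}\sigma$ being coalgebra automorphisms become available.
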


\begin{proof}
By the left-right symmetric counterpart of Lemma \ref{lem:base_coalg_bar}~(1),
for any $a,b\in A$, $\delta \pibarL(ab)=(\pibarL\ox \pibarL)T_3^\op(b\ox 
a)$. Therefore, 
$$
(\sigma\ox \sigma)\delta^\op \pibarL(ab)=
(\sigma\pibarL\ox \sigma\pibarL)T_3(b\ox a)=
(\piR\ox \piR)T_3(b\ox a)\stackrel{\eqref{eq:delta_other}}=
\delta\piR(ab),
$$
so that $\sigma$ is anti-comultiplicative. By the left-right symmetric
counterpart of Lemma \ref{lem:base_coalg_bar}~(3), $\varepsilon\sigma 
\pibarL(a)=\varepsilon \piR(a)=\epsilon(a)=\varepsilon \pibarL(a)$ for any
$a\in A$, proving that $\sigma$ is an anti-coalgebra map. It is proven by
symmetric steps that also $\overline \sigma$ is anti-multiplicative and an
anti-coalgebra homomorphism.

Applying Lemma \ref{lem:counit_exp} in the second equality, it follows for all
$a,b\in A$ that 
$$
\epsilon(a\overline\sigma\piL(b))=
\epsilon(a\pibarR(b))=
\epsilon(ab)\stackrel{\eqref{eq:counit_exp_reg}}=
\epsilon(a\piL(b)).
$$
Since $\piL(A)=\pibarL(A)$ by Theorem \ref{thm:full}, this implies that
$\epsilon(a\overline\sigma\pibarL(b))= \epsilon(a\pibarL(b))$, for all
$a,b\in A$. Using this identity in the fourth equality and Lemma
\ref{lem:counit_inv} in the fifth one, 
\begin{eqnarray*}
\varepsilon(\piR(a)\overline\sigma
\!\!\!\!\!\!\!&&\!\!\!\!\!\!\!\sigma^{-1}\piR(b))=
\varepsilon(\piR(a)\overline\sigma\pibarL(b))\stackrel{\eqref{eq:mod_map_reg}}=
\varepsilon\piR(a\overline\sigma\pibarL(b))=
\epsilon(a\overline\sigma\pibarL(b))\\
&=& \epsilon(a\pibarL(b))=
\epsilon(b\piR(a))=
\varepsilon\piR(b\piR(a))\stackrel{\eqref{eq:mod_map_reg}}=
\varepsilon(\piR(b)\piR(a)),
\end{eqnarray*}
for all $a,b\in A$. This proves that $\sigma \overline\sigma^{\,-1}$ is the
Nakayama automorphism of $\piR(A)$ and symmetric considerations prove that
$\overline \sigma^{\,-1} \sigma$ is the Nakayama automorphism of $\piL(A)$.
\end{proof}

Finally, we want to find a relation between the multipliers $E$ and $F$. 

\begin{lemma}\label{lem:E_restricted}
Let $A$ be a regular weak multiplier bialgebra over a field. Then for all
$a\in A$, 
$$
(1\ox \pibarL(a))E\in \piR(A)\ox \pibarL(A)\quad \textrm{and}\quad
E(1\ox \piL(a))\in \pibarR(A)\ox \piL(A).
$$
In particular, if the comultiplication is right and left full, then we can
regard $E$ as an element of $\M(\piR(A)\ox \piL(A)^\op)$.
\end{lemma}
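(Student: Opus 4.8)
The plan is to establish the two displayed memberships and then read off the final assertion. These memberships are exchanged by passing to the opposite algebra: since $A$ is regular, $A^{\op}$ is again a weak multiplier bialgebra with the same $\Delta$, $\epsilon$, $E$, and this passage interchanges $\pibarL\leftrightarrow\piL$ and $\pibarR\leftrightarrow\piR$ while reversing the order of every product. Applying the first membership $(1\ox\pibarL(a))E\in\piR(A)\ox\pibarL(A)$ in $A^{\op}$ therefore yields exactly the second one, $E(1\ox\piL(a))\in\pibarR(A)\ox\piL(A)$, so I would only prove the first.

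For the first membership the decisive device is Lemma~\ref{lem:E_balanced}, which rewrites $(1\ox\pibarL(a))E=(\piR(a)\ox1)E$; this is what makes the two different base algebras appear on the two legs. I would then control each leg by a one-sided multiplication. Multiplying on the right by $(1\ox y)$ and using \eqref{eq:E(1@a)} together with Lemma~\ref{lem:bar_mod_map} (in the form $\piR(a)\pibarR(r)=\piR(a\pibarR(r))$) gives, after writing $y$ as a sum of products by idempotency of $A$, that $(1\ox\pibarL(a))E(1\ox y)\in\piR(A)\ox A$; the first leg is thus governed by $\piR$. Multiplying on the left by $(x\ox1)$ and using Proposition~\ref{prop:(a@1)E}(1) (so that $(x\ox1)E\in A\ox\pibarL(A)$) together with Lemma~\ref{lem:mod_map} (in the form $\pibarL(a)\pibarL(q)=\pibarL(a\pibarL(q))$) gives $(x\ox1)(1\ox\pibarL(a))E\in A\ox\pibarL(A)$; the second leg is thus governed by $\pibarL$.

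The main obstacle is to promote these one-sided controls to a genuine membership in the \emph{algebraic} tensor product $\piR(A)\ox\pibarL(A)\subseteq\M(A)\ox\M(A)$, because multiplying a factor by an element of $A$ collapses the opposite base algebra into $A$ (as $\piR(A)\,A\subseteq A$), so the two controls do not simply combine. I would handle this through the elementary identity $(\piR(A)\ox\M(A))\cap(\M(A)\ox\pibarL(A))=\piR(A)\ox\pibarL(A)$ valid inside $\M(A)\ox\M(A)$: once one knows $m:=(1\ox\pibarL(a))E$ lies in $\M(A)\ox\M(A)$, writing $m=\sum_k\omega_k\ox\eta_k$ with linearly independent $\eta_k$ and feeding in the two controls forces $\eta_k\in\pibarL(A)$ (from $(x\ox1)m\in A\ox\pibarL(A)$) and then $\omega_k\in\piR(A)$ (from $m(1\ox y)\in\piR(A)\ox A$, using $\eta_k y\in A$ and non-degeneracy). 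The real work, and the only delicate point, is therefore the preliminary claim $m\in\M(A)\ox\M(A)$; I expect to obtain it exactly as in Proposition~\ref{prop:(a@1)E}(2), upgrading the honest memberships $(x\ox1)E\in A\ox\M(A)$ and $E(1\ox y)\in\M(A)\ox A$ (the latter by the left--right mirror) through the factor $(\piR(a)\ox1)=(1\ox\pibarL(a))$ and the idempotency of $A$.

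For the final assertion I would invoke Theorem~\ref{thm:full}: left and right fullness give the \emph{equalities of subalgebras} $\pibarL(A)=\piL(A)$ and $\pibarR(A)=\piR(A)$ in $\M(A)$. The two memberships then read $(1\ox\ell)E\in\piR(A)\ox\piL(A)$ and $E(1\ox\ell)\in\piR(A)\ox\piL(A)$ for every $\ell\in\piL(A)$, and their companions $(\piR(a)\ox1)E=(1\ox\pibarL(a))E$ and $E(\pibarR(a)\ox1)=E(1\ox\piL(a))$ from Lemma~\ref{lem:E_balanced} give $(r\ox1)E,\,E(r\ox1)\in\piR(A)\ox\piL(A)$ for every $r\in\piR(A)$. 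Since the elements $r\ox1$ and $1\ox\ell$ generate $\piR(A)\ox\piL(A)$ and both base algebras possess local units by Theorem~\ref{thm:base_sF}(2), these four containments say precisely that left and right multiplication by $E$ preserve the base tensor algebra, so $E$ is a multiplier of it. Finally, the second factor must be taken with the opposite multiplication: on the second leg the left action of the base algebra on $E$ is the one furnished by $\pibarL$, whereas the right action is the one furnished by $\piL$, and matching these through the identification $\pibarL(A)=\piL(A)$ reverses the product; hence $E\in\M(\piR(A)\ox\piL(A)^{\op})$.
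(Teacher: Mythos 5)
Your two one-sided controls are correct and use the right lemmas, and the reduction of the second membership to the first via $A^{\op}$ is exactly what the paper does. But the step you yourself flag as ``the real work'' --- showing beforehand that $m:=(1\ox \pibarL(a))E$ lies in the algebraic tensor product $\M(A)\ox\M(A)$ --- is a genuine gap, and it cannot be obtained ``exactly as in Proposition \ref{prop:(a@1)E}~(2)''. That proposition reads the membership $(ab\ox 1)E\in A\ox\M(A)$ off the explicit identity $(ab\ox 1)E=(\id\ox\pibarL)T_2(a\ox b)$, whose right-hand side is visibly a finite sum; the decisive point is that the first leg carries an element of $A$. Here the first leg carries $\piR(a)\in\M(A)$ (after Lemma \ref{lem:E_balanced}), which in general is not in $A$, and no previously established identity expresses $(1\ox\pibarL(a))E$ as a finite sum of elementary tensors. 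Moreover, the one-sided controls genuinely do not force membership in $\M(A)\ox\M(A)$: the multiplier $E$ itself satisfies $(x\ox 1)E\in A\ox\M(A)$ and $E(1\ox y)\in\M(A)\ox A$ (indeed $(x\ox1)E(1\ox y)\in A\ox A$ by Proposition \ref{prop:(a@1)E}~(3)), yet in Example \ref{ex:span} with infinitely many objects $E$ is an ``infinite sum'' $\sum_o p_o\ox p_o$ of orthogonal idempotents and is not in $\M(A)\ox\M(A)$. So controls of this local, one-sided kind can never deliver the global finiteness you need; some explicit finite-sum formula for $(1\ox\pibarL(a))E$ is unavoidable.

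The paper supplies precisely such a formula: combining \eqref{eq:pi_identity_reg}, \eqref{eq:E(a@1)} and Lemma \ref{lem:bar_mod_map} it derives $(1\ox\pibarL(bc))E=(\piR\ox\pibarL)T_3(c\ox b)$ for all $b,c\in A$, and since $T_3(c\ox b)\in A\ox A$ the membership in $\piR(A)\ox\pibarL(A)$ is then immediate by idempotency of $A$ --- which also makes your intersection argument $(\piR(A)\ox\M(A))\cap(\M(A)\ox\pibarL(A))=\piR(A)\ox\pibarL(A)$ superfluous. (A secondary, fixable slip: with the $\eta_k$ chosen linearly independent, the control $(x\ox 1)m\in A\ox\pibarL(A)$ does not directly force $\eta_k\in\pibarL(A)$; for that leg you need the $\omega_k$ linearly independent together with a non-degeneracy argument, and then a second pass for the other leg.) Your treatment of the final assertion, via Theorem \ref{thm:full}, Lemma \ref{lem:E_balanced} and the local units of Theorem \ref{thm:base_sF}~(2), is a reasonable elaboration of what the paper leaves implicit.
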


\begin{proof}
For any $c,d,f\in A$, $(f\ox 1)T_4(d\ox c)=(f\ox 1)\Delta(c)(d\ox 1)=T_2(f\ox
c)(d\ox 1)$. Hence multiplying on the left both sides of
\eqref{eq:pi_identity_reg} by $f\ox 1$ and simplifying on the right the
resulting equality by $d\ox 1$, we obtain the identity 
$$
(f\ox 1)((\piR\ox \id)T_3(a\ox b))(c\ox 1)=
(1\ox ba)((\id \ox \piL)T_2(f\ox c)),
$$
for all $a,b,c,f\in A$. Using this identity in the fourth equality and Lemma
\ref{lem:bar_mod_map} in the third one,
\begin{eqnarray*}
(a\ox 1)(1\ox \pibarL(bc))
\!\!\!\!\!\!\!&\!\!\!\!\!\!\!\!E\!\!\!\!\!\!\!\!&\!\!\!\!\!\!\!(d\ox f)=
(1\ox \pibarL(bc))(a\ox 1)E(d\ox f)\\
&\stackrel{\eqref{eq:E(a@1)}}=&
(1\ox \pibarL(bc))((\id \ox \piL)T_2(a\ox d))(1\ox f)\\
&=&(1\ox \pibarL)[(1 \ox bc)((\id \ox \piL)T_2(a\ox d))](1\ox f)\\
&=&(1\ox \pibarL)[(a\ox 1)((\piR\ox \id)T_3(c\ox b))(d\ox 1)](1\ox f)\\
&=&(a\ox 1)((\piR\ox \pibarL)T_3(c\ox b))(d\ox f),
\end{eqnarray*}
for all $a,b,c,d,f\in A$. This proves
\begin{equation}\label{eq:E_restricted}
(1\ox \pibarL(bc))E=(\piR\ox \pibarL)T_3(c\ox b)\qquad \forall b,c\in A,
\end{equation}
hence by the idempotency of $A$ also the first claim. The second claim is
proven symmetrically. 
\end{proof}

Let $A$ be a regular weak multiplier bialgebra over a field with a right and
left full comultiplication. The algebra $\piR(A)\ox \piL(A)$ is idempotent by
Proposition \ref{prop:F}~(3) and its symmetric counterpart. Hence
the multiplicative and bijective map $\id\ox \sigma:\piR(A)\ox \piL(A)^\op\to
\piR(A)\ox \piR(A)$ in Lemma \ref{lem:sigma}~(1) is non-degenerate and thus
extends to an algebra homomorphism $\overline{\id \ox \sigma}: \M(\piR(A)\ox
\piL(A)^\op)\to \M(\piR(A)\ox \piR(A))$.

\begin{proposition}\label{prop:E<>F}
Let $A$ be a regular weak multiplier bialgebra over a field with a right and
left full comultiplication. Then $(\overline{\id \ox \sigma})(E)=F$ as
elements of $\M(\piR(A)\ox \piR(A))$. 
\end{proposition}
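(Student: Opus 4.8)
The plan is to establish the equality of the two multipliers $(\overline{\id\ox\sigma})(E)$ and $F$ in $\M(\piR(A)\ox\piR(A))$ by showing that they agree under right multiplication by the elements $1\ox\piR(bc)$, and then to upgrade this to genuine equality by a density argument. The natural starting point is the explicit formula \eqref{eq:E_restricted}, namely $(1\ox\pibarL(bc))E=(\piR\ox\pibarL)T_3(c\ox b)$ for all $b,c\in A$, which is exactly the identity witnessing that $E$ may be regarded as an element of $\M(\piR(A)\ox\piL(A)^\op)$.

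First I would apply the map $\id\ox\sigma$ to the right-hand side of \eqref{eq:E_restricted}. Since $\sigma$ acts on the second (that is, the $\piL(A)=\pibarL(A)$) leg by $\sigma\pibarL(x)=\piR(x)$, it converts the factor $\pibarL$ into $\piR$, so that $(\id\ox\sigma)[(\piR\ox\pibarL)T_3(c\ox b)]=(\piR\ox\piR)T_3(c\ox b)$. By the alternative expression \eqref{eq:delta_other} of the base comultiplication one has $(\piR\ox\piR)T_3(c\ox b)=\delta\piR(bc)$, and by Proposition \ref{prop:F}~(2) this equals $F(1\ox\piR(bc))$. Thus the image under $\id\ox\sigma$ of the left-hand side of \eqref{eq:E_restricted} is precisely $F(1\ox\piR(bc))$.

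Next I would identify that same image with $(\overline{\id\ox\sigma})(E)\,(1\ox\piR(bc))$. The ordinary product $(1\ox\pibarL(bc))E$ computed in $\M(A\ox A)$, once read inside $\piR(A)\ox\piL(A)^\op$, is the product of the multiplier $E$ with the element $1\ox\pibarL(bc)$: passing to the opposite multiplication in the second leg turns the left multiplication there into the appropriate product in $\piR(A)\ox\piL(A)^\op$. As $\overline{\id\ox\sigma}$ is an algebra homomorphism extending $\id\ox\sigma$, and $\sigma\pibarL(bc)=\piR(bc)$, applying it to this product yields $(\overline{\id\ox\sigma})(E)\,(1\ox\piR(bc))$. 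Combined with the previous step this gives $(\overline{\id\ox\sigma})(E)(1\ox\piR(bc))=F(1\ox\piR(bc))$ for all $b,c\in A$. To conclude I would then argue by density: the left multiplication action of a multiplier on $\piR(A)\ox\piR(A)$ is a right module map, hence determined by its values on any spanning set; since $\piR(A)$ is idempotent (Proposition \ref{prop:F}~(3)), the elements $(1\ox\piR(bc))(\piR(p)\ox\piR(q))=\piR(p)\ox\piR(bc)\piR(q)$ span $\piR(A)\ox\piR(A)$, so the agreement on the elements $1\ox\piR(bc)$ forces the two multipliers to coincide.

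The main obstacle I anticipate is the middle step: making precise that the ordinary product $(1\ox\pibarL(bc))E$ coincides with the product taken in $\piR(A)\ox\piL(A)^\op$, so that the homomorphism identity for $\overline{\id\ox\sigma}$ can legitimately be invoked. This is exactly the point where the anti-multiplicativity of $\sigma$ and the opposite multiplication in the second tensor factor have to be tracked with care; once that bookkeeping is settled, the remainder is a direct substitution using \eqref{eq:delta_other} and Proposition \ref{prop:F}~(2), together with the density argument above.
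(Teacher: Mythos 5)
Your argument is correct and follows essentially the same route as the paper's proof: both rest on \eqref{eq:E_restricted}, the multiplicativity of $\overline{\id\ox\sigma}$ together with $\sigma\pibarL=\piR$ from Lemma \ref{lem:sigma}~(1), and the identification $(\piR\ox\piR)T_3(c\ox b)=\delta\piR(bc)=F(1\ox\piR(bc))$ via \eqref{eq:delta_other} and Proposition \ref{prop:F}. The only (harmless) difference is that the paper tests against $\piR(a)\ox\piR(bc)$ directly, absorbing your final density step into the computation.
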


\begin{proof}
For any $a,b,c\in A$, 
\begin{eqnarray*}
((\overline{\id \ox \sigma})(E))(\piR(a)\ox \piR(bc))&=&
(\overline{\id \ox \sigma})[(1\ox \pibarL(bc))E(\piR(a)\ox 1)]\\
&\stackrel{\eqref{eq:E_restricted}}=&
(\overline{\id \ox \sigma})[((\piR\ox \pibarL)T_3(c\ox b))(\piR(a)\ox 1)]\\
&=&((\piR\ox \piR)T_3(c\ox b))(\piR(a)\ox 1)\\
&\stackrel{\eqref{eq:delta_other}}=&
F(\piR(a) \ox \piR(bc)),
\end{eqnarray*}
where in the first and the third equalities we used part (1) of Lemma
\ref{lem:sigma} and that $\overline{\id \ox \sigma}$ is multiplicative.
\end{proof}

\section{A monoidal category of modules}\label{sec:modcat}

Bialgebras over a field can be characterized by the property that the category
of their (left or right) modules is monoidal such that the forgetful functor
to the category of vector spaces is strict monoidal. More generally, the
category of (left or right) modules over a weak bialgebra is monoidal such
that the forgetful functor to the category of bimodules over the (separable
Frobenius) base algebra is strict monoidal (see e.g. \cite{Szl}). The aim of
this section is to prove a similar property of regular weak multiplier
bialgebras with a (left or right) full comultiplication. The key point in
doing so is to find the appropriate notion of module in the absence of an
algebraic unit.

\begin{definition}\label{def:module}
Let $A$ be an idempotent algebra over a field with a non-degenerate
multiplication. By a {\em non-unital} right $A$-module we mean a vector space
$V$ equipped with a linear map (called the {\em $A$-action}) $V\ox A \to V$,
$v\ox a\mapsto va$ satisfying the associativity condition 
$$
(va)b=v(ab)\qquad \forall v\in V,\ a,b\in A.
$$
A right $A$-module $V$ is said to be {\em idempotent} (or {\em unital}) if
the $A$-action $V\ox A\to V$ is surjective. It is called {\em firm} if the
quotient map $V\ox_A A \to V$, $v\ox_A a\mapsto va$ (to the $A$-module tensor
product $V\ox_A A$) is bijective. Finally, $V$ is a {\em non-degenerate}
$A$-module if for any $v\in V$, $(va=0\ \forall a\in A)$ implies $v=0$. Left 
$A$-modules are defined as right modules over the opposite algebra $A^\op$,
with action denoted by $V\ox A^{\mathsf{op}}\to V$, $v\ox a \mapsto av$;
and $A$-bimodules are both left and right $A$-modules $V$ with
commuting actions (i.e. such that $a(vb)=(av)b$, for all $v\in V$ and $a,b\in
A$). 
\end{definition}

The morphisms of non-unital modules over an idempotent and non-degenerate
algebra $A$ are the linear maps $f:V\to W$ such that $f(va)=f(v)a$, for all
$v\in V$ and $a\in A$. Throughout, we denote by $M_{(A)}$ the category of
idempotent and non-degenerate right $A$-modules. The category of firm
$A$-bimodules (i.e. of bimodules which are firm both as left and right
modules) will be denoted by ${}_A M_A$. Whenever $A$ is a firm algebra 
--- that is, the quotient map $A\ox_A A\to A$, $a\ox_A b\mapsto ab$ is
bijective --- ${}_A M_A$ is a monoidal category via the module tensor product
$\ox_A$ and the neutral object $A$. 

Let $A$ be a regular weak multiplier bialgebra over a field with a right full
comultiplication. By Theorem \ref{thm:base_sF}~(1), $R:=\piR(A)$ is a firm
algebra so there is a monoidal category ${}_R M_R$. 

\begin{proposition}\label{prop:forgetful}
Let $A$ be a regular weak multiplier bialgebra over a field with a right full
comultiplication. Any object of $M_{(A)}$ can be regarded as a firm
$R:=\piR(A)$-bimodule. This gives rise to a functor $U:M_{(A)} \to {}_R M_R$,
acting on the morphisms as the identity map.
\end{proposition}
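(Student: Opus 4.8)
The plan is to promote the given right $A$-action on an object $V$ of $M_{(A)}$ first to a right action of the whole multiplier algebra $\M(A)$, and then to extract the $R$-bimodule structure by restricting along $R=\piR(A)\subseteq\M(A)$ on the right and along the anti-multiplicative map $\tau$ of Lemma~\ref{lem:sigma}~(3) on the left. For the extension, given $\omega\in\M(A)$ and $v\in V$, I would write $v=\sum_i w_ia_i$ (possible since $V$ is idempotent) and set $v\cdot\omega:=\sum_i w_i(a_i\omega)$, which lies in $V$ because each $a_i\omega\in A$. This does not depend on the presentation: if $\sum_i w_ia_i=0$ then, using associativity $(a_i\omega)b=a_i(\omega b)$ in $\M(A)$, one finds $(\sum_i w_i(a_i\omega))b=\sum_i w_i(a_i(\omega b))=(\sum_i w_ia_i)(\omega b)=0$ for every $b\in A$, so $\sum_i w_i(a_i\omega)=0$ by non-degeneracy of $V$. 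Associativity and compatibility with the original action are routine, making $V$ a right $\M(A)$-module.

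I would then define the right $R$-action as the restriction of this action, and the left action by $r\triangleright v:=v\cdot\tau(r)$, where $\tau\colon R=\piR(A)=\pibarR(A)\to\pibarL(A)$ is the map of Lemma~\ref{lem:sigma}~(3) (available under right fullness, cf.\ Theorem~\ref{thm:full}~(5)). Since $\tau$ is anti-multiplicative, $r\triangleright(r'\triangleright v)=v\cdot(\tau(r')\tau(r))=v\cdot\tau(rr')=(rr')\triangleright v$, so this is a left action; and it commutes with the right one because $(r\triangleright v)\cdot s=v\cdot(\tau(r)s)$ equals $r\triangleright(v\cdot s)=v\cdot(s\,\tau(r))$, the factors $\tau(r)\in\pibarL(A)$ and $s\in\pibarR(A)=R$ commuting in $\M(A)$ by Lemma~\ref{lem:commute}. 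Thus $V$ is an $R$-bimodule.

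Firmness is the substantive point. I would first note that $V$ is unital on both sides over $R$: writing $T_2(a\ox b)=\sum_j p_j\ox q_j\in A\ox A$, Lemma~\ref{lem:counital_maps}~(4) gives $\sum_j p_j\piR(q_j)=ab$, so $v\cdot(ab)=\sum_j(v p_j)\cdot\piR(q_j)\in V\cdot R$; as the elements $v(ab)$ span $V$ (by idempotency of $V$ and of $A$), this yields $V\cdot R=V$, and symmetrically Lemma~\ref{lem:counital_maps}~(1) gives $V\cdot\pibarL(A)=V$, i.e.\ $R\triangleright V=V$ since $\tau(R)=\pibarL(A)$. Because $R$ has idempotent local units by Theorem~\ref{thm:base_sF}~(2), unitality forces firmness: if $\sum_i v_i\ox_R r_i$ lies in the kernel of $V\ox_R R\to V$, choosing a local unit $e$ with $r_ie=r_i$ gives $\sum_i v_i\ox_R r_i=\sum_i(v_ir_i)\ox_R e=0$, while surjectivity is unitality (the left module case is identical). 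Hence $V$ is a firm $R$-bimodule.

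Finally, any morphism $f\colon V\to W$ of $M_{(A)}$ is right $A$-linear, and the defining formula for the extended action shows at once that $f$ is right $\M(A)$-linear, hence right $R$-linear and---through $\tau$---left $R$-linear; so $f$ is a morphism of ${}_R M_R$, and declaring $U$ to act as the identity on morphisms gives a functor, functoriality being immediate. The obstacles I anticipate are conceptual rather than computational: manufacturing a \emph{left} $R$-action on a \emph{right} $A$-module through $\tau$ and the commutation of the two base algebras, and upgrading idempotency over $A$ to unitality over $R$, from which firmness follows via the local units supplied by Theorem~\ref{thm:base_sF}~(2).
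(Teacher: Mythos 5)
Your proof is correct and follows essentially the same route as the paper's: the right $R$-action is the restriction (to $\piR(A)\subseteq\M(A)$) of the $A$-action, well-defined by non-degeneracy of $V$; the left action comes from the anti-multiplicative $\tau$ of Lemma \ref{lem:sigma}~(3); the two actions commute by Lemma \ref{lem:commute}; and unitality via Lemma \ref{lem:counital_maps}~(4) and (1) combines with the local units of Theorem \ref{thm:base_sF}~(2) to give firmness. The only cosmetic differences are that you extend the action to all of $\M(A)$ at once and spell out the standard local-units-plus-unitality-implies-firm argument that the paper leaves implicit.
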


\begin{proof} 
Using that any object $V$ of $M_{(A)}$ is an idempotent $A$-module, define the
$R$-actions on $V$ with the help of the map $\tau$ in Lemma
\ref{lem:sigma}~(3) by
$$
(va)\cdot \piR(b):=v(a\piR(b))\quad \textrm{and}\quad 
\piR(b)\cdot (va):=v(a(\tau \piR(b)))=v(a\pibarL(b)).
$$
In order to see that these actions are well-defined, assume that $va=0$. Then
for all $b,c\in A$,
$$
0=(va)(\piR(b)c)=v(a(\piR(b)c))=v((a\piR(b))c)=(v(a\piR(b)))c.
$$
So by the non-degeneracy of $V$, $0=v(a\piR(b))$ proving that the right
$R$-action on $V$ is well-defined. One checks symmetrically that also the left
$R$-action is well-defined. Associativity of both actions is evident by the
associativity of the multiplication in $\M(A)$ and the
anti-multiplicativity of $\tau$ (cf. Lemma \ref{lem:sigma}~(3)). The 
left and right $R$-actions commute by Lemma \ref{lem:commute} (since by the
right fullness of the comultiplication $\piR(A)=\pibarR(A)$, see
Theorem \ref{thm:full}). Finally, $R$ has local units by Theorem
\ref{thm:base_sF}~(2). So in order to see that $V$ is a firm $R$-bimodule, it
is enough to see that it is idempotent as a left and as a right
$R$-module. Since both the algebra $A$ and the module $V$ are idempotent, any
element of $V$ can be written as a linear combination of elements of the form
$v(ab)=v(a'\piR(b'))=(va')\cdot \piR(b')$, in terms of $v\in V$ and $a,b\in
A$, where $a'\ox b':=T_2(a\ox b)$ (allowing for implicit summation) and the
first equality follows by Lemma \ref{lem:counital_maps}~(4). Symmetrically,
any element of $V$ can be written as a linear combination of elements of the
form $w(cd)=w (c'\pibarL(d'))=\piR(d')\cdot (wc')$, in terms of $w\in V$ and
$c,d\in A$, where $d'\ox c':= T_3(d\ox c)$ (allowing for implicit summation)
and the first equality follows by Lemma \ref{lem:counital_maps}~(1). 

With respect to the stated $R$-actions, any $A$-module map is evidently a
morphism of $R$-bimodules. This proves the existence of the stated functor
$U$. 
\end{proof}

\begin{proposition}\label{prop:A_module R}
Let $A$ be a regular weak multiplier bialgebra over a field with a right full
comultiplication. Then $R:=\piR(A)$ carries the structure of an idempotent and
non-degenerate right $A$-module. The functor $U$ in Proposition
\ref{prop:forgetful} takes this object $R$ of $M_{(A)}$ to the $R$-bimodule
$R$ with the actions provided by the multiplication.  
\end{proposition}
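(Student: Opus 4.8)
The plan is to equip $R=\piR(A)$ with the right $A$-action
$$
r\cdot a:=\piR(ra),\qquad r\in R,\ a\in A,
$$
where $ra$ denotes the product in $\M(A)$; since $A$ is an ideal in $\M(A)$ this lies in $A$, so $\piR(ra)$ makes sense and lands in $R$. This is manifestly well defined, depending only on the multiplier $r$ and on $a$, and writing $r=\piR(b)$ we have $r\cdot a=\piR(\piR(b)a)=\piR(ba)$ by \eqref{eq:cond_exp_reg}, an identity I will use throughout. Associativity is then immediate from \eqref{eq:cond_exp_reg}: $(r\cdot a)\cdot a'=\piR(\piR(ra)a')=\piR(ra a')=r\cdot(aa')$. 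For idempotency I would use that $A$ is idempotent: a generic element of $R$ is $\piR(bc)$ with $b,c\in A$, and the identity $\piR(bc)=\piR(\piR(b)c)=\piR(b)\cdot c$ (again \eqref{eq:cond_exp_reg}) exhibits it inside $R\cdot A$, so $R\cdot A=R$.

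Next I would prove non-degeneracy. Suppose $r\cdot a=0$ for all $a\in A$, with $r=\piR(b)$; then $\piR(ba)=0$ for all $a$. Applying \eqref{eq:counit_exp_reg} gives $\epsilon(bac)=\epsilon(\piR(ba)c)=0$ for all $a,c\in A$, and since $A$ is idempotent this forces $\epsilon(bx)=0$ for all $x\in A$. Now I invoke \eqref{eq:b_piR(a)c}: for any $p,q\in A$, $\ p\piR(b)q=(\id\ox\epsilon)[(1\ox b)T_2(p\ox q)]$. As $T_2(p\ox q)\in A\ox A$ by axiom (i), the second tensor factor runs over $A$, so every term of this expression carries a factor $\epsilon(b\cdot(\text{element of }A))=0$. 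Hence $p\piR(b)q=0$ for all $p,q$, and non-degeneracy of $\mu$ yields $r=\piR(b)=0$. This establishes that $R$ is an object of $M_{(A)}$, so the functor $U$ of Proposition \ref{prop:forgetful} may be applied to it.

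It remains to identify $U(R)$. Writing a generic element of $R$ as $\piR(b)\cdot c=\piR(bc)$ (idempotency), the right $R$-action supplied by $U$ computes as
$$
(\piR(b)\cdot c)\cdot\piR(d)=\piR(b)\cdot(c\piR(d))=\piR(\piR(b)c\piR(d))=\piR(bc\piR(d))=\piR(bc)\piR(d),
$$
where the middle steps use the definition of the $A$-action together with \eqref{eq:cond_exp_reg}, and the last step uses \eqref{eq:mod_map_reg}; this is exactly right multiplication in $R$. For the left $R$-action, $U$ sends $\piR(d)\cdot(\piR(b)\cdot c)$ to $\piR(b)\cdot(c\,\tau\piR(d))=\piR(b)\cdot(c\pibarL(d))$, using $\tau\piR(d)=\pibarL(d)$ from Lemma \ref{lem:sigma}~(3); this equals $\piR(\piR(b)c\pibarL(d))=\piR(bc\pibarL(d))$, and Lemma \ref{lem:piRonL} rewrites $\piR(bc\pibarL(d))=\piR(d)\piR(bc)$, which is left multiplication in $R$. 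Thus $U(R)$ is the $R$-bimodule $R$ with the actions given by multiplication.

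The routine content is the definition and associativity of the action; the two places that require care are the non-degeneracy argument, where the passage from $\piR(ba)=0$ to $\piR(b)=0$ cannot be attempted directly but must be routed through the counit via \eqref{eq:counit_exp_reg} and \eqref{eq:b_piR(a)c}, and the identification of the left $R$-action, which hinges on the \emph{twisted} relation $\piR(a\pibarL(b))=\piR(b)\piR(a)$ of Lemma \ref{lem:piRonL} rather than on a naive left-module identity.
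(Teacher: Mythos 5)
Your proof is correct and follows the same overall route as the paper: the same right $A$-action $\piR(b)\mapsto\piR(ba)$, the same three verifications (surjectivity, non-degeneracy, identification of the induced $R$-bimodule structure), and the same final appeals to \eqref{eq:cond_exp_reg}, \eqref{eq:mod_map_reg} and Lemma \ref{lem:piRonL}. Two local steps differ. For idempotency the paper writes $\piR(ab)=\pibarR(a')$ acted on by $b'$, where $b'\ox a'=T_4(b\ox a)$, which invokes Lemma \ref{lem:counital_maps}~(2) together with $\piR(A)=\pibarR(A)$ from right fullness; your observation that $\piR(bc)=\piR(\piR(b)c)$ already exhibits every generator of $\piR(A)$ inside $R\cdot A$ is simpler and uses only \eqref{eq:cond_exp_reg} and the idempotency of $A$. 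For non-degeneracy the paper applies $\mu(\id\ox\piR)$ to $(1\ox a)T_2(b\ox c)$ and transforms the resulting zero into $b\piR(a)cd$; you instead first deduce $\epsilon(bx)=0$ for all $x\in A$ via \eqref{eq:counit_exp_reg} and the idempotency of $A$, and then annihilate $p\piR(b)q$ through \eqref{eq:b_piR(a)c}. Both non-degeneracy arguments ultimately rest on the same identity expressing $b\piR(a)c$ in terms of $T_2$ and $\epsilon$, so the difference is essentially cosmetic, with your factorization through the counit being marginally more transparent.
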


\begin{proof}
For any $a,b\in A$, put
$$
\piR(a)\wr b:=
\piR(\piR(a)b)\stackrel{\eqref{eq:cond_exp_reg}}=\piR(ab). 
$$
It is clearly a well-defined associative action. Let us see that it is
idempotent. For any $a,b\in A$, denote $b'\ox a':=T_4(b\ox a)$, allowing for
implicit summation. By the right fullness of the comultiplication,
$\piR(A)=\pibarR(A)$, cf. Theorem \ref{thm:full}. So by Lemma
\ref{lem:counital_maps}~(2), 
$$
\pibarR(a')\wr b'=\piR(\pibarR(a')b')=\piR(ab).
$$
By the idempotency of $A$, this proves that the $A$-action on $R$ is
surjective. In order to see its non-degeneracy, assume that for some $a\in A$,
$\piR(ab)=0$ for all $b\in A$. Then for all $b,c,d\in A$,
\begin{eqnarray*}
0&=&
(\mu(\id \ox \piR)[(1\ox a)T_2(b\ox c)])d\\
&=&(\mu(\id \ox \piR)[(b\ox 1)T_3(c\ox a)])d\\
&\stackrel{\eqref{eq:piR(a)b}}=&
b(\mu(\id\ox \id \ox \epsilon)(\id \ox T_3\mathsf{tw})
(T_3\ox \id)(c\ox a\ox d))\\
&=&b((\id \ox \epsilon)(\mu\ox \id)(\id \ox T_3\mathsf{tw})
(T_3\ox \id)(c\ox a\ox d))\\
&=&b((\id \ox \epsilon)T_3(cd\ox a))\stackrel{\eqref{eq:piR(a)b}}=
b\piR(a)cd.
\end{eqnarray*} 
By the density of $A$ in $\M(A)$, this proves $\piR(a)=0$
hence the non-degeneracy of the action. In the penultimate equality we used
that 
$$
(\mu\ox \id)(\id \ox T_3\mathsf{tw})(T_3\ox \id)(c\ox a\ox d)=
(1\ox a)\Delta(c)\Delta(d)=
(1\ox a)\Delta(cd)=
T_3(cd\ox a).
$$
Applying the functor $U:M_{(A)}\to {}_R M_R$ in Proposition
\ref{prop:forgetful} to the object $R$ of $M_{(A)}$ above, the right action in
the resulting $R$-bimodule comes out as the right multiplication. Indeed,
$$
\piR(ab)\cdot \piR(c)=
\piR(a)\wr (b\piR(c))=
\piR(ab\piR(c))\stackrel{\eqref{eq:mod_map_reg}}=
\piR(ab)\piR(c),
$$
for all $a,b,c\in A$. The left $R$-action is also given by the multiplication
since 
$$
\piR(c)\cdot \piR(ab)=
\piR(a)\wr (b\pibarL(c))=
\piR(ab\pibarL(c))=
\piR(c)\piR(ab),
$$
where the last equality follows by Lemma \ref{lem:piRonL}.
\end{proof}

\begin{lemma}\label{lem:R_mod_prod}
Let $A$ be a regular weak multiplier bialgebra over a field with a right full
comultiplication. Regard any objects $V$ and $W$ of $M_{(A)}$ as firm
$R:=\piR(A)$-bimodules as in Proposition \ref{prop:forgetful}. Then the
$R$-module tensor product $V\ox_R W$ is isomorphic to 
$$
\langle (v\ox w)((a\ox b)E)\ |\ v\in V,\ w\in W,\ a,b\in A \rangle.
$$
\end{lemma}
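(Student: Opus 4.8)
The plan is to realize $V\ox_R W$ as the image
$$
P:=\langle (v\ox w)((a\ox b)E)\ |\ v\in V,\ w\in W,\ a,b\in A\rangle
$$
of an idempotent operator on $V\ox W$ given by right multiplication by $E$, and to identify the canonical surjection onto $V\ox_R W$ with this projection. First I would record that $P$ is genuinely a subspace of $V\ox W$: by Proposition \ref{prop:(a@1)E}~(2) we have $(a\ox 1)E\in A\ox\M(A)$, and since $A$ is an ideal of $\M(A)$ this gives $(a\ox b)E=(1\ox b)(a\ox 1)E\in A\ox A$, so $(v\ox w)((a\ox b)E)$ is an honest element of the right $A\ox A$-module $V\ox W$.

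Next I would define $\mathcal{E}\colon V\ox W\to V\ox W$ to be right multiplication by the multiplier $E\in\M(A\ox A)$. Since $V$ and $W$ are idempotent and non-degenerate, so is $V\ox W$ as a right $A\ox A$-module, and a routine non-degeneracy argument shows that right multiplication by any element of $\M(A\ox A)$ is well defined on such a module; explicitly, $\mathcal{E}(\sum_k(v_k\ox w_k)(a_k\ox b_k))=\sum_k(v_k\ox w_k)((a_k\ox b_k)E)$. Its image is exactly $P$, and because $E^2=E$ one checks $\mathcal{E}|_P=\id_P$, so $\mathcal{E}$ is idempotent with image $P$. I would then show $\mathcal{E}$ is $R$-balanced: writing the $R:=\piR(A)$-actions as in Proposition \ref{prop:forgetful} and $v=v_0a$, $w=w_0c$, one has $v\cdot\piR(y)\ox w=(v_0\ox w_0)(a\piR(y)\ox c)$ and $v\ox\piR(y)\cdot w=(v_0\ox w_0)(a\ox c\pibarL(y))$; applying $\mathcal{E}$ and factoring out $(\piR(y)\ox 1)E$ respectively $(1\ox\pibarL(y))E$, the two outputs agree by Lemma \ref{lem:E_balanced}. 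Hence $\mathcal{E}$ factors through the canonical surjection $q\colon V\ox W\to V\ox_R W$ as a map $\overline{\mathcal{E}}\colon V\ox_R W\to P$, and $\mathcal{E}|_P=\id_P$ yields $\overline{\mathcal{E}}\circ q|_P=\id_P$, so $q|_P$ is a section of $\overline{\mathcal{E}}$.

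The remaining and hardest point is that $q|_P$ is onto, equivalently that $\xi\equiv\mathcal{E}(\xi)\pmod{\ker q}$ for every $\xi\in V\ox W$; by linearity it suffices to treat $\xi=va\ox wb$. This is where the counital identities must be brought in. Writing $a=pq$ by idempotency (implicit summation) and $T_2(p\ox q)=p'\ox q'$, Lemma \ref{lem:counital_maps}~(4) gives $\sum p'\piR(q')=pq=a$, so $va=\sum(vp')\cdot\piR(q')$ and the defining relation of $V\ox_R W$ forces $va\ox wb\equiv\sum vp'\ox w(b\pibarL(q'))\pmod{\ker q}$. On the other hand Proposition \ref{prop:(a@1)E}~(1) reads $\sum p'\ox\pibarL(q')=(pq\ox 1)E$, whence $(a\ox b)E=(1\ox b)(pq\ox 1)E=\sum p'\ox b\pibarL(q')$ and therefore $(v\ox w)((a\ox b)E)=\sum vp'\ox w(b\pibarL(q'))$. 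The two computations coincide, giving $va\ox wb\equiv\mathcal{E}(va\ox wb)\pmod{\ker q}$ as required.

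Consequently $q|_P$ and $\overline{\mathcal{E}}$ are mutually inverse, which establishes $V\ox_R W\cong P$. I expect this last step -- matching the balancing relation defining $\ox_R$ against right multiplication by $E$, via the two identities $\sum p'\piR(q')=pq$ and $\sum p'\ox\pibarL(q')=(pq\ox 1)E$ -- to be the main obstacle; once the operator $\mathcal{E}$ and its idempotency, well-definedness and $R$-balancedness are in place, everything else is formal.
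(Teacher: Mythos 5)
Your proof is correct, and it reaches the same idempotent as the paper (right multiplication by $E$ on $V\ox W$) by a genuinely different route. The paper's proof is short because it outsources the main structural input: it invokes the coseparability of $R=\piR(A)$ from Theorem \ref{thm:base_sF}~(1) and cites \cite[Proposition 2.17]{BoVe} to identify $V\ox_R W$ with the image of the idempotent $\theta$ built from the comultiplication $\delta$ of Proposition \ref{prop:F}~(3), and then the whole content of the proof is the computation (via \eqref{eq:delta} and Proposition \ref{prop:(a@1)E}~(1)) that $\theta$ is right multiplication by $E$. You instead verify everything from scratch: that right multiplication by $E$ is a well-defined idempotent on the non-degenerate $A\ox A$-module $V\ox W$ with image $P$, that it is $R$-balanced (via Lemma \ref{lem:E_balanced}, which in the paper's argument is hidden inside the $R$-bilinearity of $\delta$), and --- the step the paper never needs to make explicit because \cite{BoVe} supplies it --- that every $va\ox wb$ is congruent to its image modulo $\ker q$, which you extract correctly from Lemma \ref{lem:counital_maps}~(4) together with Proposition \ref{prop:(a@1)E}~(1), both applied to the same expansion $T_2(p\ox q)=p'\ox q'$. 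What your approach buys is self-containedness: no appeal to the coalgebra structure on $R$ or to the external coseparability result, at the cost of a longer verification. What the paper's approach buys is that the same citation simultaneously handles the analogous identifications used later (e.g.\ in Proposition \ref{Im<>mod_prod}), so the coseparable-coalgebra machinery is amortized over several statements.
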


\begin{proof}
By Theorem \ref{thm:base_sF}~(1), $R$ is a coseparable coalgebra. Then by
\cite[Proposition 2.17]{BoVe}, $V\ox_R W$ is isomorphic to the image of the
idempotent map $\theta:V\ox W \to V\ox W$,
$$
v\cdot \piR(a)\ox w\mapsto 
(v\cdot(-)\ox (-)\cdot w) \delta\piR(a),
$$
where $\delta:R\to R\ox R$ is the ($R$-bilinear) comultiplication in
Proposition \ref{prop:F}~(3). In order to obtain a more explicit expression
of this map $\theta$, note that by the idempotency of $A$ and Lemma
\ref{lem:counital_maps}~(4), any element of $A$ can be written as a linear
combination of elements of the form $a\piR(bc)$ --- so any element of $V$ can
be written as a linear combination of elements of the form 
$va\piR(bc)$ --- in terms of $v\in V$, $a,b,c\in A$. Now 
\begin{eqnarray*}
\theta(va\piR(bc)\ox wd)&\stackrel{\eqref{eq:delta}}=&
(va\ox wd)((\piR\ox \pibarL)T_2(b\ox c))\\
&=&(va\ox wd)((\piR\ox \id)[(bc\ox 1)E])\\
&=&(v\ox w)((a\piR(bc)\ox d)E),
\end{eqnarray*}
where the second equality follows by Proposition \ref{prop:(a@1)E}~(1) and in
the last equality we used that for all $a,b,c,d\in A$, 
\begin{eqnarray*}
(\piR\ox \id)[(ab\ox cd)E]&\stackrel{\eqref{eq:E(a@1)}}=&
(\piR\ox \id)[(ab\ox 1)((\piR\ox \id)T_3(d\ox c))]\\
&\stackrel{\eqref{eq:mod_map_reg}}=&
(\piR(ab)\ox 1)((\piR\ox \id)T_3(d\ox c))
\stackrel{\eqref{eq:E(a@1)}}=
(\piR(ab)\ox cd)E,
\end{eqnarray*}
hence $(\piR\ox \id)[(ab\ox 1)E]=(\piR(ab)\ox 1)E$. This proves that the image
of $\theta$ is spanned by the stated elements $(v\ox w)((a\ox b)E)$. 
\end{proof}

\begin{proposition}\label{prop:product_A_module}
Let $A$ be a regular weak multiplier bialgebra over a field with a right full
comultiplication. Regard any objects $V$ and $W$ of $M_{(A)}$ as firm
$R:=\piR(A)$-bimodules as in Proposition \ref{prop:forgetful}. Then the
$R$-module tensor product $V\ox_R W$ carries the structure of an idempotent
and non-degenerate $A$-module too.
\end{proposition}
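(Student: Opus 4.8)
The plan is to make the identification from Lemma~\ref{lem:R_mod_prod} do the heavy lifting: realise $V\ox_R W$ concretely as the subspace
$$
Z:=\langle (v\ox w)((a\ox b)E)\ |\ v\in V,\ w\in W,\ a,b\in A\rangle
=(V\ox W)E\subseteq V\ox W,
$$
and transport to it the ``diagonal'' $A$-action coming from the comultiplication. First I would record that $V\ox W$ is an idempotent and non-degenerate right $A\ox A$-module: idempotency is immediate from $VA=V$ and $WA=W$, and non-degeneracy follows by acting separately on the two tensor factors, as in the algebra case \cite[Lemma 1.11]{JaVe}. Consequently the $A\ox A$-action extends to an associative, unital action of $\M(A\ox A)$ on $V\ox W$, by setting $\xi\cdot T:=\sum_i\xi_i(\zeta_iT)$ whenever $\xi=\sum_i\xi_i\zeta_i$ with $\zeta_i\in A\ox A$ and $T\in\M(A\ox A)$. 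Using this I define the right $A$-action on $V\ox W$ by $\xi\cdot c:=\xi\,\Delta(c)$; it is associative because $\Delta$ is multiplicative and the $\M(A\ox A)$-action is associative.

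Next I would check that $Z$ is an $A$-submodule. Writing a generic element of $Z$ as $\xi E$ with $\xi\in V\ox W$, and invoking the identity $E\Delta(c)=\Delta(c)=\Delta(c)E$ (valid in every weak multiplier bialgebra by axiom (iv) together with idempotency of $E$), one computes $(\xi E)\cdot c=\xi E\Delta(c)=\xi\Delta(c)=(\xi\Delta(c))E\in(V\ox W)E=Z$. Hence the $A$-action restricts to $Z\cong V\ox_R W$, which is the module structure claimed.

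It then remains to verify the two asserted properties, and here axiom (iv) does all the work. For idempotency, $Z\cdot A$ is spanned by the elements $(\xi E)\cdot c=\xi\Delta(c)=(v\ox w)((a\ox b)\Delta(c))$; since the second equality in axiom (iv) reads $\langle(a\ox b)\Delta(c)\mid a,b,c\in A\rangle=\langle(a\ox b)E\mid a,b\in A\rangle$, these span exactly $\langle(v\ox w)((a\ox b)E)\rangle=Z$, so $Z\cdot A=Z$. For non-degeneracy, suppose $\zeta=\xi E\in Z$ satisfies $\zeta\cdot c=\xi\Delta(c)=0$ for all $c$. Acting on the right by an arbitrary $a\ox b\in A\ox A$ and using associativity of the $\M(A\ox A)$-action gives $\xi(\Delta(c)(a\ox b))=(\xi\Delta(c))(a\ox b)=0$ for all $a,b,c$; by the first equality in axiom (iv), $\langle\Delta(c)(a\ox b)\rangle=\langle E(a\ox b)\rangle$, so $\xi(E(a\ox b))=(\xi E)(a\ox b)=0$ for all $a,b$, and non-degeneracy of $V\ox W$ forces $\zeta=\xi E=0$.

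The one point requiring genuine care---and which I expect to be the main obstacle---is the existence of the $\M(A\ox A)$-action, i.e.\ the well-definedness of $\xi\cdot T$ independently of the factorisation $\xi=\sum_i\xi_i\zeta_i$. This is exactly where non-degeneracy of the modules is indispensable: if $\sum_i\xi_i\zeta_i=0$, then for every $b\in A\ox A$ one has $\bigl(\sum_i\xi_i(\zeta_iT)\bigr)b=\sum_i\xi_i(\zeta_i(Tb))=\bigl(\sum_i\xi_i\zeta_i\bigr)(Tb)=0$, whence $\sum_i\xi_i(\zeta_iT)=0$ by non-degeneracy of $V\ox W$. Once this foundational extension is secured, everything else is the purely formal manipulation above, driven entirely by the two relations $E\Delta(c)=\Delta(c)=\Delta(c)E$ and axiom (iv).
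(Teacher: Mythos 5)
Your proof is correct, and it reaches the same final formula for the action, $(v\ox w)((a\ox b)E)\cdot c=(v\ox w)((a\ox b)\Delta(c))$, and the same endgame (axiom (iv) plus non-degeneracy of $V\ox W$ as an $A\ox A$-module) as the paper; but the construction of the action is organized differently. The paper works on the quotient: it first checks that $va\ox wb\mapsto(v\ox w)((a\ox b)\Delta(c))$ is a well-defined map on $V\ox W$, then invokes Lemma \ref{lem:E_balanced} to see that it is $R$-balanced, so that it descends along the canonical epimorphism $\pi:V\ox W\to V\ox_R W$; only afterwards does it transport the action through the isomorphism of Lemma \ref{lem:R_mod_prod} to verify idempotency and non-degeneracy. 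You instead work inside $V\ox W$ from the start: you extend the (idempotent, non-degenerate) $A\ox A$-action to a unital action of $\M(A\ox A)$ --- the standard extension, whose well-definedness you correctly reduce to non-degeneracy of $V\ox W$ --- set $\xi\cdot c:=\xi\Delta(c)$, and observe via $E\Delta(c)=\Delta(c)=\Delta(c)E$ that the subspace $(V\ox W)E\cong V\ox_R W$ of Lemma \ref{lem:R_mod_prod} is stable. What your route buys is that the $R$-balancedness check (and hence the appeal to Lemma \ref{lem:E_balanced}) disappears, being absorbed into the one-time verification that the multiplier action exists; what it costs is that this multiplier-extension step, which the paper never needs to make explicit, must be carried out carefully --- and you do carry it out. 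Both arguments rely on the same two inputs in the end, so this is a legitimate and arguably cleaner repackaging rather than a genuinely new idea.
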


\begin{proof}
Observe that, for each $c \in A$, there is a well-defined linear map $V \ox W
\to V \ox W$ given by 
$$
va \ox wb \mapsto (va \ox w)T_3(c\ox b)=
(v \ox wb)T_2(a\ox c)=
(v \ox w)((a \ox b) \Delta(c))
$$
for any $a,b \in A$, $v\in V$ and $w\in W$. Since this map is
$R$-balanced by Lemma \ref{lem:E_balanced}, it induces an action of $A$
on $V \ox_R W$ defined by 
$$
(va\ox_R wb) c=\pi((v\ox w)((a\ox b)\Delta(c))),
$$
where $\pi:V\ox W \to V\ox_R W$ denotes the canonical epimorphism. This action
is associative by the multiplicativity of $\Delta$. In order to see that
it is idempotent and non-degenerate, let us apply the isomorphism in
Lemma \ref{lem:R_mod_prod}. It takes the above $A$-action on $V\ox_R W$ to 
\begin{equation}\label{eq:product_A_module}
(v\ox w)((a\ox b)E) c=
(v\ox w)((a\ox b)\Delta(c)).
\end{equation}
It is an idempotent action by axiom (iv) in Definition \ref{def:mwba}. In
order to see that it is non-degenerate, assume that $(v\ox w)((a\ox
b)\Delta(c))=0$ for all $c\in A$. Then 
$$
\begin{array}{lll}
0=(v\ox w)((a\ox b)\Delta(c))(d\ox f)=
(v\ox w)(a\ox b)(\Delta(c)(d\ox f))
\ &\forall c,d,f\in A
&\stackrel{(iv)}\Longrightarrow\\
0=(v\ox w)(a\ox b)(E(c\ox d))=
(v\ox w)((a\ox b)E)(c\ox d)\ &\forall c,d\in A.&
\end{array}
$$
By \cite[Lemma 1.11]{JaVe}, $V\ox W$ is a non-degenerate $A\ox
A$-module. Hence $0=(v\ox w)((a\ox b)E)$, proving the non-degeneracy of the
$A$-module $V\ox_R W$. 

Applying the functor $U:M_{(A)}\to {}_R M_R$ in Proposition
\ref{prop:forgetful} to the object $V\ox_R W$ of $M_{(A)}$ above, it follows
by Lemma \ref{lem:coproduct} and \eqref{eq:coproduct_reg} that the resulting
$R$-bimodule has the actions 
$$
\piR(a)\cdot (v\ox_R w)\cdot \piR(b)=(\piR(a)\cdot v)\ox_R (w\cdot \piR(b)).
$$
\end{proof}

\begin{theorem}\label{thm:mod_cat}
Let $A$ be a regular weak multiplier bialgebra over a field with a right full
comultiplication. Then $M_{(A)}$ is a monoidal category and the functor
$U:M_{(A)}\to {}_R M_R$ in Proposition \ref{prop:forgetful} is strict monoidal. 
\end{theorem}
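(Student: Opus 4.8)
The plan is to transport the monoidal structure of ${}_R M_R$ along the forgetful functor $U$ of Proposition \ref{prop:forgetful}, observing that essentially all of the work has already been done in the preceding propositions. Indeed, $R:=\piR(A)$ is a firm algebra by Theorem \ref{thm:base_sF}~(1), so ${}_R M_R$ is monoidal via $\ox_R$ with unit object $R$ and the canonical associativity and unit constraints. Proposition \ref{prop:product_A_module} shows that for objects $V,W$ of $M_{(A)}$ the $R$-module tensor product $V\ox_R W$ is again an idempotent and non-degenerate $A$-module, hence an object of $M_{(A)}$; and its closing display shows that applying $U$ returns precisely the $R$-bimodule $V\ox_R W=U(V)\ox_R U(W)$. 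Likewise Proposition \ref{prop:A_module R} exhibits $R$ itself as an object of $M_{(A)}$ whose image under $U$ is the monoidal unit of ${}_R M_R$. It therefore remains only to organise these data into a monoidal category and to check that the constraint isomorphisms live in $M_{(A)}$.

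First I would declare the tensor product on $M_{(A)}$ to be $\ox_R$ equipped with the $A$-action \eqref{eq:product_A_module}, with unit object $R$. Functoriality on morphisms is quick: a morphism $f$ in $M_{(A)}$ is by definition $A$-linear, hence in particular a map of $R$-bimodules through $U$, so for $f\colon V\to V'$ and $g\colon W\to W'$ the map $f\ox_R g$ is defined as a morphism in ${}_R M_R$; that it is moreover $A$-linear follows directly from \eqref{eq:product_A_module}, since writing $(a\ox b)\Delta(c)\in A\ox A$ explicitly and applying $f\ox g$ componentwise commutes with the $A$-action by the $A$-linearity of $f$ and $g$.

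The substantive point is to verify that the associativity and unit constraints of ${}_R M_R$ are morphisms of $M_{(A)}$, i.e.\ that they are $A$-linear for the actions \eqref{eq:product_A_module}. For the unitors this is a direct computation using the $A$-module structure of the unit object $R$ described in Proposition \ref{prop:A_module R}. The main obstacle is the associator $(V\ox_R W)\ox_R X\to V\ox_R(W\ox_R X)$, where one must show that the two iterated $A$-actions agree. Representing the iterated tensor products by spans of the form $(v\ox w\ox x)(\text{element of }\M(A^{\ox 3}))$ as in the proof of Lemma \ref{lem:R_mod_prod}, both $A$-actions are computed through the idempotent $E^{(3)}$ of \eqref{eq:E_3} and the two iterated comultiplications $(\overline{\Delta\ox\id})\Delta$ and $(\overline{\id\ox\Delta})\Delta$; these coincide precisely by coassociativity, axiom (ii), together with axiom (v) of Definition \ref{def:mwba}. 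I expect this computation, essentially the $A$-linear refinement of the coassociativity already encoded in $E^{(3)}$, to be the only place where real calculation is required.

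Once the constraints are known to be $A$-linear, the pentagon and triangle coherence identities hold automatically: they are equalities of the underlying $R$-bimodule maps, which already hold in ${}_R M_R$, and $U$ is faithful, being the identity on morphisms. This makes $M_{(A)}$ a monoidal category with constraints inherited from ${}_R M_R$. Strict monoidality of $U$ is then immediate: $U$ sends the tensor product and unit of $M_{(A)}$ to those of ${}_R M_R$ by Propositions \ref{prop:product_A_module} and \ref{prop:A_module R}, so the structure morphisms are identities, and it sends each associativity and unit constraint to itself, since by construction the constraint in $M_{(A)}$ is the very same underlying map as the one in ${}_R M_R$.
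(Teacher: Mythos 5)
Your proposal is correct and follows essentially the same route as the paper: Propositions \ref{prop:A_module R} and \ref{prop:product_A_module} reduce everything to checking that the associativity and unit constraints of ${}_R M_R$ are $A$-linear, the associator case being handled exactly as you indicate via the $E^{(3)}$-presentation of the triple tensor product and the coassociativity axiom (ii) (in the paper, in the equivalent form $(T_2\ox\id)(\id\ox T_3)=(\id\ox T_3)(T_2\ox\id)$). The only step you leave implicit is the unitor computation, which the paper carries out explicitly using Lemma \ref{lem:cond_exp}, Lemma \ref{lem:coproduct} and Lemma \ref{lem:counital_maps}~(1); it is indeed the short direct calculation you anticipate.
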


\begin{proof}
In view of Proposition \ref{prop:A_module R} and Proposition
\ref{prop:product_A_module}, we only need to show that the associativity and
unit constraints of ${}_R M_R$ --- if evaluated on objects of $M_{(A)}$ ---
are morphisms of $A$-modules.

Take any objects $V,W,Z$ in $M_{(A)}$. In view of Lemma \ref{lem:R_mod_prod},
$(V\ox_R W)\ox_R Z$ is isomorphic to the vector subspace of $V\ox W\ox Z$
spanned by the elements of the form
\begin{eqnarray*}
((v\ox w)((a\ox b)E)\ox z)((c\ox d)E)&\stackrel{\eqref{eq:product_A_module}}=&
(v\ox w\ox z)((a\ox b\ox 1)(\Delta\ox \id)((c\ox d)E))\\
&\stackrel{(v) (iv) }=&
(v\ox w\ox z)((a\ox b\ox 1)(\Delta(c)\ox d)(1\ox E)),
\end{eqnarray*}
(for $a,b,c,d\in A$, $v\in V$, $w\in W$ and $z\in Z$) hence in light of axiom
(iv) in Definition \ref{def:mwba}, by elements of the form
$$
(v\ox w\ox z)((a\ox b\ox d)(E\ox 1)(1\ox E))\stackrel{(v)}=
(v\ox w\ox z)((a\ox b\ox d)(1\ox E)(E\ox 1))
$$
(for $a,b,d\in A$, $v\in V$, $w\in W$ and $z\in Z$).
A symmetric computation shows the isomorphism of the same vector subspace of
$V\ox W\ox Z$ to $V\ox_R (W\ox_R Z)$, and the associator isomorphism $(V\ox_R
W)\ox_R Z\to V\ox_R (W\ox_R Z)$ is given by the composite of these
isomorphisms. Its $A$-module map property is thus equivalent to the equality of
both induced actions 
\begin{eqnarray}\label{eq:(VW)Z}
((v\ox w\ox z)\!\!\!\!\!\!\!\!&&\!\!\!\!\!\!\!\!
((a\ox b\ox c)(E\ox 1)(1\ox E))) d \\
&=&((v\ox w)((a\ox b)E)\ox z) T_3(d\ox c)\nonumber\\
&=&(v\ox wb\ox z)((T_2\ox \id)(\id \ox T_3)(a\ox d\ox c))
\nonumber
\end{eqnarray}
and 
\begin{eqnarray}\label{eq:V(WZ)}
((v\ox w\ox z)\!\!\!\!\!\!\!\!&&\!\!\!\!\!\!\!\!
((a\ox b\ox c)(1\ox E)(E\ox 1)) d \\
&=&(v\ox (w\ox z)((b\ox c)E)) T_2(a\ox d)\nonumber\\
&=&(v\ox wb\ox z)((\id \ox T_3)(T_2\ox \id)(a\ox d\ox c)),
\nonumber
\end{eqnarray}
where we used the equivalent forms $(v\ox w)((a\ox b)E)
c=(va\ox w)T_3(c\ox b)=(v\ox wb)T_2(a\ox c)$ of the action in
\eqref{eq:product_A_module}. The actions \eqref{eq:(VW)Z} and \eqref{eq:V(WZ)} 
are equal by the alternative form $(T_2\ox \id)(\id \ox T_3)=(\id \ox
T_3)(T_2\ox \id)$ of the coassociativity axiom (ii) in Definition
\ref{def:mwba} (obtained by evaluating both sides of (ii) on any $a\ox b\ox c
\in A\ox A \ox A$, multiplying on the left by $1\ox 1\ox d$ and simplifying on
the right by $1\ox 1\ox c$). 

In order to see that the left unit constraint $R\ox_R V \to V$,
$\piR(a)\ox_R v \mapsto v\pibarL(a)$ is a morphism of right $A$-modules, take
any $\piR(a)\ox_R vb \in R\ox_R V$. Applying to it the left unit constraint
and next the action by any $c\in A$ results in $vb\pibarL(a)c$. On the other
hand, acting first by $c\in A$ on $\piR(a)\ox_R vb$ yields $\pi((\piR(a)\ox
v) T_3(c\ox b))$, where we used the notation $\pi:R\ox V \to
R\ox_R V$ for the canonical epimorphism. Applying now the left unit
constraint, we obtain $v(\mu^\op(\pibarL\ox \id)[(a\ox 1)T_3(c\ox b)])$. Using
Lemma \ref{lem:cond_exp}, Lemma \ref{lem:coproduct} and Lemma
\ref{lem:counital_maps}~(1) in the first, second and last equalities, 
respectively, we see that for any $a,b,c\in A$
\begin{eqnarray*}
\mu^\op(\pibarL\ox \id)[(a\ox 1)T_3(c\ox b)]
&=&\mu^\op(\pibarL\ox \id)
[(\pibarL(a)\ox 1)T_3(c\ox b)]\\
&=&\mu^\op(\pibarL\ox \id)T_3(\pibarL(a)c \ox b)=
b\pibarL(a)c.
\end{eqnarray*}
This proves that the left unit constraint in ${}_R M_R$ evaluated on an object
$V$ of $M_{(A)}$ is a morphism of $A$-modules. A symmetric reasoning applies
for the right unit constraint. 
\end{proof}

\section{The antipode}\label{sec:antipode}

The {\em antipode} of a Hopf algebra $A$ is defined as the convolution inverse
of the identity map $A\to A$. In a {\em weak} Hopf algebra, the antipode is
no longer a strict inverse of the identity map in the convolution algebra of
the linear maps $A\to A$. However, it is a `weak' inverse in the following
sense. The maps $\piL$ and $\piR:A\to A$ are idempotent elements in the
convolution algebra and they serve as left, respectively, right units for the
identity map $A\to A$. The antipode is then a linear map $A\to A$ for whom
$\piL$ and $\piR$ serve as a right, respectively, left unit; and whose
convolution products with the identity map in both possible orders yield
$\piL$ and $\piR$, respectively. 
In what follows, we equip a weak multiplier bialgebra with an antipode in
the above spirit: as a generalized convolution inverse. The resulting
structure is compared with a (regular or not) weak multiplier
Hopf algebra in \cite{VDaWa}. 

Let $A$ be a regular weak multiplier bialgebra over a field. By
Proposition \ref{prop:F}~(1), for all $a,b\in A$ $(ab\ox 1)F=(\id\ox
\piR)T_2(a\ox b)$ is an element of $A\ox \M(A)$. So by the idempotency of $A$,
$(a\ox 1)F\in A\ox \M(A)$ for all $a\in A$, allowing for the definition of a
linear map 
\begin{equation}\label{eq:G1}
G_1:A\ox A \to A \ox A, \qquad a\ox b\mapsto (a\ox 1)F(1\ox b).
\end{equation}
The notation $G_1$ is motivated by the fact that it is the same map appearing
under the same name in \cite[Proposition 1.11]{VDaWa}:  

\begin{proposition}\label{prop:G1}
Let $A$ be a regular weak multiplier bialgebra. The map \eqref{eq:G1}
satisfies the equality 
$$
(G_1\ox \id)[\Delta_{13}(a)(1\ox b\ox c)]=
\Delta_{13}(a)(1\ox E)(1\ox b\ox c),\qquad \forall a,b,c\in A.
$$
Hence it is the same map denoted by $G_1$ in \cite[Proposition 1.11]{VDaWa}.
\end{proposition}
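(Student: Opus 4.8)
The plan is to prove the displayed identity by multiplying both sides on the left by $p\ox q\ox 1$ for arbitrary $p,q\in A$, thereby reducing everything to genuine elements of $A\ox A\ox A$, and then to conclude by non-degeneracy of the multiplication (applied in the first two tensor factors). The first preparatory step is to make the right-hand factor explicit: writing $a'\ox c':=T_1(a\ox c)$, I would check directly from the definition of $\Delta_{13}$ that $\Delta_{13}(a)(1\ox b\ox c)=a'\ox b\ox c'$, so that, by the definition \eqref{eq:G1} of $G_1$ as $G_1(x\ox y)=(x\ox 1)F(1\ox y)$, the left-hand side equals $\sum(a'\ox 1)F(1\ox b)\ox c'$.

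Next I would bring in the defining (right multiplication) property of $F$ from Proposition~\ref{prop:F}. After multiplying by $p\ox q\ox 1$ the first leg becomes the genuine product $pa'$ and the middle slot carries the element $q$, so that $(pa'\ox q)F=(1\ox q)(\id\ox\piR)T_2(p\ox a')$. Collecting the three legs and pulling the decorations out front, this rewrites $(p\ox q\ox 1)\cdot\mathrm{LHS}$ as $(1\ox q\ox 1)\,(\id\ox\piR\ox\id)(T_2\ox\id)(\id\ox T_1)(p\ox a\ox c)\,(1\ox b\ox 1)$. This is where the essential input enters: I would apply the coassociativity axiom (ii) in the form $(T_2\ox\id)(\id\ox T_1)=(\id\ox T_1)(T_2\ox\id)$ to reshuffle the two Galois maps, and then apply the identity $(\piR\ox\id)T_1(x\ox y)=(1\ox x)E(1\ox y)$ --- valid for any regular weak multiplier bialgebra, as established inside the proof of Theorem~\ref{thm:full} --- to the inner $T_1$. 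Writing $T_2(p\ox a)=\sum a^1\ox a^2$, this produces $\sum a^1\ox(q\ox a^2)E(b\ox c)$.

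For the other side I would compute $(p\ox q\ox 1)\cdot\mathrm{RHS}$ straight from the left-multiplication rule for $\Delta_{13}$, namely $(p\ox q\ox 1)\Delta_{13}(a)=\sum a^1\ox q\ox a^2$ with the same $\sum a^1\ox a^2=(p\ox 1)\Delta(a)=T_2(p\ox a)$, so that inserting $(1\ox E)(1\ox b\ox c)$ again yields exactly $\sum a^1\ox(q\ox a^2)E(b\ox c)$. Since the two reductions coincide for all $p,q\in A$, non-degeneracy of the multiplication gives the asserted equality in $\M(A\ox A\ox A)$; the closing sentence identifying $G_1$ with the map of \cite[Proposition 1.11]{VDaWa} is then immediate, since the two maps satisfy the same defining equation.

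I expect the main obstacle to be the middle paragraph. Naively the left-hand side appears to carry one more comultiplication of $a$ than the right-hand side --- the factor $\piR(a^2)$ conceals a further splitting --- and it is precisely the interplay of the coassociativity axiom with the identity $(\piR\ox\id)T_1(x\ox y)=(1\ox x)E(1\ox y)$ that reabsorbs this extra splitting into $E$ and reconciles the two expressions. Care is also needed to keep the bookkeeping of left/right multiplier actions and tensor slots correct throughout, and to invoke the relation for $\piR$ in its regular (not full) form, so that no fullness hypothesis is inadvertently used.
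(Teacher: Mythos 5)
Your proof is correct, but it follows a genuinely different route from the paper's. You work with the second description of $F$ from Proposition \ref{prop:F}~(1) (the one via $(\id\ox\piR)T_2$), premultiply by $p\ox q\ox 1$, and then make the coassociativity axiom (ii) do the work, finishing with the identity $(\piR\ox\id)T_1(x\ox y)=(1\ox x)E(1\ox y)$ (which, as you correctly note, is established in the proof of Theorem \ref{thm:full} for any regular weak multiplier bialgebra, with no fullness needed) and non-degeneracy of the multiplication. The paper instead stays entirely at the multiplier level and works with the \emph{first} description of $F$, namely $(a\ox 1)F(1\ox bd)=((\pibarR\ox\id)T_4^{\op}(d\ox b))(a\ox 1)$: it writes the middle tensor factor as a product $bd$ (idempotency of $A$), converts $\pibarR$ on the first leg to $\piL$ on the second leg via Lemma \ref{lem:E_balanced}, and then recognizes $(\id\ox\piL)T_4(d\ox b)$ as $E(bd\ox 1)$ by \eqref{eq:E(a@1)} --- three lines, with coassociativity entering only implicitly through those earlier lemmas. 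Your version is longer and requires the extra bookkeeping of the $p\ox q\ox 1$ reduction, but it makes visible exactly where axiom (ii) reconciles the apparent extra splitting of $a$ hidden in $\piR$; the paper's version buys brevity by exploiting the two-sided multiplier structure of $F$ already packaged in Proposition \ref{prop:F}. Both arguments use only regularity, as required.
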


\begin{proof}
For any $a,b,c,d\in A$, 
\begin{eqnarray*}
(G_1\ox \id)[\Delta_{13}(a)(1\ox bd\ox c)]&=&
\Delta_{13}(a)((\pibarR\ox \id)T_4^\op(d\ox b)\ox c)\\
&=&\Delta_{13}(a)(1\ox (\id \ox \piL)T_4(d\ox b))(1\ox 1\ox c)\\
&=&\Delta_{13}(a)(1\ox E)(1\ox bd\ox c).
\end{eqnarray*}
The first equality follows by Proposition \ref{prop:F}~(1), the second one
follows by Lemma \ref{lem:E_balanced} and the last equality follows by
\eqref{eq:E(a@1)}. So we conclude by the idempotency of $A$.
\end{proof}

In \cite{VDaWa}, the form \eqref{eq:G1} of $G_1$ was proven for {\em regular}
weak multiplier Hopf algebras, but it was left open if it has the above form
for {\em arbitrary} weak multiplier Hopf algebras.

\begin{proposition}\label{Im<>mod_prod}
Let $A$ be a regular weak multiplier bialgebra over a field. If the
comultiplication is left and right full, then the following hold.
\begin{itemize}
\item[{(1)}] The image of the map $E_1:A\ox A \to A\ox A$, $a\ox b\mapsto
E(a\ox b)$ is isomorphic to the $\piL(A)$-module tensor square of $A$ with
respect to the actions 
$$
\piL(a)\cdot b:=\piL(a)b\qquad \textrm{and}\qquad
b\cdot \piL(a):=\pibarR(a)b,\quad \textrm{ for}\ a,b\in A.
$$
\item[{(2)}] The image of the map $G_1:A\ox A \to A\ox A$ in \eqref{eq:G1} is
isomorphic to the $\piR(A)$-module tensor square of $A$ with respect to the
actions 
$$
\piR(a)\cdot b:=\piR(a)b\qquad \textrm{and}\qquad
b\cdot \piR(a):=b\piR(a),\quad \textrm{ for}\ a,b\in A.
$$
\end{itemize}
\end{proposition}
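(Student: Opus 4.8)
The plan is to treat both parts in the same way. In each case I would show that the defining map ($E_1$ in (1), $G_1$ in (2)) is balanced over the relevant base algebra, so that it descends to a \emph{surjection} from the module tensor square onto its image, and then exhibit the canonical projection $\pi\colon A\ox A\to A\ox_? A$ as a one-sided inverse of this descended map, which forces injectivity and hence an isomorphism. Throughout I set $R:=\piR(A)$ and $L:=\piL(A)$ and use that, by fullness, $\piR(A)=\pibarR(A)$ and $\piL(A)=\pibarL(A)$ (Theorem \ref{thm:full}). Before anything else I would record that the stated actions really are bimodule structures: in (2) they are just the restrictions of the multiplication of $\M(A)$, while in (1) the right action $b\cdot\piL(a)=\pibarR(a)b$ is well defined and associative precisely because $\piL(a)\mapsto\pibarR(a)$ is the anti-multiplicative map $\overline\sigma$ of Lemma \ref{lem:sigma}(2), the two actions commuting by Lemma \ref{lem:commute}.

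For part (2), the first step is balancedness of $G_1$ over $R$. Writing $G_1(a\ox b)=(a\ox 1)F(1\ox b)$ and using $(\piR(a)\ox 1)F=F(1\ox\piR(a))$ from Proposition \ref{prop:F}(2),
\[
G_1(b\piR(a)\ox c)=(b\ox 1)(\piR(a)\ox 1)F(1\ox c)=(b\ox 1)F(1\ox\piR(a)c)=G_1(b\ox\piR(a)c),
\]
so $G_1$ descends to a surjection $\overline G_1\colon A\ox_R A\to\mathrm{Im}\,G_1$, $a\ox_R b\mapsto(a\ox 1)F(1\ox b)$. For injectivity I would compute $\pi\circ\overline G_1$. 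Writing $T_2(p\ox q)=p'\ox q'$ (implicit summation) and using $(pq\ox 1)F=(\id\ox\piR)T_2(p\ox q)$ from Proposition \ref{prop:F}(1), the $R$-balancing relation in $A\ox_R A$ gives
\[
\pi\big(G_1(pq\ox b)\big)=\pi\big(p'\ox\piR(q')b\big)=p'\piR(q')\ox_R b=pq\ox_R b,
\]
the last equality by $\mu(\id\ox\piR)T_2=\mu$ (Lemma \ref{lem:counital_maps}(4)). Idempotency of $A$ then yields $\pi\circ\overline G_1=\id$ (regarding $\overline G_1$ as valued in $A\ox A$), so $\overline G_1$ is injective, hence the desired isomorphism.

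Part (1) is the mirror image. Balancedness of $E_1$ over $L$ is \emph{exactly} the second identity of Lemma \ref{lem:E_balanced}:
\[
E_1(\pibarR(a)b\ox c)=E(\pibarR(a)\ox 1)(b\ox c)=E(1\ox\piL(a))(b\ox c)=E_1(b\ox\piL(a)c),
\]
giving a surjection $\overline E_1\colon A\ox_L A\to\mathrm{Im}\,E_1$. For the one-sided inverse I would use $E(ab\ox 1)=(\id\ox\piL)T_4(b\ox a)$ from \eqref{eq:E(a@1)}: writing $T_4(b\ox a)=u\ox v$, the $L$-balancing relation gives
\[
\pi\big(E_1(ab\ox c)\big)=\pi\big(u\ox\piL(v)c\big)=\pibarR(v)u\ox_L c=ab\ox_L c,
\]
where the last step uses $\pibarR(v)u=\mu^{\op}(\id\ox\pibarR)T_4(b\ox a)=ab$ (Lemma \ref{lem:counital_maps}(2)). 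Again idempotency yields $\pi\circ\overline E_1=\id$, so $\overline E_1$ is injective.

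I expect the main obstacle to be conceptual rather than computational: one has to pin down the exact base-algebra module structures so that the two balancing conditions become verbatim the identities of Lemma \ref{lem:E_balanced} and Proposition \ref{prop:F}(2), and so that the ``counital'' identities of Lemma \ref{lem:counital_maps}(2),(4) are precisely what collapses the composites $\pi\circ\overline E_1$ and $\pi\circ\overline G_1$ to the identity. The crossing between $\piL$ and $\pibarR$ in the right action of part (1) (via the anti-multiplicative $\overline\sigma$) is the delicate point; once the actions are fixed correctly, the remaining steps are the short manipulations above.
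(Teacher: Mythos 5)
Your proof is correct, but it takes a genuinely different route from the paper's. The paper deduces both parts from the coseparability of the base coalgebras: by Lemma \ref{lem:R_mod_prod} (and its symmetric version), which rests on the quoted result \cite[Proposition 2.17]{BoVe}, the module tensor product over the coseparable coalgebra $\piR(A)$ (resp.\ $\piL(A)$) is the image of the canonical idempotent $x\cdot r\ox y\mapsto (x\cdot(-)\ox(-)\cdot y)\,\delta(r)$ built from the comultiplication $\delta$ of Proposition \ref{prop:F}~(3), and this idempotent is then computed to be exactly $G_1$ (resp.\ $E_1$). You instead argue from scratch: balancedness of $G_1$ via the centrality of $F$ (Proposition \ref{prop:F}~(2)) and of $E_1$ via Lemma \ref{lem:E_balanced} yields the descended surjections $\overline G_1$ and $\overline E_1$, and the counital identities of Lemma \ref{lem:counital_maps}~(2),(4) together with $(ab\ox 1)F=(\id\ox\piR)T_2(a\ox b)$ and $E(ab\ox 1)=(\id\ox\piL)T_4(b\ox a)$ show that the canonical projection $\pi$ is a retraction of each, forcing injectivity. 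In effect you re-prove, by hand, the special case of \cite[Proposition 2.17]{BoVe} that the paper invokes; what your version buys is a self-contained argument that makes visible exactly which identities do the work, at the cost of having to check that the stated actions are well defined --- which you correctly reduce to the anti-multiplicative map $\overline\sigma$ of Lemma \ref{lem:sigma}~(2) and the commutation of the two base algebras (Lemma \ref{lem:commute} combined with Theorem \ref{thm:full}). Both arguments are sound.
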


\begin{proof}
(1). By a symmetric version of Lemma \ref{lem:R_mod_prod}, $E_1$ is equal to
the map
$$
a\ox \piL(bc)d\mapsto ((\pibarR\ox \piL)T_1(b\ox c))(a\ox d)
$$
whose image is equal to the stated module tensor product. 

(2). Since $\piR(A)$ is a coseparable coalgebra by Theorem
\ref{thm:base_sF}~(1), it follows by \cite[Proposition 2.17]{BoVe} that the
stated module tensor product is isomorphic to the image of the map
$$
a\piR(b)\ox c \mapsto 
(a\ox 1)(\delta \piR(b))(1\ox c)=
(a\piR(b)\ox 1)F(1\ox c)=
G_1(a\piR(b)\ox c),
$$
where $F\in \M(A\ox A)$ appeared in Proposition \ref{prop:F}~(1) and
$\delta:\piR(A)\to \piR(A)\ox \piR(A)$ is the comultiplication in 
Proposition \ref{prop:F}~(3). Since by Lemma \ref{lem:counital_maps}~(4) and
by the idempotency of $A$ any element of $A$ is a linear combination of
elements of the form $a\piR(b)$, for $a,b\in A$, we have the claim proven.
\end{proof}

For any weak multiplier bialgebra $A$ over a field, consider the vector space 
$$
\begin{array}{lll}
\mathcal L:=\{L:A\ox A \to A\ox A \ |& L(a\ox bc)=L(a\ox b)(1\ox c)\ 
\forall a,b,c\in A&\\
& (T_2\ox \id)(\id \ox L)=(\id \ox L)(T_2\ox \id)&\}.
\end{array}
$$
Denoting the vector space of right $A$-module maps $A\to A$ by
$\mathsf{End}_A(A)$, there is a linear map
\begin{equation}\label{eq:L_map}
\mathcal L\to \mathsf{Lin}(A,\mathsf{End}_A(A)),\qquad
L\mapsto [\lambda_L:a\mapsto (\epsilon\ox \id)L(a\ox -)].
\end{equation}
With its help, for any $a,b,c\in A$ and $L\in \mathcal L$,
\begin{eqnarray}\label{eq:L_form}
((\id \ox \lambda_L)\!\!\!&\!\!T_2\!\!&\!\!\!(a\ox b))(1\ox c)=
(\id \ox \epsilon \ox \id)(\id \ox L)(T_2 \ox \id)(a\ox b \ox c)\\
&&=(\id \ox \epsilon \ox \id)(T_2 \ox \id)(\id \ox L)(a\ox b \ox c)
\stackrel{(iii)}=
(a\ox 1)L(b\ox c).\nonumber
\end{eqnarray}
Applying this together with the non-degeneracy of the multiplication in $A\ox
A$, we conclude that the map \eqref{eq:L_map} is injective. Clearly, $\mathcal
L$ is an algebra via the composition of maps. For $L,L'\in \mathcal L$ and
$a\in A$, 
\begin{equation}\label{eq:L_conv}
\lambda_{L'L}(a)=
(\epsilon \ox \id)L'L(a\ox -)=
\mu(\lambda_{L'}\ox \id)L(a\ox -)
\end{equation}
 (where $\mu:\mathsf{End}_A(A)\ox A \to A$ denotes the evaluation map
$\Phi\ox a\mapsto \Phi a\equiv\Phi(a)$), generalizing the convolution
product $(\lambda_{L'}\ast \lambda_L)(a) = \mu(\lambda_{L'} \ox
\lambda_L)\Delta(a)$ of endomorphisms $\lambda_{L'}$ and $\lambda_L$ on a
(weak) bialgebra. 

\begin{proposition}\label{prop:L}
Let $A$ be a regular weak multiplier bialgebra over a field. 
\begin{itemize}
\item[{(1)}] The maps $T_1$, $E_1:=E(-\ox -)$ and $G_1$ in \eqref{eq:G1} from
$A\ox A$ to $A\ox A$ are elements of $\mathcal L$.
\item[{(2)}] The map \eqref{eq:L_map} takes the elements of $\mathcal L$ in
part (1) to $[a\mapsto a(-)]$, $[a\mapsto \piL(a)(-)]$ and $[a\mapsto
\piR(a)(-)]$, respectively. 
\item[{(3)}] $E_1^2=E_1$, $G_1^2=G_1$ and $E_1T_1=T_1=T_1G_1$.
\end{itemize}
\end{proposition}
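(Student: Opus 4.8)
The plan is to verify the two defining conditions of $\mathcal L$ for each of $T_1$, $E_1$, $G_1$ in part~(1), to compute the three images under \eqref{eq:L_map} in part~(2), and then to deduce the idempotency and absorption relations of part~(3) by combining part~(2) with the injectivity of \eqref{eq:L_map} and the composition formula \eqref{eq:L_conv}. For part~(1), the right $A$-linearity condition $L(a\ox bc)=L(a\ox b)(1\ox c)$ is immediate in all three cases, since each of $\Delta(a)$, $E$ and $(a\ox 1)F$ stands to the left of the varying tensor factor; e.g. $E_1(a\ox bc)=E(a\ox b)(1\ox c)$ because $a\ox bc=(a\ox b)(1\ox c)$ in $A\ox A$. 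The compatibility $(T_2\ox\id)(\id\ox L)=(\id\ox L)(T_2\ox\id)$ is literally axiom~(ii) for $L=T_1$. For $L=E_1$ I would rewrite both sides as left multiplications by $1\ox E$ and by $(\Delta\ox\id)(E(-\ox-))$ respectively, reducing the claim to the multiplier identity $(\Delta\ox\id)(E(b\ox c))=(1\ox E)(\Delta(b)\ox c)$; this follows from $E^{(3)}=\overline{\Delta\ox\id}(E)=(1\ox E)(E\ox 1)$ (axiom~(v) and \eqref{eq:E_3}) together with $E\Delta(b)=\Delta(b)$.

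The verification of $G_1\in\mathcal L$ is the main obstacle. Writing $G_1(b\ox c)=(b\ox 1)F(1\ox c)$ and stripping a common left factor $a\ox 1\ox 1$ from both sides (legitimate by non-degeneracy and the idempotency of $A$), the compatibility condition reduces to the multiplier identity $(\Delta\ox\id)\bigl((b\ox 1)F(1\ox c)\bigr)=(\Delta(b)\ox 1)(1\ox F)(1\ox 1\ox c)$. I would establish this using the multiplicativity of $\overline{\Delta\ox\id}$, the defining relation $(ab\ox 1)F=(\id\ox\piR)T_2(a\ox b)$ of Proposition~\ref{prop:F}~(1), the balancing Lemma~\ref{lem:E_balanced}, and the coassociativity axiom~(ii) (equivalently, the coassociativity of $\delta$ from Proposition~\ref{prop:F}~(4)). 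This is where the bulk of the computation lies.

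For part~(2), I would exploit that the relevant ``left legs'' lie in $A\ox\M(A)$, so that $\epsilon$ on the first factor may be pulled through. For $T_1$, axiom~(iii) gives $\lambda_{T_1}(a)(b)=(\epsilon\ox\id)T_1(a\ox b)=ab$, i.e. $\lambda_{T_1}(a)=a(-)$. For $E_1$, since $E(a\ox 1)\in A\ox\M(A)$ by \eqref{eq:E(a@1)} and idempotency, $\lambda_{E_1}(a)(b)=(\epsilon\ox\id)(E(a\ox b))=\bigl[(\epsilon\ox\id)(E(a\ox 1))\bigr]b=\piL(a)b$ by \eqref{eq:piL}. For $G_1$, since $(a\ox 1)F\in A\ox\M(A)$ I obtain $\lambda_{G_1}(a)(b)=\bigl[(\epsilon\ox\id)((a\ox 1)F)\bigr]b$; writing $a=\sum a^ic^i$, using $(a^ic^i\ox 1)F=(\id\ox\piR)T_2(a^i\ox c^i)$ together with \eqref{eq:pibarL} and the auxiliary identity $\piR(\pibarL(x)y)=\piR(xy)$ (a short consequence of \eqref{eq:piR(a)b} and Lemma~\ref{lem:counit_exp}) yields $(\epsilon\ox\id)((a\ox 1)F)=\piR(a)$, hence $\lambda_{G_1}(a)=\piR(a)(-)$.

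Finally, part~(3) splits into two elementary and two structural relations. The identities $E_1^2=E_1$ and $E_1T_1=T_1$ follow at once from $E^2=E$ and $E\Delta(a)=\Delta(a)$. For $G_1^2=G_1$ and $T_1G_1=T_1$, I would observe that all maps involved lie in the algebra $\mathcal L$ and that \eqref{eq:L_map} is injective, so it suffices to compare images under $\lambda$. By \eqref{eq:L_conv} and part~(2), $\lambda_{G_1^2}(a)(b)=\mu(\piR\ox\id)G_1(a\ox b)$ and $\lambda_{T_1G_1}(a)(b)=\mu\,G_1(a\ox b)$; invoking $(ab\ox 1)F=(\id\ox\piR)T_2(a\ox b)$ once more, together with $\mu(\id\ox\piR)T_2=\mu$ (Lemma~\ref{lem:counital_maps}~(4)) and the coseparability relation $\mu\delta=\id$ on $\piR(A)$ (Proposition~\ref{prop:F}~(3), via \eqref{eq:delta}), these evaluate to $\piR(a)b=\lambda_{G_1}(a)(b)$ and to $ab=\lambda_{T_1}(a)(b)$ respectively. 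Injectivity of \eqref{eq:L_map} then gives $G_1^2=G_1$ and $T_1G_1=T_1$.
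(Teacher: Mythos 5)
Your handling of $T_1$ and $E_1$ in part~(1), of part~(2), and of $E_1^2=E_1$, $E_1T_1=T_1$ is correct and essentially matches the paper (your auxiliary identity $\piR(\pibarL(x)y)=\piR(xy)$ is valid and gives a legitimate variant of the paper's computation of $\lambda_{G_1}$). The genuine gap is the verification that $G_1\in\mathcal L$, which you correctly single out as the main obstacle but then leave as an unexecuted sketch whose ingredient list misses the identity that actually closes the argument. Lemma~\ref{lem:E_balanced} plays no role here, and rerouting through the coassociativity of $\delta$ or axiom~(ii) would itself require a delicate multiplier-level justification that you do not supply. The step becomes a short computation once one uses the \emph{first} description of $F$ in Proposition~\ref{prop:F}~(1), which says that $F(1\ox cd)=(\pibarR\ox\id)T_4^{\op}(d\ox c)$ has its second tensor leg in $A$: writing $T_4(d\ox c)=d'\ox c'$ (implicit summation), one gets $(b\ox 1)F(1\ox cd)=b\pibarR(c')\ox d'$, so $(\Delta\ox\id)$ only has to be applied to $b\pibarR(c')$; then the multiplicativity of $\overline\Delta$, Lemma~\ref{lem:coproduct} in the form $\overline\Delta\pibarR(c')=E(1\ox\pibarR(c'))$, and $\Delta(b)E=\Delta(b)$ (a consequence of axiom~(iv)) yield $\Delta(b)(1\ox\pibarR(c'))\ox d'=(\Delta(b)\ox 1)\bigl(1\ox(\pibarR\ox\id)T_4^{\op}(d\ox c)\bigr)$, which is exactly $(\id\ox G_1)(T_2\ox\id)$ up to the harmless left factor $a\ox 1\ox 1$. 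Until this is done, part~(1) is incomplete, and so is your proof of the last two identities of part~(3), which presupposes $G_1\in\mathcal L$.

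That said, your argument for $G_1^2=G_1$ and $T_1G_1=T_1$ is a genuinely different and cleaner route than the paper's: instead of direct multiplier manipulations with $T_4$ and $\pibarR$, you compose inside the algebra $\mathcal L$, use \eqref{eq:L_conv} and part~(2) to compute $\lambda_{G_1^2}(a)b=\mu(\piR\ox\piR)T_2(a^i\ox c^i)\,b=\piR(a)b$ via \eqref{eq:delta} and $\mu\delta=\id$ on $\piR(A)$, and $\lambda_{T_1G_1}(a)b=ab$ via Lemma~\ref{lem:counital_maps}~(4), and then conclude by the injectivity of \eqref{eq:L_map}. This is correct and arguably more conceptual, at the price of invoking Proposition~\ref{prop:F}, whereas the paper's direct computation keeps part~(3) independent of the coalgebra structure on $\piR(A)$.
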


\begin{proof}
(1). Evidently, all of $T_1$, $E_1$ and $G_1$ are right $A$-module maps. The
compatibility of $T_1$ with $T_2$ is axiom (ii) in Definition \ref{def:mwba}. 
The compatibility of $E_1$ with $T_2$ follows in the same way as in
\cite[Proposition 2.2]{VDaWa}: for all $a,b,c\in A$, 
\begin{eqnarray*}
(T_2\ox \id)(\id \ox E_1)(a\ox b\ox c)&=&
(a\ox 1\ox 1)(\Delta\ox \id)(E(b\ox c))\\
&\stackrel{(v)}=&
(a\ox 1\ox 1)(1\ox E)(E\ox 1)(\Delta(b)\ox c)\\
&\stackrel{(iv)}=&
(1\ox E)(a\ox 1\ox 1)(\Delta(b)\ox c)\\
&=&(\id \ox E_1)(T_2\ox \id)(a\ox b\ox c).
\end{eqnarray*}
It remains to prove the compatibility of $G_1$ with $T_2$. By Proposition
\ref{prop:F}~(1), for all $a,b,c\in A$, 
\begin{equation}\label{eq:G1_other}
G_1(a\ox bc)=(a\ox 1)((\pibarR\ox \id)T_4^\op(c\ox b)).
\end{equation}
Using this form \eqref{eq:G1_other} of $G_1$ in the first and the last
equalities and the multiplicativity of $\overline \Delta$ and Lemma
\ref{lem:coproduct} together with axiom (iv) in the second one, it follows for
any $a,b,c,d\in A$ that 
\begin{eqnarray*}
(T_2\ox \id)(\id \ox G_1)(a\ox b\ox cd)&=&
(a\ox 1\ox 1)(\Delta \ox \id)[(b\ox 1)((\pibarR\ox \id)T_4^\op(d\ox c))]\\
&=&(a\ox 1\ox 1)(\Delta(b) \ox 1)(1\ox (\pibarR\ox \id)T_4^\op(d\ox c))\\
&=&(\id \ox G_1)(T_2\ox \id)(a\ox b\ox cd),
\end{eqnarray*}
 from which we conclude by the idempotency of $A$. 

(2). Take any $a,b\in A$. By axiom (iii), $(\epsilon\ox \id)T_1(a\ox
b)=ab$. By \eqref{eq:piL}, $(\epsilon \ox \id)E_1(a\ox b)=\piL(a)b$. Finally,
using Lemma \ref{lem:counit_exp} in the second equality, it follows for all
$a,b,c\in A$ that
\begin{eqnarray*}
(\epsilon \ox \id)G_1(a\ox bc)&\stackrel{\eqref{eq:G1_other}}=&
(\id\ox \epsilon)[(1\ox a)(\id \ox \pibarR)T_4(c\ox b)]\\
&=&(\id\ox \epsilon)[(1\ox a)T_4(c\ox b)]\\
&=&(\id\ox \epsilon)[T_3(b\ox a)(c\ox 1)]\stackrel{\eqref{eq:piR(a)b}}=
\piR(a)bc.
\end{eqnarray*}

(3). $E_1^2=E_1$ is evident by the fact that $E$ is an idempotent element of
$\M(A\ox A)$. By Lemma \ref{lem:coproduct}, for any $a,b,c\in A$, 
\begin{equation}\label{eq:T4_Rmod}
(1\ox \pibarR(a))T_4(b\ox c)=
T_4(b\ox \pibarR(a) c).
\end{equation}
For any $b,c\in A$, denote $T_4(c\ox b)=:c' \ox b'$ allowing for implicit
summation. Then by Lemma \ref{lem:counital_maps}~(2),
\begin{equation}\label{eq:counital'}
\pibarR(b')c'=
\mu^\op(\id \ox \pibarR)T_4(c\ox b)=
bc.
\end{equation}
With these identities at hand, and applying Lemma \ref{lem:mod_map} in the
second equality, it follows for $a,b,c,d\in A$ that 
\begin{eqnarray*}
G_1^2(a\ox bcd)&\stackrel{\eqref{eq:G1_other}}=&
(a\pibarR(b')\ox 1)((\pibarR\ox \id)T_4^\op(d\ox c'))\\
&=&(a\ox 1)((\pibarR\ox \id)[(\pibarR(b')\ox 1)T_4^\op(d\ox c')])\\
&\stackrel{\eqref{eq:T4_Rmod}}=&
(a\ox 1)((\pibarR\ox \id)
T_4^\op(d\ox  \pibarR(b')c')\\
&\stackrel{\eqref{eq:counital'}}=&
(a\ox 1)((\pibarR\ox \id)T_4^\op(d\ox bc))
\stackrel{\eqref{eq:G1_other}}=
G_1(a\ox bcd),
\end{eqnarray*}
proving $G_1^2=G_1$. The equality $E_1T_1=T_1$ is immediate by axiom
(iv). Finally, using again the notation $T_4(c\ox b)=:c' \ox b'$ (allowing for
implicit summation), for all $a,b,c\in A$
$$
T_1G_1(a\ox bc)\stackrel{\eqref{eq:G1_other}}=
\Delta(a\pibarR(b'))(1\ox c')=
\Delta(a)(1\ox \pibarR(b')c')\stackrel{\eqref{eq:counital'}}=
\Delta(a)(1\ox bc)=
T_1(a\ox bc).
$$
The second equality follows by the multiplicativity of $\overline \Delta$,
Lemma \ref{lem:coproduct} and axiom (iv). 
\end{proof}

Applying the same reasoning as in \cite[Proposition 2.3]{VDaWa}, the following
can be shown. 

\begin{proposition}\label{prop:R1_in_L}
Let $A$ be a regular weak multiplier bialgebra over a field. If there is a
linear map $R_1:A\ox A \to A\ox A$ such that $R_1T_1=G_1$, $T_1R_1=E_1$ and
$R_1T_1R_1=R_1$, then $R_1\in \mathcal L$. 
\end{proposition}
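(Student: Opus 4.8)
The plan is to reduce both conditions defining $\mathcal L$ to a single commutation argument applied twice. First I would extract from the three hypotheses two further identities, namely $R_1E_1=R_1$ and $G_1R_1=R_1$: indeed $R_1E_1=R_1(T_1R_1)=(R_1T_1R_1)=R_1$ and $G_1R_1=(R_1T_1)R_1=R_1$. Together with the hypotheses $R_1T_1=G_1$ and $T_1R_1=E_1$, these exhibit $R_1$ as a reflexive generalized inverse of $T_1$ whose one-sided products with $T_1$ are the idempotents $E_1$ and $G_1$ of Proposition \ref{prop:L}. By Proposition \ref{prop:L}~(1) the three maps $T_1$, $E_1$ and $G_1$ already belong to $\mathcal L$, so each of them satisfies both defining conditions of $\mathcal L$; this is the only input about the weak multiplier bialgebra structure that I expect to need.

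The key step I would isolate is an abstract commutation lemma: \emph{if a linear operator $X$ commutes with each of $T_1$, $E_1$ and $G_1$, then it commutes with $R_1$.} This follows from the chain
\[
R_1X = R_1E_1X = R_1XE_1 = R_1XT_1R_1 = R_1T_1XR_1 = G_1XR_1 = XG_1R_1 = XR_1,
\]
where the first and last equalities use $R_1E_1=R_1$ and $G_1R_1=R_1$, the two substitutions $E_1=T_1R_1$ and $R_1T_1=G_1$ turn the expression inside out, and the three remaining steps move $X$ past $E_1$, $T_1$ and $G_1$ using the assumed commutation with each of them.

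Finally I would recognize both membership requirements for $\mathcal L$ as instances of this lemma. The first condition, $R_1(a\ox bc)=R_1(a\ox b)(1\ox c)$, says precisely that $R_1$ commutes with the operator $\rho_c\colon A\ox A\to A\ox A$, $x\mapsto x(1\ox c)$, of right multiplication by $1\ox c$; since $T_1$, $E_1$ and $G_1$ satisfy this first condition (being in $\mathcal L$), each commutes with every $\rho_c$, and the lemma yields $R_1\rho_c=\rho_c R_1$. The second condition, $(T_2\ox\id)(\id\ox R_1)=(\id\ox R_1)(T_2\ox\id)$, is the lemma applied on $A\ox A\ox A$ with $X:=T_2\ox\id$ and with $T_1,E_1,G_1,R_1$ replaced by $\id\ox T_1,\id\ox E_1,\id\ox G_1,\id\ox R_1$; these inflated maps obey the same generalized-inverse relations, and $T_2\ox\id$ commutes with the first three exactly because $T_1$, $E_1$, $G_1$ satisfy the second condition defining $\mathcal L$. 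Hence $R_1\in\mathcal L$. The only point demanding care — the mild obstacle — is the bookkeeping in this last step: checking that applying $\id\ox(-)$ preserves all the composition identities and that the two defining conditions of $\mathcal L$ are genuinely commutation statements for $\rho_c$ and for $T_2\ox\id$; once this is set up, no further computation with the comultiplication is required.
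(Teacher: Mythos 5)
Your argument is correct, and every step checks out: the derived identities $R_1E_1=R_1$ and $G_1R_1=R_1$ follow immediately from the three hypotheses, the seven-term chain proving the commutation lemma is valid, and both defining conditions of $\mathcal L$ really are commutation statements (the first with each right-multiplication $\rho_c\colon x\mapsto x(1\ox c)$, the second with $T_2\ox\id$ after everything is inflated by $\id\ox(-)$, an operation that visibly preserves all the composition identities). The only structural input you use is Proposition \ref{prop:L}~(1), which is exactly what is available at that point in the paper. For comparison: the paper gives no argument of its own here, deferring entirely to the reasoning of \cite[Proposition 2.3]{VDaWa}, which proceeds by observing that $T_1$ restricts to a bijection from $\mathsf{Im}(G_1)$ onto $\mathsf{Im}(E_1)$ with inverse induced by $R_1$, and then transports the module-map and coassociativity properties of $T_1$ across that bijection. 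Your abstract lemma --- a reflexive generalized inverse commutes with every operator commuting with the map and with its two associated idempotents --- is precisely the operator-theoretic distillation of that subspace argument; it buys a completely self-contained, computation-free proof that also makes transparent why the same statement holds verbatim for $R_2$ and $\mathcal R$ by symmetry.
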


Symmetrically to the above considerations, we can define 
$$
\begin{array}{lll}
\mathcal R:=\{K:A\ox A \to A\ox A \ |& K(ab\ox c)=(a\ox 1)K(b\ox c)\ 
\forall a,b,c\in A&\\
& (\id \ox T_1)(K \ox \id)=(K \ox \id)(\id \ox T_1)&\}.
\end{array}
$$
Denoting the vector space of left $A$-module maps $A\to A$ by
${}_A\mathsf{End}(A)$, there is an injective linear map
\begin{equation}\label{eq:R_map}
\mathcal R\to \mathsf{Lin}(A,{}_A\mathsf{End}(A)),\qquad
K\mapsto [\rho_K:a\mapsto (\id \ox \epsilon)K(-\ox a)]
\end{equation}
such that 
\begin{equation}\label{eq:R_form}
(a\ox 1)((\rho_K \ox \id)T_1(b\ox c))=K(a\ox b)(1\ox c),\qquad \forall
a,b,c\in A.
\end{equation}
For $K,K'\in \mathcal R$, 
\begin{equation}\label{eq:R_conv}
\rho_{K'K}(a)=\mu(\id \ox \rho_{K'})K(-\ox a),\quad \forall a\in A,
\end{equation}
 (where $\mu:A\ox {}_A\mathsf{End}(A)\to A$ denotes the evaluation map
$a\ox \Phi\mapsto a\Phi\equiv \Phi(a)$.) The linear map
\eqref{eq:R_map} takes the elements 
\begin{equation}\label{eq:maps_2}
T_2,\quad
E_2:a\ox b \mapsto (a\ox b)E,\quad
G_2:ab\ox c\mapsto ((\id \ox \pibarL)T_3^\op(b\ox a))(1\ox c)
\end{equation}
of $\mathcal R$ to $[a\mapsto (-)a]$, $[a\mapsto (-)\piR(a)]$ and $[a\mapsto
(-)\piL(a)]$, respectively. The equalities $E_2^2=E_2$, $G_2^2=G_2$ and
$E_2T_2=T_2=T_2G_2$ hold. If there is a linear map $R_2:A\ox A \to A\ox A$
such that $R_2T_2=G_2$, $T_2R_2=E_2$ and $R_2T_2R_2=R_2$, then $R_2\in
\mathcal R$. 

\begin{proposition}\label{prop:EG_coassoci}
Let $A$ be a regular weak multiplier bialgebra over a field and for $i\in
\{1,2\}$, let $E_i,G_i:A\ox A \to A\ox A$ be the same maps as before. Then the
following hold.
\begin{itemize}
\item[{(1)}] $(\id \ox E_1)(E_2\ox \id)=(E_2\ox \id)(\id \ox E_1)$.
\item[{(2)}] $(\id \ox G_1)(G_2\ox \id)=(G_2\ox \id)(\id \ox G_1)$.
\item[{(3)}] $(\id \ox G_1)(E_2\ox \id)=(E_2\ox \id)(\id \ox G_1)$.
\item[{(4)}] $(\id \ox E_1)(G_2\ox \id)=(G_2\ox \id)(\id \ox E_1)$.
\end{itemize}
\end{proposition}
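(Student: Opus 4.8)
The plan is to derive all four identities from one commutation principle: for every $L\in\mathcal L$ and every $K\in\mathcal R$,
\[
(\id\ox L)(K\ox\id)=(K\ox\id)(\id\ox L)
\]
as maps $A\ox A\ox A\to A\ox A\ox A$. This yields the proposition at once, since $E_1,G_1\in\mathcal L$ and $E_2,G_2\in\mathcal R$ by Proposition~\ref{prop:L}~(1) and its symmetric counterpart recorded after \eqref{eq:maps_2}; parts (1)--(4) are the four choices $(L,K)=(E_1,E_2),(G_1,G_2),(G_1,E_2),(E_1,G_2)$. Note first the two module symmetries built into the definitions: because $L(a\ox bc)=L(a\ox b)(1\ox c)$, the operator $\id\ox L$ is right $A$-linear in the third tensor leg, and because $K(ab\ox c)=(a\ox1)K(b\ox c)$, the operator $K\ox\id$ is left $A$-linear in the first tensor leg; both composites inherit both linearities. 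Hence, by the idempotency of $A$ and the density of $A\ox A\ox A$ in $\M(A\ox A\ox A)$, it is enough to verify the identity after suitably multiplying by elements of $A$ on the outer and the middle legs.

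On such sandwiched expressions the characterizations \eqref{eq:L_form} and \eqref{eq:R_form} apply. The heart of the argument is then to use \eqref{eq:L_form} to trade the occurrence of $L$ for $T_2$ (with $\lambda_L$ a passive spectator) and \eqref{eq:R_form} to trade $K$ for $T_1$ (with $\rho_K$ passive); after this substitution the interchange of $K\ox\id$ and $\id\ox L$ is exactly the coassociativity axiom $(T_2\ox\id)(\id\ox T_1)=(\id\ox T_1)(T_2\ox\id)$ of Definition~\ref{def:mwba}~(ii). One concludes by the non-degeneracy of the multiplication together with the injectivity of the maps \eqref{eq:L_map} and \eqref{eq:R_map}.

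It is worth recording the concrete picture, which also shows where the content sits. Regarding $A\ox A\ox A$ as a two-sided ideal of $\M(A\ox A\ox A)$, one has that $\id\ox E_1$ is left multiplication by $1\ox E$ and $E_2\ox\id$ is right multiplication by $E\ox1$, while $\id\ox G_1$ sends $a\ox b\ox c$ to $(a\ox b\ox1)(1\ox F)(1\ox1\ox c)$ and $G_2\ox\id$ sends it to $(a\ox1\ox1)(F''\ox1)(1\ox b\ox c)$, where $F''\in\M(A\ox A)$ is the multiplier attached to $G_2$ exactly as $F$ is to $G_1$ (so that $G_2(a\ox b)=(a\ox1)F''(1\ox b)$). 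In each composite the two operators meet only on the middle leg. For (1) and (2) one of them multiplies that leg from the left and the other from the right, so they commute for free: both sides of (1) equal $(1\ox E)(a\ox b\ox c)(E\ox1)$, and both sides of (2) equal $(a\ox1\ox1)(F''\ox1)(1\ox b\ox1)(1\ox F)(1\ox1\ox c)$. For (3) and (4) both operators act on the middle leg from the same side, and---after removing the outer factors by non-degeneracy---the two identities become the multiplier commutations $(E\ox1)(1\ox F)=(1\ox F)(E\ox1)$ and $(1\ox E)(F''\ox1)=(F''\ox1)(1\ox E)$ in $\M(A\ox A\ox A)$.

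The main obstacle is precisely these last two commutations, i.e.\ that $E$ commutes with $F$ and with $F''$ in the three-legged sense. I expect to secure them either as instances of the general principle above, or directly, by rewriting $E$ and $F$ through their common description as right multiplications expressed via $T_2$ and the base projections, using $(xy\ox z)E=(1\ox z)(\id\ox\pibarL)T_2(x\ox y)$ and $(xy\ox z)F=(1\ox z)(\id\ox\piR)T_2(x\ox y)$, and then commuting the two sides past one another with the coassociativity axioms~(ii) and~(v) and the commutativity of the base projections (Lemma~\ref{lem:commute} and \eqref{eq:commute_reg}). The delicate point throughout is the tensor-leg bookkeeping required to bring the two expressions into the shape where axiom~(ii) can be applied; the rest is routine once the reductions above are in hand.
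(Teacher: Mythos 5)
Your reduction is sound up to exactly the point you flag yourself: parts (1) and (2) do follow at once from associativity in $\M(A\ox A\ox A)$ and the multiplier forms of $E_1,E_2,G_1,G_2$ (this is all the paper does for them), and your observation that (3) and (4) amount to the commutations $(E\ox 1)(1\ox F)=(1\ox F)(E\ox 1)$ and $(1\ox E)(F''\ox 1)=(F''\ox 1)(1\ox E)$ correctly isolates the content. But those two commutations are precisely what remains unproven, and of the two routes you offer for them, the first is broken. The general principle ``$(\id\ox L)(K\ox \id)=(K\ox \id)(\id\ox L)$ for all $L\in\mathcal L$, $K\in \mathcal R$'' cannot be obtained by trading $L$ for $T_2$ via \eqref{eq:L_form} and $K$ for $T_1$ via \eqref{eq:R_form} and then invoking axiom (ii): the formula \eqref{eq:L_form} only computes $(y\ox 1)L(v\ox c)$, i.e.\ it requires multiplying the \emph{middle} tensor leg from the \emph{left}, whereas \eqref{eq:R_form} only computes $K(a\ox p)(1\ox w)$, i.e.\ it requires multiplying the middle leg from the \emph{right}; since $K\ox\id$ is not left $A$-linear in leg $2$ and $\id\ox L$ is not right $A$-linear in leg $2$, neither multiplication passes through the other operator, so the two substitutions cannot be performed simultaneously and axiom (ii) is never reached. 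Note also that if such a principle were available, Proposition \ref{prop:R_coassoci} would be an immediate consequence of Proposition \ref{prop:R1_in_L} and its mirror image; the paper instead proves it by a separate bootstrapping computation, which strongly suggests no such blanket commutation is at hand.

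Your second, ``direct'' route does close the gap, with essentially the tools you name, but it has to be carried out. Sandwiching both sides of (3) as $(xy\ox b\ox 1)(\,\cdot\,)(1\ox 1\ox cd)$ and writing $T_2(x\ox y)=x'\ox y'$ and $T_4(d\ox c)=d''\ox c''$ (implicit summation), Proposition \ref{prop:(a@1)E}~(1) gives $(xy\ox b)E=x'\ox b\,\pibarL(y')$ and Proposition \ref{prop:F}~(1) gives $(b\ox 1)F(1\ox cd)=b\,\pibarR(c'')\ox d''$; the two composites then come out as $x'\ox b\,\pibarL(y')\pibarR(c'')\ox d''$ and $x'\ox b\,\pibarR(c'')\pibarL(y')\ox d''$ respectively, so the identity is exactly Lemma \ref{lem:commute} (no further use of axioms (ii) or (v) is needed at this stage). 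The paper packages the same information as Lemma \ref{lem:coproduct}, i.e.\ $E(1\ox \pibarR(a))=(1\ox\pibarR(a))E$, and derives (3) in two lines from \eqref{eq:G1_other}; part (4) is symmetric. So your plan is completable and correctly locates both the difficulty and a workable tool, but as written the decisive step is an expectation rather than a proof, and one of your two proposed mechanisms for it would fail.
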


\begin{proof}
Assertion (1) is evident and (2) follows easily by the explicit forms of
$G_1$ in \eqref{eq:G1_other} and $G_2$ in \eqref{eq:maps_2}. Concerning (3),
take any $a,b,c,d\in A$ and denote $T_4(d\ox c)=:d'\ox c'$, allowing for
implicit summation. Then using Lemma \ref{lem:coproduct} in the second 
equality, 
\begin{eqnarray*}
(\id \ox G_1)(E_2\!\!\!\!&\!\!\!\!\!\ox\!\!\!\!\!&\!\!\!\! \id)
(a\ox b\ox cd)
\stackrel{\eqref{eq:G1_other}}=
(a\ox b)E(1\ox \pibarR(c'))\ox d'\\
&=&(a\ox b\pibarR(c'))E\ox d'\stackrel{\eqref{eq:G1_other}}=
(E_2\ox \id)(\id \ox G_1)(a\ox b\ox cd).
\end{eqnarray*}
Part (4) is proven symmetrically. 
\end{proof}

Consider the vector subspace
$$
\mathcal M:= \langle 
(L,K)\in \mathcal L\times \mathcal R\ |\ 
a((\epsilon\ox \id)L(b\ox c))=
((\id \ox \epsilon)K(a\ox b))c\ \forall a,b,c\in A 
 \rangle 
$$
of $\mathcal L\times \mathcal R$. The maps \eqref{eq:L_map} and \eqref{eq:R_map}
induce a linear map 
$$
\mathcal M \to \mathsf{Lin}(A,\M(A)), \qquad 
(L,K)\mapsto [a \mapsto (\lambda_L(a),\rho_K(a))].
$$
For $(L,K)\in \mathcal M$, assume that $L=0$. Then for any $a\in A$, in
$(\lambda_L (a),\rho_K(a))\in \M(A)$ the component $\lambda_L(a)$
is zero. Hence also $\rho_K(a)=0$ for any $a\in A$ so $\rho_K=0$. Thus
by the injectivity of \eqref{eq:R_map}, also $K=0$. Symmetrically, $K=0$
implies $L=0$.

By part (2) of Proposition \ref{prop:L} and its symmetric counterpart,
$(T_1,T_2)$, $(E_1,G_2)$ and $(E_2,G_1)$ are elements of $\mathcal M$. For
$i\in \{1,2\}$, assume that there exist linear maps $R_i:A\ox A \to A\ox A$
such that $R_iT_i=G_i$, $T_iR_i=E_i$ and $R_iT_iR_i=R_i$. Then $(R_1,R_2)\in
\mathcal L\times \mathcal R$, and our next aim is to show that in fact
$(R_1,R_2)\in \mathcal M$.

\begin{proposition}\label{prop:R_coassoci}
Let $A$ be a regular weak multiplier bialgebra over a field and for $i\in
\{1,2\}$, let $E_i,G_i,R_i:A\ox A \to A\ox A$ be the same maps as before. Then
the following hold.
\begin{itemize}
\item[{(1)}] $(\id \ox E_1)(R_2\ox \id)=(R_2\ox \id)(\id \ox E_1)$.
\item[{(2)}] $(\id \ox R_1)(E_2\ox \id)=(E_2\ox \id)(\id \ox R_1)$.
\item[{(3)}] $(\id \ox G_1)(R_2\ox \id)=(R_2\ox \id)(\id \ox G_1)$.
\item[{(4)}] $(\id \ox R_1)(G_2\ox \id)=(G_2\ox \id)(\id \ox R_1)$.
\item[{(5)}] $(\id \ox R_1)(R_2\ox \id)=(R_2\ox \id)(\id \ox R_1)$.
\end{itemize}
\end{proposition}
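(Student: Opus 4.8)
The plan is to run everything off three ingredients. First, the defining relations $R_iT_i=G_i$, $T_iR_i=E_i$, $R_iT_iR_i=R_i$ give the handy consequences $G_iR_i=R_i=R_iE_i$. Second, the memberships recorded before the proposition, namely $R_1,E_1,G_1\in\mathcal L$ and $R_2,E_2,G_2\in\mathcal R$ (Proposition \ref{prop:L}(1) and its symmetric counterpart, together with $(R_1,R_2)\in\mathcal L\times\mathcal R$), which by the very definitions of $\mathcal L$ and $\mathcal R$ say that $(T_2\ox\id)(\id\ox L)=(\id\ox L)(T_2\ox\id)$ for $L\in\{E_1,G_1,R_1\}$ and $(\id\ox T_1)(K\ox\id)=(K\ox\id)(\id\ox T_1)$ for $K\in\{E_2,G_2,R_2\}$. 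Third, the four commutation relations of Proposition \ref{prop:EG_coassoci}. The only genuinely non-formal device will be a \emph{cancellation principle}: since $T_2$ is not injective I cannot cancel it outright, but if two maps $P,Q\colon A\ox A\ox A\to A\ox A\ox A$ satisfy $(T_2\ox\id)P=(T_2\ox\id)Q$ as well as $(G_2\ox\id)P=P$ and $(G_2\ox\id)Q=Q$, then precomposing the first identity with $(R_2\ox\id)$ and using $R_2T_2=G_2$ yields $(G_2\ox\id)P=(G_2\ox\id)Q$, whence $P=Q$. The mirror principle, with $(\id\ox T_1)$, $(\id\ox G_1)$ and $R_1T_1=G_1$, will handle the other half.

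For (1) I set $P=(\id\ox E_1)(R_2\ox\id)$ and $Q=(R_2\ox\id)(\id\ox E_1)$ and first compute
\begin{align*}
(T_2\ox\id)P
&=(\id\ox E_1)(T_2\ox\id)(R_2\ox\id)
=(\id\ox E_1)(E_2\ox\id)\\
&=(E_2\ox\id)(\id\ox E_1)
=(T_2\ox\id)Q,
\end{align*}
using $E_1\in\mathcal L$, then $T_2R_2=E_2$, then Proposition \ref{prop:EG_coassoci}(1), then $T_2R_2=E_2$ once more. Next I note $(G_2\ox\id)P=(\id\ox E_1)(G_2\ox\id)(R_2\ox\id)=(\id\ox E_1)(R_2\ox\id)=P$ by Proposition \ref{prop:EG_coassoci}(4) and $G_2R_2=R_2$, while $(G_2\ox\id)Q=(R_2\ox\id)(\id\ox E_1)=Q$ directly from $G_2R_2=R_2$. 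The cancellation principle then gives $P=Q$.

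Parts (2), (3), (4) will follow by the same template with the appropriate pairing of relations. Part (3), with $G_1$ replacing $E_1$, is verbatim the computation above but invoking Proposition \ref{prop:EG_coassoci}(3) where (1) was used and (2) where (4) was used. Parts (2) and (4) instead use the mirror principle: I precompose with $(\id\ox T_1)$, slide it through the fixed factor via $E_2,G_2\in\mathcal R$, collapse $T_1R_1=E_1$, and apply Proposition \ref{prop:EG_coassoci}(1),(3) for (2) and (4),(2) for (4); the recovery step precomposes with $(\id\ox R_1)$ and uses $G_1R_1=R_1$. Finally (5) reduces to earlier parts: with $P=(\id\ox R_1)(R_2\ox\id)$ and $Q=(R_2\ox\id)(\id\ox R_1)$, the identity $R_1\in\mathcal L$ and $T_2R_2=E_2$ give $(T_2\ox\id)P=(\id\ox R_1)(E_2\ox\id)$ and $(T_2\ox\id)Q=(E_2\ox\id)(\id\ox R_1)$, which agree \emph{by part (2)}; and $(G_2\ox\id)P=P$, $(G_2\ox\id)Q=Q$ follow from \emph{part (4)} together with $G_2R_2=R_2$, so the cancellation principle again yields $P=Q$.

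The only real obstacle will be this cancellation step: because neither $T_1$ nor $T_2$ is injective, the equality cannot be read off after composing with a single $T_i$, and the argument hinges on converting $T_i$ into the idempotent $G_i$ through the partial-inverse relation $R_iT_i=G_i$ and then verifying that this idempotent fixes both sides. Once that mechanism is isolated, the remainder is bookkeeping, and the only thing to watch is selecting the correct pair among the four relations of Proposition \ref{prop:EG_coassoci} in each case.
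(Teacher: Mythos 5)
Your argument is correct and uses exactly the same ingredients as the paper's proof (the relations $G_iR_i=R_i=R_iE_i$, $R_iT_i=G_i$, $T_iR_i=E_i$, the memberships $E_1,G_1,R_1\in\mathcal L$ and $E_2,G_2,R_2\in\mathcal R$, and Proposition \ref{prop:EG_coassoci}); your ``cancellation principle'' is just the paper's sandwich $R_2T_2=G_2$ made explicit, and unrolling it for part (1) reproduces the paper's chain of equalities step by step. The only cosmetic difference is that your part (5) deduces the claim from parts (2) and (4) on the $T_2$-side, whereas the paper uses parts (1) and (3) on the $T_1$-side.
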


\begin{proof}
(1). Applying part (4) of Proposition \ref{prop:EG_coassoci} in the second
 equality and its part (1) in the penultimate equality,
\begin{eqnarray*}
(\id \ox E_1)\!\!\!\!\!\!\!\!&&\!\!\!\!\!\!\!\!(R_2\ox \id)=
(\id \ox E_1)(G_2R_2\ox \id)=
(G_2\ox \id)(\id \ox E_1)(R_2\ox \id)\\
&=& (R_2T_2\ox \id)(\id \ox E_1)(R_2\ox \id)\stackrel{E_1\in \mathcal L}=
(R_2\ox \id)(\id \ox E_1)(T_2R_2\ox \id)\\
&=& (R_2\ox \id)(\id \ox E_1)(E_2\ox \id)=
(R_2E_2\ox \id)(\id \ox E_1)=
(R_2\ox \id)(\id \ox E_1).
\end{eqnarray*}
Parts (2)-(4) are proven analogously.

(5). Using part (1) of the current proposition in the second equality and its
part (3) in the penultimate equality, 
\begin{eqnarray*}
(\id \ox R_1)\!\!\!\!\!\!\!\!&&\!\!\!\!\!\!\!\!(R_2\ox \id)=
(\id \ox R_1E_1)(R_2\ox \id)=
(\id \ox R_1)(R_2\ox \id)(\id \ox E_1)\\
&=&(\id \ox R_1)(R_2\ox \id)(\id \ox T_1R_1)\stackrel{R_2\in \mathcal R}=
(\id \ox R_1T_1)(R_2\ox \id)(\id \ox R_1)\\
&=&(\id \ox G_1)(R_2\ox \id)(\id \ox R_1)=
(R_2\ox \id)(\id \ox G_1R_1)=
(R_2\ox \id)(\id \ox R_1).
\end{eqnarray*}
\end{proof}

\begin{corollary}\label{cor:S}
Let $A$ be a regular weak multiplier bialgebra over a field and for $i\in
\{1,2\}$, let $T_i,E_i,G_i:A\ox A \to A\ox A$ be the same maps as
before. Assume that there exist linear maps $R_i:A\ox A \to A\ox A$ such that
$R_iT_i=G_i$, $T_iR_i=E_i$ and $R_iT_iR_i=R_i$. Then $(R_1,R_2)\in \mathcal
M$, hence there is a corresponding linear map $S:=(\lambda_{R_1},\rho_{R_2}):A
\to \M(A)$. 
\end{corollary}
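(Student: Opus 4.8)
The plan is to verify the single outstanding requirement, namely that $(R_1,R_2)$ lies in the subspace $\mathcal M$ of $\mathcal L\times\mathcal R$; the membership $(R_1,R_2)\in\mathcal L\times\mathcal R$ is already granted by Proposition \ref{prop:R1_in_L} and its symmetric counterpart. Unwinding the definition of $\mathcal M$ together with the representing maps \eqref{eq:L_map} and \eqref{eq:R_map}, this amounts to the single identity
$$
a\big((\epsilon\ox\id)R_1(b\ox c)\big)=\big((\id\ox\epsilon)R_2(a\ox b)\big)c,\qquad\forall a,b,c\in A,
$$
i.e. to the assertion that for each fixed $b$ the pair $\big(\lambda_{R_1}(b),\rho_{R_2}(b)\big)$ obeys the multiplier axiom on $A$. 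I would first record the reformulation $a\big((\epsilon\ox\id)R_1(b\ox c)\big)=(\epsilon\ox\id)\big[(1\ox a)R_1(b\ox c)\big]$ and $\big((\id\ox\epsilon)R_2(a\ox b)\big)c=(\id\ox\epsilon)\big[R_2(a\ox b)(c\ox 1)\big]$, which renders the target symmetric and reduces the whole corollary to this one equality; once it holds, $S:=(\lambda_{R_1},\rho_{R_2})$ is a well-defined map $A\to\M(A)$, and the injectivity of \eqref{eq:L_map} and \eqref{eq:R_map} guarantees that $S$ faithfully encodes $(R_1,R_2)$.

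The engine of the proof is the compatibility between $R_1$ and $R_2$ produced in Proposition \ref{prop:R_coassoci}, above all the commutation relation $(\id\ox R_1)(R_2\ox\id)=(R_2\ox\id)(\id\ox R_1)$; applying $\id\ox\epsilon\ox\id$ to it links the two families $b\mapsto\lambda_{R_1}(b)$ and $b\mapsto\rho_{R_2}(b)$. This commutation cannot, however, suffice on its own: the analogous relation $(\id\ox E_1)(E_2\ox\id)=(E_2\ox\id)(\id\ox E_1)$ of Proposition \ref{prop:EG_coassoci}~(1) holds for the pair $(E_1,E_2)$, which is \emph{not} in $\mathcal M$ (membership would force $\piL=\piR$). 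So the genuinely new input must be the pseudo-inverse relations $T_iR_i=E_i$, $R_iT_i=G_i$, $R_iT_iR_i=R_i$ together with the module-map forms \eqref{eq:L_form}, \eqref{eq:R_form} of $R_1\in\mathcal L$, $R_2\in\mathcal R$. The plan is to use these, via the convolution identities \eqref{eq:L_conv} and \eqref{eq:R_conv}, to replace $R_i$ by $G_i$ (respectively $E_i$) on the image of $T_i$, and then to invoke the counit axiom~(iii) to collapse the coproduct leg created by $T_1$ or $T_2$; the book-keeping that turns the resulting terms into $ab$ and $bc$ is supplied by the formulas of Proposition \ref{prop:L}~(2) for $\lambda_{T_1},\lambda_{E_1},\lambda_{G_1}$ and their right-handed analogues, and by the already established memberships $(T_1,T_2),(E_1,G_2),(G_1,E_2)\in\mathcal M$.

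The hard part will be precisely this collapse of the middle tensor factor. In the commutation relation the middle slot is at once an output of $R_2$ and an input of $R_1$, so the counit applied there entangles the two maps rather than separating them: a direct contraction produces only a doubled expression (informally, a term of the shape $a\,S(b_{(1)})\ox S(b_{(2)})\,c$ carrying two antipode factors, instead of the sought $a\,S(b)\,c$). The delicate point is therefore to route the argument through $R_iT_iR_i=R_i$ so that one antipode leg is annihilated by an accompanying $T_i$ (using $T_iR_i=E_i$) \emph{before} the counit is applied, leaving a single clean product on each side. Finally, by the left–right symmetry of the hypotheses it will be enough to establish one half of the multiplier identity, the other following from the mirror image of the same computation.
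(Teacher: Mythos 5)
Your plan is correct and is essentially the paper's own proof: the paper rewrites $a\bigl((\epsilon\ox\id)R_1(b\ox c)\bigr)$ using $R_1=G_1R_1$, the membership $(G_1,E_2)\in\mathcal M$, the relation $E_2=T_2R_2$ and axiom (iii) into the triple product $\mu(\mu\ox\id)(R_2\ox\id)(\id\ox R_1)(a\ox b\ox c)$, obtains the mirror expression $\mu(\id\ox\mu)(\id\ox R_1)(R_2\ox\id)(a\ox b\ox c)$ for $\bigl((\id\ox\epsilon)R_2(a\ox b)\bigr)c$, and concludes by Proposition~\ref{prop:R_coassoci}~(5). Your identification of the delicate point --- that the counit must be absorbed via $E_i=T_iR_i$ and $(\id\ox\epsilon)T_2=\mu$ into a multiplication rather than applied directly to the middle tensor leg --- is exactly how the paper's computation proceeds, so there is no gap.
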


\begin{proof}
Using in the second equality that $(G_1,E_2)\in \mathcal M$, it follows for
any $a,b,c\in A$ that 
\begin{eqnarray*}
a(\epsilon \ox \id)R_1(b\ox c)&=&
a(\epsilon \ox \id)G_1R_1(b\ox c)\\
&=&\mu(\id \ox \epsilon \ox \id)(E_2\ox \id)(\id \ox R_1)(a\ox b\ox c)\\
&=&\mu(\id \ox \epsilon \ox \id)(T_2R_2\ox \id)(\id \ox R_1)(a\ox b\ox c)\\
&\stackrel{(iii)}=&
\mu(\mu \ox \id)(R_2\ox \id)(\id \ox R_1)(a\ox b\ox c).
\end{eqnarray*}
Symmetrically, using in the second equality that $(E_1,G_2)\in \mathcal M$, 
\begin{eqnarray*}
(\id \ox \epsilon)R_2(a\ox b)c&=&
(\id \ox \epsilon)G_2R_2(a\ox b)c\\
&=&\mu(\id \ox \epsilon \ox \id)(\id \ox E_1)(R_2\ox \id)(a\ox b\ox c)\\
&=&\mu(\id \ox \epsilon \ox \id)(\id \ox T_1R_1)(R_2\ox \id)(a\ox b\ox c)\\
&\stackrel{(iii)}=&
\mu(\id \ox \mu)(\id \ox R_1)(R_2\ox \id)(a\ox b\ox c).
\end{eqnarray*}
They are equal by the associativity of $\mu$ and Proposition
\ref{prop:R_coassoci}~(5). 
\end{proof}

The map $S:A\to \M(A)$ in Corollary \ref{cor:S} --- whenever it exists ---
will be termed the {\em antipode} for the following reason.

\begin{theorem}\label{thm:Hopf}
For any regular weak multiplier bialgebra $A$ over a field, there is a
bijective correspondence between the following data.
\begin{itemize}
\item[{(1)}] For $i\in \{1,2\}$, a linear map $R_i:A\ox A \to A\ox A$ such
 that $R_iT_i=G_i$, $T_iR_i=E_i$ and $R_iT_iR_i=R_i$.
\item[{(2)}] A linear map $S:A\to \M(A)$ satisfying for all $a,b,c\in A$
\begin{itemize}
\item[{(vii)}] $T_1[((\id \ox S)T_2(a\ox b))(1\ox c)]=\Delta(a)(b\ox c)$,
\item[{(viii)}] $T_2[(a\ox 1)((S\ox \id)T_1(b\ox c))]=(a\ox b)\Delta(c)$,
\item[{(ix)}] $\mu(S\ox \id)[E(a\ox 1)]=S(a)$ (equivalently, $\mu(\id\ox
 S)[(1\ox a)E]=S(a)$).
\end{itemize}
\end{itemize}
\end{theorem}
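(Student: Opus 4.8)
The plan is to prove the two implications separately and then check that the resulting assignments are mutually inverse, the injectivity of \eqref{eq:L_map} and \eqref{eq:R_map} guaranteeing uniqueness. The passage from (1) to (2) is essentially already available: given $R_1,R_2$ as in (1), Corollary \ref{cor:S} produces $S=(\lambda_{R_1},\rho_{R_2})\colon A\to \M(A)$, so only (vii), (viii), (ix) remain to be checked. Conversely, given $S$ as in (2), I would define $R_1$ and $R_2$ by \emph{peeling off} a leg,
\begin{align*}
(a\ox 1)R_1(b\ox c)&:=\big((\id\ox S)T_2(a\ox b)\big)(1\ox c),\\
R_2(a\ox b)(1\ox c)&:=(a\ox 1)\big((S\ox \id)T_1(b\ox c)\big),
\end{align*}
and then show these prescriptions determine genuine maps $A\ox A\to A\ox A$ obeying the three relations of (1).

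For the forward direction the crucial input is \eqref{eq:L_form} (resp. \eqref{eq:R_form}): for $L=R_1\in\mathcal L$ it gives $\big((\id\ox S)T_2(a\ox b)\big)(1\ox c)=(a\ox 1)R_1(b\ox c)$. Substituting this into the left side of (vii) and using the multiplicativity of $\Delta$ to factor out $\Delta(a)$, together with $T_1R_1=E_1$ and the consequence $\Delta(a)E=\Delta(a)$ of axiom (iv), yields $T_1[(a\ox 1)R_1(b\ox c)]=\Delta(a)\,T_1R_1(b\ox c)=\Delta(a)E(b\ox c)=\Delta(a)(b\ox c)$, which is (vii); axiom (viii) follows symmetrically from $T_2R_2=E_2$ and $E\Delta(c)=\Delta(c)$. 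For (ix) I would use the convolution formulas \eqref{eq:L_conv}, \eqref{eq:R_conv}: multiplying $\mu(S\ox\id)[E(a\ox 1)]$ on the right by $x$ turns it into $\mu(\lambda_{R_1}\ox\id)E_1(a\ox x)=\lambda_{R_1E_1}(a)(x)$, and since $E_1=T_1R_1$ the reflexivity $R_1T_1R_1=R_1$ gives $R_1E_1=R_1$, whence the value is $\lambda_{R_1}(a)(x)=S(a)x$; the second form of (ix) is obtained likewise from $R_2T_2R_2=R_2$.

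For the converse I would first settle well-definedness of the displayed prescriptions. That the right-hand sides lie in $A\ox A$ for each fixed test element is clear since $S(q)c\in A$; the delicate point is that, after cancelling the test element, $R_1(b\ox c)$ still lies in $A\ox A$ and not merely in $\M(A)\ox A$. This I would establish exactly as in the proof of \cite[Proposition 2.3]{VDaWa} (compare Proposition \ref{prop:R1_in_L}), using regularity of $A$: the first leg is a left $A$-module map, and combining the prescription with its left--right symmetric partner pins the missing leg inside $A$. Once $R_1,R_2$ are maps $A\ox A\to A\ox A$—lying in $\mathcal L$, $\mathcal R$ respectively, the module properties being immediate and the compatibility with $T_2$, resp. $T_1$, following from the definitions together with the coassociativity axiom (ii)—axioms (vii), (viii) give $T_1R_1=E_1$, $T_2R_2=E_2$ (reading the forward computation backwards and cancelling $\Delta(a)$ via axiom (iv)), and axioms (ix) give $R_1T_1R_1=R_1$, $R_2T_2R_2=R_2$ through \eqref{eq:L_conv}, \eqref{eq:R_conv} and the injectivity of \eqref{eq:L_map}, \eqref{eq:R_map}.

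The main obstacle is the remaining pair $R_1T_1=G_1$ and $R_2T_2=G_2$, which the axioms (vii)--(ix) do \emph{not} encode directly and which must be recovered as the genuine antipode computation. By \eqref{eq:L_conv} one has $\lambda_{R_1T_1}(a)=\mu(S\ox\id)T_1(a\ox -)$, so—via injectivity of \eqref{eq:L_map} and Proposition \ref{prop:L}~(2)—the relation $R_1T_1=G_1$ amounts to the identity $\mu(S\ox\id)T_1(a\ox c)=\piR(a)c$, the multiplier analogue of the weak antipode axiom $\sum S(a_1)a_2=\piR(a)$. I would prove this by combining (viii) with the counit axiom (iii) and regularity, and then identify the outcome with $G_1$ through its explicit form \eqref{eq:G1_other} and Proposition \ref{prop:F}; the case of $R_2T_2=G_2$ is symmetric. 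This is the hardest step, and the one where the coseparable coalgebra structure of the base algebra (entering via $F$) is genuinely used. Finally, the two constructions are mutually inverse: forming $S$ from $(R_1,R_2)$ and peeling again returns the same $R_i$ by \eqref{eq:L_form}, \eqref{eq:R_form} and the injectivity of \eqref{eq:L_map}, \eqref{eq:R_map}, while membership $(R_1,R_2)\in\mathcal M$ used throughout is automatic from the associativity of the product in $\M(A)$.
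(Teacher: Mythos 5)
Your forward direction $(1)\Rightarrow(2)$ coincides with the paper's: Corollary \ref{cor:S} produces $S$, (vii) and (viii) follow from $T_iR_i=E_i$ via \eqref{eq:L_form}/\eqref{eq:R_form} and $\Delta(a)E=\Delta(a)$, and (ix) follows from $R_iE_i=R_iT_iR_i=R_i$ via \eqref{eq:L_conv}/\eqref{eq:R_conv}. The problems are all in the converse.

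Your construction of $R_1$ by the ``peeling'' prescription $(a\ox 1)R_1(b\ox c):=((\id\ox S)T_2(a\ox b))(1\ox c)$ has two genuine gaps. First, well-definedness: the right-hand side is a left $A$-module map in $a$, but to extract an element $R_1(b\ox c)\in A\ox A$ independent of $a$ you would essentially need to apply $\overline{\id\ox S}$ to the multiplier $\Delta(b)$, and the extension of $S$ to multipliers is only available once $S$ is known to be anti-multiplicative and non-degenerate --- facts proved \emph{after} this theorem (Theorem \ref{thm:S_anti-multi}, Proposition \ref{prop:S_nd}, the latter needing fullness). Deferring to \cite[Proposition 2.3]{VDaWa} does not help, because that argument constructs $R_1$ by a different device (a kernel/image analysis), not by this formula. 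Second, circularity: to get $R_1T_1=G_1$ you invoke the injectivity of \eqref{eq:L_map}, which requires $R_1\in\mathcal L$; but you assert the intertwining $(T_2\ox\id)(\id\ox R_1)=(\id\ox R_1)(T_2\ox\id)$ is ``immediate from the definitions,'' whereas in the paper (Proposition \ref{prop:R1_in_L}) membership of $R_1$ in $\mathcal L$ is \emph{deduced from} the three relations $R_1T_1=G_1$, $T_1R_1=E_1$, $R_1T_1R_1=R_1$ --- exactly what you are trying to prove. A further warning sign is your appeal to the coseparable coalgebra structure of the base algebra: that structure requires a right full comultiplication (Theorem \ref{thm:base_sF}), a hypothesis absent from Theorem \ref{thm:Hopf}; only Proposition \ref{prop:F}~(1) (no fullness needed) may be used here.

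The paper's route avoids all of this. One shows $\mathsf{Im}(T_1)=\mathsf{Im}(E_1)$ (axiom (iv) gives one inclusion, (vii) the other) and $\mathsf{Ker}(T_1)=\mathsf{Ker}(G_1)$ (one inclusion from $T_1G_1=T_1$ in Proposition \ref{prop:L}~(3); the other from the identity $\mu(S\ox\id)T_1=\mu(\piR\ox\id)$, which is (viii) composed with $\id\ox\epsilon$, fed through coassociativity (ii)). Then $R_1$ is \emph{defined} as $T_1(a\ox b)\mapsto G_1(a\ox b)$ on $\mathsf{Im}(E_1)$ and $0$ on $\mathsf{Ker}(E_1)$; all three relations drop out of $E_1T_1=T_1=T_1G_1$ and $G_1^2=G_1$, with no need for (ix), which enters only to show the round trip returns the original $S$. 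If you want to salvage your approach you would have to first prove, independently, that your prescription lands in $A\ox A$ and defines an element of $\mathcal L$; as written, those steps are missing and the argument for $R_1T_1=G_1$ does not close.
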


\begin{proof}
(1)$\mapsto$(2). By Corollary \ref{cor:S}, there is a linear map
$(\lambda_{R_1},\rho_{R_2})=:S:A\to \M(A)$. Using in the penultimate equality
that $T_1R_1=E_1$, 
\begin{eqnarray*}
T_1[((\id \ox S)T_2(a\ox b))(1\ox c)]&\stackrel{\eqref{eq:L_form}}=&
T_1[(a\ox 1)R_1(b\ox c)]=
\Delta(a)T_1R_1(b\ox c)\\
&=&\Delta(a)E(b\ox c)\stackrel{(iv)}=
\Delta(a)(b\ox c)
\end{eqnarray*}
for all $a,b,c\in A$, so that (vii) holds. Symmetrically, (viii) follows by
$T_2R_2=E_2$. Using in the second equality $R_1E_1=R_1$, 
$$
\mu(S\ox \id)[E(a\ox b)]\stackrel{\eqref{eq:L_conv}}=
\lambda_{R_1E_1}(a)b=
\lambda_{R_1}(a)b=
S(a)b,
$$
for all $a,b\in A$, proving the first form of (ix). The second form follows
symmetrically by $R_2E_2=R_2$. 

(2)$\mapsto$(1). By axiom (iv) in Definition \ref{def:mwba},
$$
\mathsf{Im}(T_1)\subseteq 
\langle E(a\ox b)\ |\ a,b\in A \rangle = 
\langle \Delta(a)(b\ox c)\ |\ a,b,c\in A \rangle.
$$ 
Conversely, by (vii)
$\langle \Delta(a)(b\ox c)\ |\ a,b,c\in A \rangle \subseteq 
\mathsf{Im}(T_1)$, so that $\mathsf{Im}(T_1)=\mathsf{Im}(E_1)$. 
By Proposition \ref{prop:L}~(3),
$T_1G_1=T_1$ so that $\mathsf{Ker}(G_1)\subseteq \mathsf{Ker}(T_1)$. In order
to see the converse, note that applying $\id\ox \epsilon$ to both sides of
(viii) and making use of the counitality axiom (iii) and \eqref{eq:piR(a)b}, we
conclude, since $A$ is non-degenerate, that 
\begin{equation}\label{eq:S>piR}
\mu(S\ox \id)T_1=\mu(\piR\ox \id). 
\end{equation}
Assume that for some $b,c\in A$,
$T_1(b\ox c)=0$. Then for all $a\in A$, 
\begin{eqnarray*}
0&=&
(\id \ox \mu)(\id \ox S \ox \id)(T_2\ox \id)(\id \ox T_1)(a\ox b\ox c)\\
&\stackrel{(ii)}=&
(\id \ox \mu)(\id \ox S \ox \id)(\id \ox T_1)(T_2\ox \id)(a\ox b\ox c)\\
&\stackrel{\eqref{eq:S>piR}}=&
(\id \ox \mu)(\id \ox \piR \ox \id)(T_2\ox \id)(a\ox b\ox c)=
G_1(ab\ox c)=
(a\ox 1)G_1(b\ox c).
\end{eqnarray*}
In the penultimate equality we used an alternative form of the map $G_1$ in
\eqref{eq:G1} derived from Proposition \ref{prop:F}~(1), and in the last
equality we used that $G_1$ in \eqref{eq:G1} is a morphism of left
$A$-modules. By the non-degeneracy of the multiplication in $A\ox A$, this
proves $G_1(b\ox c)=0$ hence $\mathsf{Ker}(G_1)= \mathsf{Ker}(T_1)$. By the
same reasoning applied in \cite[Proposition 2.3]{VDaWa}, the above information
about the image and the kernel of $T_1$ implies that there is a linear map
$R_1:A\ox A \to A\ox A$ with the desired properties. A bit more explicitly,
for any $a,b\in A$, 
\begin{equation}\label{eq:R<G}
R_1: T_1(a\ox b)\mapsto G_1(a\ox b), 
\end{equation}
gives $R_1$ on $\mathsf{Im}(E_1) = \mathsf{Im}(T_1)$, while $R_1$ is defined
as zero on $\mathsf{Ker}(E_1)$. The map $R_2$ is constructed
symmetrically. Note that we did not make use of property (ix) so far. 

It remains to see the bijectivity of the above correspondence. From the
expression \eqref{eq:R<G} of $R_1$, it is clear that it does not depend on the
actual choice of the map $S$ in part (2) (only on its existence). Hence
starting with the data $(R_1,R_2)$ as in part (1), we get from the
relation $R_1T_1 = G_1$ that $R_1$ must be defined by \eqref{eq:R<G} on
$\mathsf{Im}(T_1)$; and because $ R_1 = R_1 E_1$, $R_1$ must be
equal to zero on $\mathsf{Ker}(E_1)$. Similarly for $R_2$. 
Conversely, starting with a map $S$ as in part (2) and iterating the above
constructions $S\mapsto (R_1,R_2)\mapsto (\lambda_{R_1},\rho_{R_2})$, we
obtain the map $\lambda_{R_1}:A\to \mathsf{End}_A(A)$ taking 
$a\in A$ to 
\begin{eqnarray*}
b&\mapsto&
(\epsilon \ox \id)R_1(a\ox b)=
(\epsilon \ox \id)G_1R_1(a\ox b)=
\mu(\piR\ox \id)R_1(a\ox b)\\
&\stackrel{\eqref{eq:S>piR}}=&
\mu(S\ox \id)T_1R_1(a\ox b)=
\mu (S\ox \id)[E(a \ox b)].
\end{eqnarray*}
In the second equality we used Proposition \ref{prop:L}~(2). This element
$\lambda_{R_1}(a)b$ is equal to $S(a)b$ for all $a,b\in A$ if and only
if the first form of (ix) holds. Symmetrically, $a\rho_{R_2}(b)$ is equal to
$aS(b)$ for all $a,b\in A$ if and only if the second form of (ix) holds, what
proves in particular the equivalence of both stated forms of (ix). 
\end{proof}

Theorem \ref{thm:Hopf} implies in particular that if the antipode exists then
it is unique. 

Let us stress that the antipode axioms in part (2) of Theorem \ref{thm:Hopf}
imply the identities
\begin{equation}\label{eq:conv_inv}
\begin{array}{ll}
\mu(S\ox \id)T_1=\mu(\piR\ox \id),\qquad
&\mu(\id \ox S)T_2=\mu(\id \ox \piL),\\
\mu(S\ox \id)E_1=\mu(S\ \, \ox \id)\quad \Leftrightarrow
&\mu(\id \ox S)E_2=\mu(\id \ox S\ )
\end{array}
\end{equation}
expressing the requirement that $S$ is the (widely generalized) convolution
inverse of the map $A\to \M(A)$, $a\mapsto (a(-),(-)a)$. However, the
identities in \eqref{eq:conv_inv} do not seem to be equivalent to the axioms
(vii)-(ix).

Combining Theorem \ref{thm:reg_mwha>mwba} and Theorem \ref{thm:Hopf}, we
conclude that any regular weak multiplier Hopf algebra in the sense of
\cite{VDaWa} is a regular weak multiplier bialgebra in the sense of the
current paper possessing an antipode. On the other hand, if a regular weak
multiplier bialgebra in the sense of the current paper admits an antipode,
then it is also a weak multiplier Hopf algebra --- though not necessarily a
regular one --- in the sense of \cite{VDaWa}. That is to say, our regular weak
multiplier bialgebras possessing an antipode are {\em between regular and
arbitrary} weak multiplier Hopf algebras in \cite{VDaWa}. 

In view of Theorem \ref{thm:wba<>mwba}, a unital algebra possesses a weak Hopf
algebra structure as in \cite{WHAI} if and only if via the same structure
maps, it is a regular weak multiplier bialgebra with an antipode. 

From Theorem \ref{thm:Hopf} and Example \ref{ex:direct_sum}, we obtain the
following example. 

\begin{example}
For a family $\{A_j\}_{j\in I}$ of regular weak multiplier bialgebras over a
field, labelled by any index set $I$, the direct sum regular weak multiplier
bialgebra $\oplus_{j\in I} A_j$ in Example \ref{ex:direct_sum} possesses an
antipode if and only if $A_j$ does, for all $j\in I$. In this case, for any
$\underline a\in A$, $S(\underline a)= \varphi^{-1}(\{S_j(a_j)\}_{j\in I})$ in
terms of the map \eqref{eq:phi} and the antipode $S_j$ of $A_j$. 
\end{example}

Our final task is to investigate the properties of the antipode.

\begin{lemma}\label{lem:id*S}
Let $A$ be a regular weak multiplier bialgebra over a field. Assume that $A$
possesses an antipode $S:A\to \M(A)$. For $i\in \{1,2\}$, denote by $R_i:A\ox
A \to A\ox A$ the corresponding maps in Theorem \ref{thm:Hopf}~(1). Then the
following hold. 
$$
\mu R_1=\mu(\piL\ox \id)\qquad \textrm{and}\qquad
\mu R_2=\mu(\id \ox \piR).
$$
\end{lemma}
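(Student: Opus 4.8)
The plan is to read the asserted identities off the module-valued formulas \eqref{eq:L_form} and \eqref{eq:R_form}, which apply to $R_1$ and $R_2$ because $R_1\in\mathcal L$ (Proposition \ref{prop:R1_in_L}) and $R_2\in\mathcal R$, and then to feed in the convolution identities \eqref{eq:conv_inv}. The point is that $\mu R_1$ and $\mu R_2$ are exactly the "full" convolution products that \eqref{eq:conv_inv} evaluates, once the operator-valued expressions are collapsed by $\mu$.

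Concretely, I would instantiate \eqref{eq:L_form} at $L=R_1$, obtaining $(a\ox 1)R_1(b\ox c)=((\id\ox\lambda_{R_1})T_2(a\ox b))(1\ox c)$ for all $a,b,c\in A$. Writing $T_2(a\ox b)=a'\ox b'$ with implicit summation and recalling from Corollary \ref{cor:S} that $S=(\lambda_{R_1},\rho_{R_2})$, so that $\lambda_{R_1}(b')$ is the operator $x\mapsto S(b')x$, the right-hand side becomes $a'\ox S(b')c$. Applying the multiplication $\mu$ and using associativity gives
\[
a\,\mu R_1(b\ox c)=a'S(b')c=(\mu(\id\ox S)T_2(a\ox b))c .
\]
By the second identity of \eqref{eq:conv_inv}, $\mu(\id\ox S)T_2=\mu(\id\ox\piL)$, the right-hand side equals $a\piL(b)c$. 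Thus $a(\mu R_1(b\ox c))=a(\piL(b)c)$ for all $a,b,c\in A$, and since $\mu R_1(b\ox c)$ and $\piL(b)c$ both lie in $A$, non-degeneracy of the multiplication cancels $a$ on the left, yielding $\mu R_1=\mu(\piL\ox\id)$.

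The second equality follows by the mirror-image computation. Instantiating \eqref{eq:R_form} at $K=R_2$ (where now $\rho_{R_2}(b')$ is the operator $x\mapsto xS(b')$) and applying $\mu$ produces $(\mu R_2(a\ox b))c=a\,(\mu(S\ox\id)T_1(b\ox c))$, which by the first identity $\mu(S\ox\id)T_1=\mu(\piR\ox\id)$ of \eqref{eq:conv_inv} equals $a\piR(b)c$; cancelling $c$ on the right, again by non-degeneracy, gives $\mu R_2=\mu(\id\ox\piR)$.

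I do not expect a genuine obstacle here, only bookkeeping: one must read the factor $(1\ox c)$ in \eqref{eq:L_form} as evaluation of the endomorphism $\lambda_{R_1}(b')$ on $c$, check that $a'\ox S(b')c\in A\ox A$ (so that $\mu$ is defined on it, using that $A$ is an ideal in $\M(A)$), and observe that the final cancellation is on the left for $R_1$ but on the right for $R_2$. Notably no fullness hypothesis is needed, because \eqref{eq:conv_inv} is available for every regular weak multiplier bialgebra equipped with an antipode.
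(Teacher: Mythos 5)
Your proposal is correct and follows essentially the same route as the paper's own proof: both evaluate \eqref{eq:L_form} (resp.\ \eqref{eq:R_form}) at $L=R_1$ (resp.\ $K=R_2$), identify $\lambda_{R_1}$ and $\rho_{R_2}$ with left/right multiplication by $S$ via Corollary \ref{cor:S}, apply $\mu$, invoke \eqref{eq:conv_inv}, and cancel by non-degeneracy. Your version merely spells out the intermediate element $a'\ox S(b')c$ that the paper leaves implicit.
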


\begin{proof}
For any $a,b,c\in A$, 
$$
a (\mu R_1(b\ox c))
\stackrel{\eqref{eq:L_form}}=
(\mu(\id \ox S)T_2(a\ox b))c\stackrel{\eqref{eq:conv_inv}}=
a\piL(b)c.
$$
This proves the first assertion and the second one is proven symmetrically. 
\end{proof}

\begin{lemma}\label{lem:pi*S}
Let $A$ be a regular weak multiplier bialgebra over a field. Assume that $A$
possesses an antipode $S:A\to \M(A)$. For $i\in \{1,2\}$, denote by $R_i:A\ox
A \to A\ox A$ the corresponding maps in Theorem \ref{thm:Hopf}~(1). Then the
following hold. 
$$
\mu(\piR \ox \id)R_1=\mu(S\ox \id)\qquad \textrm{and}\qquad
\mu(\id \ox \piL)R_2=\mu(\id \ox S).
$$
\end{lemma}

\begin{proof}
Using part (2) of Proposition \ref{prop:L} in the first equality, $G_1R_1=R_1$
in the third one, and the relation between $S$ and $\lambda_{R_1}$ in Theorem
\ref{thm:Hopf} in the last one, it follows for all $a,b\in A$ that 
$$
\mu(\piR \ox \id)R_1(a\ox b)=
\mu(\lambda_{G_1} \ox \id)R_1\stackrel{\eqref{eq:L_conv}}=
\lambda_{G_1R_1}(a)b=
\lambda_{R_1}(a)b=
S(a)b.
$$
This proves the first assertion and the second one is proven symmetrically. 
\end{proof}

Although the following theorem is contained in \cite[Proposition 3.5]{VDaWa},
we prefer to give an alternative proof not referring to Sweedler type
indices. 

\begin{theorem}\label{thm:S_anti-multi}
Let $A$ be a regular weak multiplier bialgebra over a field. If $A$ possesses
an antipode $S:A\to \M(A)$, then it is anti-multiplicative. 
\end{theorem}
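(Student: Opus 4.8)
The plan is to run the classical argument that a left and a right convolution inverse of the multiplication must coincide, but carried out through the maps of Theorem \ref{thm:Hopf} instead of Sweedler indices. A multiplier $m\in\M(A)$ is determined by its left multiplications $c\mapsto cm$ (if $cm=0$ for all $c$ then the right-acting component of $m$ vanishes, hence so does the left-acting one by non-degeneracy, whence $m=0$). It therefore suffices to prove $cS(ab)=cS(b)S(a)$ for all $a,b,c\in A$, both sides lying in $A$ since $A$ is an ideal of $\M(A)$. I would phrase everything through the maps $R_1\in\mathcal L$ and $R_2\in\mathcal R$ via $S(x)c=\lambda_{R_1}(x)(c)=(\epsilon\ox\id)R_1(x\ox c)$ and $cS(x)=\rho_{R_2}(x)(c)=(\id\ox\epsilon)R_2(c\ox x)$, using the clean consequences of axioms (vii) and (viii), namely $R_2(a\ox b)(1\ox c)=(a\ox 1)((S\ox\id)T_1(b\ox c))$ and $(a\ox 1)R_1(b\ox c)=(a\ox 1)((\id\ox S)T_2(b\ox c))$, to pass between the abstract maps and explicit antipode expressions.

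The heart of the proof is to present $\tau(a\ox b):=S(b)S(a)$ as a left, and $\nu(a\ox b):=S(ab)$ as a right, generalized convolution inverse of $\mu$ relative to the comultiplication, the role of $\eta\epsilon$ from the unital theory being taken over by the idempotent maps $\piR$ and $\piL$. The three ingredients are exactly the identities collected in \eqref{eq:conv_inv}: $\mu(S\ox\id)T_1=\mu(\piR\ox\id)$, $\mu(\id\ox S)T_2=\mu(\id\ox\piL)$, and $\mu(S\ox\id)E_1=\mu(S\ox\id)$ (equivalently $\mu(\id\ox S)E_2=\mu(\id\ox S)$), expressing that $S$ is a two-sided weak convolution inverse of the identity. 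Using the multiplicativity of $\overline\Delta$ (so that $T_1,T_2$ distribute over products in the inner variable) together with the coassociativity axiom (ii), in the form $(T_2\ox\id)(\id\ox T_1)=(\id\ox T_1)(T_2\ox\id)$ used to reassociate a threefold convolution, the computation follows the skeleton $\tau=\tau\ast(\mu\ast\nu)=(\tau\ast\mu)\ast\nu=\nu$: one identifies $\mu\ast\nu$ and $\tau\ast\mu$ with composites of $\mu$ with $\piL$ and $\piR$, and the two outer equalities are precisely the neutrality of $\piR$ and $\piL$ recorded in Lemma \ref{lem:id*S} and Lemma \ref{lem:pi*S} (together with \eqref{eq:S>piR}).

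The main obstacle is structural rather than computational: without a unit there is no ambient convolution algebra in which to invoke uniqueness of inverses, so every occurrence of a convolution must be read off through $T_1$ and $T_2$ and, crucially, be checked to land in $A\ox A$ and not merely in $\M(A\ox A)$ before density and non-degeneracy can be applied. Staying inside $A\ox A$ is secured by the idempotency of $A$ and of the maps $E_1,G_1$ (Proposition \ref{prop:L}(3)) together with the commutation relations of Proposition \ref{prop:R_coassoci}, while the neutrality of $\piR$ and $\piL$ that forces the left and right inverses to agree is supplied by \eqref{eq:conv_inv}. Once these points are arranged, the two resulting elements of $A$ coincide for all $a,b,c$, and non-degeneracy of the multiplication upgrades this to the equality $S(ab)=S(b)S(a)$ in $\M(A)$.
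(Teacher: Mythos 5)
Your plan is essentially the paper's proof: the paper runs the same convolution-associativity argument through the maps $R_1\in\mathcal L$ and $R_2\in\mathcal R$, packaging the threefold convolution as a single composite $W:A\ox A\ox A\ox A\to A$ sandwiched between elements $a,d\in A$ (exactly the device you prescribe for staying inside $A\ox A$) and evaluating it in two ways to get $aS(bc)d=aS(c)S(b)d$, using \eqref{eq:conv_inv}, Lemma \ref{lem:id*S} and Lemma \ref{lem:pi*S} as the neutrality statements. The strategy and the supporting lemmas coincide; the paper merely carries out in full the computation you outline.
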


\begin{proof}
 For $i\in \{1,2\}$, denote by $R_i:A\ox A \to A \ox A$ the maps in
Theorem \ref{thm:Hopf}~(1). Consider the composite map
$$
W:=\mu^2(R_2\ox \id)(\id \ox \mu \ox \id)(\id \ox\id \ox R_1)
(\id \ox \mathsf{tw} \ox \id)(\id \ox\id \ox R_1)
(\id \ox \mathsf{tw} \ox \id)
$$
from $A\ox A\ox A\ox A$ to $A$. We shall evaluate it on an arbitrary element
$a\ox b\ox c \ox d$ in two different ways. In one case, we will get $aS(bc)d$
and in the other case it will yield $aS(c)S(b)d$.

To begin with, compute 
\begin{eqnarray*}
(\mu(\id \ox S)\ox \id)(\id \ox T_1)&\stackrel{\eqref{eq:R_map}}=&
(\id \ox \epsilon\ox \id)(R_2\ox \id)(\id \ox T_1)\\
&\stackrel{R_2\in \mathcal R}=&
(\id \ox \epsilon\ox \id)(\id \ox T_1)(R_2\ox \id)\\
&\stackrel{(iii)}=&
(\id \ox \mu)(R_2\ox \id).
\end{eqnarray*}
With its help, 
\begin{eqnarray*}
W(a\ox b\ox c\ox d)=
a(\mu(S\ox \id)T_1(\mu \ox \id)(\id \ox R_1)(\mathsf{tw} \ox \id)(\id \ox R_1)
(c \ox b\ox d))).
\end{eqnarray*}
Next, for all $b,c,d\in A$,
\begin{eqnarray*}
T_1(\mu \ox \id)(\id \ox R_1)(b\ox c\ox d)&=&
\Delta(b)(T_1R_1(c\ox d))
\stackrel{(iv)}=
T_1(b\ox d)(c\ox 1).
\end{eqnarray*}
Using this computation, 
\begin{eqnarray*}
W(a\ox b\ox c\ox d)&=&
a(\mu (S\ox \id)[(T_1R_1(b\ox d))(c\ox 1)])\\
&=&a(\mu(S\ox \id)E_1(bc\ox d))
\stackrel{\eqref{eq:conv_inv}}=
aS(bc)d
\end{eqnarray*}
for all $a,b,c,d\in A$.
On the other hand, by Lemma \ref{lem:id*S} and \eqref{eq:cond_exp_reg}, 
$$
\mu R_2 (\id \ox \mu)=
\mu (\id \ox \piR \mu)=
\mu (\id \ox \piR)(\id \ox \mu(\piR \ox \id))=
\mu R_2(\id \ox \mu(\piR \ox \id)),
$$
hence 
\begin{eqnarray*}
W=\mu^2 \!\!\!\!\!\!\!\!\!\!\!&&(R_2\ox \id)
(\id \ox \mu(\piR \ox \id)\ox \id)\\
&&(\id \ox\id \ox R_1)(\id \ox \mathsf{tw} \ox \id)(\id \ox\id \ox R_1)
(\id \ox \mathsf{tw} \ox \id).
\end{eqnarray*}
Moreover, for any $a,b,c,d\in A$, 
\begin{eqnarray*}
R_2(a\ox \piR(b)c)(1\ox d)&\stackrel{\eqref{eq:R_form}}=&
(a\ox 1)((S\ox \id)T_1(\piR(b)c\ox d))\\
&\stackrel{\eqref{eq:coproduct_reg}}=&
(a\ox 1)((S\ox \id)((1\ox \piR(b))T_1(c\ox d)))\\
&=&(a\ox \piR(b))((S\ox \id)T_1(c\ox d)).
\end{eqnarray*}
Therefore, 
\begin{eqnarray*}
\mu^2&&\!\!\!\!\!\!\!\!\!\!\!\!\!\!\!\!\!\!\!
(R_2\ox \id)(\id \ox \mu(\piR \ox \id)\ox \id)
(\id \ox\id \ox R_1) (a\ox b\ox c\ox d)\\
&=&\mu[(a\ox \piR(b))((S\ox \id)T_1R_1(c\ox d))]\\
&=&\mu[(a\ox \piR(b))((S\ox \id)(E(c\ox d)))]\\
&\stackrel{\eqref{eq:coproduct_reg}}=&
a(\mu(S\ox \id)E_1(c\ox \piR(b)d))
\stackrel{\eqref{eq:conv_inv}}=
aS(c)\piR(b)d.
\end{eqnarray*}
Substituting this identity and applying Lemma \ref{lem:pi*S}, we obtain
$$
W(a\ox b\ox c\ox d)=
\mu^3(\id \ox S \ox \piR\ox \id)(\id \ox\id \ox R_1) (a\ox c\ox b\ox d)=
aS(c)S(b)d
$$
for any $a,b,c,d\in A$. By the density of $A$ in $\M(A)$, this proves
$S(bc)=S(c)S(b)$, for all $b,c\in A$.
\end{proof}

The following proposition is contained in \cite[Proposition
3.6]{VDaWa}. However, in our setting a much shorter proof can be given. 

\begin{proposition}\label{prop:S_nd}
Let $A$ be a regular weak multiplier bialgebra over a field which 
possesses an antipode $S:A\to \M(A)$. Whenever the comultiplication is left
and right full, $S$ is a non-degenerate map. 
\end{proposition}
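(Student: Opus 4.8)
The plan is to unfold what non-degeneracy of $S:A\to \M(A)$ amounts to concretely---namely that $\langle S(a)b\ |\ a,b\in A\rangle = A = \langle bS(a)\ |\ a,b\in A\rangle$---and to identify each of these two spans with a spanning condition already known to hold under the fullness hypothesis. Since $S(a)\in\M(A)$ and $b\in A$, the inclusions $\langle S(a)b\rangle\subseteq A$ and $\langle bS(a)\rangle\subseteq A$ are automatic, so only the reverse inclusions require proof, and I expect no genuine computation beyond bookkeeping.

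First I would treat the right span. The key input is the convolution identity $\mu(S\ox \id)T_1=\mu(\piR\ox \id)$ from \eqref{eq:conv_inv}. Because $T_1$ takes values in the ideal $A\ox A$ by axiom (i), each element $\mu(S\ox\id)T_1(a\ox b)$ is a finite sum of terms of the form $S(x)y$ with $x,y\in A$; hence $\langle \piR(a)b\ |\ a,b\in A\rangle=\langle \mu(S\ox\id)T_1(a\ox b)\ |\ a,b\in A\rangle\subseteq \langle S(x)y\ |\ x,y\in A\rangle$. It therefore suffices to show $\langle \piR(a)b\ |\ a,b\in A\rangle=A$, and this is exactly where right fullness enters: by \eqref{eq:piR(a)b} one has $\piR(a)b=(\id\ox \epsilon)T_3(b\ox a)$, so the span in question equals $\langle (\id\ox\epsilon)T_3(b\ox a)\ |\ a,b\in A\rangle$, which is all of $A$ by the characterization of right fullness in Theorem \ref{thm:full}~(4). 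Combining these gives $A\subseteq \langle S(x)y\rangle\subseteq A$, as needed.

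The left span is handled symmetrically, using the companion identity $\mu(\id\ox S)T_2=\mu(\id\ox \piL)$ from \eqref{eq:conv_inv} together with the left-hand identity $b\piL(a)=(\epsilon\ox \id)T_4(a\ox b)$ from \eqref{eq:piR(a)b}. Again $T_2$ lands in $A\ox A$, so $\langle a\piL(b)\ |\ a,b\in A\rangle\subseteq \langle xS(y)\ |\ x,y\in A\rangle$, and left fullness via Theorem \ref{thm:full}~(4)$'$ yields $\langle a\piL(b)\ |\ a,b\in A\rangle=A$, whence $A\subseteq \langle xS(y)\rangle\subseteq A$.

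I expect the only real subtlety to be the reduction itself rather than any estimate: once one recognizes that the two defining spans for non-degeneracy collapse, via the convolution identities, to the base-algebra spans $\langle \piR(a)b\rangle$ and $\langle a\piL(b)\rangle$, the fullness hypothesis is precisely tailored to finish. Minor points to watch are invoking the correct one-sided version of \eqref{eq:piR(a)b} and the correct clause of Theorem \ref{thm:full} on each side, and observing that one never needs the full strength of the antipode axioms (vii)--(ix)---only their convolution-inverse consequences \eqref{eq:conv_inv} and the regular-side identities are used---which is what makes this argument shorter than the corresponding one in \cite{VDaWa}.
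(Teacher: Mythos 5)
Your proof is correct and takes essentially the same route as the paper's: both reduce non-degeneracy of $S$ to the spanning statements $A\piL(A)=A$ and $\piR(A)A=A$ via the convolution identities \eqref{eq:conv_inv}, and both obtain those spanning statements from fullness. The only cosmetic difference is that you invoke Theorem \ref{thm:full}~(4) and (4)' directly, whereas the paper runs the chain $A=A^2\subseteq A\pibarL(A)=A\piL(A)\subseteq AS(A)$ using Lemma \ref{lem:counital_maps} and the equality $\piL(A)=\pibarL(A)$.
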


\begin{proof}
Using the idempotency of the algebra $A$, Lemma \ref{lem:counital_maps}~(1),
the fact that $\piL(A)=\pibarL(A)$ (cf. Theorem \ref{thm:full}) and
\eqref{eq:conv_inv}, 
$$
A=A^2\subseteq A\pibarL(A) = A\piL(A)\subseteq AS(A)\subseteq A
$$
so that $A=AS(A)$. A symmetrical reasoning shows that also $A=S(A)A$.
\end{proof}

We conclude by Theorem \ref{thm:extend} that in the situation in Proposition
\ref{prop:S_nd} the antipode extends to algebra homomorphisms $\overline
S:\M(A)^\op\to \M(A)$, $\overline {\id\ox S}:\M(A\ox A^\op)\cong\M(A^\op\ox
A)^\op\to \M(A\ox A)$, $\overline {S \ox \id}:\M(A\ox A)\to \M(A^\op\ox
A)\cong \M(A\ox A^\op)^\op$ and $\overline {S \ox S}: \M(A\ox A)^\op\to
\M(A\ox A)$. 

\begin{lemma}\label{lem:S_mod_map}
Let $A$ be a regular weak multiplier bialgebra over a field. Assume that $A$
possesses an antipode $S:A\to \M(A)$. Then for any $a,b\in A$, the following
hold. 
\begin{eqnarray*}
&S(a\pibarL(b))=\piR(b)S(a)\qquad 
&S(\pibarL(b)a)=S(a)\piR(b)\\
&S(a\pibarR(b))=\piL(b)S(a)\qquad 
&S(\pibarR(b)a)=S(a)\piL(b).
\end{eqnarray*}
\end{lemma}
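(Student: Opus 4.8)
The plan is to read off from \eqref{eq:conv_inv} the two ``convolution inverse'' identities in the pointwise form
$$S(x)c=\mu(S\ox \id)(E(x\ox c))\qquad\textrm{and}\qquad cS(x)=\mu(\id \ox S)((c\ox x)E),\qquad x,c\in A,$$
and to combine them with the anti-multiplicativity of $S$ (Theorem \ref{thm:S_anti-multi}) and the balancing relations of Lemma \ref{lem:E_balanced}. Since each $S(z)$ is only a multiplier, I would verify every claimed equality after multiplying by an arbitrary element of $A$, on the side dictated by where $\pibarL(b)$ (resp. $\pibarR(b)$) sits, and conclude by non-degeneracy of the multiplication.

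First I would settle the two ``outer'' identities $S(a\pibarL(b))=\piR(b)S(a)$ and $S(\pibarR(b)a)=S(a)\piL(b)$, where the base multiplier is the outermost factor. For the former, left multiplication by $d$ and the second convolution identity give $d\,S(a\pibarL(b))=\mu(\id\ox S)((d\ox a\pibarL(b))E)$; factoring $(d\ox a\pibarL(b))E=(d\ox a)(1\ox \pibarL(b))E$ and applying Lemma \ref{lem:E_balanced} in the form $(1\ox \pibarL(b))E=(\piR(b)\ox 1)E$ turns this into $(d\piR(b)\ox a)E$, whence $\mu(\id\ox S)((d\piR(b)\ox a)E)=d\piR(b)S(a)$ by the convolution identity again; as $d$ is arbitrary the claim follows. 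The identity $S(\pibarR(b)a)=S(a)\piL(b)$ is the exact mirror, checked by right multiplication using $S(a)c=\mu(S\ox\id)(E(a\ox c))$ and the second relation $E(\pibarR(b)\ox 1)=E(1\ox \piL(b))$ of Lemma \ref{lem:E_balanced}.

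The two ``inner'' identities $S(\pibarL(b)a)=S(a)\piR(b)$ and $S(a\pibarR(b))=\piL(b)S(a)$ are the genuinely harder part and the main obstacle: here the base multiplier sits on the inner side, and the balancing relations of Lemma \ref{lem:E_balanced} would only push it into the \emph{wrong} tensor leg — a relation such as $(\pibarL(b)\ox 1)E=(1\ox \piR(b))E$ is false, as it would force $\pibarL=\piR$. Instead I would bring in the map $R_1\in\mathcal L$ attached to $S$ in Theorem \ref{thm:Hopf}. From its defining $T_2$-compatibility $(T_2\ox \id)(\id \ox R_1)=(\id \ox R_1)(T_2\ox \id)$ together with $\pibarL(b)a=(\epsilon\ox \id)T_2(b\ox a)$ and the linearity of $R_1$, one extracts the multiplier linearity $R_1(\pibarL(b)a\ox c)=(\pibarL(b)\ox 1)R_1(a\ox c)$. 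Feeding this into Lemma \ref{lem:pi*S}, $S(z)c=\mu(\piR\ox \id)R_1(z\ox c)$, gives $S(\pibarL(b)a)c=\mu(\piR\ox \id)\big((\pibarL(b)\ox 1)R_1(a\ox c)\big)$.

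To close this, writing $R_1(a\ox c)=\sum a^\flat\ox c^\flat$, I would use the left–right mirror $\piR(\pibarL(b)y)=\piR(y)\piR(b)$ of Lemma \ref{lem:piRonL} (provable by the same steps) to rewrite the expression as $\sum \piR(a^\flat)\piR(b)\,c^\flat$, and match it against $S(a)\piR(b)c=\mu(\piR\ox \id)R_1(a\ox \piR(b)c)$ via the second-leg relation $R_1(a\ox \piR(b)c)=(1\ox \piR(b))R_1(a\ox c)$, which I would obtain from the centrality of $F$ in Proposition \ref{prop:F} (using $F(1\ox \piR(b))=(\piR(b)\ox 1)F$). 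The identity $S(a\pibarR(b))=\piL(b)S(a)$ is then the symmetric statement, handled with $R_2\in\mathcal R$ and the corresponding mirror relations. I expect the verification of these auxiliary mirror identities and of the second-leg $\piR$-linearity to be the most delicate bookkeeping; a straightforward reduction of the inner identities to the outer ones via anti-multiplicativity and a resolution of $a$ is tempting but turns out to be circular, which is precisely why the $R_1,R_2$ machinery (and ultimately coassociativity, axiom (ii)) is needed.
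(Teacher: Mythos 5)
Your treatment of the two ``outer'' identities $S(a\pibarL(b))=\piR(b)S(a)$ and $S(\pibarR(b)a)=S(a)\piL(b)$ is correct and coincides with the paper's argument: multiply by an arbitrary element on the appropriate side, use $\mu(\id\ox S)E_2=\mu(\id\ox S)$ (resp. $\mu(S\ox \id)E_1=\mu(S\ox\id)$) from \eqref{eq:conv_inv}, move the base multiplier through $E$ with Lemma \ref{lem:E_balanced}, and apply \eqref{eq:conv_inv} again.

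For the two ``inner'' identities there is a genuine gap, and it originates in a wrong judgement call. The reduction to the outer identities via anti-multiplicativity, which you dismiss as circular, is exactly what the paper does, and it is not circular: Theorem \ref{thm:S_anti-multi} is proved before the present lemma and its proof uses only Lemmas \ref{lem:id*S} and \ref{lem:pi*S}, the identities \eqref{eq:conv_inv}, \eqref{eq:R_form}, \eqref{eq:coproduct_reg} and the axioms --- never Lemma \ref{lem:S_mod_map}. Concretely, from the already established $\piR(b)S(c)=S(c\pibarL(b))$ one gets
$$
S(a)\piR(b)S(c)d=S(a)S(c\pibarL(b))d=S\bigl(c\,\pibarL(b)a\bigr)d=S(\pibarL(b)a)S(c)d,
$$
and one cancels $S(c)d$ using $S(A)A=A$ (Proposition \ref{prop:S_nd}) and the density of $A$ in $\M(A)$. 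Your substitute route through $R_1$ does not close: its pivotal step is the ``mirror'' identity $\piR(\pibarL(b)y)=\piR(y)\piR(b)$, which is false. Indeed, by Lemma \ref{lem:counit_exp} one has $\epsilon(\pibarL(b)x)=\epsilon(bx)$, whence $\piR(\pibarL(b)y)=\piR(by)=\piR(\piR(b)y)$ by \eqref{eq:cond_exp_reg}; this would coincide with $\piR(y)\piR(b)=\piR(y\piR(b))$ (cf. \eqref{eq:mod_map_reg}) only if $\piR$ were anti-multiplicative, which already fails for unital weak bialgebras. The genuine Lemma \ref{lem:piRonL} concerns $\piR(a\pibarL(b))$, where $\pibarL(b)$ sits adjacent to the coproduct leg so that the balancing of Lemma \ref{lem:E_balanced} applies; in $\piR(\pibarL(b)a)$ it does not, and no such mirror is available. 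Consequently the inner identities remain unproven in your proposal; the intended (and valid) route is the anti-multiplicativity reduction above.
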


\begin{proof}
Using Lemma \ref{lem:E_balanced} in the second equality, it follows for any
$a,b,c\in A$ that 
\begin{eqnarray*}
a S(b\pibarL(c))&\stackrel{\eqref{eq:conv_inv}}=&
\mu(\id \ox S)[(a\ox b\ \pibarL(c))E]\\
&=&\mu(\id \ox S)[(a\piR(c)\ox b)E]\stackrel{\eqref{eq:conv_inv}}=
a \piR(c) S(b).
\end{eqnarray*}
 By the density of $A$ in $\M(A)$, this proves the first claim. It
also implies that 
$$
S(a)\piR(c) S(b)d=S(a)S(b\pibarL(c))d=S(\pibarL(c)a)S(b)d
$$
for all $a,b,c,d\in A$, where in the second equality we used the
anti-multiplicativity of $S$. Using the non-degeneracy of $S$ and the density
of $A$ in $\M(A)$, we have the second claim proven. The remaining
assertions follow symmetrically.
\end{proof}

In view of Proposition \ref{prop:(a@1)E} and \eqref{eq:E(a@1)}, in any
regular weak multiplier bialgebra $A$ over a field, we may regard $E$ as an
element of $\M(A\ox A^\op)$. The following proposition --- and thus its
Corollary \ref{cor:S_anti-comulti} --- was proven in \cite{VDaWa} only for
regular weak multiplier Hopf algebras. 

\begin{proposition}\label{prop:E<S>F}
Let $A$ be a regular weak multiplier bialgebra over a field with a left
and right full comultiplication. Assume that $A$ possesses an antipode $S:A\to
\M(A)$. Then the elements $E\in \M(A\ox A^\op)$ and $F\in \M(A\ox A)$ in
Proposition \ref{prop:F}~(1) are related via the extensions of $S$ as
$$
(\overline {\id\ox S})(E)=F \qquad \textrm{and} \qquad 
(\overline {S \ox \id})(F)=E^\op,
$$
where $(a\ox 1)E^\op(1\ox b):=\mathsf{tw}[(1\ox a)E(b\ox 1)]$ and 
$(1\ox b)E^\op(a\ox 1):=\mathsf{tw}[(b\ox 1)E(1\ox a)]$ define $E^\op\in
\M(A^\op\ox A)$.
\end{proposition}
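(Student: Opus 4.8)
The plan is to reduce both identities to computations on the base algebras, where the antipode is governed by the maps $\sigma$ and $\overline\tau$ of Lemma \ref{lem:sigma}. First I would record that, since the comultiplication is left and right full, $S$ is non-degenerate by Proposition \ref{prop:S_nd}; hence $\langle S(a)b\rangle=A=\langle bS(a)\rangle$, so the idempotent attached to $S$ in Theorem \ref{thm:extend} is $1$ and $\overline S(1)=1$, and likewise all the tensor extensions $\overline{\id\ox S}$, $\overline{S\ox\id}$, $\overline{S\ox S}$ are unital. The crucial preliminary is to identify the restriction of $\overline S$ to the base algebras. For $b,x\in A$ the homomorphism property of $\overline S:\M(A)^\op\to\M(A)$ gives $S(x\pibarL(b))=\overline S(\pibarL(b))S(x)$, while Lemma \ref{lem:S_mod_map} gives $S(x\pibarL(b))=\piR(b)S(x)$; since $S(A)A=A$ I conclude
\[
\overline S(\pibarL(b))=\piR(b)=\sigma(\pibarL(b)),
\]
so $\overline S$ restricts on $\pibarL(A)=\piL(A)$ to $\sigma$. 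Symmetrically, from $S(\pibarR(b)x)=S(x)\piL(b)$ and $AS(A)=A$ one gets $\overline S(\pibarR(b))=\piL(b)=\overline\tau(\pibarR(b))$, so $\overline S$ restricts on $\pibarR(A)=\piR(A)$ to $\overline\tau$.

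For the first equality I would mimic the proof of Proposition \ref{prop:E<>F} with $\overline S$ in place of $\sigma$. Using \eqref{eq:E_restricted} (namely $(1\ox\pibarL(bc))E=(\piR\ox\pibarL)T_3(c\ox b)$) together with the unitality and multiplicativity of $\overline{\id\ox S}$, for all $a,b,c\in A$,
\[
(\overline{\id\ox S})(E)(\piR(a)\ox\piR(bc))=(\overline{\id\ox S})\big[(1\ox\pibarL(bc))E(\piR(a)\ox 1)\big]=((\piR\ox\piR)T_3(c\ox b))(\piR(a)\ox 1),
\]
where in the last step $\overline{\id\ox S}$ acts on $\piR(A)\ox\pibarL(A)$ as $\id\ox\overline S$ and $\overline S(\pibarL(\cdot))=\piR(\cdot)$. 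By \eqref{eq:delta_other} and Proposition \ref{prop:F} this is $F(\piR(a)\ox\piR(bc))$; since such elements span $\piR(A)\ox\piR(A)$ and both $F$ and $(\overline{\id\ox S})(E)$ lie in $\M(\piR(A)\ox\piR(A))$, non-degeneracy gives $(\overline{\id\ox S})(E)=F$. Equivalently, one may simply note that $E\in\M(\piR(A)\ox\pibarL(A)^\op)$ by Lemma \ref{lem:E_restricted} and $\overline S|_{\pibarL(A)}=\sigma$, so $\overline{\id\ox S}$ and $\overline{\id\ox\sigma}$ agree on $E$ and the claim is immediate from Proposition \ref{prop:E<>F}.

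For the second equality I would apply the homomorphism $\overline{S\ox\id}$ to the first. Since $\overline{S\ox\id}\circ\overline{\id\ox S}$ and $\overline{S\ox S}$ are both unital multiplicative extensions of $S\ox S$, uniqueness in Theorem \ref{thm:extend} yields $\overline{S\ox\id}\circ\overline{\id\ox S}=\overline{S\ox S}$, whence
\[
(\overline{S\ox\id})(F)=(\overline{S\ox S})(E).
\]
It then remains to prove $(\overline{S\ox S})(E)=E^\op$. I would establish this by the same localization as above, now on both legs: $E$ is controlled by the base algebras through Lemma \ref{lem:E_restricted} and its left-right mirror, $\overline S$ acts on the first leg $\piR(A)=\pibarR(A)$ as $\overline\tau$ and on the second leg $\pibarL(A)$ as $\sigma$, and the resulting element is matched with $E^\op$ via the left-right mirror of Proposition \ref{prop:E<>F} applied in $A^\op$.

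The main obstacle will be keeping the multiplier bookkeeping honest: giving precise meaning to $E\in\M(A\ox A^\op)$ and $E^\op\in\M(A^\op\ox A)$ under the identification $\M(A\ox A^\op)\cong\M(A^\op\ox A)^\op$, and verifying that the extensions $\overline{\id\ox S}$, $\overline{S\ox\id}$, $\overline{S\ox S}$ genuinely restrict to $\id\ox\overline S$, $\overline S\ox\id$, $\overline S\ox\overline S$ on the base-algebra subspaces, so that the $\sigma$/$\overline\tau$ identifications may be substituted. Establishing $(\overline{S\ox S})(E)=E^\op$ cleanly — i.e.\ that $S$ is an anti-coalgebra map at the level of $E$ — is the technical heart; the composition identity and the unitality $\overline S(1)=1$ supplied by Proposition \ref{prop:S_nd} are exactly what make it tractable.
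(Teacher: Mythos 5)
Your opening observation is correct and worth having: from Lemma \ref{lem:S_mod_map}, the homomorphism property of $\overline S:\M(A)^\op\to\M(A)$ and $S(A)A=A=AS(A)$ one does get $\overline S(\pibarL(b))=\piR(b)=\sigma(\pibarL(b))$ and $\overline S(\pibarR(b))=\piL(b)=\overline\tau(\pibarR(b))$, which explains \emph{why} the first identity should be Proposition \ref{prop:E<>F} in disguise. But as a proof the proposal has genuine gaps, and you have in fact named them yourself without closing them. For the first identity, $(\overline{\id\ox S})(E)$ is by definition the image of $E$ under the extension of $\id\ox S$ over $\M(A\ox A^\op)$, whereas Proposition \ref{prop:E<>F} computes the image of $E$ under the extension of $\id\ox\sigma$ over the quite different algebra $\M(\piR(A)\ox\piL(A)^\op)$; that these two procedures give the same element is not a formality, and even granting it you only obtain agreement of the products with $\piR(A)\ox\piR(A)$, from which you must still recover equality in $\M(A\ox A)$ (this needs $A=\piR(A)A=A\piR(A)$, which holds by Lemma \ref{lem:counital_maps} and fullness, but is nowhere invoked). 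None of this is carried out, and doing it honestly costs about as much as the direct computation.

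The second identity is where the route actually breaks. You reduce it to $(\overline{S\ox S})(E)=E^\op$ via ``$\overline{S\ox\id}\circ\overline{\id\ox S}=\overline{S\ox S}$ by uniqueness in Theorem \ref{thm:extend}'', but the three maps are extensions over three different algebras --- $\M(A\ox A^\op)$, $\M(A\ox A)$ and $\M(A\ox A)^\op$ --- so the uniqueness clause of Theorem \ref{thm:extend} does not apply as stated; and the remaining claim $(\overline{S\ox S})(E)=E^\op$, which you defer as ``the technical heart'', is essentially equivalent in content and difficulty to the identity being proved (in the paper it is a \emph{consequence} of this proposition, used in Corollary \ref{cor:S_anti-comulti}, not an available input). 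The paper's own proof avoids all of this: it writes a general element of $A\ox A$ as a combination of $ab\ox cS(d)$ (resp.\ $aS(b)\ox cd$), using idempotency of $A$ and non-degeneracy of $S$, and then computes $(ab\ox cS(d))(\overline{\id\ox S})(E)$ directly from the anti-multiplicativity of $S$, Proposition \ref{prop:(a@1)E}~(1), Lemma \ref{lem:S_mod_map} and Proposition \ref{prop:F}~(1), with a symmetric computation for $(\overline{S\ox\id})(F)$ using \eqref{eq:E(a@1)}. I would recommend either adopting that direct computation, or, if you want to keep your structural viewpoint, proving the compatibility of the two extensions explicitly for the first identity and giving an independent element-wise proof of the second.
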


\begin{proof}
Since $A$ is idempotent and $S$ is non-degenerate (by Proposition
\ref{prop:S_nd}), any element of $A\ox A$ can be written as a linear
combination of elements of the form $ab\ox cS(d)$, in terms of $a,b,c,d\in
A$. Moreover, using the anti-multiplicativity of $S$ in the first equality,
applying Proposition \ref{prop:(a@1)E}~(1) in the second equality, Lemma
\ref{lem:S_mod_map} in the third one and Proposition \ref{prop:F}~(1) in the
last one, it follows for any $a,b,c,d\in A$ that 
\begin{eqnarray*}
(ab\ox cS(d))(\overline {\id\ox S})(E)&=&
(1\ox c)((\id \ox S)[(ab\ox 1)E(1\ox d)])\\
&=&(1\ox c)((\id \ox S)[((\id \ox \pibarL)T_2(a\ox b))(1\ox d)]\\
&=&(1\ox cS(d))((\id \ox \piR)T_2(a\ox b))=
(ab\ox cS(d))F.
\end{eqnarray*}
This proves the first assertion. Symmetrically, in order to prove the second
one, write any element of $A\ox A$ as a linear combination of elements of the
form $aS(b)\ox cd$. Using again the anti-multiplicativity of $S$ in the first
equality, Proposition \ref{prop:F}~(1) in the second equality, Lemma
\ref{lem:S_mod_map} in the third one and \eqref{eq:E(a@1)} in the fourth
one, it follows for any $a,b,c,d\in A$ that 
\begin{eqnarray*}
(aS(b)\ox 1)(\overline {S\ox \id})(F)(1\ox cd)&=&
(a\ox 1)((S \ox \id)[F(b\ox cd)])\\
&=&(a\ox 1)((S \ox \id)[((\pibarR \ox \id)T_4^\op(d\ox c))(b\ox 1)]\\
&=&(aS(b)\ox 1)) ((\piL \ox \id)T_4^\op(d\ox c))\\
&=&(aS(b)\ox 1)\mathsf{tw}(E(cd\ox 1))\\
&=&\mathsf{tw}((1\ox aS(b))E(cd\ox 1))\\
&=&(aS(b)\ox 1)E^\op(1\ox cd).
\end{eqnarray*}
\end{proof}

\begin{corollary}\label{cor:S_anti-comulti}
Let $A$ be a regular weak multiplier bialgebra over a field with a left
and right full comultiplication. If $A$ possesses an antipode $S:A\to
\M(A)$, then $S$ is anti-comultiplicative in the sense of the commutative
diagram 
$$
\xymatrix@C=60pt{
A^\op\ar[r]^-S\ar[d]_-{\Delta^\op}&
\M(A)\ar[d]^-{\overline\Delta}\\
\M(A\ox A)^\op\ar[r]_-{\overline{S\ox S}}&
\M(A\ox A).}
$$
\end{corollary}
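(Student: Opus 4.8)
The plan is to translate the asserted commutativity of the diagram into a concrete multiplier identity and then reduce it, via the extension machinery of Theorem \ref{thm:extend}, to the relation $(\overline{\id\ox S})(E)=F$ established in Proposition \ref{prop:E<S>F}. The claim that the square commutes means precisely that
$$
\overline\Delta(S(a))=(\overline{S\ox S})(\Delta^{\op}(a))\qquad\forall a\in A,
$$
where on the right $\Delta^{\op}$ is computed in $A^{\op}$ and $\overline{S\ox S}:\M(A\ox A)^{\op}\to\M(A\ox A)$ is the extension recorded just before the statement. Since $A$ is idempotent and dense in $\M(A)$, and $S$ is non-degenerate by Proposition \ref{prop:S_nd}, it suffices to test this equality of multipliers against elements of the form $b\ox c$ (and symmetrically from the other side), so I would first spell out what each side does to such an element.

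First I would compute the left-hand side. Using Lemma \ref{lem:coproduct}-type identities is not quite enough here; instead I would use the antipode axioms directly. The key input is the relation $(\overline{\id\ox S})(E)=F$ together with the fact, from Proposition \ref{prop:F}~(1) and \eqref{eq:delta_other}, that $F$ encodes the comultiplication of $\piR(A)$. Concretely, I would start from $\overline\Delta\, S = (\overline{S\ox S})\,\overline{\Delta^{\op}}$ as an identity of maps $\M(A)\to\M(A\ox A)$ and check it after multiplying by a generator of $A\ox A$, converting the unknown $\overline\Delta(S(a))$ into an expression involving $T_1$, $T_2$, $E$ and $S$ via axioms (vii)--(ix) in Theorem \ref{thm:Hopf}. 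The anti-multiplicativity of $S$ (Theorem \ref{thm:S_anti-multi}) and the module-map identities of Lemma \ref{lem:S_mod_map} will be used repeatedly to move $S$ past $\pibarL,\pibarR,\piL,\piR$.

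For the right-hand side I would unwind $\overline{\Delta^{\op}}$ using the definition of $\Delta^{\op}$ from the Preliminaries (the $\mathsf{tw}$-twisted comultiplication) and then apply $\overline{S\ox S}$; the extension property $\overline{S\ox S}(1)=$ (the appropriate idempotent) forces the image of $E$ to appear, which is exactly where both formulas of Proposition \ref{prop:E<S>F}, namely $(\overline{\id\ox S})(E)=F$ and $(\overline{S\ox S})(F)=E^{\op}$, get consumed. Matching the two sides then reduces to the single algebraic identity relating $E$, $F$ and $E^{\op}$ under the extensions of $S$, which is precisely the content of that proposition. I expect the main obstacle to be bookkeeping rather than anything conceptual: one must keep careful track of which extension of $S$ (there are four, listed before Lemma \ref{lem:S_mod_map}) acts on which tensor factor, and ensure the $\mathsf{tw}$ flips implicit in $\Delta^{\op}$ and in $E^{\op}$ are composed consistently, so that the two appearances of Proposition \ref{prop:E<S>F} combine to give $\overline{\Delta}(S(a))=(\overline{S\ox S})(\Delta^{\op}(a))$ after testing on both sides and invoking non-degeneracy of the multiplication and of $S$ to cancel the test elements.
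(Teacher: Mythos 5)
Your plan correctly identifies the target identity $\overline\Delta(S(a))=(\overline{S\ox S})(\Delta^{\op}(a))$ and correctly senses that Proposition \ref{prop:E<S>F} must enter, but it misjudges where the real work lies, and in doing so leaves the essential step unproved. The paper's proof is two lines long only because it cites \cite[Proposition 3.7]{VDaWa} for the identity
$$
\overline\Delta\, S(a)=\bigl((\overline{S\ox S})\Delta^{\op}(a)\bigr)E,
\qquad\forall a\in A,
$$
which is the genuinely hard part: it is a substantial computation with the maps $R_1,R_2$ (comparable in length to the proof of Theorem \ref{thm:S_anti-multi}), not a bookkeeping exercise. Once that identity is available, Proposition \ref{prop:E<S>F} is used only to absorb the trailing factor of $E$: one writes $E=(\overline{S\ox S})(E^{\op})$ (combining the two formulas of that proposition), pulls $E^{\op}$ inside by the anti-multiplicativity of $\overline{S\ox S}$, and kills it against $\Delta^{\op}(a)$ by axiom (iv). Your claim that ``matching the two sides then reduces to the single algebraic identity relating $E$, $F$ and $E^{\op}$'' inverts this: the relation between $E$, $F$ and $E^{\op}$ handles only the easy final cancellation, and it cannot by itself produce the comparison of $\overline\Delta\, S(a)$ with $(\overline{S\ox S})\Delta^{\op}(a)$.

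Concretely, the gap is that you never explain how to evaluate $\overline\Delta(S(a))(b\ox c)$. The only handle on $\overline\Delta$ applied to a general multiplier is via $\overline\Delta(\omega)\Delta(d)=\Delta(\omega d)$ and $\overline\Delta(1)=E$, so one is forced to compute $\Delta(S(a)d)$ for $d\in A$ --- and relating that to $S$ applied to the legs of $\Delta(a)$ is precisely the content of the anti-comultiplicativity statement you are trying to prove. Axioms (vii)--(ix) of Theorem \ref{thm:Hopf} and Lemma \ref{lem:S_mod_map} are indeed the right tools, but invoking them ``to convert the unknown $\overline\Delta(S(a))$ into an expression involving $T_1$, $T_2$, $E$ and $S$'' is a restatement of the problem, not a solution. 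To complete your argument you would either need to reproduce the proof of \cite[Proposition 3.7]{VDaWa} (e.g.\ by a computation with $R_1$ and $R_2$ in the spirit of the map $W$ used in Theorem \ref{thm:S_anti-multi}), or cite it as the paper does.
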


\begin{proof}
By \cite[Proposition 3.7]{VDaWa}, $\overline \Delta S(a)=((\overline{S\ox
S})\Delta^\op(a))E$, for all $a\in A$. By Proposition \ref{prop:E<S>F},
$E=(\overline{S\ox S})(E^\op)$ so that by the anti-multiplicativity of
$\overline{S\ox S}$, 
\begin{eqnarray*}
\overline \Delta S(a)&=&
((\overline{S\ox S})\Delta^\op(a))E=
((\overline{S\ox S})\Delta^\op(a))(\overline{S\ox S})(E^\op)\\
&=&(\overline{S\ox S})(E^\op\Delta^\op(a))\stackrel{(iv)}=
(\overline{S\ox S})\Delta^\op(a).
\end{eqnarray*}
\end{proof}


\begin{thebibliography}{10}

\bibitem{BoGT:fF}
G. B\"ohm and J. G\'omez-Torrecillas,
{\em Firm Frobenius monads and firm Frobenius algebras},
Bull. Math. Soc. Sci. Math. Roumanie 56(104) (2013), no. 3,
281--298. 

\bibitem{WHAI}
G. B\"ohm, F. Nill and K. Szlach\'anyi, 
{\em Weak Hopf algebras. I. Integral theory and C*-structure}, 
J. Algebra 221 (1999), no. 2, 385--438. 

\bibitem{BoVe}
G. B\"ohm and J. Vercruysse,
{\em Morita theory for comodules over corings}, 
Comm. Algebra 37 (2009), no. 9, 3207--3247. 

\bibitem{CaDaNa}
F. Casta\~no Iglesias, S. D\u asc\u alescu and C. N\u ast\u asescu,
{\em Symmetric coalgebras},
J. Algebra 279 (2004), 326--344.

\bibitem{Dauns:Multiplier}
J. Dauns, 
{\em Multiplier rings and primitive ideals}, 
Trans. Amer. Math. Soc. 145 (1969), 125--158. 

\bibitem{JaVe}
K. Janssen and J. Vercruysse, 
{\em Multiplier Hopf and bi-algebras}, 
J. Algebra Appl. 9 (2010), no. 2, 275--303.

\bibitem{Nill}
F. Nill,
{\em Axioms for weak bialgebras},
preprint available at
\href{http://arxiv.org/abs/math/9805104}{http://arxiv.org/abs/math/9805104}.

\bibitem{Szl}
K. Szlach\'anyi,
{\em Adjointable monoidal functors and quantum groupoids}, 
[in:] “Hopf algebras in noncommutative geometry and physics”, 
S. Caenepeel and F. Van Oysaeyen (eds.), 
pp. 291--307, Lecture Notes in Pure and Appl. Math., 239, 
Dekker, New York, 2005.

\bibitem{Tim}
T. Timmermann,
{\em An Invitation to Quantum Groups and Duality}.
EMS Textbooks in Mathematics 2008.

\bibitem{VDae:MHA}
A. Van Daele,
{\em Multiplier Hopf algebras},
Trans. Amer. Math. Soc. 342 (1994), no. 2, 917--932.

\bibitem{VDae:Sep}
A. Van Daele,
{\em Separability Idempotents and Multiplier Algebras},
preprint available at
\href{http://arxiv.org/abs/1301.4398}{http://arxiv.org/abs/1301.4398}.

\bibitem{VDaWa:Banach}
A. Van Daele and S. Wang,
{\em Weak Multiplier Hopf Algebras. Preliminaries, motivation and basic
 examples}, 
in: Operator Algebras and Quantum Groups. 
W. Pusz and P.M. So\l tan (eds.), Banach Center Publications
(Warsaw), vol. 98 (2012), 367--415. 

\bibitem{VDaWa}
A. Van Daele and S. Wang,
{\em Weak Multiplier Hopf Algebras. The main theory},
Journal f\"ur die reine und angewandte Mathematik (Crelle's Journal)
in press, DOI:10.1515/crelle-2013-0053.

\end{thebibliography}
\end{document}